\def\DefineSymbol#1#2{\newcommand{#1}{{\mathrm {#2}}}}
\def\DefineCategory#1#2{\newcommand{#1}{{\mathrm {#2}}}}
\theoremstyle{plain}
	\newtheorem{theorem}{Theorem}[section]
	\newtheorem{lemma}[theorem]{Lemma}
	\newtheorem{proposition}[theorem]{Proposition}
	\newtheorem{corollary}[theorem]{Corollary}
\theoremstyle{definition}
	\newtheorem{definition}[theorem]{Definition}
	\newtheorem{lemma and definition}[theorem]{Lemma and Definition}
	\newtheorem{notation}[theorem]{Notation}
\theoremstyle{remark}
	\newtheorem{remark}[theorem]{Remark}
	\numberwithin{equation}{section}
\DefineSymbol{\pr}{pr}
\DefineSymbol{\id}{id}
\DefineSymbol{\const}{const}
\DefineSymbol{\op}{op}
\DefineSymbol{\Op}{Op}
\DefineSymbol{\diag}{diag}
\DefineSymbol{\proet}{pro\acute{e}t}
\DefineSymbol{\cond}{cond}
\DefineSymbol{\conti}{conti}
\DefineSymbol{\Cond}{Cond}
\DefineSymbol{\dCond}{dCond}
\DefineSymbol{\disc}{disc}
\DefineSymbol{\Tot}{Tot}
\DefineSymbol{\adic}{adic}
\DefineSymbol{\nc}{nc}
\DefineSymbol{\nuc}{nuc}
\DefineSymbol{\cpt}{cpt}
\DefineSymbol{\std}{std}
\DeclareMathOperator{\Hom}{Hom}
\DeclareMathOperator{\Map}{Map}
\DeclareMathOperator{\Fun}{Fun}
\DeclareMathOperator{\cofib}{cofib}
\DeclareMathOperator{\fib}{fib}
\DeclareMathOperator{\Spa}{Spa}
\DeclareMathOperator{\End}{End}
\DeclareMathOperator{\intHom}{\underline{Hom}}
\newcommand{\lra}{\longrightarrow}
\newcommand{\bs}{\blacksquare}
\newcommand{\heart}{\heartsuit}
\newcommand{\hotimes}{\mathbin{\hat{\otimes}}}
\DefineCategory{\Set}{Set}
\DefineCategory{\Ab}{Ab}
\DefineCategory{\Ring}{Ring}
\DefineCategory{\Mod}{Mod}
\DefineCategory{\Alg}{Alg}
\DefineCategory{\Ch}{Ch}
\DefineCategory{\Mon}{Mon}
\DefineCategory{\CMon}{CMon}
\DefineCategory{\PCoh}{PCoh}
\DefineCategory{\Perf}{Perf}
\DefineCategory{\FP}{FP}
\DefineCategory{\Aff}{Aff}
\DefineCategory{\cAff}{cAff}
\DefineCategory{\AnRing}{AnRing}
\DefineCategory{\Ani}{Ani}
\DefineCategory{\CAlg}{CAlg}
\newcommand{\Cat}{\mathcal{C}at}
\newcommand{\Ebb}{\mathbb{E}}
\newcommand{\Lbb}{\mathbb{L}}
\newcommand{\Nbb}{\mathbb{N}}
\newcommand{\Zbb}{\mathbb{Z}}
\newcommand{\Acal}{\mathcal{A}}
\newcommand{\Bcal}{\mathcal{B}}
\newcommand{\Ccal}{\mathcal{C}}
\newcommand{\Dcal}{\mathcal{D}}
\newcommand{\Ecal}{\mathcal{E}}
\newcommand{\Kcal}{\mathcal{K}}
\newcommand{\Ocal}{\mathcal{O}}
\newcommand{\Scal}{\mathcal{S}}
\renewcommand{\hat}{\widehat}
\begin{document}

\title[]{fppf-descent for condensed animated rings}
\author{Yutaro Mikami}
\date{\today}
\address{Graduate School of Mathematical Sciences, University of Tokyo, 3-8-1 Komaba, Meguro-ku, Tokyo 153-8914, Japan}
\email{y-mikmi@g.ecc.u-tokyo.ac.jp}
\subjclass{primary 14G22, secondary 13D09}
\maketitle
\begin{abstract}
In this paper, we define ``animated affinoid algebras'' and prove some basic properties of them.
Then we generalize the result of \cite{Mann22, Mikami22} (fppf-descent for discrete rings or affinoid algebras) to discrete animated rings or animated affinoid algebras.
\end{abstract}

\setcounter{tocdepth}{1}

\tableofcontents


\section*{Introduction}
There are two goals of this paper.
\subsection{Animated affinoid $K$-algebras}
Let $K$ be a complete non-archimedean field.
The first goal is to give an ``animation'' of affinoid $K$-algebras in some sense by using condensed mathematics, which was introduced by Clausen-Scholze in \cite{CM}.
It will be necessary to define and study ``derived rigid geometry''.
Derived rigid geometry has been studied without using condensed mathematics by Porta-Yu and Ben-Bassat-Kremnizer \cite{PY18, PY20, PY21, BBK17}.
One of the advantages of using condensed mathematics is that it makes it possible to use a general theory of analytic spaces in the sense of Clausen-Scholze (\cite{CM, AG, CC}).
For example, we can naturally define quasi-coherent complexes over animated affinoid $K$-algebras, and we can construct a 6-functor formalism for them.

We can naively define an \textit{animated affinoid $K$-algebra} to be a solid condensed animated $K$-algebra $A$ such that $\pi_0A$ is an affinoid $K$-algebra in the usual sense (more precisely, there is an affinoid $K$-algebra $A^{\prime}$ in the usual sense such that the associated condensed ring $\underline{A^{\prime}}$ is isomorphic to $\pi_0A$). 
However this definition does not work well.
The problem is that for such a solid condensed animated $K$-algebra $A$, $(K,\Zbb)_{\bs} \to (A,\Zbb)_{\bs}$ is not necessarily steady (\cite[Definition 2.3.16]{Mann22}), where $(K,\Zbb)_{\bs}$ and $(A,\Zbb)_{\bs}$ are analytic animated rings whose analytic animated ring structures are induced from $\Zbb_{\bs}$.
Because of this problem, tensor products of such solid condensed animated $K$-algebras and scalar extension functors are ill-behaved.
If $A$ is nuclear as an object of $\Dcal((K,\Zbb)_{\bs})$, then $(K,\Zbb)_{\bs} \to (A,\Zbb)_{\bs}$ is steady by \cite[Proposition 13.14]{AG}.
Therefore, we find that the following definition of animated affinoid $K$-algebras is more natural.
\begin{definition}
An \textit{animated affinoid $K$-algebra} is a condensed animated $K$-algebra $A$ satisfying the following conditions:
\begin{enumerate}
\item
A static solid ring $\pi_0A$ is an affinoid $K$-algebra in the usual sense.
\item
The object $A$ of $\Dcal(K)^{\cond}$ is $(K,\Zbb)_{\bs}$-complete and nuclear as an object of $\Dcal((K,\Zbb)_{\bs})$. 
\end{enumerate}
\end{definition}

We will prove that animated affinoid $K$-algebras as in the above definition satisfy some basic properties as in the case of affinoid $K$-algebras in the usual sense.

\subsection{Fppf-descent}
The second goal is to give a generalization of the results of \cite{Mann22, Mikami22} (fppf-descent for discrete rings or affinoid $K$-algebras) to discrete animated rings or animated affinoid $K$-algebras.
Let $A\to B$ be a faithfully flat map of discrete animated rings, and let $B^{n/A}$ denote the $n$-fold derived tensor product of $B$ over $A$.
Lurie and Mathew proved the following result in \cite{SAG, Mat16}.
\begin{theorem}
We have an equivalence of $\infty$-categories 
$$\Dcal(A) \overset{\sim}{\lra} \varprojlim_{[n] \in \Delta} \Dcal((B^{n+1/A})), $$
where $\Dcal(A)$ is a derived $\infty$-category of $A$-modules.
\end{theorem}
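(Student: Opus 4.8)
The plan is to establish the equivalence by descent along the augmented cosimplicial diagram, using the Beck--Chevalley / Barr--Beck--Lurie machinery. First I would reduce to the statement that the pair of adjoint functors $(f^{*}, f_{*})$ attached to $f\colon A \to B$, namely extension of scalars $f^{*} = B \otimes_{A} (-) \colon \Dcal(A) \to \Dcal(B)$ and restriction of scalars $f_{*}$, is \emph{comonadic}: by the $\infty$-categorical Barr--Beck--Lurie theorem (\cite{HA}), it suffices to check that $f^{*}$ preserves totalizations of $f^{*}$-split cosimplicial objects and that it is conservative. Conservativity is exactly the faithfulness half of faithful flatness: a map $M \to N$ in $\Dcal(A)$ whose base change to $B$ is an equivalence has cone $C$ with $B \otimes_{A} C \simeq 0$, and since $A \to B$ is faithfully flat, $\pi_{*}(B \otimes_{A} C) = B \otimes_{A} \pi_{*}(C)$ (flatness) vanishes, forcing $\pi_{*}(C) = 0$; here one uses that a faithfully flat map of ordinary rings detects vanishing of modules, applied degreewise to homotopy groups. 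Preservation of $f^{*}$-split totalizations is formal for a functor of the form $B \otimes_{A}(-)$ since $B$, being flat, makes $f^{*}$ exact, and split totalizations are absolute colimits in the opposite category, hence preserved by any functor.

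Granting comonadicity, the Barr--Beck--Lurie theorem identifies $\Dcal(A)$ with the $\infty$-category of comodules over the comonad $T = f^{*}f_{*}$ acting on $\Dcal(B)$. The next step is to identify this category of comodules with the stated limit $\varprojlim_{[n]\in\Delta} \Dcal(B^{n+1/A})$. For this I would use the flat base-change isomorphism: for each $[n]$ the ring $B^{n+1/A}$ is the derived tensor product $B \otimes_{A} \cdots \otimes_{A} B$ ($n+1$ factors), and flatness of $A \to B$ guarantees that the Čech nerve $[n] \mapsto B^{n+1/A}$ is a cosimplicial \emph{commutative} ring (no higher Tor to worry about), so that the functors $\Dcal(B^{n+1/A}) \to \Dcal(B^{n+2/A})$ are again given by flat base change and the Beck--Chevalley condition holds on the nose. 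The comonad $T$ acting on $\Dcal(B)$ is then, by flat base change, the comonad associated to the coalgebra object $B \otimes_{A} B$ in $B$-bimodules, whose category of comodules is precisely the totalization $\varprojlim_{[n]\in\Delta}\Dcal(B^{n+1/A})$ of the cosimplicial $\infty$-category built from the Čech nerve; this is a standard unwinding, e.g. as in \cite{SAG} or Lurie's treatment of faithfully flat descent.

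The main obstacle I anticipate is \emph{not} the abstract descent formalism but the verification that everything is compatible with the animated (derived) structure — specifically, ensuring that ``faithfully flat'' in the animated setting still gives a conservative, totalization-preserving base change, and that the Čech nerve $B^{\bullet+1/A}$ really is a cosimplicial object in animated commutative rings with flat transition maps. The key point is that flatness of $A \to B$ of animated rings means $\pi_{0}A \to \pi_{0}B$ is flat and $\pi_{*}B = \pi_{*}A \otimes_{\pi_0 A} \pi_0 B$, from which one deduces by a spectral-sequence (or $t$-structure) argument that each $B^{n+1/A}$ is flat over $A$ and that base change along these maps is exact; once this is in place, conservativity is checked on homotopy groups as above and the rest is the general machinery. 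A secondary technical point is checking that $f^{*}$-split cosimplicial diagrams are handled correctly — but since $f^{*}$ is exact (by flatness) and preserves all colimits (being a left adjoint), it automatically preserves the relevant limits of split diagrams, so this causes no real difficulty. I would present the argument in this order: (1) flatness implies the Čech nerve is well-behaved; (2) conservativity of $f^{*}$; (3) Barr--Beck--Lurie applies; (4) identify comodules with the limit.
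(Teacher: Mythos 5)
The paper does not actually prove this statement: it is quoted as background from Lurie and Mathew (\cite{SAG, Mat16}), and the paper's own descent machinery (the compact object $N_{B/A}$ and descendability) is aimed at the solid variant and needs $\pi_0A\to\pi_0B$ finitely presented. So there is no internal proof to compare with; measured against the cited sources, your overall strategy --- identify $\varprojlim_{[n]\in\Delta}\Dcal(B^{n+1/A})$ with comodules over the comonad $f^{*}f_{*}$ via Beck--Chevalley, then prove comonadicity of $f^{*}$ by the dual Barr--Beck--Lurie criterion --- is the standard route, and your conservativity argument is fine: flatness in the animated sense gives $\pi_{*}(B\otimes_{A}C)\cong \pi_{0}B\otimes_{\pi_{0}A}\pi_{*}(C)$, and faithful flatness of $\pi_{0}A\to\pi_{0}B$ then detects vanishing.

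The genuine gap is the other Barr--Beck hypothesis. You assert that preservation of totalizations of $f^{*}$-split cosimplicial objects is ``formal'' because split totalizations are absolute, and again later because $f^{*}$ is exact and colimit-preserving. Both halves of this reasoning fail: absoluteness applies to a cosimplicial object that is itself split, whereas the diagrams $X^{\bullet}$ in $\Dcal(A)$ relevant here are only split \emph{after} applying $f^{*}$; and exactness plus preservation of colimits gives no control over the infinite limit $\Tot(X^{\bullet})$ --- the functor $B\otimes_{A}(-)$ does not commute with infinite products or totalizations for a general flat $B$ (this is exactly the failure that forces the present paper to represent $-\otimes_{A_{\bs}}B_{\bs}$ by a compact object in its solid setting). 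This step is precisely where the content of unbounded fpqc descent lies: in the literature one either proves convergence of the cobar construction on bounded-below objects using exactness of the Amitsur complex of $\pi_{0}A\to\pi_{0}B$ (via the degenerate Tot spectral sequence) and then supplies a separate argument to pass to all of $\Dcal(A)$, or one uses a different mechanism such as descendability, which requires extra finiteness hypotheses and does not cover the theorem as stated. As written, your step (3) is unsupported. Two minor points: the Beck--Chevalley/base-change equivalences for the \v{C}ech nerve hold for arbitrary pushouts of animated rings and do not need flatness; and the \v{C}ech nerve is automatically a cosimplicial animated ring --- flatness is only needed to know its terms remain flat over $A$, which you do use.
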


In the above theorem, only discrete modules are considered.
We can also consider a generalization to topological modules (more precisely, solid condensed modules). 
The notion of solid condensed modules is better than that of discrete modules in some sense. 
For example there is a 6-functor formalism in the world of solid condensed modules.
Therefore, this generalization will be worthwhile.
However, it is difficult to prove descent for solid condensed modules.
One of the reasons is that the scalar extension functor $$-\otimes_{A_{\bs}}^{\Lbb} B_{\bs} \colon \Dcal(A_{\bs})\to \Dcal(B_{\bs})$$ is not given by the derived tensor product $-\otimes_{A_{\bs}}^{\Lbb} B \colon  \Dcal(A_{\bs})\to \Dcal(A_{\bs})$.
Mann and the author independently proved descent for solid condensed modules in different ways when $A\to B$ be a finitely presented faithfully flat map of discrete (static) rings (\cite{Mann22, Mikami22}).

\begin{theorem}
Let $A\to B$ be a finitely presented faithfully flat map of discrete rings.
Then we have an equivalence of $\infty$-categories 
$$\Dcal(A_{\bs}) \overset{\sim}{\lra} \varprojlim_{[n] \in \Delta} \Dcal((B^{n+1/A})_{\bs}), $$
where $\Dcal(A_{\bs})$ is a derived $\infty$-category of $A_{\bs}$-modules.
\end{theorem}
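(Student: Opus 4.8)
The plan is to deduce the statement from the formal descendability criterion of Mathew, thereby sidestepping the comonadic descent argument of Lurie and Mathew, which (as the discussion above of $-\otimes_{A_\bs}^{\Lbb}B_\bs$ versus $-\otimes_{A_\bs}^{\Lbb}B$ indicates) is delicate to run directly in the solid setting. Recall that a map $R\to S$ of $\mathbb{E}_\infty$-algebras in a presentable stable symmetric monoidal $\infty$-category $\Ccal$ is \emph{descendable} if the unit object lies in the thick $\otimes$-ideal generated by $S$; equivalently, writing $I=\fib(R\to S)$, the canonical map $I^{\otimes_R N}\to R$ is null-homotopic for some $N\geq 1$. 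Mathew's theorem then produces, for such a map, an equivalence $\Mod_R(\Ccal)\overset{\sim}{\lra}\varprojlim_{[n]\in\Delta}\Mod_{S^{\otimes_R (n+1)}}(\Ccal)$. So the plan reduces to two things: showing that $A_\bs\to B_\bs$ is descendable in $\Dcal(A_\bs)$, and identifying the resulting cosimplicial diagram with $[n]\mapsto\Dcal((B^{n+1/A})_\bs)$.

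For the identification, I would argue as follows. Since $A\to B$ is flat, the derived tensor powers $B^{n+1/A}$ are static. The functor $\Phi\colon\Dcal(A)\to\Dcal(A_\bs)$ sending a discrete module $M$ to $\underline{M}$ (which lands in solid modules) is exact and symmetric monoidal: on discrete modules one has $\underline{M\otimes_A^{\Lbb}N}\simeq\underline{M}\otimes_{A_\bs}^{\Lbb}\underline{N}$, which follows by reducing, through filtered and finite colimits, to the case of finitely generated free modules (and by resolving by frees in the derived setting). Hence $B_\bs^{\otimes_{A_\bs}(n+1)}\simeq\Phi(B^{n+1/A})=(B^{n+1/A})_\bs$. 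Moreover, for any map of discrete rings $A\to C$, the induced map $A_\bs\to C_\bs$ of solid analytic rings is steady --- the $C_\bs$-completion of a solid module is the module itself --- so the base change $\Dcal(A_\bs)\to\Dcal(C_\bs)$ is monadic with monad $-\otimes_{A_\bs}\underline{C}$, giving $\Mod_{C_\bs}(\Dcal(A_\bs))\simeq\Dcal(C_\bs)$; applying this with $C=B^{n+1/A}$ yields the identification.

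It then remains to prove that $A_\bs\to B_\bs$ is descendable. I would deduce this from the classical fact that a faithfully flat map $A\to B$ of discrete rings of finite presentation is descendable already in $\Dcal(A)$: applying the exact symmetric monoidal functor $\Phi$ to a null-homotopy of $\fib(A\to B)^{\otimes_A N}\to A$ produces a null-homotopy of $\fib(A_\bs\to B_\bs)^{\otimes_{A_\bs} N}\to A_\bs$. For the classical fact, I would use that descendability is Zariski-local on $\Spec A$ (using that $A\to A_f$ is an epimorphism of rings, so that the thick $\otimes$-ideal generated by $B_f$ in $\Dcal(A_f)$ maps into that generated by $B$ in $\Dcal(A)$) and is stable under composition and under factorisations $A\to B\to C$ with $A\to C$ descendable; that a finite locally free faithfully flat map $A\to C$ is split as a map of $A$-modules (the element $1_C$ is unimodular, since $C/\mathfrak{m}C\neq 0$ for every maximal ideal $\mathfrak{m}$), hence descendable, and that finite Zariski covers are descendable by the usual Koszul argument; and, crucially, that by Raynaud--Gruson a faithfully flat finitely presented morphism $\Spec B\to\Spec A$ admits, Zariski-locally on the target, a factorisation $\Spec C\to\Spec B\to\Spec A$ with $C$ finite locally free of positive rank over the base. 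Chaining these gives descendability of $A\to B$. (If $A$ is not Noetherian, one first reduces to the Noetherian case by the standard finite-presentation limit argument, compatibly with the formation of $\Dcal((-)_\bs)$ along filtered colimits of rings.) The finite presentation hypothesis is genuinely needed here: $\Zbb\to\hat{\Zbb}$ is faithfully flat but not descendable.

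The main obstacle, I expect, is this last geometric input --- importing or re-proving the Raynaud--Gruson reduction of faithfully flat finitely presented covers to finite locally free ones --- together with the bookkeeping that turns the solid Čech nerve $[n]\mapsto\Dcal((B^{n+1/A})_\bs)$ into the precise module-category diagram to which Mathew's theorem applies (i.e.\ verifying steadiness of the relevant maps and the symmetric monoidality of $\Phi$ in the derived sense). Everything else in the argument is formal.
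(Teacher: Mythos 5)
The decisive gap is in your identification of the cosimplicial diagram. For a map of discrete rings $A\to C$ it is \emph{not} true that ``the $C_{\bs}$-completion of a solid module is the module itself'': the induced analytic ring structure $(C,A)_{\bs}$ on $C$ does not coincide with the full solid structure $C_{\bs}=(C,C)_{\bs}$. Already for $A=\Zbb$, $C=\Zbb[T]$ (a faithfully flat, finitely presented map, hence an instance of the theorem) one has $(\Zbb[T],\Zbb)_{\bs}[S]=\Zbb[T]\otimes_{\Zbb}\prod_I\Zbb$, whereas $\Zbb[T]_{\bs}[S]=\varprojlim_i\Zbb[T][S_i]$ is strictly larger; equivalently, the base change $-\otimes_{(\Zbb[T],\Zbb)_{\bs}}^{\Lbb}\Zbb[T]_{\bs}$ is a nontrivial steady localization of positive Tor-dimension (this is exactly what is used in Proposition \ref{prop:loc}, following \cite{CM}). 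Consequently $\Mod_{\underline{C}}(\Dcal(A_{\bs}))\simeq\Dcal((C,A)_{\bs})$, not $\Dcal(C_{\bs})$, and the monad of the adjunction $\Dcal(A_{\bs})\rightleftarrows\Dcal(C_{\bs})$ is $-\otimes_{A_{\bs}}^{\Lbb}C_{\bs}$ (completion included), not $-\otimes_{A_{\bs}}^{\Lbb}\underline{C}$. Hence Mathew's criterion applied in $\Ccal=\Dcal(A_{\bs})$ to the algebra object $\underline{B}$ --- which is all that your transferred null-homotopy of $\fib(A\to B)^{\otimes N}\to A$ under the symmetric monoidal functor $\Phi$ controls --- yields at best $\Dcal(A_{\bs})\simeq\varprojlim_{[n]}\Mod_{\underline{B}^{\otimes(n+1)}}(\Dcal(A_{\bs}))\simeq\varprojlim_{[n]}\Dcal((B^{n+1/A},A)_{\bs})$, a diagram of \emph{induced} structures; it does not give the asserted equivalence with $\varprojlim_{[n]}\Dcal((B^{n+1/A})_{\bs})$.

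This is precisely the difficulty the paper emphasizes: the scalar extension $-\otimes_{A_{\bs}}^{\Lbb}B_{\bs}$ is not $-\otimes_{A_{\bs}}^{\Lbb}B$, so descendability in the sense needed here is a statement about the enriched endofunctor $\Kcal_f=\fib(\id\to-\otimes_{A_{\bs}}^{\Lbb}B_{\bs})$, which is not ``tensoring with $\fib(A\to\underline{B})$''. The actual proofs bridge this gap non-formally: in \cite{Mikami22} (and in the generalization in this paper) one produces a compact object $N_{B/A}$ with $-\otimes_{A_{\bs}}^{\Lbb}B_{\bs}\simeq R\intHom_A(N_{B/A},-)$, proves uniform boundedness of its tensor powers, and converts this into descendability of $A_{\bs}\to B_{\bs}$ via the fully faithful monoidal embedding $\Dcal(\Acal)^{\op}\to\End(\Dcal(\Acal))$ (see Lemma \ref{lem:desiff} and Proposition \ref{prop:desbdd}); \cite{Mann22} likewise works at the level of endofunctors. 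Your reduction of classical fppf descendability to the finite locally free case via Raynaud--Gruson, and the symmetric monoidality of $\Phi$, are fine but only control the naive comonad; steadiness of $A_{\bs}\to C_{\bs}$ does not repair this, since it does not say the two analytic ring structures on $C$ agree. Without either an argument comparing $\varprojlim_{[n]}\Dcal((B^{n+1/A},A)_{\bs})$ with $\varprojlim_{[n]}\Dcal((B^{n+1/A})_{\bs})$, or a direct handle on the completed base change as in the paper, the proposal does not reach the stated theorem.
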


A key result of the proof of the author is that there exists a compact object $N_{B/A} \in \Dcal(A_{\bs})$ such that there is an equivalence of functors from $\Dcal(A_{\bs})$ to $\Dcal(A_{\bs})$
$$-\otimes_{A_{\bs}}^{\Lbb} B_{\bs} \simeq R\intHom_A(N_{B/A},-),$$
where the functor $-\otimes_{A_{\bs}}^{\Lbb} B_{\bs} \colon \Dcal(A_{\bs}) \to \Dcal(A_{\bs})$ is the composition of $$-\otimes_{A_{\bs}}^{\Lbb} B_{\bs} \colon \Dcal(A_{\bs}) \to \Dcal(B_{\bs})$$ with the forgetful functor $\Dcal(B_{\bs})\to\Dcal(A_{\bs})$.
In this paper, we will prove the following theorem from the above result.

\begin{theorem}\label{thm:introN}
Let $A\to B$ be a faithfully flat map of discrete animated rings such that $\pi_0f \colon \pi_0A \to \pi_0B$ is finitely presented.
Then there exists a compact object $N_{B/A} \in \Dcal(A_{\bs})$ such that there is an equivalence of functors from $\Dcal(A_{\bs})$ to $\Dcal(A_{\bs})$
$$-\otimes_{A_{\bs}}^{\Lbb} B_{\bs} \simeq R\intHom_{A}(N_{B/A},-).$$
\end{theorem}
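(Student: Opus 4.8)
The plan is to reduce to the static case recalled above by passing to $\pi_0$ and then propagating the conclusion up the Postnikov tower of $A$. For a map $g\colon R\to S$ of condensed animated rings write $\Phi_g\colon\Dcal(R_\bs)\to\Dcal(R_\bs)$ for the composite of scalar extension along $g$ with the forgetful functor; the assertion is that $\Phi_f\simeq R\intHom_A(N_{B/A},-)$ for a suitable compact $N_{B/A}$, and $\Phi_f$ is automatically colimit-preserving, since both scalar extension and the forgetful functor between solid module categories are. Put $A_0=\pi_0A$ and $B_0=\pi_0B$. Faithful flatness of $f$ makes $f_0\colon A_0\to B_0$ faithfully flat, and it is finitely presented by hypothesis; flatness of $f$ also forces the square $A\to B$, $A\to A_0$, $B\to B_0$ to be a pushout of animated rings, so that $B_0\simeq B\otimes_A A_0$ and $(B_0)_\bs\simeq B_\bs\otimes_{A_\bs}(A_0)_\bs$. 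Writing $\lambda,\iota$ for scalar extension and restriction along $A_\bs\to(A_0)_\bs$, this pushout yields base-change identities $\lambda\circ\Phi_f\simeq\Phi_{f_0}\circ\lambda$ and $\iota\circ\Phi_{f_0}\simeq\Phi_f\circ\iota$ (using that the relevant maps are steady, which follows from the finite presentation of $f_0$). Applying the static theorem to $f_0$ then produces a compact $N_0\in\Dcal((A_0)_\bs)$ with $\Phi_{f_0}\simeq R\intHom_{A_0}(N_0,-)$.

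Next I would run the same construction over every truncation $A^{(n)}:=\tau_{\le n}A$, with $f^{(n)}\colon A^{(n)}\to B^{(n)}:=B\otimes_A A^{(n)}$; by flatness $B^{(n)}\simeq\tau_{\le n}B$ is flat over $A^{(n)}$ with $\pi_0B^{(n)}=B_0$, and $A^{(n+1)}\to A^{(n)}$ is a square-zero extension by $\pi_{n+1}A[n+1]$. The claim is that one can choose, inductively and starting from the previous paragraph, a compact $N_n\in\Dcal((A^{(n)})_\bs)$ together with an equivalence $\Phi_{f^{(n)}}\simeq R\intHom_{A^{(n)}}(N_n,-)$ refining, under $-\otimes_{A^{(n)}}A^{(n-1)}$, the datum at level $n-1$. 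In the inductive step one first lifts $N_n$ to a compact $N_{n+1}\in\Dcal((A^{(n+1)})_\bs)$, which is possible because compact objects lift along square-zero extensions and such lifts are again compact; since $R\intHom$ out of a compact object commutes with base change and the relevant maps are steady, both $\Phi_{f^{(n+1)}}$ and $R\intHom_{A^{(n+1)}}(N_{n+1},-)$ reduce, under $-\otimes_{A^{(n+1)}}A^{(n)}$, to the two equivalent functors at level $n$. It then remains to lift the equivalence itself: the space of pairs consisting of a lift of $N_n$ and a compatible lift of the given equivalence is controlled by the square-zero deformation theory of the category of compact objects and of the relevant functor $\infty$-category, and one shows it is nonempty, after possibly adjusting $N_{n+1}$ by an automorphism, via the vanishing of an obstruction class built from $\pi_{n+1}A$ and $\Ext$-groups between $N_n$ and the common reduction of the two functors. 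Finally set $N_{B/A}:=\varprojlim_n N_n$, which is compact because $\Perf(A_\bs)\simeq\varprojlim_n\Perf((A^{(n)})_\bs)$. Both $\Phi_f$ and $R\intHom_A(N_{B/A},-)$ preserve colimits, so it suffices to produce a natural equivalence between them on the compact generators $A_\bs[S]$ ($S$ extremally disconnected); since $A_\bs[S]\simeq\varprojlim_n (A^{(n)})_\bs[S]$ and $N_{B/A}$ is compact, the values of both functors at $A_\bs[S]$ are the inverse limits of their level-$n$ counterparts, which agree compatibly by construction.

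The main obstacle is the inductive step above --- not the lifting of the compact object $N_n$, which is routine, but carrying the functor-level equivalences coherently up the tower so that they pass to the inverse limit, that is, the $\infty$-categorical deformation-theoretic bookkeeping and the control of the obstructions to it. An alternative that bypasses the Postnikov tower would be to revisit the construction of $N_{B_0/A_0}$ in the static proof and check that it goes through verbatim with ``faithfully flat with finitely presented $\pi_0$'' in place of ``finitely presented faithfully flat'', producing $N_{B/A}$ directly over $A$; the advantage of the route sketched here is that it invokes the static theorem purely as a black box.
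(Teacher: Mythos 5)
Your reduction to $\pi_0$ and the base-change identities are fine, but the argument has a genuine gap exactly at the point you yourself flag as ``the main obstacle'': the inductive lifting of the \emph{equivalence of functors} along the square-zero extensions $A^{(n+1)}\to A^{(n)}$. Knowing that $\Phi_{f^{(n+1)}}$ and $R\intHom_{A^{(n+1)}}(N_{n+1},-)$ agree after $-\otimes_{A^{(n+1)}}A^{(n)}$ gives you no map between them over $A^{(n+1)}$; producing a natural transformation $R\intHom_{A^{(n+1)}}(N_{n+1},-)\Rightarrow\Phi_{f^{(n+1)}}$ amounts to lifting the universal element $A^{(n)}\to N_n\otimes_{(A^{(n)})_{\bs}}^{\Lbb}(B^{(n)})_{\bs}$ along the square-zero extension, and the obstruction to this lives in an Ext-group with coefficients built from $\pi_{n+1}A$ that has no a priori reason to vanish. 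You assert that ``one shows it is nonempty, after possibly adjusting $N_{n+1}$ by an automorphism,'' but no such argument is given, and without it the whole tower collapses. A second unproved input is the compactness of $N_{B/A}:=\varprojlim_n N_n$, which you justify by $\Perf(A_{\bs})\simeq\varprojlim_n\Perf((A^{(n)})_{\bs})$: in the solid setting compact objects are retracts of finite complexes of objects $A_{\bs}[S]$ (limits of large products, not finite free modules), and this nilcompleteness/gluing statement for compact solid modules is not a quotable result; proving it would be a task of comparable difficulty to the theorem itself.

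For contrast, the paper bypasses the Postnikov tower entirely. It shows that the composite functor $-\otimes_{A_{\bs}}^{\Lbb}B_{\bs}\colon\Dcal(A_{\bs})_{\kappa}\to\Dcal(A_{\bs})_{\kappa}$ preserves all small limits and has finite Tor-dimension, by transferring these properties from the static functor $-\otimes_{\pi_0A_{\bs}}^{\Lbb}\pi_0B_{\bs}$ via the t-structure (this is where the real work is, including the careful choice of uncountable solid cutoff cardinals $\kappa$, since $\Dcal(A_{\bs})$ itself is not presentable and $\Dcal(A_{\bs})_{\kappa}$ is not closed under limits in it). Then the representability criterion in the presentable $\infty$-category $\Dcal(A_{\bs})_{\kappa}$ produces, in one step, a compact object $N_{B/A}$ representing $M\mapsto\Map(B,M\otimes_{A_{\bs}}^{\Lbb}B_{\bs})$, with $N_{B/A}\otimes_{A_{\bs}}^{\Lbb}\pi_0A_{\bs}\simeq N_{\pi_0B/\pi_0A}$ ensuring independence of $\kappa$; the universal element then furnishes the natural transformation, which is checked to be an equivalence on static modules using limit- and colimit-preservation of both sides. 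If you want to salvage your route, you would have to either carry out the obstruction-theoretic bookkeeping and the compact-object gluing in the solid setting, or switch to an argument, as in the paper, that constructs the representing object globally rather than tower-by-tower.
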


In the proof of Theorem \ref{thm:introN}, it is a key claim that there exists a compact object $N_{B/A}\in \Dcal(A_{\bs})$ which represents a functor 
$$\Dcal(A_{\bs}) \to \Ani ;\; M \mapsto \Map_{A}(A, M\otimes_{A_{\bs}}^{\Lbb} B_{\bs}),$$
where $\Map_{A}(A, M\otimes_{A_{\bs}}^{\Lbb} B_{\bs})$ is the mapping anima from $A$ to $M\otimes_{A_{\bs}}^{\Lbb} B_{\bs}$ in $\Dcal(A_{\bs})$ (Lemma \ref{lem:repani}).
To prove this claim, we will first prove that the functor $$-\otimes_{A_{\bs}}^{\Lbb} B_{\bs} \colon \Dcal(A_{\bs}) \to \Dcal(A_{\bs})$$ preserves small limits by comparing with the limit-preserving functor 
$$-\otimes_{\pi_0A_{\bs}}^{\Lbb} \pi_0B_{\bs} \colon \Dcal(\pi_0A_{\bs}) \to \Dcal(\pi_0A_{\bs}).$$
Then we will use the adjoint functor theorem (\cite[Corollary 5.5.2.9]{HTT}).
The $\infty$-category $\Dcal(A_{\bs})$ is not a presentable $\infty$-category and it is just an increasing union of presentable $\infty$-categories indexed by strong limit cardinals, so we should use the adjoint functor theorem carefully.

From uniform boundedness of $\{N_{\pi_0B/\pi_0A}^{\otimes n}\}_{n\geq 1}$ (\cite[Proposition 2.20]{Mikami22}), we can prove that $\{N_{B/A}^{\otimes n}\}_{n\geq 1}$ is uniformly bounded above (Theorem \ref{thm:unifbdd}).
From a formal argument, we find that it implies that $A_{\bs} \to B_{\bs}$ is descendable in the sense of \cite{Mann22}.
In other words, we get the following main theorem.

\begin{theorem}
Let $A\to B$ be a faithfully flat map of discrete animated rings such that $\pi_0f \colon \pi_0A \to \pi_0B$ is finitely presented.
Then we have an equivalence of $\infty$-categories 
$$\Dcal(A_{\bs}) \overset{\sim}{\lra} \varprojlim_{[n] \in \Delta} \Dcal((B^{n+1/A})_{\bs}).$$
\end{theorem}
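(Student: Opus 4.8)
The plan is to deduce the theorem from the assertion that the morphism of analytic animated rings $A_{\bs}\to B_{\bs}$ is descendable in the sense of \cite{Mann22}, together with the general principle that descendable morphisms are of (universal) descent for their module $\infty$-categories. First I would record that the cosimplicial diagram $[n]\mapsto \Dcal((B^{n+1/A})_{\bs})$ appearing in the statement is the one attached to the \v{C}ech nerve of $A_{\bs}\to B_{\bs}$: since solidification is symmetric monoidal, $(B^{n+1/A})_{\bs}\simeq B_{\bs}^{\otimes_{A_{\bs}}(n+1)}$ compatibly with all coface and codegeneracy maps, and the transition functors in the limit are the corresponding scalar extensions. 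Thus the claimed equivalence is exactly the descent statement for $A_{\bs}\to B_{\bs}$.

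Next I would set up the descendability computation. By Theorem \ref{thm:introN} the object $N_{B/A}\in\Dcal(A_{\bs})$ is compact, hence dualizable, and $B_{\bs}\simeq R\intHom_{A}(N_{B/A},A_{\bs})=N_{B/A}^{\vee}$ as $A_{\bs}$-modules; dualizing the structure map $A_{\bs}\to B_{\bs}$ produces a map $\eta\colon N_{B/A}\to A_{\bs}$, and for each $m$ the iterated composite $\eta^{\otimes m}\colon N_{B/A}^{\otimes m}\to A_{\bs}$ is dual to the canonical map $I^{\otimes m}\to A_{\bs}$, where $I=\fib(A_{\bs}\to B_{\bs})$. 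So descendability of $A_{\bs}\to B_{\bs}$ amounts to the nullhomotopy of $\eta^{\otimes m}$ for some integer $m$.

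For the formal argument producing such an $m$: the morphism $\pi_0 f$ being finitely presented and faithfully flat, the induced map $\pi_0 A_{\bs}\to\pi_0 B_{\bs}$ is descendable by \cite{Mikami22} (equivalently \cite{Mann22}), so the base change of $\eta^{\otimes m}$ to $\pi_0 A_{\bs}$ is nullhomotopic for some fixed $m$ (using that $N_{B/A}$ base-changes to $N_{\pi_0 B/\pi_0 A}$, which is visible from the comparison used in the proof of Theorem \ref{thm:introN}). By Theorem \ref{thm:unifbdd} the family $\{N_{B/A}^{\otimes m}\}_{m\geq 1}$ is uniformly bounded above, so the Postnikov/obstruction tower governing the lift of this nullhomotopy from $\pi_0 A_{\bs}$ to $A_{\bs}$ has only finitely many nonzero layers; after replacing $m$ by a larger integer, $\eta^{\otimes m}$ is itself nullhomotopic, and hence $A_{\bs}\to B_{\bs}$ is descendable of some finite index.

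Finally, given descendability of index $m$, the descent equivalence is formal, following the argument of Lurie and Mathew as adapted to analytic rings in \cite{Mann22}: descendability collapses $\varprojlim_{[n]\in\Delta}$ onto the finite partial totalization over $\Delta_{\leq m}$, and a thick-$\otimes$-ideal / Beck--Chevalley argument then identifies $\Dcal(A_{\bs})$ with it. The one subtlety is that $\Dcal(A_{\bs})$ is not presentable but only a filtered union of presentable subcategories indexed by strong limit cardinals $\kappa$; I would fix $\kappa$ large enough that $N_{B/A}$ is $\kappa$-small (the index $m$ is a fixed integer, independent of $\kappa$), apply the presentable descent statement to the $\kappa$-truncated categories, and pass to the filtered colimit over $\kappa$ — permissible precisely because descendability has turned the limit over $\Delta$ into a finite limit, and finite limits commute with filtered colimits of $\infty$-categories. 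I expect the genuine difficulty to lie in the third step, namely upgrading the uniform upper bound of Theorem \ref{thm:unifbdd} — which a priori gives only pro-convergence of the \v{C}ech tower — together with descent at the level of $\pi_0$, into an honest descendability index for $A_{\bs}\to B_{\bs}$; the non-presentability bookkeeping in the last step is routine once this has been achieved.
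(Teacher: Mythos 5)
Your overall skeleton (reduce to descendability of $A_{\bs}\to B_{\bs}$, using $N_{B/A}$, uniform boundedness, and descent at the level of $\pi_0$, then invoke the formal ``descendable $\Rightarrow$ descent'' machinery) is the same as the paper's, but the two steps that carry the actual content are wrong. First, your translation of descendability is incorrect. Under the fully faithful monoidal embedding $\Dcal(A_{\bs})^{\op}\hookrightarrow \End(\Dcal(A_{\bs}))$, $N\mapsto R\intHom_A(N,-)$ (Remark \ref{rem:ff}), the identity corresponds to $A$, the functor $-\otimes_{A_{\bs}}^{\Lbb}B_{\bs}$ corresponds to $N_{B/A}$, and hence $\Kcal_f=\fib(\id\to -\otimes_{A_{\bs}}^{\Lbb}B_{\bs})$ corresponds to the \emph{cofiber} $C:=\cofib(\eta\colon N_{B/A}\to A)$; descendability of index $\leq n$ means the map $A\to C^{\otimes n}$ is nullhomotopic. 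It does \emph{not} amount to nullhomotopy of $\eta^{\otimes m}\colon N_{B/A}^{\otimes m}\to A$: that map corresponds to the unit transformation $\id\to -\otimes_{A_{\bs}}^{\Lbb}B_{\bs}^{\otimes m}$, whose vanishing would force $B^{\otimes m}\simeq 0$, so this condition is never satisfied. (Also, ``compact, hence dualizable'' fails in the solid setting -- e.g.\ $\prod_I\Zbb$ is compact but not dualizable -- although the identity $B\simeq R\intHom_A(N_{B/A},A)$ you use does hold, being Theorem \ref{existN} applied to $M=A$.)

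Second, even after correcting the target of the nullhomotopy, your mechanism for producing it -- lifting the nullhomotopy from $\pi_0A_{\bs}$ along a ``Postnikov/obstruction tower with finitely many nonzero layers'' -- does not work. Nullhomotopy is not detected by $-\otimes_{A_{\bs}}^{\Lbb}\pi_0A_{\bs}$, even for maps of bounded-above objects (a nonzero class in $\pi_1A$ already gives a map $A[1]\to A$ that dies after base change to $\pi_0A$), and since $A$ may have infinitely many nonvanishing homotopy groups the obstruction tower has infinitely many layers; uniform boundedness \emph{above} of $N_{B/A}^{\otimes m}$ does not truncate it. The paper's argument is structured precisely to avoid this: what it transfers from $\pi_0A$ is an \emph{equivalence}, namely $\varinjlim_{[m]\in\Delta_s^{\op}}N_{B/A}^{\otimes(m+1)}\xrightarrow{\ \sim\ }A$ (second half of Theorem \ref{thm:unifbdd}), which \emph{is} detected after $-\otimes_{A_{\bs}}^{\Lbb}\pi_0A_{\bs}$ for bounded-above objects by Lemma \ref{lem:bddeq}; then Proposition \ref{prop:desbdd} converts uniform boundedness plus this convergence into an explicit descendability index directly over $A$, by showing that the cofiber of the finite partial colimit $N_n\to A$ is coconnective, so that $A$ is a retract of $N_n\in\Ccal(N,n+1)$, and concluding via Lemma \ref{lem:desiff}. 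Your final paragraph on the cardinal bookkeeping and on deducing the limit diagram over $\Delta$ from descendability is fine (the paper simply quotes Mann's Propositions 2.6.3 and 2.9.6 for this), but the passage from Theorem \ref{thm:unifbdd} to an actual descendability index -- which you yourself flag as the crux -- is exactly where your sketch breaks down.
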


By the similar argument, we can also prove faithfully flat descent for animated affinoid $K$-algebras.

\begin{theorem}\label{thm:intro1}
Let $f \colon A\to B$ be a faithfully flat morphism of animated affinoid $K$-algberas.
Let $B^{n/A}$ denote the $n$-fold derived tensor product of $B$ over $(A,\Zbb)_{\bs}$.
Then we have an equivalence of $\infty$-categories 
$$\Dcal(A_{\bs}) \overset{\sim}{\lra} \varprojlim_{[n] \in \Delta} \Dcal((B^{(n+1)/A})_{\bs}). $$
\end{theorem}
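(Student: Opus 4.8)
The plan is to reduce Theorem \ref{thm:intro1} to the analogue of Theorem \ref{thm:introN} for animated affinoid $K$-algebras, exactly as the discrete-animated case was reduced to the compact-object statement there. Concretely, the first step is to establish that for a faithfully flat morphism $f\colon A\to B$ of animated affinoid $K$-algebras, the scalar-extension functor $-\otimes_{A_{\bs}}^{\Lbb}B_{\bs}\colon\Dcal(A_{\bs})\to\Dcal(A_{\bs})$ (composed with the forgetful functor along $f$) is corepresented by a \emph{compact} object $N_{B/A}\in\Dcal(A_{\bs})$, i.e.\ $-\otimes_{A_{\bs}}^{\Lbb}B_{\bs}\simeq R\intHom_{A}(N_{B/A},-)$. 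To do this I would mimic the proof of Theorem \ref{thm:introN}: first show the functor $-\otimes_{A_{\bs}}^{\Lbb}B_{\bs}$ preserves small limits by comparing it with the limit-preserving functor $-\otimes_{\pi_0 A_{\bs}}^{\Lbb}\pi_0 B_{\bs}$ on $\Dcal(\pi_0 A_{\bs})$ (using that $\pi_0 f\colon\pi_0 A\to\pi_0 B$ is a finitely presented faithfully flat map of affinoid $K$-algebras, where the discrete/affinoid case from \cite{Mikami22} applies), and then invoke the adjoint functor theorem \cite[Corollary 5.5.2.9]{HTT}, being careful that $\Dcal(A_{\bs})$ is only an increasing union of presentable subcategories indexed by strong limit cardinals rather than presentable on the nose. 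The nuclearity and $(A,\Zbb)_{\bs}$-completeness built into the definition of animated affinoid $K$-algebra guarantee that $(A,\Zbb)_{\bs}\to(B,\Zbb)_{\bs}$ is steady, so tensor products and base change behave well and the reduction to $\pi_0$ is legitimate.

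The second step is the uniform boundedness input. Starting from uniform boundedness of $\{N_{\pi_0 B/\pi_0 A}^{\otimes n}\}_{n\ge 1}$ — this is \cite[Proposition 2.20]{Mikami22}, stated there for affinoid $K$-algebras — one deduces, via the comparison between $N_{B/A}$ and $N_{\pi_0 B/\pi_0 A}$ under $-\otimes_{A_{\bs}}^{\Lbb}\pi_0 A_{\bs}$, that $\{N_{B/A}^{\otimes n}\}_{n\ge 1}$ is uniformly bounded above. This is the animated-affinoid version of Theorem \ref{thm:unifbdd}. The formal argument (the same one used for discrete animated rings) then shows uniform boundedness implies that $A_{\bs}\to B_{\bs}$ is descendable in the sense of \cite{Mann22}: the cofiber of $A_{\bs}\to N_{B/A}$ becomes nilpotent in a suitable sense after enough self-tensoring, which is precisely the index-finiteness condition in Mann's notion of descendability.

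The third and final step is to conclude the descent equivalence. Descendability of $A_{\bs}\to B_{\bs}$, together with the fact that $B^{(n+1)/A}$ is again an animated affinoid $K$-algebra (so that each $\Dcal((B^{(n+1)/A})_{\bs})$ makes sense and the cosimplicial diagram is well-defined — here again steadiness ensures the iterated tensor products stay in the right category and are computed correctly), yields by the standard descendability-implies-descent argument of \cite{Mann22} the equivalence $\Dcal(A_{\bs})\overset{\sim}{\lra}\varprojlim_{[n]\in\Delta}\Dcal((B^{(n+1)/A})_{\bs})$. I expect the main obstacle to be Step 1, specifically verifying that the adjoint functor theorem applies across the non-presentable category $\Dcal(A_{\bs})$ and that the compact corepresenting object constructed $\kappa$-locally is genuinely compact in the whole category; the affinoid setting adds the extra bookkeeping of keeping everything nuclear and $(A,\Zbb)_{\bs}$-complete throughout the limit-preservation argument, but conceptually it runs parallel to the discrete-animated proof, so no new idea should be needed beyond careful transcription.
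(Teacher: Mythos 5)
There is a genuine gap in Step 1/Step 2 of your plan: the compact corepresenting object $N_{\pi_0B/\pi_0A}$ and the uniform boundedness of its tensor powers are \emph{not} available for an arbitrary faithfully flat map of affinoid $K$-algebras. In \cite{Mikami22} (and in Section 3 of this paper, Theorem \ref{thm:affbdd}) these results are established only under the additional hypothesis that $\pi_0A\to\pi_0B$ arises as the generic fiber $A_0[1/\pi]\to B_0[1/\pi]$ of a \emph{faithfully flat map of admissible $\Ocal_K$-algebras}; also \cite[Proposition 2.20]{Mikami22}, which you invoke, concerns finitely presented faithfully flat maps of discrete rings, not affinoid algebras. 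A general faithfully flat morphism of affinoid $K$-algebras need not admit such a flat formal model, so your reduction to $\pi_0$ followed by the adjoint functor theorem and the boundedness argument only proves descendability of $A_{\bs}\to B_{\bs}$ in that special case (this is exactly the scope of Theorem \ref{thm:main} in the paper, with index $\leq 3$ in the affinoid setting).

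The paper closes this gap with two ingredients you do not mention. First, a flattening statement (Lemma \ref{lem:flattening}, built on \cite[Lemma 4.4]{Mikami22} and Proposition \ref{prop:ratopcateq}): there is a rational covering $\{(A,(\pi_0A)^{\circ})\to(A_i,(\pi_0A_i)^{\circ})\}_{i=1}^n$ such that each base change of $f$ to $A_i$ does admit a faithfully flat formal model, hence is descendable by Theorem \ref{thm:main}. Second, a gluing step: the functor $(B,B^+)\mapsto\Dcal((B,B^+)_{\bs})$ is a sheaf for the topology generated by rational coverings (Proposition \ref{prop:ratcov} and Corollary \ref{cor:ratcov}), and the derived tensor products $B^{(n+1)/A}$ are again animated affinoid and compatible with the analytic ring structures by Proposition \ref{prop:afftensor}; combining the locally-on-a-rational-covering descendability with this sheafiness yields the simplicial limit equivalence for the original $f$. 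Without the flattening-and-gluing argument your proposal does not reach the stated generality; the rest of your outline (limit preservation via comparison with $\pi_0$, the adjoint functor theorem over the $\kappa$-indexed presentable pieces, uniform boundedness implying descendability, and descendability implying descent) does match the paper's route for the case where a flat formal model exists.
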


\subsection*{Outline of the paper}
This paper is organized as follows.
In Section 1, we recall some results about cardinals according to \cite{Mann22}.
These results will become necessary in Section 4 to prove the existence of $N_{B/A}$.
In the first part of Section 2, we will prove basic properties of nuclear $(A,\Zbb)_{\bs}$-modules for a complete adic ring $A$.
In the second part of Section 2, we will define animated affinoid $K$-algebras and animated affinoid pairs, and we will prove basic properties.
In Section 3, we will prove that $\{N_{B/A}^{\otimes n}\}_{n\geq 1}$ is uniformly bounded above for a faithfully flat map $A\to B$ of affinoid $K$-algebras, which will be necessary to prove Theorem \ref{thm:intro1}.
In the first part of Section 4, we will compare the descent result in \cite{Mikami22} and \cite{Mann22}, and we will prove formal results about descent for analytic animated rings.
In the second part of Section 4, we will prove the main theorem.

\subsection*{Convention}
\begin{itemize}
\item
All rings, including condensed ones, are assumed unital and commutative.
\item
For an $\infty$-category $\Ccal$, $0$-truncated objects of $\Ccal$ are called \textit{discrete objects} in \cite{HTT}.
However, this term conflicts with the term ``discrete" in the topological sense, so we use the term \textit{static object} to refer to an \textit{$0$-truncated object}.
\item 
In contrast to \cite{Mann22}, we use the term \textit{ring} to refer to an \textit{ordinary ring} (not a condensed animated ring). 
Sometimes we use the term \textit{discrete ring} (resp. \textit{discrete animated ring}) to refer to an ordinary ring (resp. animated ring) in order to emphasize that it is not a condensed one.
We also use the term \textit{static ring} (resp. \textit{static analytic ring})  to refer to an ordinary ring (resp. analytic ring) in order to emphasize that it is not an animated one.
\item 
We often identify a compactly generated topological ring $A$ with a condensed ring $\underline{A}$ associated to $A$.
It is justified by \cite[Proposition 1.7]{CM}.
If there is no room for confusion, we simply write $A$ for $\underline{A}$.
\item
We use the terms ``analytic animated ring" and ``uncompleted analytic animated ring" according to \cite{Mann22}.
\item
For an uncompleted analytic animated ring $\Acal$, we denote the underlying condensed animated ring of $\Acal$ by $\underline{\Acal}$.
\item
For an uncompleted analytic animated ring $\Acal$, an object $M\in \Dcal(\underline{\Acal})$ is said to be \textit{$\Acal$-complete} if it lies in $\Dcal(\Acal)$.
\item
We use the term ``extremally disconnected set" to refer to an \textit{extremally disconnected compact Hausdorff space}. 
\item
We use the terms \textit{f-adic ring} and \textit{affinoid pair} rather than \textit{Huber ring} and \textit{Huber pair}.
\item
For an f-adic ring $A$, we denote the ring of power-bounded elements of $A$ by $A^{\circ}$.
\item
We let $\Ani$ denote the $\infty$-category of anima. It is equivalent to the $\infty$-category $\Scal$ of spaces (\cite[Definition 1.2.16.1]{HTT}).
\item
We let $\Cat_{\infty}$ denote the $\infty$-category of (small) $\infty$-categories. 
\item
For an $\infty$-category $\Ccal$ and objects $X,Y\in \Ccal$, we denote the mapping anima from $X$ to $Y$ by $\Map_{\Ccal}(X,Y)$ or $\Map(X,Y)$ if $\Ccal$ is clear from the context.
\item
We denote the simplex category by $\Delta$, which is the full subcategory of the category of totally ordered sets consisting of the totally ordered sets $[n]=\{0,\ldots,n\}$ for all $n \geq 0$. 
We also denote the subcategory of $\Delta$ with the same objects but where the morphisms are given by injective maps by $\Delta_s$.
Moreover, for every $m \geq 0$ we denote the full subcategory of $\Delta_s$ consisting of $[n]$ for all $0 \leq n \leq m$ by $\Delta_{s,\leq m}$.
\end{itemize}
\subsection*{Acknowledgements}
The author is grateful to Yoichi Mieda for his support during the studies of the author.
In addition, the author is grateful to Grigory Andreychev and Lucas Mann for their comments on this paper.
This work was supported by JSPS KAKENHI Grant Number JP23KJ0693.
This paper was written during a visit to the Hausdorff Research Institute for Mathematics, funded by the Deutsche Forschungsgemeinschaft (DFG, German Research Foundation) under Germany's Excellence Strategy – EXC-2047/1 – 390685813.


\section{Some arguments about solid cutoff cardinals}
In the proof of the main theorem, we will use the adjoint functor theorem (\cite[Corollary 5.5.2.9]{HTT}).
However, the $\infty$-category of condensed modules is not presentable, and it is just an increasing union of presentable $\infty$-categories indexed by strong limit cardinals.
Therefore, we need to treat cardinals carefully.
In this section, we recall some results about cardinals according to \cite{Mann22}.

\begin{definition}[{\cite[Definition 2.1.1]{Mann22}}]\label{condmath}
Let $\Ccal$ be an $\infty$-category which has all small colimits.
\begin{enumerate}
\item
For a strong limit cardinal $\kappa$, we denote the category of $\kappa$-small extremally disconnected sets by $\mathrm{Exd}_{\kappa}$.
Let $\Cond(\Ccal)_{\kappa}$ denote the full $\infty$-subcategory of $\Fun(\mathrm{Exd}_{\kappa}^{\op}, \Ccal)$ consisting of those functors which preserve all finite products.
We call $\Cond(\Ccal)_{\kappa}$ the $\infty$-category of \textit{$\kappa$-condensed objects in $\Ccal$}.
\item
For strong limit cardinals $\kappa \leq \kappa^{\prime}$, we can define the fully faithful left adjoint functor $\Cond_{\kappa}(\Ccal) \to \Cond_{\kappa^{\prime}}(\Ccal)$ to the restriction functor $\Cond_{\kappa^{\prime}}(\Ccal) \to \Cond_{\kappa}(\Ccal)$ which is given by left Kan extension.
We define 
$$\Cond(\Ccal) \coloneqq \varinjlim_{\kappa} \Cond(\Ccal)_{\kappa},$$
where the colimit is taken over all strong limit cardinals.
We call $\Cond(\Ccal)$ the $\infty$-category of \textit{condensed objects in $\Ccal$}. 
We regard the $\infty$-category $\Cond(\Ccal)_{\kappa}$ as a full $\infty$-subcategory of $\Cond(\Ccal)$ which is closed equivalences.
\item
We say that an object $X \in \Cond(\Ccal)$ is \textit{discrete} if $X\in \Cond(\Ccal)_{\omega}$.
By taking a colimit of the restriction functors $\Cond_{\kappa}(\Ccal) \to \Cond_{\omega}(\Ccal)$, we get a functor $\Cond(\Ccal) \to \Cond(\Ccal)_{\omega}$, and we call it the \textit{discretization functor}.
It is a right adjoint to the fully faithful functor $\Cond(\Ccal)_{\omega} \to \Cond(\Ccal)$.
\end{enumerate}
\end{definition}

\begin{definition}
Let $\kappa$ be a strong limit cardinal.
\begin{enumerate}
\item
We let $\Dcal(\Zbb)^{\cond}_{\kappa}$ (resp.  $\Dcal(\Zbb)^{\cond}$) denote the $\infty$-category of $\kappa$-condensed (resp. condensed) objects in the derived $\infty$-category $\Dcal(\Zbb)$ of $\Zbb$-modules.
\item
We let $\Ani\Ring^{\cond}_{\kappa}$ (resp. $\Ani\Ring^{\cond}$) denote the $\infty$-category of $\kappa$-condensed (resp. condensed) objects in the $\infty$-category $\Ani\Ring$ of animated rings.
\end{enumerate}
\end{definition}

\begin{remark}
\begin{enumerate}
\item
The full $\infty$-subcategory $\Dcal(\Zbb)^{\cond}_{\omega}$ of $\Dcal(\Zbb)^{\cond}$ is equivalent to the derived $\infty$-category $\Dcal(\Zbb)$ of $\Zbb$-modules.
The discretization functor $$\Dcal(\Zbb)^{\cond} \to \Dcal(\Zbb)^{\cond}_{\omega} \simeq \Dcal(\Zbb) ;\; M \to M_{\disc}$$ is t-exact.
\item
Let $\Cond(\Zbb)_{\kappa}$ be the category of $\kappa$-condensed objects in the Grothendieck abelian category $\Mod_{\Zbb}$ of $\Zbb$-modules.
Then we have a natural equivalence of $\infty$-categories
$$\Dcal(\Cond(\Zbb)_{\kappa}) \simeq \Dcal(\Zbb)^{\cond}_{\kappa},$$
where the left hand side is the derived $\infty$-category of $\Cond(\Zbb)_{\kappa}$ (\cite[Proposition 2.1.13]{Mann22}).
\item
The $\infty$-category $\Dcal(\Zbb)^{\cond}$ has a natural closed symmetric monoidal structure and left complete t-structure (\cite[Proposition 2.1.11]{Mann22}).
The heart $(\Dcal(\Zbb)^{\cond})^{\heart}$ is equivalent to $\Cond(\Zbb)$.
\item
The full $\infty$-subcategory $\Ani\Ring^{\cond}_{\omega}$ of $\Ani\Ring^{\cond}$ is equivalent to the $\infty$-category $\Ani\Ring$ of animated rings.
By using this equivalence, we will identify animated rings with discrete condensed animated rings.
\item
Let $\Cond(\Ring)_{\kappa}$ be the category of $\kappa$-condensed objects in the category $\Ring$ of rings.
This category is generated under small colimits by compact projective objects, so we can define the animation $\Ani(\Cond(\Ring)_{\kappa})$ of $\Cond(\Ring)_{\kappa}$ (\cite[Definition 11.4]{AG}).
Then by \cite[Lemma 11.8]{AG}, we have a natural equivalence of $\infty$-categories 
$$\Ani(\Cond(\Ring)_{\kappa}) \simeq \Ani\Ring^{\cond}_{\kappa}.$$ 
\end{enumerate}
\end{remark}

\begin{definition}
Let $A$ be a condensed animated ring. We can regard $A$ as an $\Ebb_{\infty}$-algebra object in $\Dcal(\Zbb)^{\cond}$.
We define $\Dcal(A)^{\cond}$ as the $\infty$-category of $A$-module objects in $\Dcal(\Zbb)^{\cond}$.
For a strong limit cardinal $\kappa$ such that $A \in \Ani\Ring^{\cond}_{\kappa}$, we let $\Dcal(A)^{\cond}_{\kappa} \subset \Dcal(A)^{\cond}$ denote the full $\infty$-subcategory of those objects whose underlying objects in $\Dcal(\Zbb)^{\cond}$ lie in $\Dcal(\Zbb)^{\cond}_{\kappa}$.
If there is no room for confusion, we simply write $\Dcal(A), \Dcal(A)_{\kappa}$ for $\Dcal(A)^{\cond}, \Dcal(A)^{\cond}_{\kappa}$.
\end{definition}

\begin{lemma}
Let $A$ be a condensed animated ring.
\begin{enumerate}
\item
The $\infty$-category $\Dcal(A)$ is a stable closed symmetric monoidal $\infty$-category which has all small limits and colimits. 
It comes equipped with a natural left complete t-structure which is compatible with products and filtered colimits.
\item
For every strong limit cardinal $\kappa$ such that $A \in \Ani\Ring^{\cond}_{\kappa}$, $\Dcal(A)_{\kappa}$ is a stably presentable $\infty$-category.
It is compactly generated by compact objects $A[S]=A \otimes_{\Zbb}^{\Lbb} \Zbb[S]$ for $\kappa$-small extremally disconnected sets $S$.
Moreover, $\Dcal(A)_{\kappa}$ is closed under small colimits, tensor products, and truncations on $\Dcal(A)$.
\end{enumerate}
\end{lemma}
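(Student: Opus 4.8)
The plan is to obtain both parts by transporting the corresponding properties of $\Dcal(\Zbb)^{\cond}$, recalled in the remarks above, along the forgetful functor $U\colon\Dcal(A)\to\Dcal(\Zbb)^{\cond}$, using the identification of $\Dcal(A)$ with the $\infty$-category $\Mod_{A}(\Dcal(\Zbb)^{\cond})$ of modules over $A$ regarded as an $\Ebb_{\infty}$-algebra. The first facts to record about $U$ are that it is conservative, creates all small limits, and creates all small colimits: limits are standard for module categories, while colimits are created because $\Dcal(\Zbb)^{\cond}$ is closed symmetric monoidal, so each functor $-\otimes_{\Zbb}A$ preserves colimits and hence the underlying object of a colimit of $A$-modules is the colimit of the underlying objects. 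Since $\Dcal(\Zbb)^{\cond}$ admits all small limits and colimits (\cite{Mann22}), so does $\Dcal(A)$.

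For part (1): $\Dcal(A)$ is stable because it is a category of modules over an $\Ebb_{\infty}$-algebra in the stable symmetric monoidal $\infty$-category $\Dcal(\Zbb)^{\cond}$, and it carries the relative closed symmetric monoidal structure $(\otimes_{A},\intHom_{A})$ with unit $A$ coming from the general theory of such module categories. For the t-structure I would set $\Dcal(A)_{\geq 0}\coloneqq U^{-1}\bigl(\Dcal(\Zbb)^{\cond}_{\geq 0}\bigr)$ and $\Dcal(A)_{\leq 0}\coloneqq U^{-1}\bigl(\Dcal(\Zbb)^{\cond}_{\leq 0}\bigr)$; the content is that, since $A$ is connective, for every $A$-module $M$ the truncations $\tau_{\geq n}(UM)$ and $\tau_{\leq n}(UM)$ carry canonical $A$-module structures with $A$-linear truncation maps---for $\tau_{\geq n}$ one combines the fact that $A\otimes_{\Zbb}(-)$ preserves connectivity with the universal property of the connective cover, and for $\tau_{\leq n}$ one uses the vanishing $\Map(\Ccal_{\geq n+1},\Ccal_{\leq n})=\ast$. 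Thus the truncation functors of $\Dcal(\Zbb)^{\cond}$ lift through $U$ and define a t-structure on $\Dcal(A)$ compatible with $U$, so that left completeness and compatibility with products and filtered colimits transfer directly from $\Dcal(\Zbb)^{\cond}$, because $U$ creates limits, products, and filtered colimits and detects equivalences.

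For part (2): fix a strong limit cardinal $\kappa$ with $A\in\Ani\Ring^{\cond}_{\kappa}$. The inputs here, cited from \cite{Mann22}, are that $\Dcal(\Zbb)^{\cond}_{\kappa}$ is a stably presentable symmetric monoidal $\infty$-category compactly generated by the objects $\Zbb[S]$ for $S\in\mathrm{Exd}_{\kappa}$, and that it is closed under small colimits, tensor products, and truncations inside $\Dcal(\Zbb)^{\cond}$. I would first show that $U$ identifies $\Mod_{A}\bigl(\Dcal(\Zbb)^{\cond}_{\kappa}\bigr)$ with the full subcategory $\Dcal(A)_{\kappa}\subseteq\Dcal(A)$: full faithfulness holds because mapping anima in a module category are totalizations of mapping anima in the base, which the inclusion of level-$\kappa$ objects preserves, and the essential image is precisely $\Dcal(A)_{\kappa}$ because $A\otimes_{\Zbb}(-)$ preserves $\kappa$-condensed objects, so all of the action and coherence data of an $A$-module with $\kappa$-condensed underlying object already lives at level $\kappa$. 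Granting this, $\Dcal(A)_{\kappa}\simeq\Mod_{A}\bigl(\Dcal(\Zbb)^{\cond}_{\kappa}\bigr)$ is stably presentable by the general theory of modules over algebras in presentable symmetric monoidal $\infty$-categories, and it is compactly generated by the free modules $A[S]=A\otimes_{\Zbb}^{\Lbb}\Zbb[S]$: these are compact since $U$ preserves filtered colimits and $\Map_{\Dcal(A)}(A[S],M)\simeq\Map_{\Dcal(\Zbb)^{\cond}}(\Zbb[S],UM)$ with $\Zbb[S]$ compact, and they generate since $U$ is conservative and the $\Zbb[S]$ generate $\Dcal(\Zbb)^{\cond}_{\kappa}$. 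Finally, closure of $\Dcal(A)_{\kappa}$ under small colimits, $\otimes_{A}$, and truncations formed in $\Dcal(A)$ follows from the corresponding closure properties of $\Dcal(\Zbb)^{\cond}_{\kappa}$ in $\Dcal(\Zbb)^{\cond}$: colimits and truncations in $\Dcal(A)$ are computed on underlying objects by $U$, while $M\otimes_{A}N$ is the geometric realization of the bar complex with terms $M\otimes_{\Zbb}A^{\otimes\bullet}\otimes_{\Zbb}N$, each of which lies in $\Dcal(\Zbb)^{\cond}_{\kappa}$.

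The step I expect to be the real obstacle is not any single estimate but the identification $\Dcal(A)_{\kappa}\simeq\Mod_{A}\bigl(\Dcal(\Zbb)^{\cond}_{\kappa}\bigr)$---the assertion that a module structure on a $\kappa$-condensed object is already witnessed at level $\kappa$---together with the care needed, as in Section 1, to keep the various constructions within a fixed cutoff cardinal. Once this is in place, every assertion of the lemma is a formal consequence of the corresponding property of $\Dcal(\Zbb)^{\cond}$ transferred along $U$.
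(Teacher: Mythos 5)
Your argument is correct in substance, but it takes a more self-contained route than the paper, which disposes of almost the entire lemma by citation: the paper simply invokes \cite[Proposition 2.2.15, Proposition 2.2.21]{Mann22} for everything except the closure of $\Dcal(A)_{\kappa}$ under truncations, and then handles that one point by the same reduction you use (truncations in $\Dcal(A)$ are computed on underlying objects, so it suffices that $\Dcal(\Zbb)^{\cond}_{\kappa}$ is closed under truncations in $\Dcal(\Zbb)^{\cond}$), justified in one line by the t-exactness of the fully faithful transition functors $\Dcal(\Zbb)^{\cond}_{\kappa}\to\Dcal(\Zbb)^{\cond}_{\kappa^{\prime}}$ for strong limit cardinals $\kappa\leq\kappa^{\prime}$. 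In effect you re-derive what the paper cites (your identification $\Dcal(A)_{\kappa}\simeq\Mod_{A}(\Dcal(\Zbb)^{\cond}_{\kappa})$, the free--forgetful adjunction giving compact generation by the $A[S]$, the bar-construction argument for closure under $\otimes_{A}$ — this is essentially the content of Mann's propositions), and you cite what the paper proves: the truncation-closure of $\Dcal(\Zbb)^{\cond}_{\kappa}$ inside $\Dcal(\Zbb)^{\cond}$ is exactly the point the paper treats as not covered by the citations, so you should supply the one-line justification above rather than attribute it to \cite{Mann22} wholesale. Two further small cautions: the ``general theory'' you invoke for the closed symmetric monoidal structure on $\Dcal(A)$ (and for creation of colimits and relative tensor products) is usually stated under presentability hypotheses, which $\Dcal(\Zbb)^{\cond}$ fails; this is repairable, e.g.\ by constructing $\otimes_{A}$ via the bar construction (only small colimits and colimit-preservation of $\otimes$ in each variable are needed) and the internal Hom via a cobar limit or by working on each $\Dcal(A)_{\kappa}$ and passing to the union, but it deserves a sentence. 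With those two points patched, your proof is complete; what it buys over the paper's is independence from Mann's module-level propositions at the cost of length, while the paper's version isolates the genuinely new observation (truncation-closure at a fixed cutoff cardinal) and leaves the formal module-category facts to the reference.
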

\begin{proof}
The claims except that $\Dcal(A)_{\kappa}$ is closed under truncations on $\Dcal(A)$ follow from \cite[Proposition 2.2.15, Proposition 2.2.21]{Mann22}.
To prove it, it is enough to show that $\Dcal(\Zbb)^{\cond}_{\kappa}$ is closed under truncations on $\Dcal(\Zbb)^{\cond}$.
It follows from the fact that the fully faithful functor $\Dcal(\Zbb)^{\cond}_{\kappa} \to \Dcal(\Zbb)^{\cond}_{\kappa^{\prime}}$ is t-exact for strong limit cardinals $\kappa\leq \kappa^{\prime}$.
\end{proof}

In contrast to colimits, $\Dcal(A)_{\kappa}$ is not necessarily closed under small limits on $\Dcal(A)$. 
For example, an infinite product of $\Zbb$ in $\Dcal(\Zbb)^{\cond}_{\omega}$ is $\prod \Zbb$ with the discrete topology, but an infinite product of $\Zbb$ in $\Dcal(\Zbb)^{\cond}$ is $\prod \Zbb$ with the product topology.
However the following holds.

\begin{lemma}\label{lem:limcard}
Let $A$ be a condensed animated ring, and $\lambda$ be a strong limit cardinal.
Then for a strong limit cardinal $\kappa$ whose cofinality is equal to or larger than $\lambda$ such that $A \in \Ani\Ring^{\cond}_{\kappa}$, $\Dcal(A)_{\kappa}$ is closed under $\lambda$-small limits on $\Dcal(A)$.
\end{lemma}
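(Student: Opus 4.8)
The plan is to reduce to $A=\Zbb$, to peel off finite limits (which are easy) from $\lambda$-small products (the crux of the matter), and to treat products by exhibiting the relevant fully faithful left Kan extension as a $\lambda$-filtered colimit; the hypothesis on $\mathrm{cf}(\kappa)$ enters only at that last point.

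First I would reduce to $A=\Zbb$: the forgetful functor $\Dcal(A)\to\Dcal(\Zbb)^{\cond}$ is conservative and preserves all limits (it is the right adjoint of $A\otimes_{\Zbb}^{\Lbb}(-)$), and $\Dcal(A)_{\kappa}$ is by definition the preimage of $\Dcal(\Zbb)^{\cond}_{\kappa}$, so it suffices to prove that $\Dcal(\Zbb)^{\cond}_{\kappa}$ is closed under $\lambda$-small limits in $\Dcal(\Zbb)^{\cond}$. By the preceding lemma $\Dcal(\Zbb)^{\cond}_{\kappa}$ is stable and closed under small colimits in $\Dcal(\Zbb)^{\cond}$; since a fiber sequence is a shifted cofiber sequence, it is therefore also closed under finite limits in $\Dcal(\Zbb)^{\cond}$. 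As any $\lambda$-small limit can be built out of finite limits and $\lambda$-small products --- write it as a totalization of a cosimplicial object whose terms are $\lambda$-small products, recall that a totalization is a countable limit of finite limits and that a countable limit is an equalizer of countable products, and note that for $\lambda=\omega$ one is already in the finite-limit case --- it remains to show: for every set $I$ with $|I|<\lambda$ and every family $(M_i)_{i\in I}$ in $\Dcal(\Zbb)^{\cond}_{\kappa}$, the product $\prod_{i\in I}M_i$ formed in $\Dcal(\Zbb)^{\cond}$ lies in $\Dcal(\Zbb)^{\cond}_{\kappa}$.

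For this, let $P$ be the product of the $M_i$ computed in the presentable $\infty$-category $\Dcal(\Zbb)^{\cond}_{\kappa}$. Given $N\in\Dcal(\Zbb)^{\cond}$, choose a strong limit cardinal $\kappa'\geq\kappa$ with $N\in\Dcal(\Zbb)^{\cond}_{\kappa'}$ and let $L\colon\Dcal(\Zbb)^{\cond}_{\kappa}\hookrightarrow\Dcal(\Zbb)^{\cond}_{\kappa'}$ denote the (fully faithful) inclusion. If $L$ preserves $\lambda$-small products, then using full faithfulness of $\Dcal(\Zbb)^{\cond}_{\kappa'}\hookrightarrow\Dcal(\Zbb)^{\cond}$ one computes
\[
\Map_{\Dcal(\Zbb)^{\cond}}(N,P)\,\simeq\,\Map_{\Dcal(\Zbb)^{\cond}_{\kappa'}}\Bigl(N,{\textstyle\prod_{i}}L(M_i)\Bigr)\,\simeq\,{\textstyle\prod_{i}}\Map_{\Dcal(\Zbb)^{\cond}_{\kappa'}}(N,L(M_i))\,\simeq\,{\textstyle\prod_{i}}\Map_{\Dcal(\Zbb)^{\cond}}(N,M_i),
\]
so $P$, with its projections, is the product of the $M_i$ in $\Dcal(\Zbb)^{\cond}$; since $P\in\Dcal(\Zbb)^{\cond}_{\kappa}$ this finishes the proof. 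Hence everything comes down to showing that $L$ preserves $\lambda$-small products.

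Using \cite[Proposition 2.1.13]{Mann22}, $L$ is identified with the derived functor of the left Kan extension $L^{\heart}\colon\Cond(\Zbb)_{\kappa}\to\Cond(\Zbb)_{\kappa'}$ left adjoint to restriction (both are the left adjoint of the restriction functor, which at the derived level is the derived functor of the exact abelian restriction); and $L^{\heart}$ is exact, being a left adjoint that is moreover computed pointwise by filtered colimits, so $L$ is $t$-exact. Since products in $\Cond(\Zbb)_{\kappa}$ and $\Cond(\Zbb)_{\kappa'}$ are exact --- they are computed pointwise on extremally disconnected sets, where products of abelian groups are exact --- the product functors on the corresponding derived categories are $t$-exact; comparing on cohomology objects (and using that a morphism in the derived $\infty$-category of an abelian category inducing isomorphisms on all cohomology objects is an equivalence), it suffices to show that $L^{\heart}$ preserves $\lambda$-small products. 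The left Kan extension is given, for an extremally disconnected set $T$, by the pointwise formula $(L^{\heart}F)(T)\simeq\varinjlim_{(U,\,T\to U)}F(U)$, the colimit being over the category $\Jcal_{T}$ whose objects are pairs consisting of a $\kappa$-small extremally disconnected set $U$ and a map $T\to U$, and whose morphisms $(U,T\to U)\to(U',T\to U')$ are maps $U'\to U$ over $T$. The main obstacle is the claim that $\Jcal_{T}$ is $\lambda$-filtered. Given a $\lambda$-small family $\{(U_{\alpha},T\to U_{\alpha})\}_{\alpha\in I}$, the set $\prod_{\alpha\in I}U_{\alpha}$ has cardinality $<\kappa$: indeed $\sup_{\alpha}|U_{\alpha}|<\kappa$, being a supremum of fewer than $\mathrm{cf}(\kappa)$ cardinals each $<\kappa$, whence $|\prod_{\alpha}U_{\alpha}|\leq 2^{\max(\sup_{\alpha}|U_{\alpha}|,\,|I|)}<\kappa$ because $\kappa$ is a strong limit cardinal. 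Hence the (compact, hence closed) image $V$ of $T$ in $\prod_{\alpha}U_{\alpha}$ admits a $\kappa$-small extremally disconnected cover $W\twoheadrightarrow V$ --- e.g.\ $W=\beta(V_{\disc})$, using the strong-limit property once more --- and by projectivity of $T$ the map $T\to V$ lifts to $T\to W$, so that $(W,T\to W)$, together with the composites $W\to V\hookrightarrow\prod_{\alpha}U_{\alpha}\to U_{\alpha}$, forms a cocone over the given family; compatibility with a $\lambda$-small family of morphisms is arranged similarly by replacing $W$ with a suitable coincidence locus. Since $\lambda$-filtered colimits commute with $\lambda$-small limits, and an $I$-indexed product with $|I|<\lambda$ is a $\lambda$-small limit, $L^{\heart}$ (hence $L$) preserves $\lambda$-small products. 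This cardinal-arithmetic step --- that a $\lambda$-small product of $\kappa$-small extremally disconnected sets is again $\kappa$-small --- is exactly where the hypothesis $\mathrm{cf}(\kappa)\geq\lambda$ (together with $\kappa$ being a strong limit cardinal) is used.
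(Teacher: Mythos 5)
Your proof is correct and in substance it is the same argument the paper outsources to the proof of \cite[Proposition 2.1.11]{Mann22}: reduce $\lambda$-small limits to $\lambda$-small products, describe the fully faithful transition functor $\Dcal(\Zbb)^{\cond}_{\kappa}\to\Dcal(\Zbb)^{\cond}_{\kappa'}$ pointwise as a colimit over $\kappa$-small extremally disconnected sets under $T$, and use the strong-limit property together with $\mathrm{cf}(\kappa)\geq\lambda$ to see that this index category is $\lambda$-filtered, so the embedding commutes with $\lambda$-small products and the product formed at level $\kappa$ is also the product in $\Dcal(\Zbb)^{\cond}$. The only loose spots are expository rather than gaps: the phrase ``a countable limit is an equalizer of countable products'' should be the stable-$\infty$-categorical fact that a sequential limit is the fiber of a map between countable products (with $\mathrm{Tot}^n$ handled via the matching-object/Reedy filtration), and the coequalization half of $\lambda$-filteredness, as well as the fact that the pointwise Kan extension needs no sheafification, are only sketched, though both follow by the same cardinality considerations.
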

\begin{proof}
It follows from the proof of \cite[Proposition 2.1.11]{Mann22}.
\end{proof}

Next, we consider $\Dcal(\Acal)$ for an analytic animated ring $\Acal$.

\begin{definition}
Let $\Acal$ be an analytic animated ring (for the definition of analytic animated rings, see \cite[Definition 2.3.10]{Mann22}).
For a strong limit cardinal $\kappa$ such that the underlying condensed animated ring $\underline{\Acal}$ lies in $\Dcal(\Zbb)^{\cond}_{\kappa}$, we define $\Dcal(\Acal)_{\kappa} \subset \Dcal(\Acal)$ to be the smallest full $\infty$-subcategory which contains all objects $\Acal[S]$ for $\kappa$-small extremally disconnected sets $S$ and is closed under small colimits and shifts.
\end{definition}

\begin{lemma}[{\cite[Proposition 2.3.4]{Mann22}}]
Let $\Acal$ be an analytic animated ring.
\begin{enumerate}
\item
The $\infty$-category $\Dcal(\Acal)$ is a stable $\infty$-category, and it is closed under small limits, colimits, and truncations on $\Dcal(\underline{\Acal})$ and generated under small colimits and shifts by the objects $\Acal[S]$ for extremally disconnected sets $S$.
\item
Let $\kappa$ be a strong limit cardinal such that $\underline{\Acal} \in \Dcal(\Zbb)^{\cond}_{\kappa}$.
Then $\Dcal(\Acal)_{\kappa}$ is a stably presentable $\infty$-category.
It is compactly generated by compact objects $\Acal[S]$ for $\kappa$-small extremally disconnected sets $S$.
Moreover, $\Dcal(\Acal)_{\kappa}$ is closed under small colimits and tensor products on $\Dcal(\Acal)$
\item
Let $\Acal\to\Bcal$ be a morphism of analytic animated rings.
Let $\kappa$ be a strong limit cardinal such that $\underline{\Acal}, \underline{\Bcal}\in \Dcal(\Zbb)_{\kappa}$.
Then the functor $-\otimes_{\Acal}^{\Lbb} \Bcal \colon \Dcal(\Acal) \to \Dcal(\Bcal)$ restricts to a symmetric monoidal functor $\Dcal(\Acal)_{\kappa} \to \Dcal(\Bcal)_{\kappa}$.
\end{enumerate}
\end{lemma}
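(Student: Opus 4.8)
The plan is to reduce all three assertions to the corresponding facts about the underlying condensed animated ring $\underline{\Acal}$ recorded in the preceding lemmas, exploiting the structure of an analytic animated ring: the full subcategory $\Dcal(\Acal) \subseteq \Dcal(\underline{\Acal})$ is a reflective localization whose localization functor $(-)^{\wedge} \colon \Dcal(\underline{\Acal}) \to \Dcal(\Acal)$ ($\Acal$-completion) is exact and symmetric monoidal and sends $\underline{\Acal}[S]$ to $\Acal[S]$, and $\Dcal(\Acal)$ is generated under small colimits and shifts by the $\Acal[S]$, with $\Acal$-completeness of $M$ detected by the maps $\underline{\Acal}[S] \to \Acal[S]$ (i.e.\ $M \in \Dcal(\Acal)$ iff $\Map_{\Dcal(\underline{\Acal})}(\Acal[S], M) \to \Map_{\Dcal(\underline{\Acal})}(\underline{\Acal}[S], M)$ is an equivalence for all $S$).

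For (1): being a reflective subcategory of the stable $\infty$-category $\Dcal(\underline{\Acal})$ with exact localization functor, $\Dcal(\Acal)$ is stable and closed under small limits. It is closed under small colimits because the objects $\cofib(\underline{\Acal}[S] \to \Acal[S])$, whose right orthogonal is $\Dcal(\Acal)$, become compact after restricting to a cardinal stage, so the localization is smashing on each $\Dcal(\underline{\Acal})_{\kappa}$ and hence on $\Dcal(\underline{\Acal}) = \varinjlim_{\kappa}\Dcal(\underline{\Acal})_{\kappa}$. Closure under the truncation functors of $\Dcal(\underline{\Acal})$ follows from left completeness of the t-structure together with the fact that the objects detecting non-completeness may be taken connective, so that $\Acal$-completeness propagates up and down the Whitehead tower of $M$. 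Generation under small colimits and shifts by the $\Acal[S]$ is built into the definition.

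For (2) and (3): fix a strong limit cardinal $\kappa$ with $\underline{\Acal} \in \Dcal(\Zbb)^{\cond}_{\kappa}$. First one checks that filtered colimits in $\Dcal(\Acal)_{\kappa}$ are computed as in $\Dcal(\underline{\Acal})_{\kappa}$, using (1) and the construction of $\Dcal(\Acal)_{\kappa}$; then from the adjunction $\Map_{\Dcal(\Acal)}(\Acal[S], -) \simeq \Map_{\Dcal(\underline{\Acal})}(\underline{\Acal}[S], -)$ on $\Dcal(\Acal)$ and the compactness of $\underline{\Acal}[S]$ in $\Dcal(\underline{\Acal})_{\kappa}$ one gets that each $\Acal[S]$ with $S$ $\kappa$-small is compact in $\Dcal(\Acal)_{\kappa}$. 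Hence $\Dcal(\Acal)_{\kappa}$ is generated under colimits and shifts by a small set of compact objects, so the recognition theorem for compactly generated $\infty$-categories makes it stably presentable and compactly generated by the $\Acal[S]$; closure under small colimits on $\Dcal(\Acal)$ is immediate from (1), and closure under tensor products follows since $\Acal[S] \otimes^{\Lbb}_{\Acal} \Acal[T] \simeq \Acal[S\times T]$ lies in $\Dcal(\Acal)_{\kappa}$ for $S, T$ $\kappa$-small and $\otimes^{\Lbb}_{\Acal}$ preserves colimits in each variable. For (3), $-\otimes^{\Lbb}_{\Acal}\Bcal$ factors as $-\otimes^{\Lbb}_{\underline{\Acal}}\underline{\Bcal}$ (which restricts to $\Dcal(\underline{\Acal})_{\kappa} \to \Dcal(\underline{\Bcal})_{\kappa}$ since it sends $\underline{\Acal}[S]$ to $\underline{\Bcal}[S]$ and preserves colimits) followed by $\Bcal$-completion (which sends $\underline{\Bcal}[S]$ to $\Bcal[S]$); so it carries the generator $\Acal[S]$ to $\Bcal[S] \in \Dcal(\Bcal)_{\kappa}$, and being a left adjoint it preserves colimits and shifts, hence maps $\Dcal(\Acal)_{\kappa}$ into $\Dcal(\Bcal)_{\kappa}$, the symmetric monoidal refinement being inherited from $-\otimes^{\Lbb}_{\Acal}\Bcal \colon \Dcal(\Acal) \to \Dcal(\Bcal)$.

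The step I expect to be the main obstacle is the cardinal bookkeeping in (2): verifying that $\Acal$-completion restricts to a functor $\Dcal(\underline{\Acal})_{\kappa} \to \Dcal(\Acal)_{\kappa}$ and that the compactness of $\Acal[S]$ is genuinely detected inside $\Dcal(\Acal)_{\kappa}$ rather than only in the ambient $\Dcal(\Acal)$. This rests on combining the smashing property used in (1) with the closure of $\Dcal(\underline{\Acal})_{\kappa}$ under colimits and truncations in $\Dcal(\underline{\Acal})$, which lets one compute the completion of a $\kappa$-presented object by a $\kappa$-small colimit construction, so that no cardinal jump occurs.
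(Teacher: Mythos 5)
A preliminary remark: the paper gives no proof of this lemma at all; it is imported verbatim from \cite[Proposition 2.3.4]{Mann22}, so there is no in-paper argument to compare against, and your sketch has to stand on its own. It does not, for two related reasons. First, you begin by assuming that $\Dcal(\Acal)\subset\Dcal(\underline{\Acal})$ is a reflective localization whose completion functor is exact and symmetric monoidal, sends $\underline{\Acal}[S]$ to $\Acal[S]$, and whose essential image is generated under colimits and shifts by the $\Acal[S]$, with completeness detected by the mapping-space condition. With the definition of an analytic animated ring in play here (a condensed animated ring together with the functor $S\mapsto\Acal[S]$ subject to a completeness axiom), these facts are precisely the content of the proposition; taking them as ``the structure of an analytic animated ring'' is circular. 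Second, the one place where you do supply an argument --- closure of $\Dcal(\Acal)$ under small colimits --- is wrong: you claim that the objects $\cofib(\underline{\Acal}[S]\to\Acal[S])$ become compact in $\Dcal(\underline{\Acal})_{\kappa}$, so that the localization is smashing stage by stage. There is no reason for $\Acal[S]$ to be compact in $\Dcal(\underline{\Acal})_{\kappa}$ (in the basic solid example $\Zbb_{\bs}[S]\cong\prod_I\Zbb$ becomes compact only after solidification, not in condensed abelian groups), and in fact $\Acal[S]$ need not even \emph{lie} in $\Dcal(\underline{\Acal})_{\kappa}$: the paper stresses immediately after this lemma that $\Dcal(\Acal)_{\kappa}\subset\Dcal(\Acal)\cap\Dcal(\underline{\Acal})_{\kappa}$ can fail, and Section 1 (Proposition \ref{prop:card}, Corollary \ref{discrete}, Proposition \ref{prop:aff}) exists precisely to salvage this inclusion under solid-cutoff hypotheses. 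The same slip resurfaces in your treatment of (2), where you say filtered colimits in $\Dcal(\Acal)_{\kappa}$ are computed as in $\Dcal(\underline{\Acal})_{\kappa}$. Closure under colimits is not a formal smashing-localization statement; it comes from the defining axiom of an (uncompleted) analytic ring, namely that the completeness condition holds for all colimits of the objects $\Acal[S]$, and that is how it is actually proved in \cite{Mann22}.

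The salvageable parts: your argument for compactness of $\Acal[S]$ in $\Dcal(\Acal)_{\kappa}$ works once rephrased correctly --- colimits of complete objects are computed in $\Dcal(\underline{\Acal})$ (granting (1)), $\underline{\Acal}[S]$ is compact there because any small filtered diagram lies in some stage $\Dcal(\underline{\Acal})_{\kappa'}$, and $\Map_{\Dcal(\Acal)}(\Acal[S],-)\simeq\Map_{\Dcal(\underline{\Acal})}(\underline{\Acal}[S],-)$ on complete objects; together with generation by a small set of compact objects this gives stable presentability of $\Dcal(\Acal)_{\kappa}$, and the truncation argument via connectivity of $\Acal[S]$ and left completeness, as well as the factorization of $-\otimes^{\Lbb}_{\Acal}\Bcal$ through $-\otimes^{\Lbb}_{\underline{\Acal}}\underline{\Bcal}$ followed by $\Bcal$-completion in (3), are in the right spirit (note also that $S\times T$ is no longer extremally disconnected, so the identification $\Acal[S]\otimes^{\Lbb}_{\Acal}\Acal[T]\simeq\Acal[S\times T]$ needs a resolution of $S\times T$ by extremally disconnected sets). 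But the foundational steps --- existence, exactness and monoidality of completion, and closure under colimits --- are exactly the content you would need to reproduce from \cite[Proposition 2.3.4]{Mann22}, and your sketch does not recover them.
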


Since the inclusion $\Dcal(\Acal) \hookrightarrow \Dcal(\underline{\Acal})$ has a left adjoint $$-\otimes_{\underline{\Acal}}^{\Lbb}\Acal \colon \Dcal(\underline{\Acal}) \to\Dcal(\Acal) ;\; \underline{\Acal}[S] \mapsto \Acal[S],$$
we have an inclusion $\Dcal(\Acal) \cap \Dcal(\underline{\Acal})_{\kappa} \subset \Dcal(\Acal)_{\kappa}$.
However $\Dcal(\Acal) \cap \Dcal(\underline{\Acal})_{\kappa} \supset \Dcal(\Acal)_{\kappa}$ is not necessarily true.
We will consider when it holds.
We introduce a notion of uncountable solid cutoff cardinal according to \cite{Sch17} and \cite{Mann22}.
\begin{definition}
An \textit{uncountable solid cutoff cardinal} $\kappa$ is a cardinal with the following properties:
\begin{enumerate}
\item
The cardinal $\kappa$ is a strong limit cardinal.
\item
The cofinality of $\kappa$ is larger than $\omega$.
\item
For every cardinal $\lambda < \kappa$, there is a strong limit cardinal $\kappa_{\lambda} <\kappa$ such that the cofinality of $\kappa_{\lambda}$ is larger than $\lambda$.
\end{enumerate}
\end{definition}

\begin{lemma}
For every cardinal $\lambda$, there is an uncountable solid cutoff cardinal $\kappa>\lambda$.
\end{lemma}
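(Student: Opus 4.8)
The plan is to build $\kappa$ as the supremum of a strictly increasing, continuous sequence of strong limit cardinals indexed by $\omega_1$, each successor term of which already has large cofinality; this simultaneously forces $\kappa$ to be a strong limit with uncountable cofinality and makes condition (3) automatic.

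First I would record the elementary fact that for every cardinal $\mu$ the cardinal $\beth_{\mu^{+}}$ is a strong limit cardinal with $\beth_{\mu^{+}}>\mu$ and with cofinality $\mu^{+}>\mu$: indeed $\beth_{\mu^{+}}=\sup_{\gamma<\mu^{+}}\beth_{\gamma}$ is a beth number at a limit ordinal, hence a strong limit cardinal, and its cofinality equals the cofinality of $\mu^{+}$, which is $\mu^{+}$ since successor cardinals are regular. Then, given $\lambda$, I would define a sequence $(\mu_{\alpha})_{\alpha<\omega_{1}}$ of cardinals by transfinite recursion: set $\mu_{0}=\beth_{\lambda^{+}}$, set $\mu_{\alpha+1}=\beth_{\mu_{\alpha}^{+}}$, and set $\mu_{\alpha}=\sup_{\beta<\alpha}\mu_{\beta}$ for limit $\alpha$. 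By the fact above, each $\mu_{\alpha+1}$ is a strong limit cardinal with $\mu_{\alpha+1}>\mu_{\alpha}$ and of cofinality $\mu_{\alpha}^{+}$, and the sequence is strictly increasing and continuous.

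Finally I would put $\kappa=\sup_{\alpha<\omega_{1}}\mu_{\alpha}$ and check the three conditions together with $\kappa>\lambda$. One has $\kappa\ge\mu_{0}=\beth_{\lambda^{+}}>\lambda$. The cardinal $\kappa$ is a strong limit: if $\nu<\kappa$ then $\nu<\mu_{\alpha}$ for some $\alpha<\omega_{1}$, whence $2^{\nu}\le 2^{\mu_{\alpha}}<\mu_{\alpha+1}\le\kappa$. The cofinality of $\kappa$ is that of $\omega_{1}$, namely $\omega_{1}>\omega$, since $(\mu_{\alpha})_{\alpha<\omega_{1}}$ is strictly increasing and cofinal in $\kappa$. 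For condition (3), given a cardinal $\lambda'<\kappa$ pick $\alpha<\omega_{1}$ with $\lambda'<\mu_{\alpha}$ and take $\kappa_{\lambda'}=\mu_{\alpha+1}=\beth_{\mu_{\alpha}^{+}}$; this is a strong limit cardinal with $\kappa_{\lambda'}<\kappa$ (as $\alpha+1<\omega_{1}$) and with cofinality $\mu_{\alpha}^{+}>\lambda'$.

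I do not expect a genuine obstacle. The only point requiring attention is the uncountable-cofinality requirement in (2): it is precisely this that dictates running the recursion along $\omega_{1}$ rather than $\omega$, and choosing each successor value via the operation $\mu\mapsto\beth_{\mu^{+}}$ (rather than, say, $\mu\mapsto 2^{\mu}$) so that each $\mu_{\alpha+1}$ already has cofinality $\mu_{\alpha}^{+}$, which is what lets the $\mu_{\alpha+1}$ serve as the witnesses $\kappa_{\lambda'}$ in (3) with no further argument.
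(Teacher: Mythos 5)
Your proposal is correct. Note, though, that the paper does not actually prove this lemma: its ``proof'' is a single citation to \cite[Lemma 4.1]{Sch17}, so what you have supplied is a self-contained replacement for that reference. Your construction is the standard one behind such statements: the auxiliary fact that $\beth_{\mu^{+}}$ is a strong limit cardinal of cofinality $\mu^{+}>\mu$ is verified correctly (beth numbers at limit ordinals are strong limits, and the cofinality computation uses regularity of the successor cardinal $\mu^{+}$), the recursion along $\omega_{1}$ yields a strictly increasing continuous sequence, and the three conditions for $\kappa=\sup_{\alpha<\omega_{1}}\mu_{\alpha}$ all check out: the strong limit property because any $\nu<\kappa$ sits below some $\mu_{\alpha}$ and $2^{\nu}\le 2^{\mu_{\alpha}}<\mu_{\alpha+1}$ (using that $\mu_{\alpha+1}$ is a strong limit above $\mu_{\alpha}$), the cofinality $\omega_{1}$ because a strictly increasing cofinal $\omega_{1}$-sequence forces $\mathrm{cf}(\kappa)=\mathrm{cf}(\omega_{1})=\omega_{1}$, and condition (3) because the successor terms $\mu_{\alpha+1}=\beth_{\mu_{\alpha}^{+}}$ themselves serve as the witnesses $\kappa_{\lambda'}$. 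The choice of $\mu\mapsto\beth_{\mu^{+}}$ rather than $\mu\mapsto 2^{\mu}$, and of length $\omega_{1}$ rather than $\omega$, is exactly what makes (2) and (3) come out for free, as you observe. What the citation buys the paper is brevity; what your argument buys is a complete and elementary proof requiring nothing beyond basic cardinal arithmetic with the beth function.
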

\begin{proof}
\cite[Lemma 4.1]{Sch17}.
\end{proof}

For an analytic animated ring $\Acal$, we let $\pi_0\Acal$ denote the condensed ring $\pi_0\underline{\Acal}$ with the analytic animated ring structure induced from $\Acal$ along the map $\underline{\Acal} \to \pi_0\underline{\Acal}$.
By definition, we have $(\pi_0\Acal)[S] =\Acal[S]\otimes_{\Acal}^{\Lbb} \pi_0\underline{\Acal}$ for an extremally disconnected set $S$.
\begin{remark}
The analytic animated ring $\pi_0\Acal$ is not necessarily an analytic ring in the sense of \cite[Definition 7.4]{CM} because $(\pi_0\Acal)[S] =\Acal[S]\otimes_{\Acal}^{\Lbb} \pi_0\underline{\Acal}$ is not necessarily static.
\end{remark}

\begin{proposition}\label{prop:card}
Let $\Acal$ be an analytic animated ring, and $\kappa$ be a strong limit cardinal.
\begin{enumerate}
\item
If $\Acal[S] \in \Dcal(\underline{\Acal})_{\kappa}$ for every $\kappa$-small extremally disconnected set $S$, then we have the inclusion $\Dcal(\underline{\Acal})_{\kappa} \supset \Dcal(\Acal)_{\kappa}$.
In particular, $\Dcal(\Acal)_{\kappa}$ is closed under truncations on $\Dcal(\Acal)$.
\item
We assume the following:
\begin{enumerate}
\item
The cofinality of $\kappa$ is larger than $\omega$.
\item
For every $\kappa$-small extremally disconnected set $S$, $(\pi_0\Acal)[S] \in \Dcal(\underline{\pi_0\Acal})_{\kappa}$.
\item
For every non-negative integer $n$, $\pi_n(\underline{\Acal}) \in \Dcal(\underline{\pi_0\Acal})_{\kappa}$.
\end{enumerate}
Then we have the inclusion $\Dcal(\underline{\Acal})_{\kappa} \supset \Dcal(\Acal)_{\kappa}$.
\end{enumerate}
\end{proposition}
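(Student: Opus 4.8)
For part (1), I would set $\Ccal\coloneqq\Dcal(\Acal)\cap\Dcal(\underline{\Acal})_{\kappa}$, regarded as a full $\infty$-subcategory of $\Dcal(\Acal)$. By hypothesis it contains $\Acal[S]$ for every $\kappa$-small extremally disconnected set $S$. Since $\Dcal(\Acal)$ is closed under small colimits and shifts in $\Dcal(\underline{\Acal})$ (so that these operations agree with the corresponding ones in $\Dcal(\underline{\Acal})$), and $\Dcal(\underline{\Acal})_{\kappa}$ is likewise closed under small colimits and shifts in $\Dcal(\underline{\Acal})$, the subcategory $\Ccal$ is closed under small colimits and shifts in $\Dcal(\Acal)$. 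By the minimality in the definition of $\Dcal(\Acal)_{\kappa}$ this gives $\Dcal(\Acal)_{\kappa}\subset\Ccal\subset\Dcal(\underline{\Acal})_{\kappa}$, which is the first assertion. Combined with the inclusion $\Dcal(\Acal)\cap\Dcal(\underline{\Acal})_{\kappa}\subset\Dcal(\Acal)_{\kappa}$ recalled before the statement, it yields $\Dcal(\Acal)_{\kappa}=\Dcal(\Acal)\cap\Dcal(\underline{\Acal})_{\kappa}$, and since both $\Dcal(\Acal)$ and $\Dcal(\underline{\Acal})_{\kappa}$ are preserved by the truncation functors of $\Dcal(\underline{\Acal})$, so is their intersection, which is the ``in particular''.

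For part (2), by (1) it suffices to show that the underlying object of $\Acal[S]$ in $\Dcal(\Zbb)^{\cond}$ lies in $\Dcal(\Zbb)^{\cond}_{\kappa}$ for every $\kappa$-small extremally disconnected $S$. Note first that $\Acal[S]$ is connective, since $\underline{\Acal}[S]=\underline{\Acal}\otimes_{\Zbb}^{\Lbb}\Zbb[S]$ is connective and the localization $-\otimes_{\underline{\Acal}}^{\Lbb}\Acal$ is right t-exact (as a left adjoint to the t-exact inclusion $\Dcal(\Acal)\hookrightarrow\Dcal(\underline{\Acal})$). The idea is then to use the Postnikov tower of the underlying ring: for $m\geq 0$, let $\Acal_{\leq m}$ be the analytic animated ring structure on $\tau_{\leq m}\underline{\Acal}$ induced from $\Acal$, so that $\Acal_{\leq m}[S]=\Acal[S]\otimes_{\underline{\Acal}}^{\Lbb}\tau_{\leq m}\underline{\Acal}$ (compare the formula for $\pi_0\Acal$ recalled before the statement) and $\Acal_{\leq 0}=\pi_0\Acal$. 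Applying $\Acal[S]\otimes_{\underline{\Acal}}^{\Lbb}(-)$ to the fiber sequence $\tau_{\geq m+1}\underline{\Acal}\to\underline{\Acal}\to\tau_{\leq m}\underline{\Acal}$ shows that $\fib(\Acal[S]\to\Acal_{\leq m}[S])$ is $(m+1)$-connective (tensor products of connective objects over $\underline{\Acal}$ being connective); hence the natural map $\Acal[S]\to\varprojlim_{m}\Acal_{\leq m}[S]$ induces an isomorphism on every homotopy group and, by left completeness of the t-structure, is an equivalence. As $\mathrm{cf}(\kappa)>\omega$ by (a), the subcategory $\Dcal(\Zbb)^{\cond}_{\kappa}$ is closed under this countable limit in $\Dcal(\Zbb)^{\cond}$ (as in the proof of Lemma \ref{lem:limcard}), so it remains to prove that $\Acal_{\leq m}[S]\in\Dcal(\Zbb)^{\cond}_{\kappa}$ for every $m$.

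I would prove this by induction on $m$. The base case $m=0$ is exactly $\Acal_{\leq 0}[S]=(\pi_0\Acal)[S]\in\Dcal(\Zbb)^{\cond}_{\kappa}$, i.e. hypothesis (b). For $m\geq 1$, applying $\Acal[S]\otimes_{\underline{\Acal}}^{\Lbb}(-)$ to the fiber sequence $\pi_m(\underline{\Acal})[m]\to\tau_{\leq m}\underline{\Acal}\to\tau_{\leq m-1}\underline{\Acal}$ gives a fiber sequence $(\Acal[S]\otimes_{\underline{\Acal}}^{\Lbb}\pi_m(\underline{\Acal}))[m]\to\Acal_{\leq m}[S]\to\Acal_{\leq m-1}[S]$. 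Since $\pi_m(\underline{\Acal})$ is a $\pi_0(\underline{\Acal})$-module, $\Acal[S]\otimes_{\underline{\Acal}}^{\Lbb}\pi_m(\underline{\Acal})\simeq(\pi_0\Acal)[S]\otimes_{\pi_0(\underline{\Acal})}^{\Lbb}\pi_m(\underline{\Acal})$, and by (b) and (c) the objects $(\pi_0\Acal)[S]$, $\pi_0(\underline{\Acal})$, $\pi_m(\underline{\Acal})$ all lie in $\Dcal(\Zbb)^{\cond}_{\kappa}$; since $\Dcal(\Zbb)^{\cond}_{\kappa}$ is closed under the tensor product of $\Dcal(\Zbb)^{\cond}$ and under small colimits, the two-sided bar resolution shows that $(\pi_0\Acal)[S]\otimes_{\pi_0(\underline{\Acal})}^{\Lbb}\pi_m(\underline{\Acal})$, and hence also its shift, lies in $\Dcal(\Zbb)^{\cond}_{\kappa}$. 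As $\Dcal(\Zbb)^{\cond}_{\kappa}$ is a stable subcategory closed under extensions, the fiber sequence together with the inductive hypothesis $\Acal_{\leq m-1}[S]\in\Dcal(\Zbb)^{\cond}_{\kappa}$ gives $\Acal_{\leq m}[S]\in\Dcal(\Zbb)^{\cond}_{\kappa}$, completing the induction and the proof.

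I expect the main obstacle to be the equivalence $\Acal[S]\simeq\varprojlim_{m}\Acal_{\leq m}[S]$ in part (2) — the compatibility of forming free $\Acal$-modules with the Postnikov tower of $\underline{\Acal}$ — which relies on the right t-exactness of $-\otimes_{\underline{\Acal}}^{\Lbb}\Acal$ and on the left completeness of the t-structure on $\Dcal(\Acal)$. Everything else reduces to routine bookkeeping with the closure properties of $\Dcal(\Zbb)^{\cond}_{\kappa}$ and with the minimality defining $\Dcal(\Acal)_{\kappa}$.
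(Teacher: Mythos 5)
Your proof is correct and follows essentially the same route as the paper: part (1) is the minimality argument the paper treats as clear (plus the intersection observation for truncations), and part (2) is the same Postnikov-tower induction, using the identification $\Acal[S]\otimes_{\underline{\Acal}}^{\Lbb}\pi_m(\underline{\Acal})\simeq(\pi_0\Acal)[S]\otimes_{\pi_0(\underline{\Acal})}^{\Lbb}\pi_m(\underline{\Acal})$ together with hypotheses (b), (c), and then closure under countable limits from (a) and Lemma \ref{lem:limcard}. The only cosmetic difference is that you justify the convergence $\Acal[S]\simeq\varprojlim_m\Acal[S]\otimes_{\underline{\Acal}}^{\Lbb}\tau_{\leq m}\underline{\Acal}$ via connectivity of the fibers and left completeness, where the paper cites the cohomological dimension $\leq 1$ of $R\varprojlim$; these are the same standard argument.
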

\begin{proof}
The claim (1) is clear. We will prove the claim (2).
From (b) and (c), we find that for every $\kappa$-small extremally disconnected set $S$ and non-negative integer $n$, 
$$\Acal[S] \otimes_{\Acal}^{\Lbb} \pi_n\underline{\Acal} \simeq (\Acal[S] \otimes_{\Acal}^{\Lbb} \pi_0\Acal) \otimes_{\pi_0\Acal}^{\Lbb} \pi_n\underline{\Acal}$$
lies in $\Dcal(\underline{\pi_0\Acal})_{\kappa}$, and therefore in $\Dcal(\underline{\Acal})_{\kappa}$.
By induction, we have $\Acal[S] \otimes_{\Acal}^{\Lbb} \tau_{\leq n}\underline{\Acal} \in \Dcal(\underline{\Acal})_{\kappa}$.
Since the cohomological dimension of $R\varprojlim_{n\in \Nbb}$ in $\Dcal(\underline{\Acal})$ is $\leq 1$, we have 
$$\Acal[S] \simeq R\varprojlim_{n\in \Nbb} (\Acal[S] \otimes_{\Acal}^{\Lbb} \tau_{\leq n}\underline{\Acal}).$$
Therefore we get $\Acal[S]\in \Dcal(\underline{\Acal})_{\kappa}$ from (a) and Lemma \ref{lem:limcard}.
\end{proof}

\begin{corollary}[{\cite[Lemma 2.9.12]{Mann22}}]\label{discrete}
Let $A$ be an animated ring, and $\kappa$ be an uncountable solid cutoff cardinal.
Then we have the inclusion $\Dcal(A)^{\cond}_{\kappa} \supset \Dcal(A_{\bs})_{\kappa}$.
\end{corollary}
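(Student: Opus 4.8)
The plan is to deduce this from Proposition \ref{prop:card}(2), applied to the analytic animated ring $\Acal \coloneqq A_{\bs}$. Its underlying condensed animated ring $\underline{A_{\bs}} = A$ is discrete, so $\Dcal(\underline{\Acal})_{\kappa} = \Dcal(A)^{\cond}_{\kappa}$, and the conclusion of Proposition \ref{prop:card}(2) for this $\Acal$ is precisely the asserted inclusion $\Dcal(A)^{\cond}_{\kappa} \supseteq \Dcal(A_{\bs})_{\kappa}$. So I would reduce to verifying conditions (a), (b), (c) of that proposition. Note first that, since $A$ is discrete, $\pi_0\Acal$ is the static solid analytic ring $(\pi_0 A)_{\bs}$, with underlying condensed ring $\pi_0 A$; this identification is what lets one reduce everything to statements about the static ring $\pi_0 A$.

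Conditions (a) and (c) are essentially free. Condition (a) --- that $\kappa$ has cofinality $> \omega$ --- is part of the definition of an uncountable solid cutoff cardinal. For (c): $\pi_n(\underline{A_{\bs}}) = \pi_n A$ is a static (discrete) $\pi_0 A$-module, hence lies in $\Dcal(\pi_0 A)^{\cond}_{\omega} \subseteq \Dcal(\pi_0 A)^{\cond}_{\kappa} = \Dcal(\underline{\pi_0\Acal})_{\kappa}$, which is what is required. So the whole content is condition (b).

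For (b), let $S$ be a $\kappa$-small extremally disconnected set and fix a presentation $S = \varprojlim_{i \in I} S_i$ as a cofiltered limit of finite sets; since $|S| < \kappa$ and $\kappa$ is a strong limit cardinal one may take $|I| < \kappa$. By the construction of the solid analytic ring $(\pi_0 A)_{\bs}$ out of $\Zbb_{\bs}$, the free module $(\pi_0 A)_{\bs}[S]$ has underlying condensed $\pi_0 A$-module $\varprojlim_{i \in I} \pi_0 A[S_i]$, the limit being taken in $\Dcal(\pi_0 A)^{\cond}$ with no higher derived terms (the transition maps are surjective). Now I would invoke the third clause in the definition of an uncountable solid cutoff cardinal: choosing a cardinal $\lambda$ with $|I| \leq \lambda < \kappa$, there is a strong limit cardinal $\kappa_{\lambda} < \kappa$ whose cofinality exceeds $\lambda$. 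Each $\pi_0 A[S_i]$ is discrete, hence lies in $\Dcal(\pi_0 A)^{\cond}_{\omega} \subseteq \Dcal(\pi_0 A)^{\cond}_{\kappa_{\lambda}}$, and the defining limit is indexed by a category of size $\leq \lambda$, hence is $\lambda^{+}$-small; so Lemma \ref{lem:limcard}, applied to the condensed animated ring $\pi_0 A$, gives $(\pi_0 A)_{\bs}[S] \in \Dcal(\pi_0 A)^{\cond}_{\kappa_{\lambda}} \subseteq \Dcal(\pi_0 A)^{\cond}_{\kappa}$. This is exactly condition (b), and Proposition \ref{prop:card}(2) then yields the corollary.

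I expect the only real obstacle to be condition (b), and within it the crux is that $\Zbb_{\bs}[S]$ --- hence $(\pi_0 A)_{\bs}[S]$ --- is genuinely non-discrete (it carries a product-type topology and is not in $\Dcal(\Zbb)^{\cond}_{\omega}$), so one cannot expect it to lie in $\Dcal(\pi_0 A)^{\cond}_{\kappa}$ for an arbitrary strong limit $\kappa$: one must locate an intermediate stage $\kappa_{\lambda} < \kappa$ at which the defining limit already closes up. That is precisely the feature the third axiom of an uncountable solid cutoff cardinal is designed to provide, and the remaining bookkeeping (bounding $|I|$, feeding it into Lemma \ref{lem:limcard}) is routine. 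Nothing else is specific to the animated setting beyond the identification $\pi_0(A_{\bs}) = (\pi_0 A)_{\bs}$, which trivializes (a) and (c).
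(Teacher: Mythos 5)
Your overall strategy is the paper's: reduce to Proposition \ref{prop:card}(2) for $\Acal=A_{\bs}$, note that (a) and (c) are immediate, and concentrate on (b), which you handle via the third axiom of an uncountable solid cutoff cardinal together with Lemma \ref{lem:limcard}. However, your verification of (b) rests on a false identification: for a general discrete ring $R=\pi_0A$ (which need not be a finitely generated $\Zbb$-algebra, since $A$ is an arbitrary animated ring), the underlying condensed module of $R_{\bs}[S]$ is \emph{not} $\varprojlim_{i}R[S_i]$. That formula is correct for finitely generated $\Zbb$-algebras, but the solid structure on a general discrete ring is defined by passing to the filtered colimit over finitely generated subalgebras, and solidification commutes with this colimit; the colimit does not commute with the cofiltered limit. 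Concretely, for $R=\Qbb$ one has $\Qbb_{\bs}[S]\cong\varinjlim_{n}\prod_{I}\Zbb[1/n]$, which consists of ``measures with bounded denominators'' and is strictly smaller than $\varprojlim_{i}\Qbb[S_i]$; similarly for any ring that is not finitely generated over $\Zbb$. So the object you feed into Lemma \ref{lem:limcard} is not the one you need to control.

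The repair is exactly the paper's argument and is close in spirit to yours, but the order of operations matters: use \cite[Corollary 5.5]{CM} to write $\Zbb_{\bs}[S]\cong\prod_{I}\Zbb$ for a $\kappa$-small set $I$, and then
$$(\pi_0A)_{\bs}[S]\;\cong\;\varinjlim_{B\subset \pi_0A}\prod_{I}B,$$
the colimit ranging over the finitely generated $\Zbb$-subalgebras $B$ of $\pi_0A$. Each $B$ is discrete, so choosing (by the cutoff-cardinal axiom) a strong limit cardinal $\lambda<\kappa$ of cofinality larger than $\max\{\#I,\omega\}$, Lemma \ref{lem:limcard} gives $\prod_{I}B\in\Dcal(\Zbb)^{\cond}_{\kappa}$, and closure of $\Dcal(\Zbb)^{\cond}_{\kappa}$ under small colimits then yields $(\pi_0A)_{\bs}[S]\in\Dcal(\Zbb)^{\cond}_{\kappa}$, i.e.\ condition (b). In short: take the colimit over finitely generated subalgebras first and apply the limit argument only to those, rather than asserting a limit description of $(\pi_0A)_{\bs}[S]$ over $\pi_0A$ itself. (A minor additional caveat: your aside that surjectivity of the transition maps kills higher derived limit terms is not something you should lean on for a general cofiltered limit; in the corrected argument it is not needed.)
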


\begin{proof}
We will check the conditions (a), (b), and (c) in (2) of Proposition \ref{prop:card}.
The condition (a) and (c) is clear.
We will check the condition (b).
We may assume that $A$ is a static ring.
Let $S$ be a $\kappa$-small extremally disconnected set.
There exists a $\kappa$-small set $I$ and an isomorphism $\Zbb_{\bs}[S] \cong \prod_{I} \Zbb$ by \cite[Corollary 5.5]{CM}.
Then we get an isomorphism $ A_{\bs}[S] \cong\varinjlim_{B\subset A} \prod_{I} B,$
where the colimit is taken over all the finitely generated $\Zbb$-subalgebra $B\subset A$.
Since $\kappa$ is an uncountable solid cutoff cardinal, we can take a strong limit cardinal $\lambda <\kappa$ whose cofinality is larger than $\lambda^{\prime}=\mathrm{max}\{\#I,\omega\}$.
Then we have $\prod_{I} B\in\Dcal(\Zbb)^{\cond}_{\kappa}$ by Lemma \ref{lem:limcard}, where we note that $B \in \Dcal(\Zbb)^{\cond}_{\omega}\subset \Dcal(\Zbb)^{\cond}_{\lambda^{\prime}}$.
Since $\Dcal(\Zbb)^{\cond}_{\kappa} \subset \Dcal(\Zbb)^{\cond}$ is closed under small colimits, we find that $ A_{\bs}[S] \cong\varinjlim_{B\subset A} \prod_{I} B \in \Dcal(\Zbb)^{\cond}_{\kappa} $.
\end{proof}

\begin{remark}
In the setting of Corollary \ref{discrete}, we can directly prove that $A_{\bs}[S] \in \Dcal(A)^{\cond}_{\kappa}$ for an animated ring $A$ (see the proof of \cite[Lemma 2.9.12]{Mann22}).
However for a comparison to the case of animated affinoid algebras, we proved Corollary \ref{discrete} as above.
\end{remark}


\section{Animated affinoid $K$-algebras}
We begin with the construction of an uncompleted analytic animated ring from a pair of a condensed animated ring and a condensed ring.

\begin{definition}\label{def:ana}
Let $A$ be a condensed animated ring, and let $A_0 \to \pi_0(A)$ be a morphism of condensed rings.
Let ${A_0}_{\disc}$ denote the discretization of $A_0$.
Then ${A_0}_{\disc}$ is an ordinary ring, and we can define an analytic ring $({A_0}_{\disc})_{\bs}=({A_0}_{\disc}, {A_0}_{\disc})_{\bs}$ (\cite[Definition 2.9.3]{Mann22}).
We define an uncompleted analytic animated ring $(A,A_0)_{\bs}$ as follows:
If $A$ is static then there is a natural morphism ${A_0}_{\disc} \to A$ and we define an uncompleted analytic animated ring $(A,A_0)_{\bs}$ as the condensed animated ring $A$ with the induced uncompleted analytic animated ring structure from $({A_0}_{\disc})_{\bs}$ (\cite[Proposition 12.8]{AG} or an uncompleted version of \cite[Definition 2.3.13]{Mann22}).
For general $A$, we define $(A,A_0)_{\bs}$ as the condensed animated ring $A$ with an uncompleted analytic animated ring structure induced from $(\pi_0(A),A_0)_{\bs}$ by using \cite[Proposition 12.21]{AG}.
\end{definition}

\begin{remark}\label{rem:sta}
By the construction, an object $M\in \Dcal(A)$ is $(A,A_0)_{\bs}$-complete if and only if a $\pi_0A$-module $H^i(M)$ is $({A_0}_{\disc})_{\bs}$-complete (equivalently,  $H^i(M)$ is $(\pi_0A, A_0)_{\bs}$-complete) for every $i\in \Zbb$.
In particular, if for every non-negative integer $n$, $\pi_n(A)$ is $({A_0}_{\disc})_{\bs}$-complete (for example, $A$ can be written as a limit of objects in $\Dcal(\Zbb)^{\cond}_{\omega}$), then $(A,A_0)_{\bs}$ is an analytic animated ring.
\end{remark}

Let $A$ be a $t$-adically complete condensed animated ring such that $A/^{\Lbb}t$ is discrete ($t\in \pi_0(A)$), where we let $A/^{\Lbb}t$ denote $A\otimes_{\Zbb[T]}^{\Lbb} \Zbb$. 
We will give a characterization of nuclear objects over $A$ based on ideas used in \cite{Mann22b}.
Some of the results below are also proved in the thesis of Andreychev (\cite{And23}).


\begin{definition}
An object $M \in \Dcal(A)$ is said to be \textit{Banach} over $A$ if $M$ is $t$-adically complete and $M/^{\Lbb}t$ is discrete.
\end{definition}


\begin{lemma}\label{lem:trunc}
The full $\infty$-subcategory of $\Dcal(A)$ spanned by Banach objects of $\Dcal(A)$ is closed under truncations on $\Dcal(A)$.
\end{lemma}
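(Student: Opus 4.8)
The plan is to verify that the collection of Banach objects is stable under the truncation functors $\tau_{\leq n}$ and $\tau_{\geq n}$ on $\Dcal(A)$. Recall that $M \in \Dcal(A)$ is Banach if (i) $M$ is $t$-adically complete, i.e. $M \simeq R\varprojlim_r M/^{\Lbb}t^r$, and (ii) $M/^{\Lbb}t$ is discrete. Since the $t$-adic completeness condition is equivalent to the vanishing of $R\varprojlim$ of the tower $(\dots \xrightarrow{t} M \xrightarrow{t} M)$, which is the statement that $R\Hom_A(A[t^{-1}], M) = 0$ (derived $t$-completeness in the sense of Lurie), condition (i) is a vanishing condition that is preserved by any exact functor commuting with the relevant limits and by passing to fibers/cofibers in a fiber sequence whose two other terms satisfy it. So the first step is to observe that derived $t$-complete objects form a full subcategory of $\Dcal(A)$ closed under extensions, shifts, and cones — hence under truncations, since $\tau_{\leq n} M$ sits in a fiber sequence with $M$ and $\tau_{\geq n+1}M$.

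Next I would handle condition (ii). The key point is to relate $\tau_{\leq n}M / ^{\Lbb} t$ and $\tau_{\geq n}M/^{\Lbb}t$ to truncations of $M/^{\Lbb}t$. Here one cannot naively commute $\tau_{\leq n}$ with $-\otimes^{\Lbb}_{\Zbb[T]}\Zbb$, but the discrepancy is controlled: $- /^{\Lbb} t$ has Tor-amplitude $[-1,0]$ (it is the cofiber of multiplication by $t$), so for any $M$ we have $H^i(M/^{\Lbb}t)$ fitting into short exact sequences involving $H^i(M)/t$ and $H^{i+1}(M)[t]$. Concretely, using the octahedral axiom on the fiber sequence $\tau_{\geq n+1}M \to M \to \tau_{\leq n}M$ tensored with $\Zbb[T]/T \cong \Zbb$ over $\Zbb[T]$, we get a fiber sequence $(\tau_{\geq n+1}M)/^{\Lbb}t \to M/^{\Lbb}t \to (\tau_{\leq n}M)/^{\Lbb}t$ in $\Dcal(A)$. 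The plan is then: since $M/^{\Lbb}t$ is discrete (lies in $\Dcal(\Zbb)^{\cond}_\omega$), and since $\Dcal(\Zbb)^{\cond}_\omega \subset \Dcal(\Zbb)^{\cond}$ is closed under truncations (as recorded in the excerpt, the inclusion $\Dcal(\Zbb)^{\cond}_\kappa \hookrightarrow \Dcal(\Zbb)^{\cond}$ is t-exact), it suffices to show $(\tau_{\leq n}M)/^{\Lbb}t$ and $(\tau_{\geq n+1}M)/^{\Lbb}t$ are built from discrete pieces, or more precisely to extract their discreteness from that of $M/^{\Lbb}t$.

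The cleanest route: it is enough to treat the case of a single truncation, say $\tau_{\leq n}$, because $\tau_{\geq n+1}M = \fib(M \to \tau_{\leq n}M)$ is then automatically Banach by the fiber-sequence stability of both conditions, and general truncations $\tau_{[a,b]}$ are compositions. So I would reduce to showing: if $M$ is Banach then $\tau_{\leq n}M$ is Banach. For condition (ii) on $\tau_{\leq n}M$: the $t$-adic completeness gives $\tau_{\leq n}M \simeq R\varprojlim_r (\tau_{\leq n}M)/^{\Lbb}t^r$ — wait, one must be slightly careful, but the point is to analyze $(\tau_{\leq n}M)/^{\Lbb}t$ directly via the fiber sequence above. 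Since $M/^{\Lbb}t$ is discrete and $\tau_{\geq n+1}M$ is $n$-connective, $(\tau_{\geq n+1}M)/^{\Lbb}t$ is $n$-connective (Tor-amplitude $[-1,0]$ shifts connectivity by at most $1$, but tensoring a $(n+1)$-connective object gives something $n$-connective), hence $\tau_{\leq n-1}$ of the fiber sequence shows $\tau_{\leq n-1}((\tau_{\leq n}M)/^{\Lbb}t) \simeq \tau_{\leq n-1}(M/^{\Lbb}t)$, which is discrete; and in degrees $\geq n$ one checks $(\tau_{\leq n}M)/^{\Lbb}t$ has cohomology only in degree $n$ coming from $H^n(M)/t$, again discrete since $M/^{\Lbb}t$ is. Assembling these, $(\tau_{\leq n}M)/^{\Lbb}t$ is discrete.

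The main obstacle I anticipate is the bookkeeping in the last paragraph: carefully tracking in which cohomological degrees the contributions $H^i(M)/t$ versus $H^{i+1}(M)[t]$ appear after applying $-/^{\Lbb}t$, and making sure that the discreteness of $M/^{\Lbb}t$ (a condition on all cohomology sheaves simultaneously) propagates to each truncation without a circularity — in particular that one genuinely uses the $t$-adic completeness of $M$ together with discreteness of $M/^{\Lbb}t$, rather than needing discreteness of $M$ itself, which is false. I expect the actual argument to be short once one fixes the convention that "discrete" means lying in $\Dcal(\Zbb)^{\cond}_\omega$ and invokes t-exactness of that inclusion together with the Tor-amplitude $[-1,0]$ of $-\otimes^{\Lbb}_{\Zbb[T]}\Zbb$; the derived $t$-completeness half is formal from closure of $R\varprojlim$-vanishing under cones.
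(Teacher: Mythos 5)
Your skeleton for condition (ii) is essentially the paper's: tensor the truncation fiber sequence with $\Zbb$ over $\Zbb[T]$ and observe that away from the edge degree the cohomology of the truncated object agrees with that of $M/^{\Lbb}t$. But the step you dispatch with ``again discrete since $M/^{\Lbb}t$ is'' is exactly where the paper does its real work. The edge piece of $(\tau^{\leq n}M)/^{\Lbb}t$ is $H^n(M)/t$, which is only a condensed \emph{subobject} of the discrete module $H^n(M/^{\Lbb}t)$ (and for the complementary truncation the edge piece $H^{n+1}(M)[t]$ is only a \emph{quotient} of it). Discreteness of a static condensed module is not formally inherited by subobjects or quotients; the paper proves precisely this point by comparing the exact sequence $0\to H^0((\tau^{\leq 0}M)/^{\Lbb}t)\to H^0(M/^{\Lbb}t)\to H^0((\tau^{\geq 1}M)/^{\Lbb}t)\to 0$ with its discretization and showing that the counit $N_{\disc}\to N$ is injective for every static condensed module $N$ (by writing an extremally disconnected $S$ as a cofiltered limit of finite sets). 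Your proposal contains no substitute for this ingredient, so the discreteness of the edge term is simply asserted. Your reduction to a single truncation via $\tau_{\geq n+1}M=\fib(M\to\tau_{\leq n}M)$ is a genuinely nice simplification---it removes the quotient case, since $\Dcal(\Zbb)^{\cond}_{\omega}\subset\Dcal(\Zbb)^{\cond}$ is a stable subcategory and hence closed under fibers of maps between discrete objects---but the subobject case still needs an argument. (Also watch the indexing: with the homological truncations appearing in your fiber sequence, the new piece of $(\tau_{\leq n}M)/^{\Lbb}t$ sits in degree $n+1$ and is the $t$-torsion $\pi_n(M)[t]$, not $H^n(M)/t$ in degree $n$; the structure of the argument survives, but the bookkeeping as written is off.)

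The completeness half is also not ``formal from closure under cones.'' A stable full subcategory closed under fibers, cofibers and shifts need not be closed under truncations (perfect complexes are the standard counterexample), and your intended application is circular: to get completeness of $\tau_{\leq n}M$ from the fiber sequence you would already need completeness of $\tau_{\geq n+1}M$, which is the other half of what is being proved. What is true---and what the paper invokes as \cite[Lemma 2.12.4]{Mann22}---is that the $t$-adically complete objects of $\Dcal(A)$ are closed under truncation (equivalently, completeness can be checked on the cohomology objects, e.g.\ via the Milnor sequence for $R\varprojlim$ of the tower $\cdots\xrightarrow{t}M\xrightarrow{t}M$, using that countable products are exact here). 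Either cite that statement, as the paper does, or supply such an argument; with these two repairs your proof would coincide with the paper's.
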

\begin{proof}
Let $M\in \Dcal(A)$ be a Banach object.
We will prove that $\tau^{0\geq}M$ and $\tau^{1\leq}M$ are Banach objects.
By \cite[Lemma 2.12.4]{Mann22}, $\tau^{0\geq}M$ and $\tau^{1\leq}M$ are $t$-adically complete, so it is enough to show that $(\tau^{0\geq}M)/^{\Lbb}t$ and $(\tau^{1\leq}M)/^{\Lbb}t$ are discrete.
We have a fiber sequence in $\Dcal(\Zbb)^{\cond}$
$$(\tau^{0\geq}M)/^{\Lbb}t \to M/^{\Lbb}t \to (\tau^{1\leq}M)/^{\Lbb}t.$$
Since $\tau^{-1\geq}\left((\tau^{0\geq}M)/^{\Lbb}t\right) \simeq \tau^{-1\geq}(M/^{\Lbb}t)$ and $\tau^{1\leq}\left((\tau^{1\leq}M)/^{\Lbb}t\right) \simeq \tau^{1\leq}(M/^{\Lbb}t)$ are discrete, it is enough to show that $M_0\coloneqq H^0\left((\tau^{0\geq}M)/^{\Lbb}t\right)$ and
$M_1\coloneqq H^0\left((\tau^{1\leq}M)/^{\Lbb}t\right)$ are discrete.
Let $M_{0,\disc}$ and $M_{1,\disc}$ denote the discretizations of $M_0$ and $M_1$.
Then we have the following commutative diagram of static condensed $\Zbb$-modules:
$$\xymatrix{
0\ar[r] & M_{0,\disc} \ar[r]\ar[d] & H^0(M/^{\Lbb}t) \ar[r] \ar[d]^{\cong} &M_{1,\disc} \ar[r]\ar[d]& 0 \\
0 \ar[r]& M_0 \ar[r]& H^0(M/^{\Lbb}t) \ar[r]&M_1 \ar[r]& 0, \\
}
$$
where the horizontal sequences are exact.
Therefore, it is enough to show that for any static condensed $\Zbb$-module $N\in (\Dcal(\Zbb)^{\cond})^{\heart}$, the natural morphism $N_{\disc} \to N$ is injective.
Let $S$ be any extremally disconnected set. We write $S$ as a cofiltered limit $ S=\varprojlim_{\lambda} S_{\lambda}$ where $S_{\lambda}$ is a finite set and the natural maps $S\to S_{\lambda}$ are surjective. 
Then the map $ \Gamma(S, N_{\disc}) =\varinjlim_{\lambda}\Gamma(S_{\lambda}, N) \hookrightarrow \Gamma(S,N)$ is injective.
Therefore, $N_{\disc} \to N$ is also injective.
\end{proof}

\begin{proposition}\label{bannuc}
Let $M\in \Dcal(A)$ be an object which can be written as a filtered colimit of Banach objects of $\Dcal(A)$.
Then for every subring $A_0$ of a ring $\pi_0(A)_{\disc}$, $M$ is $(A,A_0)_{\bs}$-complete and nuclear as an object of $\Dcal((A,A_0)_{\bs})$.
\end{proposition}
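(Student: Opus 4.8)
Write $\Acal\coloneqq(A,A_{0})_{\bs}$. The plan is to reduce to the case that $M$ is itself a Banach object, and then to establish $\Acal$-completeness and nuclearity in turn. For the reduction, recall that $\Dcal(\Acal)$ is closed under all small colimits in $\Dcal(\underline{\Acal})$, so the $\Acal$-complete objects of $\Dcal(A)$ are closed under filtered colimits, and that the nuclear objects of $\Dcal(\Acal)$ form a full subcategory closed under (filtered) colimits. Hence, once we know that a single Banach object is $\Acal$-complete, a filtered colimit of Banach objects is computed inside $\Dcal(\Acal)$ and is there a filtered colimit of nuclear objects, so it is $\Acal$-complete and nuclear. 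Thus we may assume $M$ is Banach.

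For $\Acal$-completeness: since $M$ is $t$-adically complete, $M\simeq R\varprojlim_{n}M/^{\Lbb}t^{n}$. By induction along the fiber sequences $M/^{\Lbb}t\xrightarrow{\cdot t^{n-1}}M/^{\Lbb}t^{n}\to M/^{\Lbb}t^{n-1}$ and the hypothesis that $M/^{\Lbb}t$ is discrete, every $M/^{\Lbb}t^{n}$ is discrete; hence each $H^{i}(M/^{\Lbb}t^{n})$ is a discrete $\pi_{0}(A)$-module, and every discrete module over a discrete ring is solid, so $H^{i}(M/^{\Lbb}t^{n})$ is $({A_{0}}_{\disc})_{\bs}$-complete. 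By Remark~\ref{rem:sta}, $M/^{\Lbb}t^{n}\in\Dcal(\Acal)$, and since $\Dcal(\Acal)$ is closed under small limits in $\Dcal(\underline{\Acal})$ we conclude $M\in\Dcal(\Acal)$.

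For nuclearity, I would test on the compact generators $\Acal[S]$ for extremally disconnected sets $S$, i.e.\ show that the canonical map
$$c_{S}\colon\intHom_{\Acal}(\Acal[S],\Acal)\otimes_{\Acal}M\lra\intHom_{\Acal}(\Acal[S],M)$$
is an equivalence. Since $\Acal[S]=\Acal\otimes_{\Zbb_{\bs}}\Zbb_{\bs}[S]$, adjunction gives $\intHom_{\Acal}(\Acal[S],N)\simeq\intHom_{\Zbb_{\bs}}(\Zbb_{\bs}[S],N)$ for $N\in\Dcal(\Acal)$; writing $M\simeq R\varprojlim_{n}M/^{\Lbb}t^{n}$ and using that $\Zbb_{\bs}[S]$ is compact, the target of $c_{S}$ is $R\varprojlim_{n}\intHom_{\Zbb_{\bs}}(\Zbb_{\bs}[S],M/^{\Lbb}t^{n})$, a limit of $t$-power-torsion objects, hence $t$-adically complete; the same computation shows $\intHom_{\Acal}(\Acal[S],\Acal)$ is $t$-adically complete. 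Granting (see below) that the source of $c_{S}$ is also $t$-adically complete, it suffices by derived Nakayama for $t$-complete objects to check $c_{S}$ modulo $t$, where it is identified with the comparison map $\intHom_{\Zbb_{\bs}}(\Zbb_{\bs}[S],\Zbb_{\bs})\otimes_{\Zbb_{\bs}}(M/^{\Lbb}t)\to\intHom_{\Zbb_{\bs}}(\Zbb_{\bs}[S],M/^{\Lbb}t)$. This last map is an equivalence because $M/^{\Lbb}t$ is a filtered colimit of perfect $\Zbb$-complexes (being discrete), $\intHom_{\Zbb_{\bs}}(\Zbb_{\bs}[S],-)$ commutes with filtered colimits (as $\Zbb_{\bs}[S]$ is compact), and $\intHom_{\Zbb_{\bs}}(\Zbb_{\bs}[S],D)\simeq\intHom_{\Zbb_{\bs}}(\Zbb_{\bs}[S],\Zbb_{\bs})\otimes_{\Zbb_{\bs}}D$ for $D$ dualizable.

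The step I expect to be the main obstacle is the $t$-adic completeness of the source $\intHom_{\Acal}(\Acal[S],\Acal)\otimes_{\Acal}M$ of $c_{S}$: the completed solid tensor product $\otimes_{\Acal}$ does not in general commute with the $t$-adic limit defining $M$, so one cannot simply pull it inside $R\varprojlim_{n}$. Here one must use that both $\intHom_{\Acal}(\Acal[S],\Acal)$ and $M$ are Banach over $A$ and that $t$ is topologically nilpotent — concretely, that the solid tensor product over $\Acal$ of two Banach $A$-modules is again Banach, in particular $t$-adically complete. I would prove this following the techniques for nuclear modules over adic analytic rings developed in \cite{Mann22b} (and recorded also in \cite{And23}); the remainder of the argument is bookkeeping with the solid tensor product and internal Hom, together with the two closure properties used in the first paragraph.
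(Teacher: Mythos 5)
Your reduction to a single Banach object and your completeness argument (write $M\simeq R\varprojlim_n M/^{\Lbb}t^n$, note each $M/^{\Lbb}t^n$ is discrete hence $(A,A_0)_{\bs}$-complete, use closure of $\Dcal((A,A_0)_{\bs})$ under limits) are exactly the paper's argument. The problem is the nuclearity step, and it is precisely where you stop: the $t$-adic completeness of the source $R\intHom_{\Acal}(\Acal[S],\Acal)\otimes_{\Acal}^{\Lbb}M$ of the comparison map is the heart of the proof, and you explicitly defer it to unspecified ``techniques of \cite{Mann22b}'' rather than proving it. As written, the proposal therefore has a genuine gap: the dévissage modulo $t$ is only legitimate once both sides are known to be $t$-adically complete, and that is the one claim you do not establish. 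The paper closes this gap in two moves you are missing: first it reduces to $M$ bounded above, using Lemma \ref{lem:trunc} (truncations of Banach objects are Banach) and $M\simeq\varinjlim_n\tau^{\geq n}M$ — some boundedness is needed, since a tensor product of unbounded $t$-complete objects need not be $t$-complete — and then it invokes \cite[Proposition 2.12.10]{Mann22} to get completeness of both sides at once, after which reduction mod $t$ and \cite[Proposition 2.9.7]{Mann22} (discrete modules over the discrete ring $A/^{\Lbb}t$ are nuclear for the induced solid structure) finish the proof.

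Two smaller points. The identity $\Acal[S]=\Acal\otimes_{\Zbb_{\bs}}\Zbb_{\bs}[S]$ you use is false for a general subring $A_0\subset\pi_0(A)_{\disc}$ (already for $A=A_0=\Zbb[x]$ with its own solid structure, $\Zbb[x]_{\bs}[S]\cong\prod_I\Zbb[x]$ while $\Zbb[x]\otimes_{\Zbb_{\bs}}\Zbb_{\bs}[S]\cong\bigoplus_n\prod_I\Zbb$); the consequence you draw, $R\intHom_{\Acal}(\Acal[S],N)\simeq R\intHom_{\Zbb_{\bs}}(\Zbb_{\bs}[S],N)$ for $N\in\Dcal(\Acal)$, is nevertheless true, but it should be justified by the universal property of $\Acal[S]$, not by that formula. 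Relatedly, after reducing mod $t$ the tensor product in the source is over $(A/^{\Lbb}t,A_0)_{\bs}$, not over $\Zbb_{\bs}$, so your final identification needs an extra (routine, but currently absent) argument; the paper avoids this by quoting nuclearity of discrete objects over $(A/^{\Lbb}t,A_0)_{\bs}$ directly.
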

\begin{proof}
It is enough to show that a Banach object $M\in \Dcal(A)$ is $(A,A_0)_{\bs}$-complete and nuclear as an object of $\Dcal((A,A_0)_{\bs})$.
Since $M$ can be written as a limit of discrete objects in $\Dcal(A)$ and every discrete object in $\Dcal(A)$ is $(A,A_0)_{\bs}$-complete, $M$ is also $(A,A_0)_{\bs}$-complete.
We will prove that $M$ is a nuclear object of $\Dcal((A,A_0)_{\bs})$.
Since $ M\simeq \varinjlim_{n}\tau^{n\geq}M$ and $\tau^{n\geq}M$ is Banach by Lemma \ref{lem:trunc}, we may assume that $M$ is bounded above.
Let $S$ be an extremally disconnected set. 
We want to show that $$\alpha \colon R\intHom_A((A,A_0)_{\bs}[S],A) \otimes_{(A,A_0)_{\bs}}^{\Lbb} M \to R\intHom_A((A,A_0)_{\bs}[S],M)$$ is an equivalence.
Since both sides are $t$-adically complete by \cite[Proposition 2.12.10]{Mann22}, so it suffices to show that 
\begin{align*}
\alpha/^{\Lbb}t \colon &R\intHom_{A/^{\Lbb}t}((A/^{\Lbb}t,A_0)_{\bs}[S],A/^{\Lbb}t) \otimes_{(A/^{\Lbb}t,A_0)_{\bs}}^{\Lbb} M/^{\Lbb}t \\
&\to R\intHom_{A/^{\Lbb}t}((A/^{\Lbb}t,A_0)_{\bs}[S],M/^{\Lbb}t)
\end{align*}
 is an equivalence.
By \cite[Proposition 2.9.7]{Mann22}, the discrete object $M/^{\Lbb}t \in \Dcal(A/^{\Lbb}t)$ is nuclear as an object of $\Dcal((A/^{\Lbb}t, A_0)_{\bs})$, so we find that $\alpha/^{\Lbb}t$ is an equivalence.
\end{proof}

\begin{proposition}\label{nucban}
Let $M \in \Dcal((A,A_0)_{\bs})$ be a nuclear object of $\Dcal((A,A_0)_{\bs})$.
Then $M$ can be written as a filtered colimit of Banach objects of $\Dcal(A)$.
\end{proposition}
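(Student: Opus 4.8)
The plan is to deduce the statement from the structure theory of nuclear objects, passing to the reduction modulo $t$ so as to land in the situation over a discrete analytic ring. Together with Proposition \ref{bannuc}, this will give the clean characterization that an object of $\Dcal((A,A_0)_{\bs})$ is nuclear if and only if it is a filtered colimit of Banach objects of $\Dcal(A)$. Writing $\Acal\coloneqq (A,A_0)_{\bs}$, the whole argument reduces to a single claim: every dualizable object of $\Dcal(\Acal)$ is a Banach object of $\Dcal(A)$. (In fact the dualizable objects of $\Dcal(\Acal)$ will turn out to be precisely the perfect complexes over $A$, all of which are Banach.)

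First I would handle the cardinality bookkeeping: fix an uncountable solid cutoff cardinal $\kappa$ large enough that $A$ and $M$ lie in $\Dcal(\Acal)_{\kappa}$ — which, as in Proposition \ref{prop:card} and Corollary \ref{discrete}, realizes $M$ as a nuclear object of the stably presentable, compactly generated $\infty$-category $\Dcal(\Acal)_{\kappa}$. In that setting the general description of nuclear objects (\cite{Mann22}, \cite{And23}) writes $M$ — possibly after passing to a retract, which is harmless — as a filtered colimit of a diagram in $\Dcal(\Acal)$ all of whose transition maps are trace-class and all of whose terms are dualizable objects of $\Dcal(\Acal)$. Granting the claim above, every term of this diagram is a Banach object of $\Dcal(A)$, so $M$ is (a retract of) a filtered colimit of Banach objects; since the full subcategory of $\Dcal(A)$ spanned by filtered colimits of Banach objects is closed under filtered colimits and under retracts, this finishes the proof.

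To prove the claim, let $Q$ be a dualizable object of $\Dcal(\Acal)$, with dual $Q^{\vee}=R\intHom_{\Acal}(Q,\Acal)$. Being dualizable, $Q$ is reflexive, so $Q\simeq R\intHom_{\Acal}(Q^{\vee},\Acal)$; since $\underline{\Acal}=A$ is $t$-adically complete, this exhibits $Q$ as $t$-adically complete by \cite[Proposition 2.12.10]{Mann22}. Next, $Q/^{\Lbb}t$ is a dualizable object of $\Dcal((A/^{\Lbb}t,A_0)_{\bs})$ (base change along $\Acal\to (A/^{\Lbb}t,A_0)_{\bs}$ is symmetric monoidal), and because $A/^{\Lbb}t$ is a discrete ring the dualizable objects of $\Dcal((A/^{\Lbb}t,A_0)_{\bs})$ are precisely the perfect complexes over $A/^{\Lbb}t$ (this is the input from the discrete theory; cf. \cite{Mann22}, \cite{And23}, built on the description of dualizable solid modules in \cite{CM}). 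In particular $Q/^{\Lbb}t$ is discrete, and hence $Q$ is a Banach object of $\Dcal(A)$, which is the claim. For the parenthetical remark one notes that $Q$ is then even perfect over $A$, by $t$-adic lifting of perfect complexes along $A=\varprojlim_n A/^{\Lbb}t^n$, while conversely every perfect complex over $A$ is dualizable over $\Acal$ and is Banach: indeed $A$ is itself Banach by hypothesis, and — as in the proof of Lemma \ref{lem:trunc}, using \cite[Lemma 2.12.4]{Mann22} and the closure of static condensed modules under kernels, cokernels and extensions — the Banach objects form a stable subcategory of $\Dcal(A)$ closed under retracts.

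The step I expect to be the main obstacle is the identification, inside the proof of the claim, of the dualizable objects of the discrete analytic animated ring $(A/^{\Lbb}t,A_0)_{\bs}$ with the perfect complexes over $A/^{\Lbb}t$: one has to be careful because $A/^{\Lbb}t$ need not be static and its analytic ring structure is the one induced from the solid structure on $A_0$ rather than from $A/^{\Lbb}t$ itself, so the usual description of dualizable solid modules must be transported along the comparisons underlying \cite[Proposition 2.9.7]{Mann22} and Definition \ref{def:ana}. A secondary difficulty is making the structure-theoretic first step precise: since $\Dcal(\Acal)$ is only an increasing union of the presentable $\infty$-categories $\Dcal(\Acal)_{\kappa}$, one must choose $\kappa$ so that both the nuclear structure of $M$ and the dualizability of the relevant terms are detected at that fixed $\kappa$, and check compatibility with the inclusions $\Dcal(\Acal)_{\kappa}\hookrightarrow\Dcal(\Acal)_{\kappa'}$, along the lines of Proposition \ref{prop:card} and Corollary \ref{discrete}.
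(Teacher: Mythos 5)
Your second claim (dualizable objects of $\Dcal((A,A_0)_{\bs})$ are Banach, indeed perfect over $A$) is fine, but your first step is a genuine gap, and in fact false as stated: a nuclear object of $\Dcal((A,A_0)_{\bs})$ is in general \emph{not} a retract of a filtered colimit of dualizable objects with trace-class transition maps. The structure theory you invoke only writes a nuclear object as a filtered colimit of basic nuclear objects, i.e.\ sequential colimits of trace-class maps between \emph{compact} objects, and the compact objects here (retracts of finite complexes built from $(A,A_0)_{\bs}[S]$) are neither dualizable nor Banach, so "dualizable $\Rightarrow$ Banach" addresses the wrong building blocks. Moreover the stronger statement cannot be repaired: take $A=\Zbb[[t]]$, $A_0=\Zbb$, and the Banach (hence nuclear) object $M=\hat{\bigoplus_{\Nbb}}\Zbb[[t]]$. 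The map $\prod_{\Nbb}\Zbb[[t]]\to M$, $(x_i)_i\mapsto (t^ix_i)_i$, is trace-class; since the unit and $\prod_{\Nbb}\Zbb[[t]]$ are compact, if $M$ were a retract of a filtered colimit of dualizable (i.e.\ perfect) objects, this map would factor through a single perfect $\Zbb[[t]]$-complex, but reducing modulo $t^n$ its image requires $n$ generators, contradicting the bounded rank of a fixed perfect complex. In other words, the equivalence "nuclear $\iff$ filtered colimit of dualizables" is not an available input, and the correct substitute ("nuclear $\iff$ filtered colimit of Banach objects") is precisely what the proposition is proving.

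The paper's proof keeps the compact terms and instead factors the \emph{maps}: writing $M\simeq\varinjlim_n P_n$ with $P_n$ compact and trace-class transitions, it first replaces $P_n$ by its $t$-adic completion $\hat{P_n}$ (using $P_n^{\lor}\simeq\hat{P_n}^{\lor}$ to factor each transition through $\hat{P_n}$), and then shows that each trace-class map $f\colon P=\hat{P_n}\to Q=\hat{P_{n+1}}$ factors through the Banach object $Q_b=R\varprojlim_m\,(Q/^{\Lbb}t^m)_{\disc}$. This is checked by reducing to $P=\hat{(A,A_0)_{\bs}[S]}$, noting both $R\Hom_A(A,P^{\lor}\otimes_{(A,A_0)_{\bs}}^{\Lbb}Q_b)$ and $R\Hom_A(A,P^{\lor}\otimes_{(A,A_0)_{\bs}}^{\Lbb}Q)$ are $t$-adically complete, and computing modulo $t$, where $(P/^{\Lbb}t)^{\lor}\simeq\bigoplus_I A/^{\Lbb}t$ and $R\Hom_{\Zbb}(\Zbb,-)$ does not distinguish $\bigoplus_I(Q/^{\Lbb}t)_{\disc}$ from $\bigoplus_I Q/^{\Lbb}t$. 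So the Banach objects arise as interpolants of the transition maps, not as the terms of the diagram; any version of your argument would need such a factorization statement (trace-class maps between compacta factor through Banach objects) rather than a dualizability statement.
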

\begin{proof}
Since every nuclear object of $\Dcal((A,A_0)_{\bs})$ can be written as a filtered colimit of basic nuclear objects of $\Dcal((A,A_0)_{\bs})$, 
we may assume that $M$ is a basic nuclear objects $\varinjlim_{n} P_n$ of $\Dcal((A,A_0)_{\bs})$, where each $P_n$ is compact as an object of $\Dcal((A,A_0)_{\bs})$ and each transition morphism $P_n\to P_{n+1}$ is trace-class.
From the definition of trace-class maps and $P_n^{\lor} \simeq \hat{P_n}^{\lor}$, where $\hat{P_n}$ is the $t$-adic completion of $P_n$ and $\hat{P_n}^{\lor}$ is the dual $R\intHom_A(\hat{P_n},A)$ of $\hat{P_n}$, the transition morphisms $P_n \to P_{n+1}$ factor through $\hat{P_n}$, so we get that $M \simeq \varinjlim_{n} \hat{P_n}$.
By construction, $\hat{P_n} \to \hat{P_{n+1}}$ is a trace-class map. It is enough to show that $\hat{P_n} \to \hat{P_{n+1}}$ factors through a Banach object of $\Dcal(A)$.
We denote $\hat{P_n}\to \hat{P_{n+1}}$ by $f\colon P\to Q$. 
We put $ Q_b=R\varprojlim_{n} (Q/^{\Lbb}t^n)_{\disc}$, which is a Banach object of $\Dcal(A)$. 
We will prove that $f$ factors through $Q_b$.
Since $f$ is a trace-class map, $f$ corresponds to a morphism $A\to P^{\lor} \otimes_{(A,A_0)_{\bs}}^{\Lbb} Q$.
It is enough to show that 
\begin{align}
R\Hom_A (A,P^{\lor}\otimes_{(A,A_0)_{\bs}}^{\Lbb} Q_b) \to R\Hom_A (A,P^{\lor}\otimes_{(A,A_0)_{\bs}}^{\Lbb} Q)
\label{eq1}
\end{align}
is an equivalence.
We may assume that $P=\hat{(A,A_0)_{\bs}[S]}$ for some extremally disconnected set $S$.
Since both sides of (\ref{eq1}) are $t$-adically complete by \cite[Proposition 2.12.10]{Mann22}, it suffices to show that 
\begin{align}
R\Hom_{\Zbb} (\Zbb, (P/^{\Lbb}t)^{\lor}\otimes_{(A/^{\Lbb}t,\Zbb)_{\bs}}^{\Lbb} (Q/^{\Lbb}t)_{\disc}) \to R\Hom_{\Zbb} (\Zbb,(P/^{\Lbb}t)^{\lor}\otimes_{(A/^{\Lbb}t,\Zbb)_{\bs}}^{\Lbb} Q/^{\Lbb}t)
\label{eq2}
\end{align}
 is an equivalence.
We take a set $I$ such that there exists an isomorphism $\Zbb_{\bs}[S]\cong \prod_I \Zbb$.
Then $(P/^{\Lbb}t)^{\lor}$ is equivalent to $R\intHom_{\Zbb}(\prod_I \Zbb , A/^{\Lbb}t)\simeq \bigoplus_I A/^{\Lbb}t$, so the morphism (\ref{eq2}) is equivalent to 
\begin{align*}
R\Hom_{\Zbb} (\Zbb, \bigoplus_I (Q/^{\Lbb}t)_{\disc}) \to R\Hom_{\Zbb} (\Zbb, \bigoplus_I Q/^{\Lbb}t),
\end{align*}
which is an equivalence.
\end{proof}


\begin{corollary}\label{cor:nuc}
Let $A_0, A_0^{\prime}$ be subrings of a ring $\pi_0(A)_{\disc}$.
Let $M$ be an object of $\Dcal(A)$.
Then $M$ is a nuclear object of $\Dcal((A,A_0)_{\bs})$ if and only if $M$ is a nuclear object of $\Dcal((A,A_0^{\prime})_{\bs})$.
\end{corollary}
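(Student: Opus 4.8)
The plan is to observe that Propositions~\ref{bannuc} and~\ref{nucban} together furnish an intrinsic characterization of nuclearity that does not see the auxiliary subring: an object $M\in\Dcal(A)$ is nuclear as an object of $\Dcal((A,A_0)_{\bs})$ if and only if $M$ can be written as a filtered colimit of Banach objects of $\Dcal(A)$, and the latter condition refers only to the $t$-adic topology on $A$, not to $A_0$. Once this is in hand the corollary is immediate, and by the symmetry of the roles of $A_0$ and $A_0^{\prime}$ it suffices to prove a single implication.

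First I would assume that $M$ is a nuclear object of $\Dcal((A,A_0)_{\bs})$. By Proposition~\ref{nucban}, $M$ can be expressed as a filtered colimit $M\simeq\varinjlim_i M_i$ with each $M_i$ a Banach object of $\Dcal(A)$; here I only use that $M$ lies in $\Dcal((A,A_0)_{\bs})$ and is nuclear there, and the output depends solely on $A$ and the element $t\in\pi_0(A)$.

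Next I would feed this presentation into Proposition~\ref{bannuc}, applied with the subring $A_0^{\prime}\subset\pi_0(A)_{\disc}$ in place of $A_0$: since $M$ is a filtered colimit of Banach objects of $\Dcal(A)$, that proposition yields that $M$ is $(A,A_0^{\prime})_{\bs}$-complete and nuclear as an object of $\Dcal((A,A_0^{\prime})_{\bs})$. This gives the ``only if'' direction, and exchanging $A_0$ and $A_0^{\prime}$ gives the ``if'' direction, completing the proof.

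I do not expect any real obstacle here: the entire content has been absorbed into Propositions~\ref{bannuc} and~\ref{nucban}, which already decouple the condition ``nuclear'' from the choice of ring of integral elements. The only point requiring a word of care is that the hypothesis of Proposition~\ref{bannuc} is satisfied for \emph{every} subring of $\pi_0(A)_{\disc}$, so in particular for $A_0^{\prime}$, which is exactly what licenses the final step.
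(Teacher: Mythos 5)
Your argument is correct and is exactly the one the paper intends: the corollary is stated immediately after Propositions~\ref{bannuc} and~\ref{nucban} precisely because together they characterize nuclearity over $(A,A_0)_{\bs}$ as being a filtered colimit of Banach objects of $\Dcal(A)$, a condition independent of the subring. Your only-if/if symmetry argument matches the paper's (implicit) proof, so there is nothing to add.
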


\begin{corollary}\label{cor:nuccard}
Let $\kappa$ be a strong limit cardinal whose cofinality is larger than $\omega$.
Let $A_0$ be a subring of a ring $\pi_0(A)_{\disc}$
Then for a nuclear object $M$ of $(A,A_0)_{\bs}$, $M$ lies in $\Dcal(A)_{\kappa}$.
\end{corollary}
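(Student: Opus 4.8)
The plan is to reduce to the case of a Banach object by means of Proposition~\ref{nucban}, then to write every Banach object as a countable homotopy limit of discrete objects --- all of which evidently lie in $\Dcal(A)_\kappa$ --- and to conclude using that $\Dcal(A)_\kappa$ is stable under countable limits, which is precisely where the hypothesis $\mathrm{cof}(\kappa)>\omega$ is used.

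First I would check that $\Dcal(A)_\kappa$ is well defined, i.e.\ that $A\in\Ani\Ring^{\cond}_{\kappa}$. Since $A$ is $t$-adically complete with $A/^{\Lbb}t$ discrete, d\'evissage along the fiber sequences $A/^{\Lbb}t\to A/^{\Lbb}t^{n}\to A/^{\Lbb}t^{n-1}$ shows each $A/^{\Lbb}t^{n}$ is discrete, and $A\simeq R\varprojlim_{n}A/^{\Lbb}t^{n}$ is a countable limit of objects of $\Dcal(\Zbb)^{\cond}_{\omega}$. As $\mathrm{cof}(\kappa)>\omega$, such a limit lies in $\Dcal(\Zbb)^{\cond}_{\kappa}$ (cf.\ the proof of \cite[Proposition~2.1.11]{Mann22}; concretely $R\varprojlim_{n\in\Nbb}$ has cohomological dimension $\leq 1$, so one only needs closure under countable products, which holds because $\kappa$ is a strong limit cardinal of uncountable cofinality). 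Hence $A\in\Ani\Ring^{\cond}_{\kappa}$, and the same reasoning shows $\Dcal(A)_\kappa$ is closed under countable limits taken in $\Dcal(A)$.

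By Proposition~\ref{nucban}, $M$ is a filtered colimit of Banach objects of $\Dcal(A)$, and this colimit may be computed in $\Dcal(A)$, since a filtered colimit of $(A,A_0)_{\bs}$-complete objects is again $(A,A_0)_{\bs}$-complete (solid completion commutes with filtered colimits; alternatively use Remark~\ref{rem:sta} together with exactness of filtered colimits). As $\Dcal(A)_\kappa$ is closed under small colimits on $\Dcal(A)$, it suffices to show that a Banach object $M$ lies in $\Dcal(A)_\kappa$. For such $M$, the same d\'evissage shows $M/^{\Lbb}t^{n}$ is discrete for every $n$, so its underlying object lies in $\Dcal(\Zbb)^{\cond}_{\omega}\subset\Dcal(\Zbb)^{\cond}_{\kappa}$ and hence $M/^{\Lbb}t^{n}\in\Dcal(A)_\kappa$. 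As the forgetful functor $\Dcal(A)\to\Dcal(\Zbb)^{\cond}$ preserves limits, $M\simeq R\varprojlim_{n}M/^{\Lbb}t^{n}$ exhibits $M$ as a countable limit of objects of $\Dcal(A)_\kappa$, and we conclude by the previous paragraph.

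The only genuinely delicate point, and hence the expected main obstacle, is the cardinal bookkeeping: Lemma~\ref{lem:limcard} is stated for $\lambda$-small limits with $\lambda$ a strong limit cardinal, so it cannot be quoted verbatim for countable limits, and one must instead invoke the underlying fact that $\Dcal(\Zbb)^{\cond}_{\kappa}$ is closed under limits indexed by diagrams of cardinality $<\mathrm{cof}(\kappa)$ (in particular countable ones, since $\mathrm{cof}(\kappa)>\omega$). Everything else is routine $t$-adic d\'evissage.
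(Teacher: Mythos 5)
Your proof is correct and follows essentially the same route as the paper: reduce via Proposition~\ref{nucban} to Banach objects, write a Banach $M$ as $R\varprojlim_n M/^{\Lbb}t^n$ with each $M/^{\Lbb}t^n$ discrete, and conclude from closure of $\Dcal(A)_{\kappa}$ under countable limits using $\mathrm{cof}(\kappa)>\omega$. Your extra remarks (checking $A\in\Ani\Ring^{\cond}_{\kappa}$, and noting that the countable-limit closure should be drawn from the proof of \cite[Proposition~2.1.11]{Mann22} rather than from a verbatim application of Lemma~\ref{lem:limcard}) only make explicit points the paper passes over quickly.
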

\begin{proof}
We may assume that $M$ is a Banach object of $\Dcal((A,\Zbb)_{\bs})$.
By Lemma \ref{lem:limcard}, $\Dcal(A)_{\kappa}\subset \Dcal(A)$ is closed under $\Nbb$-indexed limits.
Since $M/^{\Lbb}t^n$ is discrete (i.e., $M/^{\Lbb}t^n \in \Dcal(A)_{\omega} \subset \Dcal(A)_{\kappa}$) for every non-negative integer $n$, we get that $M\in \Dcal(A)_{\kappa}$, where we note that $ M \simeq \varprojlim_{n}M/^{\Lbb}t^n$.
\end{proof}

We note the following lemma.

\begin{lemma}\label{lem:nuccoef}
Let $\Acal \to \Bcal$ be a morphism of analytic animated rings.
We assume that the analytic animated ring structure of $\Bcal$ is induced from $\Acal$ and that $\underline{\Bcal}$ is nuclear as an object of $\Dcal(\Acal)$.
Then, $M \in \Dcal(\Bcal)$ is nuclear as an object of $\Dcal(\Acal)$ if and only if $M$ is nuclear as an object of $\Dcal(\Bcal)$.
\end{lemma}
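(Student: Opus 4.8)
The plan is to prove a single statement that subsumes both implications: writing $f\colon\Acal\to\Bcal$ for the given morphism, $f^{*}=-\otimes_{\Acal}^{\Lbb}\Bcal\colon\Dcal(\Acal)\to\Dcal(\Bcal)$ for extension of scalars and $f_{*}$ for restriction of scalars, I would show that for $M\in\Dcal(\Bcal)$ the module $M$ is nuclear over $\Bcal$ if and only if $f_{*}M$ is nuclear over $\Acal$. The hypothesis that the analytic structure on $\Bcal$ is induced from $\Acal$ is exactly what guarantees that $f_{*}$ takes values in $\Dcal(\Acal)$; moreover $f_{*}$ is conservative, which will be the engine of the argument.

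First I would recall the standard criterion: an object $N$ of $\Dcal(\Acal)$ (resp.\ of $\Dcal(\Bcal)$) is nuclear if and only if, for every extremally disconnected set $S$, the canonical ``nuclearity comparison'' map
\[
R\intHom_{\Acal}(\Acal[S],\Acal)\otimes_{\Acal}^{\Lbb}N\lra R\intHom_{\Acal}(\Acal[S],N)
\]
(resp.\ its $\Bcal$-analogue) is an equivalence; here one uses that both sides are exact contravariant functors in the compact variable and that the objects $\Acal[S]$, resp.\ $\Bcal[S]=f^{*}\Acal[S]$, generate the compact objects of the respective categories under finite colimits and retracts. Since $f_{*}$ is conservative, it then suffices to construct, naturally in $S$, an identification of $f_{*}$ of the $\Bcal$-comparison map for $(M,\Bcal[S])$ with the $\Acal$-comparison map for $(f_{*}M,\Acal[S])$.

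To set up this identification I would argue as follows. On the ``internal Hom'' side, the tensor--internal-Hom adjunction attached to the symmetric monoidal adjunction $f^{*}\dashv f_{*}$ provides a natural equivalence $f_{*}R\intHom_{\Bcal}(f^{*}\Acal[S],M)\simeq R\intHom_{\Acal}(\Acal[S],f_{*}M)$. On the ``tensor'' side, the crucial point --- and the only place the hypothesis on $\underline{\Bcal}$ intervenes --- is that the canonical $\Bcal$-linear map $f^{*}R\intHom_{\Acal}(\Acal[S],\Acal)\to R\intHom_{\Bcal}(\Bcal[S],\Bcal)$ is an equivalence: applying the conservative functor $f_{*}$ together with the adjunction just mentioned turns it into $R\intHom_{\Acal}(\Acal[S],\Acal)\otimes_{\Acal}^{\Lbb}\underline{\Bcal}\to R\intHom_{\Acal}(\Acal[S],\underline{\Bcal})$, which is an equivalence precisely because $\underline{\Bcal}$ is nuclear over $\Acal$. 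Granting this, the projection formula $f_{*}\bigl(f^{*}(-)\otimes_{\Bcal}^{\Lbb}M\bigr)\simeq(-)\otimes_{\Acal}^{\Lbb}f_{*}M$ supplies a natural equivalence $f_{*}\bigl(R\intHom_{\Bcal}(\Bcal[S],\Bcal)\otimes_{\Bcal}^{\Lbb}M\bigr)\simeq R\intHom_{\Acal}(\Acal[S],\Acal)\otimes_{\Acal}^{\Lbb}f_{*}M$, and a diagram chase shows that these two equivalences intertwine the two comparison maps. Ranging over all $S$ and invoking conservativity of $f_{*}$ then yields the claimed equivalence, hence the lemma.

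The conceptual content here is slight; I expect the main obstacle to be bookkeeping: verifying that every map in the above chain is the canonical one and that the resulting square of comparison maps commutes, and checking that the tensor--internal-Hom adjunction, the projection formula, and the conservativity of $f_{*}$ are genuinely available in the setting of analytic animated rings (they follow formally from the symmetric monoidal adjunction $f^{*}\dashv f_{*}$ and the constructions of \cite{Mann22}, but this should be said explicitly). Finally, as elsewhere in the paper, one should fix a suitable solid cutoff cardinal $\kappa$ and perform the argument inside the presentable categories $\Dcal(\Acal)_{\kappa}$ and $\Dcal(\Bcal)_{\kappa}$, so that the colimits implicit in ``nuclear'' and the functor $f_{*}$ are well behaved.
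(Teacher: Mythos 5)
Your proposal is correct and, despite the more abstract packaging (identifying the two nuclearity comparison maps under the conservative forgetful functor $f_*$, via the adjunction and projection formula), it uses exactly the same ingredients as the paper's proof: the identification $R\intHom_{\Acal}(\Acal[S],-)\simeq R\intHom_{\Bcal}(\Bcal[S],-)$ on $\Bcal$-modules coming from the induced structure, nuclearity of $\underline{\Bcal}$ over $\Acal$ to rewrite $R\intHom_{\Acal}(\Acal[S],\underline{\Bcal})$, and base change of the duals to pass between the two tensor sides. The paper simply writes out the two resulting chains of equivalences separately instead of deducing both directions at once from conservativity, so this is essentially the same argument.
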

\begin{proof}
Let $S$ be an extremally disconnected set.
First, we assume that $M$ is nuclear as an object of $\Dcal(\Bcal)$.
Then we have
\begin{align*}
&R\intHom_{\Acal} (\Acal[S], M) \\
\simeq &R\intHom_{\Bcal} (\Bcal[S], M) \\
\simeq &R\intHom_{\Bcal} (\Bcal[S], \underline{\Bcal})\otimes_{\Bcal}^{\Lbb} M \\
\simeq &R\intHom_{\Acal} (\Acal[S], \underline{\Bcal})\otimes_{\Bcal}^{\Lbb} M \\
\simeq &\left(R\intHom_{\Acal} (\Acal[S], \underline{\Acal})\otimes_{\Acal}^{\Lbb} \underline{\Bcal} \right)\otimes_{\Bcal}^{\Lbb} M \\
\simeq &R\intHom_{\Acal} (\Acal[S], \underline{\Acal})\otimes_{\Acal}^{\Lbb} M,
\end{align*} 
where the second equivalence follows from nuclearity of $M$ as an object of $\Dcal(\Bcal)$ and the fourth equivalence follows from nuclearity of $\underline{\Bcal}$ as an object of $\Dcal(\Acal)$, 
so $M$ is nuclear as an object of $\Dcal(\Acal)$.

Second, we assume that $M$ is nuclear as an object of $\Dcal(\Acal)$.
Then we have
\begin{align*}
&R\intHom_{\Bcal} (\Bcal[S], \underline{\Bcal})\otimes_{\Bcal}^{\Lbb} M \\
\simeq &R\intHom_{\Acal} (\Acal[S], \underline{\Bcal})\otimes_{\Bcal}^{\Lbb} M \\
\simeq &\left(R\intHom_{\Acal} (\Acal[S], \underline{\Acal})\otimes_{\Acal}^{\Lbb} \underline{\Bcal}\right)\otimes_{\Bcal}^{\Lbb} M \\
\simeq &R\intHom_{\Acal} (\Acal[S], \underline{\Acal})\otimes_{\Acal}^{\Lbb} M\\
\simeq &R\intHom_{\Acal} (\Acal[S], M) \\
\simeq &R\intHom_{\Bcal} (\Bcal[S], M),
\end{align*} 
where the second equivalence follows from nuclearity of $\underline{\Bcal}$ as an object of $\Dcal(\Acal)$ and the fourth equivalence follows from nuclearity of $M$ as an object of $\Dcal(\Acal)$, 
so $M$ is nuclear as an object of $\Dcal(\Bcal)$.
\end{proof}

\begin{proposition}\label{prop:nucsta}
Let $A_0$ be a subring of a ring $\pi_0(A)_{\disc}$.
Then an object $M$ of $\Dcal((A,A_0)_{\bs})$ is nuclear if and only if $H^i(M)$ is nuclear for every $i\in \Zbb$.
\end{proposition}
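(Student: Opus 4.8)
The plan is to prove the two implications separately, using the description of nuclear objects of $\Dcal((A,A_0)_{\bs})$ as the filtered colimits of Banach objects of $\Dcal(A)$ (Propositions \ref{bannuc} and \ref{nucban}), together with the fact that the nuclear objects of $\Dcal((A,A_0)_{\bs})$ form a full subcategory closed under shifts, extensions, and filtered colimits. The last two closure properties hold because, in the nuclearity criterion appearing in the proof of Proposition \ref{bannuc} — the natural map \[\alpha_M(S)\colon R\intHom_A((A,A_0)_{\bs}[S],\underline{A})\otimes_{(A,A_0)_{\bs}}^{\Lbb}M\lra R\intHom_A((A,A_0)_{\bs}[S],M),\] required to be an equivalence for all extremally disconnected $S$ — both the source and the target are exact functors of $M$, and both commute with filtered colimits (the target because $R\intHom_A((A,A_0)_{\bs}[S],-)$ preserves filtered colimits, as $(A,A_0)_{\bs}[S]$ is compact in $\Dcal((A,A_0)_{\bs})_{\kappa}$ for suitable $\kappa$).

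For the ``only if'' direction, suppose $M$ is nuclear and write $M\simeq\varinjlim_j M_j$ with each $M_j$ a Banach object of $\Dcal(A)$, using Proposition \ref{nucban}. Since $H^i$ commutes with filtered colimits, $H^i(M)\simeq\varinjlim_j H^i(M_j)$, so it suffices to see that each $H^i(M_j)$ is nuclear. But $H^i(M_j)[-i]\simeq\tau^{\leq i}\tau^{\geq i}M_j$ is Banach by Lemma \ref{lem:trunc}, hence nuclear by Proposition \ref{bannuc}, and so $H^i(M_j)$ — being a shift of a nuclear object — is nuclear; therefore so is the filtered colimit $H^i(M)$.

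For the ``if'' direction, assume every $H^i(M)$ is nuclear. If $M$ is cohomologically bounded, the finite filtration of $M$ by the truncations $\tau^{\geq i}M$, whose successive fibers are the $H^i(M)[-i]$, shows that $M$ is nuclear by closure of the nuclear objects under extensions and shifts. If $M$ is merely bounded above, then by left completeness of the $t$-structure on $\Dcal((A,A_0)_{\bs})$ we have $M\simeq\varprojlim_m\tau^{\geq -m}M$, a limit of truncations that are now cohomologically bounded, hence nuclear by the first case. Finally, a general $M$ is the filtered colimit $\varinjlim_n\tau^{\leq n}M$ of bounded-above objects with nuclear cohomology, so it is nuclear by the second case.

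The substantial point, which I expect to be the main obstacle, lies in the second case: the nuclear subcategory is \emph{not} closed under arbitrary limits, so one must verify by hand that $M\simeq\varprojlim_m\tau^{\geq -m}M$ remains nuclear. Since $R\intHom_A((A,A_0)_{\bs}[S],-)$ commutes with all limits, it is enough to show that the functor $R\intHom_A((A,A_0)_{\bs}[S],\underline{A})\otimes_{(A,A_0)_{\bs}}^{\Lbb}(-)$ sends the equivalence $M\simeq\varprojlim_m\tau^{\geq -m}M$ to an equivalence; granting this, $\alpha_M(S)\simeq\varprojlim_m\alpha_{\tau^{\geq -m}M}(S)$ is an equivalence because each $\alpha_{\tau^{\geq -m}M}(S)$ is. For that commutation I would argue as follows. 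First, $R\intHom_A((A,A_0)_{\bs}[S],\underline{A})$ is cohomologically bounded above: writing $\Zbb_{\bs}[S]\cong\prod_I\Zbb$ by \cite[Corollary 5.5]{CM} and using that $A$ is $t$-adically complete and that $A/^{\Lbb}t$, hence each $A/^{\Lbb}t^k$, is discrete, it is equivalent to $\varprojlim_k\bigoplus_I(\underline{A}/^{\Lbb}t^k)$, a countable limit of a tower of connective objects. Second, the tower $(\tau^{\geq -m}M)_m$ has fibers $H^{-m}(M)[m]$ concentrated in cohomological degrees tending to $-\infty$, so after tensoring with a cohomologically bounded-above object — using right $t$-exactness of $\otimes_{(A,A_0)_{\bs}}^{\Lbb}$ — the resulting tower is eventually constant in each fixed cohomological degree; hence both sides of the map in question are computed degreewise and agree. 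This is the only place where the standing hypotheses that $A$ is $t$-adically complete with $A/^{\Lbb}t$ discrete are used.
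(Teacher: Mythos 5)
Your argument is correct in outline and reaches the same two pillars as the paper (Propositions \ref{bannuc}, \ref{nucban} and Lemma \ref{lem:trunc} for the ``only if'' direction; a Postnikov-tower d\'evissage for the ``if'' direction), but the key technical step is handled by a genuinely different route. The paper first invokes Corollary \ref{cor:nuc} and Lemma \ref{lem:nuccoef} to reduce the whole question to the analytic ring $\Zbb[[t]]_{\bs}$, where $\Zbb[[t]]_{\bs}[S]\simeq\prod_I\Zbb[[t]]$ is explicit, and then shows that the dual $\Zbb[[t]]_{\bs}[S]^{\lor}\simeq\hat{\bigoplus_I}\Zbb[[t]]$ is \emph{flat} (a filtered colimit of the modules $\prod_I\Zbb[[t]]$), so that $\Zbb[[t]]_{\bs}[S]^{\lor}\otimes^{\Lbb}-$ commutes with Postnikov limits for trivial reasons. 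You instead stay over $(A,A_0)_{\bs}$ and only establish that the dual is cohomologically bounded above, compensating with the observation that the fibers of the tower $\{\tau^{\geq-m}M\}$ escape to cohomological degree $-\infty$, so that the relevant limit is computed degreewise; this weaker input is indeed sufficient, and your explicit three-step d\'evissage (bounded, bounded above, general) spells out what the paper leaves implicit. What the paper's reduction buys is precisely that the dual computation happens over the concrete ring $\Zbb[[t]]$ with its standard solid structure; what your version buys is avoiding the base-change lemmas at the cost of a slightly more delicate identification of the dual.

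One point needs repair. Your identification $R\intHom_A((A,A_0)_{\bs}[S],\underline{A})\simeq\varprojlim_k\bigoplus_I(\underline{A}/^{\Lbb}t^k)$ tacitly uses $\Zbb_{\bs}[S]\cong\prod_I\Zbb$, i.e.\ it is really a computation for the structure induced from $\Zbb_{\bs}$; for a general subring $A_0\subset\pi_0(A)_{\disc}$ the object $(A,A_0)_{\bs}[S]$ is not of this simple form. You should first invoke Corollary \ref{cor:nuc} to replace $A_0$ by $\Zbb$ (exactly as the paper does before its own computation), and then justify the mod-$t^k$ identification of the dual via $t$-adic completeness of the internal Hom (\cite[Proposition 2.12.10]{Mann22}), as in the proofs of Propositions \ref{bannuc} and \ref{nucban}. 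With these two adjustments your proof goes through.
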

\begin{proof}
The ``only if" direction follows from Proposition \ref{nucban} and Lemma \ref{lem:trunc}.
We will prove the ``if" direction.
We may assume $(A,A_0)_{\bs}=\Zbb[[t]]_{\bs}$ by Corollary \ref{cor:nuc} and Lemma \ref{lem:nuccoef}.
It is enough to show that for every extremally disconnected set $S$, functors $\Zbb[[t]][S]^{\lor} \otimes_{\Zbb[[t]]_{\bs}}^{\Lbb} -$ and $R\intHom_{\Zbb[[t]]}(\Zbb[[t]][S],-)$ commute with Postnikov limits, where $\Zbb[[t]][S]^{\lor}=R\intHom_{\Zbb[[t]]}(\Zbb[[t]][S],\Zbb[[t]])$.
It is clear that the latter functor commute with Postnikov limits.
We will prove that the functor $\Zbb[[t]][S]^{\lor} \otimes_{\Zbb[[t]]_{\bs}}^{\Lbb} -$ has finite Tor dimension $0$ (that is, $\Zbb[[t]][S]^{\lor}$ is a flat $\Zbb[[t]]_{\bs}$-module).
We take a set $I$ such that there is an isomorphism $\Zbb_{\bs}[S] \simeq \prod_{I} \Zbb$.
Then we get $$\Zbb[[t]]_{\bs}[S]^{\lor} \simeq R\intHom_{\Zbb[[t]]}(\prod_I \Zbb[[t]],\Zbb[[t]]) \simeq \hat{\bigoplus_I} \Zbb[[t]],$$
where $\hat{\bigoplus_I} \Zbb[[t]]$ is the $t$-adic completion of $\bigoplus_I \Zbb[[t]]$.
It can be written as a filtered colimit of $\prod_I \Zbb[[t]]$, so we get the claim.
\end{proof}

Let us introduce the notion of animated affinoid algebras.
Let $K$ be a complete non-archimedean field, and let $\Ocal_{K}$ denote the ring of integers of $K$, and $\pi \in K$ denote a pseudo-uniformizer of $K$.
\begin{definition}\label{def:aff}
An \textit{animated affinoid $K$-algebra} is a condensed animated $K$-algebra $A$ satisfying the following conditions:
\begin{enumerate}
\item
A condensed $K$-algebra $\pi_0A$ is an affinoid $K$-algebra in the usual sense.
\item
The object $A$ of $\Dcal(K)^{\cond}$ is $(K,\Zbb)_{\bs}$-complete and nuclear as an object of $\Dcal((K,\Zbb)_{\bs})$. 
\end{enumerate}
A morphism of animated affinoid $K$-algebras is a morphism of condensed animated $K$-algebras.
\end{definition}

\begin{remark}
Let $A$ be an affinoid $K$-algebra in the usual sense.
We take a ring of definition $A_0\subset A$ which is $\Ocal_K$-algebra.
Then $A_0$ is a Banach $\Ocal_K$-module, so $A_0$ is nuclear as an object of $\Dcal((\Ocal_K,\Zbb)_{\bs})$.
Since $A$ can be written as a colimit of $A_0$, $A$ is also nuclear as an object of $\Dcal((\Ocal_K,\Zbb)_{\bs})$.
Therefore, by Lemma \ref{lem:nuccoef}, we find that $A$ is also nuclear as an object of $\Dcal((K,\Zbb)_{\bs})$, where we note that $K$ is nuclear as an object of $\Dcal((\Ocal_K,\Zbb)_{\bs})$.
In particular, an affinoid $K$-algebra is an animated affinoid $K$-algebra.
\end{remark}

\begin{remark}
Let $A$ be an animated affinoid $K$-algebra.
Then by Lemma \ref{lem:nuccoef}, $A$ is nuclear as an object of $\Dcal((\Ocal_K,\Zbb)_{\bs})$.
By Corollary \ref{cor:nuc}, $A$ is also nuclear as an object of $\Dcal((\Ocal_K,\Ocal_K)_{\bs})$, so $A$ is also nuclear as an object of $\Dcal((K,\Ocal_K)_{\bs})$ by Lemma \ref{lem:nuccoef}.
\end{remark}


\begin{proposition}
Let $B\leftarrow A \to C$ be a diagram of animated affinoid $K$-algebras.
Then $B\otimes_{(A,\Zbb)_{\bs}}^{\Lbb} C$ is also an animated affinoid $K$-algebras
\end{proposition}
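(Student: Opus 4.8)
The plan is to verify the two conditions of Definition \ref{def:aff} for $D\coloneqq B\otimes_{(A,\Zbb)_{\bs}}^{\Lbb}C$, which is a condensed animated $K$-algebra since it receives algebra maps from $B$ and $C$. I would dispatch condition (2) — nuclearity over $(K,\Zbb)_{\bs}$ — first, as I expect it to be the more formal half. Since $A$, $B$, $C$ are animated affinoid $K$-algebras, each is nuclear as an object of $\Dcal((K,\Zbb)_{\bs})$, hence of $\Dcal((\Ocal_K,\Zbb)_{\bs})$ by Lemma \ref{lem:nuccoef} applied to $(\Ocal_K,\Zbb)_{\bs}\to(K,\Zbb)_{\bs}$ (using that $K$ is nuclear over $(\Ocal_K,\Zbb)_{\bs}$). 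The analytic animated ring structures on $(A,\Zbb)_{\bs}$ and $(\Ocal_K,\Zbb)_{\bs}$ are both the ones induced from $\Zbb_{\bs}$, so the former is induced from the latter along $\Ocal_K\to A$, and $\underline{(A,\Zbb)_{\bs}}=A$ is nuclear over $(\Ocal_K,\Zbb)_{\bs}$; thus Lemma \ref{lem:nuccoef} applies to $(\Ocal_K,\Zbb)_{\bs}\to(A,\Zbb)_{\bs}$ and shows that $B$ and $C$, viewed as objects of $\Dcal((A,\Zbb)_{\bs})$ along $A\to B$ and $A\to C$, are nuclear over $(A,\Zbb)_{\bs}$. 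Next I would invoke that nuclear objects of $\Dcal((A,\Zbb)_{\bs})$ are closed under the tensor product: writing them as filtered colimits of basic nuclear objects (\cite{AG, Mann22}), this reduces to the facts that the tensor product of compact objects is compact (the unit $(A,\Zbb)_{\bs}[\mathrm{pt}]$ being compact) and that the tensor product of trace-class maps is trace-class. Hence $D$ is nuclear over $(A,\Zbb)_{\bs}$, and applying Lemma \ref{lem:nuccoef} in the other direction — first to $(\Ocal_K,\Zbb)_{\bs}\to(A,\Zbb)_{\bs}$, giving nuclearity of $D$ over $(\Ocal_K,\Zbb)_{\bs}$, then, since $D$ is a $K$-module and hence $(\Ocal_K,\Zbb)_{\bs}$-completeness and $(K,\Zbb)_{\bs}$-completeness of $D$ amount to the same solidity of its underlying $\Zbb$-complex, to $(\Ocal_K,\Zbb)_{\bs}\to(K,\Zbb)_{\bs}$ — shows that $D$ is $(K,\Zbb)_{\bs}$-complete and nuclear as an object of $\Dcal((K,\Zbb)_{\bs})$.

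For condition (1) I would identify $\pi_0 D$. A base-change argument — the derived tensor product being right $t$-exact and, on $\pi_0$, depending only on the $\pi_0$'s of its inputs — gives $\pi_0 D\simeq\pi_0\bigl(\pi_0 B\otimes_{(\pi_0 A,\Zbb)_{\bs}}^{\Lbb}\pi_0 C\bigr)$, reducing matters to the case where $A$, $B$, $C$ are static affinoid $K$-algebras in the usual sense. In that case the comparison between condensed and classical non-archimedean functional analysis identifies $\pi_0\bigl(\pi_0 B\otimes_{(\pi_0 A,\Zbb)_{\bs}}^{\Lbb}\pi_0 C\bigr)$ with the classical completed tensor product $\pi_0 B\hotimes_{\pi_0 A}\pi_0 C$ (\cite{And23, Mikami22}). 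Finally, choosing a presentation $\pi_0 B\cong\pi_0 A\langle x_1,\dots,x_n\rangle/I$ — possible because $\pi_0 A\to\pi_0 B$ is a morphism of affinoid $K$-algebras and hence $\pi_0 B$ is topologically of finite type over $\pi_0 A$ — yields $\pi_0 B\hotimes_{\pi_0 A}\pi_0 C\cong\pi_0 C\langle x_1,\dots,x_n\rangle/I\,\pi_0 C\langle x_1,\dots,x_n\rangle$, with the ideal closed by Noetherianity, which is visibly an affinoid $K$-algebra. Together with the previous paragraph this verifies both conditions.

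The step I expect to be the main obstacle is the $\pi_0$-identification underlying condition (1): reducing the animated analytic tensor product $\pi_0\bigl(B\otimes_{(A,\Zbb)_{\bs}}^{\Lbb}C\bigr)$ to the static level requires care because of the completion built into the analytic tensor product, and the identification of the static solid tensor product of affinoid $K$-algebras with their classical completed tensor product is a genuine input from the condensed reformulation of rigid geometry. A secondary point to pin down carefully is that all the analytic animated ring structures entering Lemma \ref{lem:nuccoef} — on $(\Ocal_K,\Zbb)_{\bs}$, on $(A,\Zbb)_{\bs}$, and on $B$ and $C$ — are compatibly the ones induced from $\Zbb_{\bs}$, so that the lemma's hypothesis on induced structures, as well as the completeness bookkeeping between $(\Ocal_K,\Zbb)_{\bs}$ and $(K,\Zbb)_{\bs}$, is legitimately met.
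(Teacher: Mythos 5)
Your proposal is correct in outline — it verifies the same two conditions of Definition \ref{def:aff} and makes the same reduction of $\pi_0$ to the static case — but it differs from the paper at both substantive points. For nuclearity the paper writes $B\otimes_{(A,\Zbb)_{\bs}}^{\Lbb}C$ as the geometric realization of the Bar construction $\varinjlim_{[n]\in\Delta^{\op}}\bigl(B\otimes_{(K,\Zbb)_{\bs}}^{\Lbb}A^{\otimes n}\otimes_{(K,\Zbb)_{\bs}}^{\Lbb}C\bigr)$, so that $(K,\Zbb)_{\bs}$-completeness and nuclearity follow from stability of nuclear objects under tensor products and colimits (\cite[Theorem 8.6]{CC}); you instead transfer nuclearity of $B$ and $C$ to $(A,\Zbb)_{\bs}$ via Lemma \ref{lem:nuccoef}, use closure of nuclears under the relative tensor product there, and transfer back. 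This is a legitimate alternative; your detour through $(\Ocal_K,\Zbb)_{\bs}$ is unnecessary, since Lemma \ref{lem:nuccoef} applies directly to $(K,\Zbb)_{\bs}\to(A,\Zbb)_{\bs}$ ($A$ being nuclear over $(K,\Zbb)_{\bs}$ by definition), and your observation that completeness is automatic because all these structures are induced from $\Zbb_{\bs}$ is right. The only caveat is that you invoke tensor-stability of nuclear objects over the animated analytic ring $(A,\Zbb)_{\bs}$, whereas the paper only ever tensors over the static base $(K,\Zbb)_{\bs}$; the formal trace-class argument you sketch does cover this, so it is a presentational rather than a mathematical difference.

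For condition (1), however, the identification of the static solid tensor product with the classical completed tensor product is exactly where the paper does its work, and you only cite it. The paper presents $\pi_0B=\pi_0A\langle X_1,\ldots,X_m\rangle/(f_1,\ldots,f_n)$, reduces to showing $A\langle X_1,\ldots,X_m\rangle\otimes_{(A,\Zbb)_{\bs}}C\simeq C\langle X_1,\ldots,X_m\rangle$, and proves this by choosing compatible rings of definition $A_0\subset A$ and $C_0\subset C$, observing that both sides of $A_0\langle X_1,\ldots,X_m\rangle\otimes_{(A_0,\Zbb)_{\bs}}C_0\to C_0\langle X_1,\ldots,X_m\rangle$ are $\pi$-adically complete by \cite[Proposition 2.12.10]{Mann22}, and checking the map modulo $\pi$, where it becomes the discrete isomorphism $A_0/\pi[X_1,\ldots,X_m]\otimes_{(A_0/\pi,\Zbb)_{\bs}}C_0/\pi\cong C_0/\pi[X_1,\ldots,X_m]$. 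If the comparison you want to quote from \cite{And23,Mikami22} is not available in exactly this form (the paper evidently did not treat it as quotable, since it proves it and only records $\pi_0(B\otimes_{(A,\Zbb)_{\bs}}^{\Lbb}C)\cong\pi_0B\hotimes_{\pi_0A}\pi_0C$ afterwards as a remark), you should supply this ring-of-definition and reduction-mod-$\pi$ argument; with it in place, your concluding classical computation $C\langle X_1,\ldots,X_m\rangle/(f_1,\ldots,f_n)$ matches the paper and the proof is complete.
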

\begin{proof}
By using the Bar resolution of $A$ over $K$, we get that $$B\otimes_{(A,\Zbb)_{\bs}}^{\Lbb} C \simeq \varinjlim_{n \in \Delta} (B \otimes_{(K,\Zbb)_{\bs}}^{\Lbb} A^{\otimes n-1} \otimes_{(K,\Zbb)_{\bs}}^{\Lbb} C),$$
so $B\otimes_{(A,\Zbb)_{\bs}}^{\Lbb} C$ is nuclear as an object of $\Dcal((K,\Zbb)_{\bs})$, where we note that a derived tensor product of nuclear objects is nuclear (\cite[Theorem 8.6]{CC}).
Next we will prove that $\pi_0(B\otimes_{(A,\Zbb)_{\bs}}^{\Lbb} C)$ is an affinoid $K$-algebra.
Since we have $\pi_0(B\otimes_{(A,\Zbb)_{\bs}}^{\Lbb} C)\simeq \pi_0B\otimes_{(\pi_0A,\Zbb)_{\bs}} \pi_0C$, we may assume that $A$, $B$, and $C$ are affinoid $K$-algebras.
We put $B=A\langle X_1,\ldots,X_m\rangle/(f_1,\ldots,f_n)$.
Then we have an isomorphism
$$(A\langle X_1,\ldots,X_m\rangle/(f_1,\ldots,f_n)) \otimes_{(A,\Zbb)_{\bs}}C \simeq (A\langle X_1,\ldots,X_m\rangle \otimes_{(A,\Zbb)_{\bs}}C)/(f_1,\ldots,f_n)$$
Therefore, it is enough to show that $$A\langle X_1,\ldots,X_m\rangle \otimes_{(A,\Zbb)_{\bs}}C \simeq C\langle X_1,\ldots,X_m\rangle.$$
We take rings of definition $A_0\subset A$ and $C_0 \subset C$ such that the image of $A_0$ under $A\to C$ is contained in $C_0$.
Then it suffices to show that 
\begin{align}\label{eq3}
A_0\langle X_1,\ldots,X_m\rangle \otimes_{(A_0,\Zbb)_{\bs}}C_0 \to C_0\langle X_1,\ldots,X_m\rangle.
\end{align}
is an equivalence.
Since both sides are $\pi$-adically complete by \cite[Proposition 2.12.10]{Mann22}, it is enough to show (\ref{eq3}) is an equivalence after taking modulo $\pi$.
The map (\ref{eq3}) modulo $\pi$ is equivalent to
$$A_0/\pi[X_1,\ldots,X_m] \otimes_{(A_0/\pi,\Zbb)_{\bs}} C_0/\pi \to C_0/\pi[X_1,\ldots,X_m],$$
which is an isomorphism.
\end{proof}
\begin{remark}
By the proof, we find that $\pi_0(B\otimes_{(A,\Zbb)_{\bs}}^{\Lbb} C)$ is isomorphic to the usual completed tensor product $\pi_0B \hotimes_{\pi_0A} \pi_0(C)$.
\end{remark}

\begin{definition}
\begin{enumerate}
\item
\textit{An animated affinoid pair of weakly finite type over $K$} is a pair $(A,A^+)$ where $A$ is an animated affinoid $K$-algebra and $A^+$ is a ring of integral elements of $\pi_0(A)$.
\item
For animated affinoid pairs $(A,A^+)$, $(B,B^+)$ of weakly finite type over $K$, a morphism $f \colon (A,A^+)\to (B,B^+)$ is a morphism $f \colon A \to B$ of animated affinoid $K$-algebras such that the image of $A^+$ under the morphism $\pi_0f \colon \pi_0A \to \pi_0B$ is contained in $B^+$.
\item
For an animated affinoid pair $(A,A^+)$ of weakly finite type over $K$, we can define an analytic animated ring $(A,A^+)_{\bs}$ by Definition \ref{def:ana}.
If $A^+$ is equal to the ring of power-bounded elements of $\pi_0A$, then we simply denote $(A,A^+)_{\bs}$ by $A_{\bs}$. 
\end{enumerate}
\end{definition}

\begin{remark}
For an animated affinoid pair $(A,A^+)$ of weakly finite type over $K$, $(\pi_0A, A^+)$ is an affinoid pair of weakly finite type over $(K,K^+)$ in the sense of \cite{Hub96}, where $K^+$ is the smallest ring of integral elements of $K$.
\end{remark}

\begin{proposition}\label{prop:cateq}
Let $\Ani\Aff_K$ denote the $\infty$-category of animated affinoid pairs of weakly finite type over $K$.
Then the functor $$\Ani\Aff_K \to \AnRing_{(K,\Zbb)_{\bs}};\; (A,A^+) \mapsto (A,A^+)_{\bs}$$ is fully faithful, where $\AnRing_{(K,\Zbb)_{\bs}}$ is the $\infty$-category of analytic animated $(K,\Zbb)_{\bs}$-algebras.
\end{proposition}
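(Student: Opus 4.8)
The plan is as follows. By the construction of Definition~\ref{def:ana}, the functor in question is the identity on underlying condensed animated $K$-algebras; hence for animated affinoid pairs $(A,A^+)$ and $(B,B^+)$ the two mapping anima to be compared are both unions of connected components of $\Map_{\CAlg(K)^{\cond}}(A,B)$. On the source the selected components are those $f$ with $\pi_0f(A^+)\subseteq B^+$; on the target they are those $f$ that underlie a morphism of analytic animated rings $(A,A^+)_{\bs}\to(B,B^+)_{\bs}$ (being such a morphism is a property of $f$, since a morphism of analytic rings is a morphism of the underlying condensed rings whose restriction-of-scalars functor preserves completeness). So everything reduces to showing that these two properties of a morphism $f\colon A\to B$ coincide.

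First I would reduce to the static case. By Remark~\ref{rem:sta}, an object of $\Dcal(B)$ is $(B,B^+)_{\bs}$-complete if and only if each of its cohomology groups is $(\pi_0B,B^+)_{\bs}$-complete, and likewise for $A$; since $H^i(f^{*}M)=(\pi_0f)^{*}H^i(M)$ for $M\in\Dcal(B)$, it follows that $f$ underlies a morphism $(A,A^+)_{\bs}\to(B,B^+)_{\bs}$ if and only if $\pi_0f$ underlies a morphism of static analytic rings $(\pi_0A,A^+)_{\bs}\to(\pi_0B,B^+)_{\bs}$, and of course $\pi_0f(A^+)\subseteq B^+$ is the same condition for $f$ and for $\pi_0f$. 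Thus we may assume $A=\pi_0A$ and $B=\pi_0B$ are affinoid $K$-algebras in the usual sense, $g\colon A\to B$ is a $K$-algebra homomorphism, and $A^+,B^+$ are rings of integral elements.

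The ``only if'' direction is the easy one. If $g(A^+)\subseteq B^+$, then $g$ restricts to a homomorphism $g_0\colon A^+_{\disc}\to B^+_{\disc}$ of discrete rings, which automatically underlies a morphism of solid analytic rings $(A^+_{\disc})_{\bs}\to(B^+_{\disc})_{\bs}$ by functoriality of the solidification of discrete rings (\cite{Mann22}). By Definition~\ref{def:ana} the analytic ring $(A,A^+)_{\bs}$ (resp.\ $(B,B^+)_{\bs}$) is obtained by inducing the structure from $(A^+_{\disc})_{\bs}$ along $A^+_{\disc}\to A$ (resp.\ from $(B^+_{\disc})_{\bs}$ along $B^+_{\disc}\to B$), and the resulting square of ring maps commutes; so the universal property of induced analytic ring structures (\cite[Proposition~12.8]{AG}) shows that restriction of scalars along $g$ sends $\Dcal((B,B^+)_{\bs})$ into $\Dcal((A,A^+)_{\bs})$, i.e.\ $g$ underlies a morphism of analytic rings.

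The ``if'' direction is where the real content lies, and I expect it to be the main obstacle: one must show that the analytic ring $(A,A^+)_{\bs}$, together with its underlying ring $A$, functorially remembers the ring of integral elements $A^+$. The plan is to recover $A^+\subseteq A$ from the rational localizations of $(A,A^+)_{\bs}$: for $a\in A$ one has $a\in A^+$ if and only if the localization of $(A,A^+)_{\bs}$ that replaces $A^+$ by the integral closure of $A^+[a]$ in $A$ is an equivalence, and this localization is built functorially out of the pair consisting of $(A,A^+)_{\bs}$ and the element $a\in A$. A morphism of analytic rings $(A,A^+)_{\bs}\to(B,B^+)_{\bs}$ over $g$ is compatible with these localizations, so the triviality of adjoining $a\in A^+$ on the source forces the triviality of adjoining $g(a)$ on the target, whence $g(a)\in B^+$. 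To justify the reconstruction I would reduce it to the level of $\pi$-adically complete rings of definition $A_0\subseteq A^+$ with $A=A_0[1/\pi]$, where the comparison between the condensed and the classical adic pictures is available from Andreychev's thesis~\cite{And23} and the work of Clausen--Scholze; alternatively, one shows directly that $(A,A^+)_{\bs}$ determines the adic spectrum $\Spa(A,A^+)$ together with its structure presheaf, so that $A^+=\{a\in A:\ |a(x)|\le 1\text{ for all }x\in\Spa(A,A^+)\}$ is recovered, and then $g(a)\in B^+$ follows by pulling back functions along $\Spa(B,B^+)\to\Spa(A,A^+)$.
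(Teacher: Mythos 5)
Your overall skeleton agrees with the paper's: since $(A,A^+)_{\bs}$ has underlying condensed animated ring $A$ and being a morphism of analytic animated rings is a property of the underlying morphism, fully faithfulness reduces to showing that $f\colon A\to B$ underlies a morphism $(A,A^+)_{\bs}\to(B,B^+)_{\bs}$ if and only if $\pi_0f(A^+)\subseteq B^+$; and your reduction to the static case via Remark \ref{rem:sta} (completeness is detected on cohomology objects, and restriction of scalars is $t$-exact) is exactly the chain of equivalences the paper writes out. Where the two arguments part ways is the static case itself: the paper finishes in one line by invoking \cite[Proposition 3.34]{And21}, which states precisely that for static affinoid pairs a $K$-algebra map underlies a morphism $(\pi_0A,A^+)_{\bs}\to(\pi_0B,B^+)_{\bs}$ if and only if it carries $A^+$ into $B^+$.

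This is where your proposal has a genuine gap. Your ``if'' direction rests on the claim that $a\in A^+$ if and only if the localization of $(A,A^+)_{\bs}$ replacing $A^+$ by the integral closure of $A^+[a]$ is an equivalence (equivalently, that the analytic ring structure remembers $A^+$). The nontrivial half of that claim --- triviality of the localization forces $a\in A^+$ --- is exactly equivalent to the statement you are trying to prove, and you do not establish it; you only gesture at rings of definition, Andreychev's thesis, or a reconstruction of $\Spa(A,A^+)$ with its structure presheaf. Proving it genuinely requires the explicit description of $(A,A^+)_{\bs}[S]$ (e.g.\ \cite[Theorem 3.27]{And21}) or an equivalent computation showing that for $b\notin B^+$ there is a $(B,B^+)_{\bs}$-complete module which is not solid for $\Zbb[T]\to B$, $T\mapsto b$; without that input the argument is circular at its pivotal step. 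The gap is easily repaired --- either cite \cite[Proposition 3.32, Proposition 3.34]{And21} as the paper does, or actually carry out the reconstruction using Andreychev's structure results --- but as written the hardest step of the proof is asserted rather than proved. (Your ``only if'' direction, via functoriality of solidification of discrete rings and the universal property of induced analytic ring structures from \cite[Proposition 12.8]{AG}, is fine, though it is the easy half.)
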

\begin{proof}
It is enough to show that for animated affinoid pairs $(A,A^+)$, $(B,B^+)$ of weakly finite type over $K$, a morphism $f \colon A \to B$ of animated affinoid $K$-algebras defines a morphism $(A,A^+)_{\bs} \to (B,B^+)_{\bs}$ of analytic animated rings if and only if $f$ defines a morphism $(A,A^+)\to (B,B^+)$ of animated affinoid pairs of weakly finite type over $K$.
By Remark \ref{rem:sta}, we get
\begin{align*}
&\text{The morphism $f$ defines a morphism $(A,A^+)_{\bs} \to (B,B^+)_{\bs}$.} \\
\iff &\text{Any object $M$ of $\Dcal((B,B^+)_{\bs})$ is $(A,A^+)_{\bs}$-complete.}\\
\iff &\text{Any static object $M\in \Dcal((B,B^+)_{\bs})^{\heart}$ is $(\pi_0A,A^+)_{\bs}$-complete.}\\
\iff &\text{Any object $M$ of $\Dcal((\pi_0B,B^+)_{\bs})$ is $(\pi_0A,A^+)_{\bs}$-complete.}\\
\iff &\text{The morphism $f$ defines a morphism $(\pi_0A,A^+)_{\bs} \to (\pi_0B,B^+)_{\bs}$,} 
\end{align*}
where we note that for an analytic animated ring $\Acal$, an object $M$ of $\Dcal(\underline{\Acal})$ is $\Acal$-complete if and only if $H^i(M)$ is $\Acal$-complete for every $i\in \Zbb$.
By \cite[Proposition 3.34]{And21}, the last condition is equivalent to $\pi_0f(A^+) \subset B^+$, which proves the proposition.
\end{proof}

\begin{remark}
By definition, we have $(\pi_0A,A^+)_{\bs}=\pi_0((A,A^+)_{\bs})$ for an animated affinoid pair $(A,A^+)$ of weakly finite type over $K$.
\end{remark}

\begin{proposition}
Let $f\colon (A,A^+)\to (B,B^+)$ be a morphism of animated affinoid pairs of weakly finite type over $K$.
Then $f_{\bs} \colon (A,A^+)_{\bs}\to (B,B^+)_{\bs}$ is steady.
\end{proposition}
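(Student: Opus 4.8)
The plan is to deduce steadiness from the criterion \cite[Proposition~13.14]{AG}: a morphism $g\colon\Acal\to\Bcal$ of analytic animated rings for which the analytic structure of $\Bcal$ is the one induced from $\Acal$ along $\underline\Acal\to\underline\Bcal$, and for which $\underline\Bcal$ is nuclear as an object of $\Dcal(\Acal)$, is steady.

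Since the analytic structure on $(B,B^+)_\bs$ need not be the one induced from $(A,A^+)_\bs$ (that induced structure is $(B,C^+)_\bs$, where $C^+\subset\pi_0(B)$ is the smallest ring of integral elements containing $\pi_0(f)(A^+)$), I would first factor $f_\bs$ as
$$(A,A^+)_\bs\lra(B,C^+)_\bs\lra(B,B^+)_\bs .$$
For the first map, by \cite[Proposition~13.14]{AG} it remains to verify that $B$ is nuclear as an object of $\Dcal((A,A^+)_\bs)$, and I would get this from a chain of the results of Section~2. By definition $B$ is nuclear over $\Dcal((K,\Zbb)_\bs)$; Lemma~\ref{lem:nuccoef} applied to $(\Ocal_K,\Zbb)_\bs\to(K,\Zbb)_\bs$ upgrades this to nuclearity over $\Dcal((\Ocal_K,\Zbb)_\bs)$; choosing a ring of definition $R_0\subset\pi_0(A)$ (so that $R_0$ is $\pi$-adically complete with $R_0/\pi$ discrete), Lemma~\ref{lem:nuccoef} for $(\Ocal_K,\Zbb)_\bs\to(R_0,\Zbb)_\bs$, then Corollary~\ref{cor:nuc} with $t=\pi$, then Lemma~\ref{lem:nuccoef} for $(R_0)_\bs\to(A,A^+)_\bs$ — using that $A$ is a filtered colimit of shifts of Banach $R_0$-modules, hence nuclear over $(R_0)_\bs$ — produce nuclearity of $B$ over $\Dcal((A,A^+)_\bs)$.

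The main obstacle is the second map $(B,C^+)_\bs\to(B,B^+)_\bs$, which is the identity on underlying condensed animated rings and changes only the ring of integral elements, so is not covered by \cite[Proposition~13.14]{AG}. Here I would pass to the static setting: completeness is detected on cohomology (Remark~\ref{rem:sta}), and by the remark following Proposition~\ref{prop:cateq} one has $\pi_0((B,C^+)_\bs)=(\pi_0B,C^+)_\bs$ and $\pi_0((B,B^+)_\bs)=(\pi_0B,B^+)_\bs$; combined with the nuclearity of each $\pi_i(A)$ and $\pi_i(B)$ — which ensures that the skeletal filtration of $M\otimes^{\Lbb}_{\underline{(A,A^+)_\bs}}B$ is compatible with $(B,B^+)_\bs$-completion — this reduces steadiness of $f_\bs$ to steadiness of the morphism of static affinoid pairs $(\pi_0A,A^+)_\bs\to(\pi_0B,B^+)_\bs$, the change of ring of integral elements included, which is available from the solid affinoid theory of \cite{And21,Mikami22}.
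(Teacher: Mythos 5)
Your overall skeleton (an ``induced-structure'' leg to be handled by nuclearity plus \cite[Proposition 13.14]{AG}, followed by a leg that only changes the ring of integral elements) is the same as the paper's, but both legs have gaps as you execute them. The serious one is the second leg. Steadiness of $(B,C^+)_{\bs}\to(B,B^+)_{\bs}$ (or of $f_{\bs}$) is a base-change condition tested against \emph{arbitrary} analytic algebras over the source, and neither the paper nor the cited literature gives a principle that lets you check it on $\pi_0$; the sentence about the skeletal filtration of $M\otimes^{\Lbb}B$ being ``compatible with $(B,B^+)_{\bs}$-completion'' is not an argument. Moreover the static input you want to quote --- steadiness of $(\pi_0A,A^+)_{\bs}\to(\pi_0B,B^+)_{\bs}$ for general rings of integral elements --- is not available in \cite{And21,Mikami22}; it is essentially the static case of the proposition you are proving. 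The correct (and very short) treatment of this leg is the paper's: a map that only enlarges the ring of integral elements is the base change of the corresponding map of \emph{discrete} solid rings $(B_{\disc},C^+_{\disc})_{\bs}\to(B_{\disc},B^+_{\disc})_{\bs}$, which is steady by \cite[Proposition 2.9.7]{Mann22}, and steady maps are stable under base change and composition.

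The first leg also has a gap. Your chain needs $B$ to be nuclear as an object of $\Dcal((A,A^+)_{\bs})$, and the last step applies Lemma \ref{lem:nuccoef} to $(R_0)_{\bs}\to(A,A^+)_{\bs}$; but that lemma requires the analytic structure of $(A,A^+)_{\bs}$ to be the one induced from $(R_0)_{\bs}$, i.e.\ $A^+$ must be (the integral closure of) the image of the chosen ring of definition $R_0$. For a pair of weakly finite type $A^+$ is an arbitrary ring of integral elements, so no such $R_0$ need exist, and Corollary \ref{cor:nuc} cannot repair this because it only applies to adic base rings with discrete quotient ($\pi$ is invertible in $A$); the paper proves no statement that nuclearity over $(A,A^+)_{\bs}$ is independent of $A^+$ for Tate $A$. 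The paper avoids needing any such fact: by \cite[Proposition 2.3.19]{Mann22} one may assume $(A,A^+)_{\bs}=(K,\Zbb)_{\bs}$, after which the factorization is $(K,\Zbb)_{\bs}\to(B,\Zbb)_{\bs}\to(B,B^+)_{\bs}$ and the only nuclearity needed is that of $B$ over $(K,\Zbb)_{\bs}$, which is condition (2) of Definition \ref{def:aff}. If you incorporate that reduction and the discrete base-change trick, your argument collapses to the paper's proof.
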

\begin{proof}
By \cite[Proposition 2.3.19]{Mann22}, we may assume $(A,A^+)_{\bs}=(K,\Zbb)_{\bs}$.
Since $B$ is nuclear as an object of $\Dcal((K,\Zbb)_{\bs})$, the map $(K,\Zbb)_{\bs} \to (B, \Zbb)_{\bs}$ is steady by \cite[Proposition 13.14]{AG}.
The morphism $(B, \Zbb)_{\bs}\to (B,B^+)_{\bs}$ can be obtained as a base change of $g\colon (B_{\disc}, \Zbb)_{\bs}\to (B_{\disc},B^+_{\disc})_{\bs}$, so it is enough to show that $g$ is steady.
It follows from \cite[Proposition 2.9.7]{Mann22}.
\end{proof}

\begin{remark}
The condition (2) in Definition \ref{def:aff} is necessary for the above proposition to hold.
\end{remark}

\begin{definition}
Let $(B,B^+)\leftarrow (A,A^+) \to (C,C^+)$ be a diagram of animated affinoid pairs of weakly finite type over $K$.
We define the derived tensor product $(B,B^+)\otimes_{(A,A^+)}^{\Lbb} (C,C^+)$ of $(B,B^+)$ and $(C,C^+)$ over $(A,A^+)$ as a pair of the animated affinoid $K$-algebra $B\otimes_{(A,\Zbb)_{\bs}}^{\Lbb} C$ 
and the smallest ring of integral elements of $\pi_0(B\otimes_{(A,\Zbb)_{\bs}}^{\Lbb} C)$ which contains the image of $\pi_0(B^+\otimes C^+)$.
\end{definition}

\begin{proposition}\label{prop:afftensor}
Let $(B,B^+)\leftarrow (A,A^+) \to (C,C^+)$ be a diagram of animated affinoid pairs of weakly finite type over $K$.
Then the natural map $$((B,B^+)_{\bs} \otimes_{(A,A^+)_{\bs}}^{\Lbb} (C,C^+))_{\bs}\to ((B,B^+)\otimes_{(A,A^+)}^{\Lbb} (C,C^+))_{\bs}$$ is an equivalence.
\end{proposition}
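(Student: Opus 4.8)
The plan is to identify both sides with $(D,D^{+})_{\bs}$, where $D\coloneqq B\otimes_{(A,\Zbb)_{\bs}}^{\Lbb}C$ is the animated affinoid $K$-algebra furnished by the proposition above showing that a derived tensor product of animated affinoid $K$-algebras over $(A,\Zbb)_{\bs}$ is again one (so $\pi_0D\simeq\pi_0B\hotimes_{\pi_0A}\pi_0C$), and $D^{+}$ is the smallest ring of integral elements of $\pi_0D$ containing the image of $\pi_0(B^{+}\otimes C^{+})$; by definition the right-hand side is exactly $(D,D^{+})_{\bs}$. It therefore suffices to identify the underlying condensed animated $K$-algebra of $(B,B^{+})_{\bs}\otimes_{(A,A^{+})_{\bs}}^{\Lbb}(C,C^{+})$ with $D$: once this is done the two rings of integral elements agree (both are produced by the same recipe applied to $\pi_0D$), and Proposition \ref{prop:cateq} concludes. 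Now $B$ and $C$ lie in $\Dcal((A,A^{+})_{\bs})$, since $f$ induces morphisms of analytic animated rings $(A,A^{+})_{\bs}\to(B,B^{+})_{\bs}$ and $(A,A^{+})_{\bs}\to(C,C^{+})_{\bs}$ by Proposition \ref{prop:cateq} (the first of which is moreover steady), so $B\otimes_{(A,A^{+})_{\bs}}^{\Lbb}C$ is the relative tensor product formed inside $\Dcal((A,A^{+})_{\bs})$. Because the inclusion $\Dcal((A,A^{+})_{\bs})\hookrightarrow\Dcal((A,\Zbb)_{\bs})$ is a symmetric monoidal localization with left adjoint $-\otimes_{(A,\Zbb)_{\bs}}(A,A^{+})_{\bs}$, this relative tensor product is the $(A,A^{+})_{\bs}$-completion of the relative tensor product of $B$ and $C$ formed inside $\Dcal((A,\Zbb)_{\bs})$, namely of $D$. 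Hence everything reduces to showing that $D$ is already $(A,A^{+})_{\bs}$-complete.

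To see this, observe that $(D,D^{+})$ is itself an animated affinoid pair of weakly finite type over $K$: the ring $D$ is an animated affinoid $K$-algebra, and $D^{+}$ is a ring of integral elements of $\pi_0D$ by construction. Hence $(D,D^{+})_{\bs}$ is an analytic animated ring, so in particular $D\in\Dcal((D,D^{+})_{\bs})$. Moreover the ring map $\pi_0A\to\pi_0D$ carries $A^{+}$ into $D^{+}$: it factors as $\pi_0A\to\pi_0B\to\pi_0D$, where the first map sends $A^{+}$ into $B^{+}$ since $f$ is a morphism of animated affinoid pairs, and the image of $B^{+}$ lies in $D^{+}$ by construction of $D^{+}$. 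Therefore the composite $A\to B\to D$ is a morphism of animated affinoid pairs $(A,A^{+})\to(D,D^{+})$, hence, by Proposition \ref{prop:cateq}, a morphism of analytic animated rings $(A,A^{+})_{\bs}\to(D,D^{+})_{\bs}$. Consequently $\Dcal((D,D^{+})_{\bs})\subseteq\Dcal((A,A^{+})_{\bs})$, so $D$ is $(A,A^{+})_{\bs}$-complete, as needed.

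The only non-formal inputs are the results already established above, namely that a derived tensor product of animated affinoid $K$-algebras over $(A,\Zbb)_{\bs}$ is again an animated affinoid $K$-algebra and that $f_{\bs}$ is steady. Granting these, the step I expect to require the most care is the first paragraph — making precise that the underlying ring of $B\otimes_{(A,A^{+})_{\bs}}^{\Lbb}C$ is the $(A,A^{+})_{\bs}$-completion of $D$, which hinges on $\Dcal((A,A^{+})_{\bs})\hookrightarrow\Dcal((A,\Zbb)_{\bs})$ being a reflective symmetric monoidal localization so that replacing $\otimes_{(A,\Zbb)_{\bs}}^{\Lbb}$ by $\otimes_{(A,A^{+})_{\bs}}^{\Lbb}$ introduces nothing beyond an $(A,A^{+})_{\bs}$-completion; the second paragraph is then formal bookkeeping with the functor of Proposition \ref{prop:cateq}.
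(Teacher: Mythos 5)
Your identification of the \emph{underlying condensed animated ring} of the left-hand side with $D=B\otimes_{(A,\Zbb)_{\bs}}^{\Lbb}C$ is fine and close in spirit to the paper: you show $D$ is $(A,A^{+})_{\bs}$-complete via the morphism of pairs $(A,A^{+})\to(D,D^{+})$ and Proposition \ref{prop:cateq} (the paper instead observes $D$ is complete for its own induced structure and hence for the coarser ones; your route would also give the $(B,B^{+})_{\bs}$- and $(C,C^{+})_{\bs}$-completeness one needs for the underlying ring of the analytic pushout, though you should say so). The problem is the last step. An equivalence of analytic animated rings requires more than an equivalence of underlying rings: you must also identify the analytic ring \emph{structures}. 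The structure on the left-hand side is the pushout structure, i.e.\ the one whose complete modules are the $D$-modules that are both $(B,B^{+})_{\bs}$- and $(C,C^{+})_{\bs}$-complete; it does not come equipped with a ``ring of integral elements produced by the same recipe applied to $\pi_0D$'' --- that the pushout structure is of the form $(D,D^{+\prime})_{\bs}$ at all, and with $D^{+\prime}=D^{+}$, is precisely the content to be proved. Proposition \ref{prop:cateq} cannot conclude this: full faithfulness of $(E,E^{+})\mapsto(E,E^{+})_{\bs}$ compares morphisms between objects already known to lie in its image, and the analytic pushout is not a priori in that image.

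What is missing is the module-category comparison that occupies the second half of the paper's proof: show that an object $M\in\Dcal(D)$ is $((B,B^{+})_{\bs}\otimes_{(A,A^{+})_{\bs}}^{\Lbb}(C,C^{+})_{\bs})$-complete (equivalently, $(B,B^{+})_{\bs}$- and $(C,C^{+})_{\bs}$-complete) if and only if it is $(D,D^{+})_{\bs}$-complete. By Remark \ref{rem:sta} this reduces to static $\pi_0D$-modules, and there one needs the non-formal input (cited in the paper as \cite[Proposition 3.32]{And21}) that the solid structure attached to a ring of integral elements depends only on a generating subring, so that completeness for the structure generated by the images of $B^{+}$ and $C^{+}$ agrees with $(\pi_0D,D^{+})_{\bs}$-completeness; only at that point does the defining property of $D^{+}$ (smallest ring of integral elements containing the image of $\pi_0(B^{+}\otimes C^{+})$) finish the argument. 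Without this step your proof establishes only that the two sides have the same underlying condensed animated ring, not that the natural map of analytic animated rings is an equivalence.
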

\begin{proof}
Since $B\otimes_{(A,\Zbb)_{\bs}}^{\Lbb} C$ is $(B\otimes_{(A,\Zbb)_{\bs}}^{\Lbb} C)_{\bs}$-complete, it is also $(A,A^+)_{\bs}$, $(B,B^+)_{\bs}$ and $(C,C^+)_{\bs}$-complete.
Therefore, we get $B\otimes_{(A,\Zbb)_{\bs}}^{\Lbb} C \simeq B\otimes_{(A,A^+)_{\bs}}^{\Lbb} C$, which is $(B,B^+)_{\bs}$ and $(C,C^+)_{\bs}$-complete.
It is enough to show that for an object $M$ of $\Dcal(B\otimes_{(A,\Zbb)_{\bs}}^{\Lbb} C)$, $M$ is $((B,B^+)\otimes_{(A,A^+)}^{\Lbb} (C,C^+))_{\bs}$-complete if and only if it is $((B,B^+)_{\bs} \otimes_{(A,A^+)_{\bs}}^{\Lbb} (C,C^+))_{\bs}$-complete (equivalently, $(B,B^+)_{\bs}$ and $(C,C^+)_{\bs}$-complete).
By the definition of the analytic animated ring structures and \cite[Proposition 3.32]{And21}, it suffices to show that the smallest ring of integral elements of $\pi_0(B\otimes_{(A,\Zbb)_{\bs}}^{\Lbb} C)$ containing the image of the map $B^+\otimes C^+ \to \pi_0(B\otimes_{(A,\Zbb)_{\bs}}^{\Lbb} C)$ is equal to the ring of integral elements of $(B,B^+)\otimes_{(A,A^+)}^{\Lbb} (C,C^+)$.
It follows from the definition of $(B,B^+)\otimes_{(A,A^+)}^{\Lbb} (C,C^+)$.
\end{proof}

We will introduce the notion of rational localizations.
First, we interpret usual rational localizations in terms of analytic rings.
\begin{lemma}\label{lem:staloc}
Let $A$ be an affinoid $K$-algebra.
Let $f_1,\ldots,f_n,g$ be elements of $A$ which generates the unit ideal of $A$.
We regard $A$ as a $\Zbb[T]$-algebra by $T\mapsto g$, and regard $A\otimes_{(\Zbb[T],\Zbb)_{\bs}}^{\Lbb}(\Zbb[T,T^{-1}],\Zbb)_{\bs}=A\otimes_{\Zbb[T]} \Zbb[T,T^{-1}]=A[g^{-1}]$ as a $\Zbb[U_1,\ldots,U_n]$-algebra by $U_i\mapsto f_i/g$.
Then $$(A \otimes_{(\Zbb[T],\Zbb)_{\bs}}^{\Lbb}(\Zbb[T,T^{-1}],\Zbb)_{\bs}) \otimes_{(\Zbb[U_1,\ldots,U_n],\Zbb)_{\bs}}^{\Lbb} \Zbb[U_1,\ldots,U_n]_{\bs}$$ is isomorphic to the rational localization $A\langle f_1/g, \ldots, f_n/g \rangle$ in the usual sense.
Moreover, for a ring of integral elements $A^+$ of $A$, we have an equivalence 
\begin{align*}
&((A,A^+)_{\bs} \otimes_{(\Zbb[T],\Zbb)_{\bs}}^{\Lbb}(\Zbb[T,T^{-1}],\Zbb)_{\bs}) \otimes_{(\Zbb[U_1,\ldots,U_n],\Zbb)_{\bs}}^{\Lbb} \Zbb[U_1,\ldots,U_n]_{\bs}\\
\simeq&(A\langle f_1/g, \ldots, f_n/g \rangle, A^+\langle f_1/g, \ldots, f_n/g \rangle)_{\bs}. 
\end{align*}
\end{lemma}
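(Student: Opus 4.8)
The plan is to reduce the identity to a statement about ordinary rings by passing to rings of definition and then reducing modulo the pseudo-uniformizer $\pi$, in the spirit of the proofs of Proposition~\ref{prop:afftensor} and of the stability of animated affinoid $K$-algebras under tensor products. The first tensor product is already the naive localization: since $\Zbb[T,T^{-1}]$ is a localization of $\Zbb[T]$, base change along $(\Zbb[T],\Zbb)_{\bs}\to(\Zbb[T,T^{-1}],\Zbb)_{\bs}$ agrees with the ordinary flat uncompleted localization, so $A\otimes_{(\Zbb[T],\Zbb)_{\bs}}^{\Lbb}(\Zbb[T,T^{-1}],\Zbb)_{\bs}=A[g^{-1}]$. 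Thus it remains to understand $A[g^{-1}]\otimes_{(\Zbb[U_1,\ldots,U_n],\Zbb)_{\bs}}^{\Lbb}\Zbb[U_1,\ldots,U_n]_{\bs}$. First I would fix a ring of definition $A_0\subseteq A$ which is an $\Ocal_K$-algebra, so that $A=A_0[\pi^{-1}]$, and after rescaling assume $f_1,\ldots,f_n,g\in A_0$. Recalling Huber's description $A\langle f_1/g,\ldots,f_n/g\rangle=\big(A_0[U_1,\ldots,U_n]/(gU_i-f_i)\big)^{\wedge}_{\pi}[\pi^{-1}]$ (up to a suitable completion), and using that every functor in sight commutes with $-\otimes_{\Ocal_K}^{\Lbb}K=(-)[\pi^{-1}]$, a filtered colimit, it suffices to prove the corresponding statement at the level of rings of definition.

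Both sides of the resulting integral identity are $\pi$-adically complete --- the classical side by construction, and the analytic-ring tensor product by \cite[Proposition~2.12.10]{Mann22} --- so it is enough to check the equivalence after applying $-\otimes_{\Zbb}^{\Lbb}\Zbb/\pi$. Modulo $\pi$ the base becomes a static ring, over which the $(\Zbb[U_1,\ldots,U_n],\Zbb)_{\bs}$-structure is just the one induced from $\Zbb_{\bs}$ and the tensor product with $\Zbb[U_1,\ldots,U_n]_{\bs}$ is computed in the static setting; here I would invoke \cite[Proposition~2.9.7]{Mann22} together with Andreychev's description of rational localizations in \cite{And21, And23} to identify the mod-$\pi$ reduction of the analytic-ring tensor product with the ordinary ring $A_0/\pi[U_1,\ldots,U_n]/(gU_i-f_i)$, which is exactly the mod-$\pi$ reduction of the classical side. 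Checking that these identifications are compatible with the structure maps gives the equivalence of underlying condensed animated rings, that is, the first assertion. (Alternatively, one could try to match the universal property of $A\langle f_1/g,\ldots,f_n/g\rangle$ among complete Tate $A$-algebras with that of the iterated analytic-ring pushout, again via the dictionary of \cite{And21}.)

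For the refinement with $A^{+}$, by the construction of the analytic animated ring structures (Definition~\ref{def:ana}) and \cite[Proposition~3.32]{And21} it suffices to match, on $\pi_0$, the smallest ring of integral elements of $A\langle f_1/g,\ldots,f_n/g\rangle$ containing the image of $A^{+}$ and of the $f_i/g$ with $A^{+}\langle f_1/g,\ldots,f_n/g\rangle$; but that smallest ring of integral elements is precisely the integral closure of $A^{+}[f_1/g,\ldots,f_n/g]$ in $A\langle f_1/g,\ldots,f_n/g\rangle$, which is Huber's definition of $A^{+}\langle f_1/g,\ldots,f_n/g\rangle$, so the two analytic animated rings agree. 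I expect the main obstacle to be the reduction modulo $\pi$: one must verify that each analytic-ring tensor product really commutes with $-\otimes_{\Zbb}^{\Lbb}\Zbb/\pi$ and with $\pi$-adic completion, and that modulo $\pi$ the ``adjoin a power-bounded element'' operation encoded by $\Zbb[U_1,\ldots,U_n]_{\bs}$ collapses to the ordinary ring-theoretic adjunction of the $f_i/g$. Everything else is bookkeeping with rings of definition.
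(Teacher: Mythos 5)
Your route (rings of definition, $\pi$-adic completeness, reduction modulo $\pi$) is genuinely different from the paper's, which simply invokes the fact that the Koszul complex of $f_1-gU_1,\ldots,f_n-gU_n$ on $A\langle U_1,\ldots,U_n\rangle$ is quasi-isomorphic to $A\langle f_1/g,\ldots,f_n/g\rangle$ (Kedlaya, Exercise 1.9.26) and then runs the argument of \cite[Proposition 4.11]{And21}. The problem is that your reduction hides, rather than supplies, the two points where the actual content of the lemma sits. First, the completeness step does not go through as stated: after the first base change the underlying object is $A[g^{-1}]$ (integrally, $A_0[g^{-1}]$), which is \emph{not} $\pi$-adically complete, and \cite[Proposition 2.12.10]{Mann22} concerns tensor products of complete objects, so it does not yield $\pi$-adic completeness of $A_0[g^{-1}]\otimes_{(\Zbb[U_1,\ldots,U_n],\Zbb)_{\bs}}^{\Lbb}\Zbb[U_1,\ldots,U_n]_{\bs}$. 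That this solidification is complete and produces the completed rational localization rather than something else is essentially the statement being proved, so asserting it begs the question.

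Second, the mod-$\pi$ identification is asserted, not proved. Even after reducing to a discrete base, one still has to compute a solid tensor product $-\otimes_{(\Zbb[U_1,\ldots,U_n],\Zbb)_{\bs}}^{\Lbb}\Zbb[U_1,\ldots,U_n]_{\bs}$, show it is static, and identify it with the naive quotient $(A_0/\pi)[U_1,\ldots,U_n]/(gU_i-f_i)$; neither \cite[Proposition 2.9.7]{Mann22} (which is about nuclearity/steadiness for discrete rings) nor a general appeal to \cite{And21} does this for you. This is precisely where the derived regularity of the sequence $(gU_1-f_1,\ldots,gU_n-f_n)$ enters: without it the iterated base change is a Koszul-type complex that could a priori have cohomology in nonzero degrees, and mod $\pi$ genuine torsion phenomena do occur (for $A=K\langle x\rangle$, $g=x$, $f_1=\pi$, the ring $(A_0/\pi)[U]/(xU)$ has $x$-torsion while $(A_0/\pi)[1/x]$ has none), so the comparison of integral and mod-$\pi$ models is not mere bookkeeping; relatedly, your formula $A\langle f_1/g,\ldots,f_n/g\rangle=\bigl(A_0[U_1,\ldots,U_n]/(gU_i-f_i)\bigr)^{\wedge}_{\pi}[1/\pi]$ ``up to a suitable completion'' itself needs justification when $\Ocal_K$ is non-Noetherian and the ideal need not be closed before completion. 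The single input that repairs all of this is the one the paper cites: the Koszul complex of the $f_i-gU_i$ on $A\langle U_1,\ldots,U_n\rangle$ is concentrated in degree zero with $H^0$ the rational localization; with that in hand the computation proceeds as in \cite[Proposition 4.11]{And21}. Your treatment of the $A^+$-part (matching the smallest ring of integral elements via \cite[Proposition 3.32]{And21} with Huber's definition of $A^+\langle f_1/g,\ldots,f_n/g\rangle$) is fine, but it rests on the unproven identification of the underlying animated ring.
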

\begin{proof}
Noting that the Koszul complex corresponding to the elements $f_1-gU_1, \ldots, f_n-gU_n$ in $A\langle U_1,\ldots,U_n\rangle$ is quasi-isomorphic to $A\langle f_1/g, \ldots, f_n/g \rangle$ (\cite[Exercise 1.9.26]{AWS17Ked}), we can prove it by the same way as in \cite[Proposition 4.11]{And21}.
\end{proof}

\begin{proposition}\label{prop:loc}
Let $A$ be an animated affinoid $K$-algebra.
Let $f_1,\ldots,f_n,g$ be elements of $\pi_0A$ which generates the unit ideal of $\pi_0A$.
We regard $A$ as a condensed animated $\Zbb[T]$-algebra by $T\mapsto g$, and regard $A\otimes_{(\Zbb[T],\Zbb)_{\bs}}^{\Lbb}(\Zbb[T,T^{-1}],\Zbb)_{\bs}=A\otimes_{\Zbb[T]}^{\Lbb} \Zbb[T,T^{-1}]$ as a condensed animated $\Zbb[U_1,\ldots,U_n]$-algebra by $U_i\mapsto f_i/g$.
Then $$A\langle f_1/g,\ldots,f_n/g\rangle \coloneqq(A \otimes_{(\Zbb[T],\Zbb)_{\bs}}^{\Lbb}(\Zbb[T,T^{-1}],\Zbb)_{\bs}) \otimes_{(\Zbb[U_1,\ldots,U_n],\Zbb)_{\bs}}^{\Lbb} \Zbb[U_1,\ldots,U_n]_{\bs}$$ is an animated affinoid $K$-algebra such that 
$$\pi_0(A\langle f_1/g,\ldots,f_n/g\rangle) \simeq (\pi_0A)\langle f_1/g,\ldots,f_n/g\rangle.$$
Moreover, for a ring of integral elements $A^+$ of $\pi_0A$, we have an equivalence 
\begin{align*}
&((A,A^+)_{\bs} \otimes_{(\Zbb[T],\Zbb)_{\bs}}^{\Lbb}(\Zbb[T,T^{-1}],\Zbb)_{\bs}) \otimes_{(\Zbb[U_1,\ldots,U_n],\Zbb)_{\bs}}^{\Lbb} \Zbb[U_1,\ldots,U_n]_{\bs}\\
\simeq&(A\langle f_1/g, \ldots, f_n/g \rangle, A^+\langle f_1/g, \ldots, f_n/g \rangle)_{\bs}. 
\end{align*}
\end{proposition}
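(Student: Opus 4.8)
The plan is to adapt the proof of the static case, Lemma~\ref{lem:staloc}, by identifying $A\langle f_1/g,\ldots,f_n/g\rangle$ with an explicit Koszul complex over a relative Tate algebra. Write $C\langle U_1,\ldots,U_n\rangle\coloneqq C\otimes_{(K,\Zbb)_{\bs}}^{\Lbb}K\langle U_1,\ldots,U_n\rangle$ for a condensed animated $K$-algebra $C$, where $K\langle U_1,\ldots,U_n\rangle$ is the usual Tate algebra (a $(K,\Zbb)_{\bs}$-complete nuclear $K$-algebra); by base change in stages and the identification underlying Lemma~\ref{lem:staloc} and its evident extension to animated rings, $C[U_1,\ldots,U_n]\otimes_{(\Zbb[U_1,\ldots,U_n],\Zbb)_{\bs}}^{\Lbb}\Zbb[U_1,\ldots,U_n]_{\bs}\simeq C\langle U_1,\ldots,U_n\rangle$ when $C[U_1,\ldots,U_n]$ carries its polynomial $\Zbb[U_1,\ldots,U_n]$-algebra structure. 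Now first $A\otimes_{(\Zbb[T],\Zbb)_{\bs}}^{\Lbb}(\Zbb[T,T^{-1}],\Zbb)_{\bs}=A\otimes_{\Zbb[T]}^{\Lbb}\Zbb[T,T^{-1}]=A[g^{-1}]$ is the filtered colimit $\varinjlim(A\xrightarrow{g}A\xrightarrow{g}\cdots)$, so $\pi_0(A[g^{-1}])\simeq(\pi_0A)[g^{-1}]$. Second, viewing $A[g^{-1}]$ as a $\Zbb[U_1,\ldots,U_n]$-algebra via $U_i\mapsto f_i/g$, the surjection $A[g^{-1}][U_1,\ldots,U_n]\to A[g^{-1}]$, $U_i\mapsto f_i/g$, exhibits $A[g^{-1}]\simeq\mathrm{Kos}(A[g^{-1}][U_1,\ldots,U_n];U_1-f_1/g,\ldots,U_n-f_n/g)$ as $\Zbb[U_1,\ldots,U_n]$-algebras, the sequence $U_i-f_i/g$ being regular on the polynomial algebra $A[g^{-1}][U_1,\ldots,U_n]$. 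Applying $-\otimes_{(\Zbb[U_1,\ldots,U_n],\Zbb)_{\bs}}^{\Lbb}\Zbb[U_1,\ldots,U_n]_{\bs}$, which preserves colimits and hence Koszul complexes, and using that $A[g^{-1}][U_1,\ldots,U_n]$ base-changes to $A\langle U_1,\ldots,U_n\rangle[g^{-1}]$, we obtain
$$A\langle f_1/g,\ldots,f_n/g\rangle\simeq\mathrm{Kos}\big(A\langle U_1,\ldots,U_n\rangle[g^{-1}];\,gU_1-f_1,\ldots,gU_n-f_n\big),$$
where $U_i-f_i/g$ has been rescaled to $gU_i-f_i$ using that $g$ is invertible there; this is the animated analogue of the Koszul presentation appearing in Lemma~\ref{lem:staloc}.

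Granting this, the remaining points are verifications using Section~2. For condition~(2) of Definition~\ref{def:aff}: $K\langle U_1,\ldots,U_n\rangle$ is an affinoid $K$-algebra in the usual sense, hence nuclear over $(K,\Zbb)_{\bs}$; a derived tensor product of nuclear objects is nuclear (\cite[Theorem~8.6]{CC}), so $A\langle U_1,\ldots,U_n\rangle$ is nuclear over $(K,\Zbb)_{\bs}$; inverting $g$ is a filtered colimit and the Koszul complex a finite colimit of shifts, both preserving nuclearity, so $A\langle f_1/g,\ldots,f_n/g\rangle$ is nuclear over $(K,\Zbb)_{\bs}$; and the same operations preserve $(K,\Zbb)_{\bs}$-completeness, as $\Dcal((K,\Zbb)_{\bs})\subset\Dcal(K)^{\cond}$ is closed under small limits, colimits and shifts (\cite[Proposition~2.3.4]{Mann22}). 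For condition~(1): $K\langle U_1,\ldots,U_n\rangle$ is a flat solid $(K,\Zbb)_{\bs}$-module (by the argument in the proof of Proposition~\ref{prop:nucsta}), so $\pi_0(A\langle U_1,\ldots,U_n\rangle[g^{-1}])\simeq(\pi_0A)\langle U_1,\ldots,U_n\rangle[g^{-1}]$, and taking $\pi_0$ of the Koszul complex gives
$$\pi_0\big(A\langle f_1/g,\ldots,f_n/g\rangle\big)\simeq(\pi_0A)\langle U_1,\ldots,U_n\rangle[g^{-1}]/(gU_1-f_1,\ldots,gU_n-f_n);$$
since $f_1,\ldots,f_n,g$ generate the unit ideal of $\pi_0A$, $g$ is already invertible in $(\pi_0A)\langle U_1,\ldots,U_n\rangle/(gU_1-f_1,\ldots,gU_n-f_n)$, so the right-hand side is the usual rational localization $(\pi_0A)\langle f_1/g,\ldots,f_n/g\rangle$, an affinoid $K$-algebra in the usual sense. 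Hence $A\langle f_1/g,\ldots,f_n/g\rangle$ is an animated affinoid $K$-algebra with the asserted $\pi_0$.

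For the statement about $(A,A^+)_{\bs}$: the underlying condensed animated ring of the left-hand side is $A\langle f_1/g,\ldots,f_n/g\rangle$ by the first part, as the base changes involved only alter the analytic structure and not the underlying condensed animated ring. By Remark~\ref{rem:sta}, an uncompleted analytic animated ring structure on a fixed condensed animated ring is determined by the induced analytic ring structure on its $\pi_0$; and the structure on $\pi_0$ of the left-hand side is computed by the corresponding base change of static analytic rings, which by Lemma~\ref{lem:staloc} equals $(\pi_0A\langle f_1/g,\ldots,f_n/g\rangle,\,A^+\langle f_1/g,\ldots,f_n/g\rangle)_{\bs}=\pi_0\big((A\langle f_1/g,\ldots,f_n/g\rangle,\,A^+\langle f_1/g,\ldots,f_n/g\rangle)_{\bs}\big)$. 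Hence the two analytic animated rings agree.

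I expect the main obstacle to be the identification in the first paragraph --- that the iterated solid base change genuinely computes $\mathrm{Kos}(A\langle U_1,\ldots,U_n\rangle[g^{-1}];gU_i-f_i)$. This rests on the precise base-change behaviour of the analytic ring $\Zbb[U_1,\ldots,U_n]_{\bs}$ (whose static avatar is Lemma~\ref{lem:staloc}) and on handling the derived Koszul complex and the localization at $g$ carefully in the non-static setting; once it is in place, the rest is bookkeeping with nuclearity, completeness and $\pi_0$ via Section~2.
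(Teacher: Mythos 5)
Your overall strategy is viable and genuinely different from the paper's: the paper never produces a Koszul presentation of $A\langle f_1/g,\ldots,f_n/g\rangle$ in the animated setting; instead it studies the endofunctor $F=(-\otimes_{(\Zbb[T],\Zbb)_{\bs}}^{\Lbb}(\Zbb[T,T^{-1}],\Zbb)_{\bs})\otimes_{(\Zbb[U_1,\ldots,U_n],\Zbb)_{\bs}}^{\Lbb}\Zbb[U_1,\ldots,U_n]_{\bs}$ of $\Dcal((A,\Zbb)_{\bs})$, uses right t-exactness to get the $\pi_0$-statement from Lemma \ref{lem:staloc}, and then checks nuclearity degreewise: by Proposition \ref{prop:nucsta} and the finite cohomological dimension of $F$ it suffices to treat $F(\pi_iA)$, which by steadiness is the base change of $\pi_iA$ to the static rational localization $((\pi_0A)\langle f_1/g,\ldots,f_n/g\rangle,\Zbb[f_1/g,\ldots,f_n/g])_{\bs}$, and nuclearity follows from Lemma \ref{lem:nuccoef} and Corollary \ref{cor:nuc}. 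The ``moreover'' part is handled in the paper by \cite[Proposition 3.32]{And21} rather than by your Remark \ref{rem:sta} argument, though the two are in the same spirit.

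The genuine gap is exactly where you suspect it: the identification $C[U_1,\ldots,U_n]\otimes_{(\Zbb[U_1,\ldots,U_n],\Zbb)_{\bs}}^{\Lbb}\Zbb[U_1,\ldots,U_n]_{\bs}\simeq C\otimes_{(K,\Zbb)_{\bs}}^{\Lbb}K\langle U_1,\ldots,U_n\rangle$ is not ``the identification underlying Lemma \ref{lem:staloc}'' (that lemma concerns rational localizations of static affinoid algebras; the relative Tate algebra computation is a separate input from \cite{And21}), and its extension to an animated affinoid $A$ is not formal. The point is that $\Dcal(\Zbb[U_1,\ldots,U_n]_{\bs})$ is not a tensor ideal in $\Dcal((\Zbb[U_1,\ldots,U_n],\Zbb)_{\bs})$: for instance $\Zbb[U]$ is $\Zbb[U]_{\bs}$-complete, but $\Zbb[U]\otimes_{\Zbb_{\bs}}\prod_I\Zbb\cong\bigoplus_{j\geq0}\prod_I\Zbb$ is not (its $U$-solidification is $\prod_I\Zbb[U]$). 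So to know that applying $-\otimes_{(\Zbb[U_1,\ldots,U_n],\Zbb)_{\bs}}^{\Lbb}\Zbb[U_1,\ldots,U_n]_{\bs}$ to $A[U_1,\ldots,U_n]$ returns $A\otimes_{(K,\Zbb)_{\bs}}^{\Lbb}K\langle U_1,\ldots,U_n\rangle$, you must prove that the latter is already $\Zbb[U_1,\ldots,U_n]_{\bs}$-complete, and this is precisely where condition (2) of Definition \ref{def:aff} has to enter: one writes $A$ as a filtered colimit of Banach objects (Proposition \ref{nucban}), notes that a Banach object tensored with $\Ocal_K\langle U_1,\ldots,U_n\rangle$ is again $\pi$-adically complete with discrete reduction (via \cite[Proposition 2.12.10]{Mann22}), hence $\Zbb[U_1,\ldots,U_n]_{\bs}$-complete, and uses that completeness is stable under colimits. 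Your proposal never invokes nuclearity or completeness of $A$ at this step, so as written the crucial equivalence in your first paragraph is unsupported; once it is supplied (together with the static relative Tate algebra computation from \cite{And21}), the rest of your argument--the Koszul presentation, the closure of nuclear and complete objects under the relevant colimits, and the $\pi_0$ computation--does go through.
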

\begin{proof}
We have a functor
$$F\coloneqq (- \otimes_{(\Zbb[T],\Zbb)_{\bs}}^{\Lbb}(\Zbb[T,T^{-1}],\Zbb)_{\bs}) \otimes_{(\Zbb[U_1,\ldots,U_n],\Zbb)_{\bs}}^{\Lbb} \Zbb[U_1,\ldots,U_n]_{\bs}$$
from $\Dcal((A,\Zbb)_{\bs})$ to $\Dcal((A,\Zbb)_{\bs})$.
This functor is right t-exact, so we get $$\pi_0(A\langle f_1/g,\ldots,f_n/g\rangle)\simeq \pi_0(F(\pi_0A))\simeq (\pi_0A)\langle f_1/g,\ldots,f_n/g\rangle,$$ 
where the last equivalence follows from Lemma \ref{lem:staloc}.
Next, we will prove that $A\langle f_1/g,\ldots,f_n/g\rangle$ is nuclear as an object of $\Dcal((K,\Zbb)_{\bs})$.
By Proposition \ref{prop:nucsta} and right $t$-exactness of $F$, it is enough to show that $F(\pi_iA)$ is nuclear as an object of $\Dcal((K,\Zbb)_{\bs})$ for every non-negative integer $i$.
The functor $- \otimes_{(\Zbb[T],\Zbb)_{\bs}}^{\Lbb}(\Zbb[T,T^{-1}],\Zbb)_{\bs})$ is t-exact and the functor  $-\otimes_{(\Zbb[U_1,\ldots,U_n],\Zbb)_{\bs}}^{\Lbb} \Zbb[U_1,\ldots,U_n]_{\bs}$ has finite Tor-dimension $n$ (\cite[Lecture VIII]{CM}),
so the functor $F$ has finite cohomological dimension $\leq n$.
Since $\pi_iA$ is a $\pi_0A$-module and two morphisms $(\Zbb[T],\Zbb)_{\bs}\to (\Zbb[T,T^{-1}],\Zbb)_{\bs}$ and $(\Zbb[U_1,\ldots,U_n],\Zbb)_{\bs}\to \Zbb[U_1,\ldots,U_n]_{\bs}$ are steady, 
we get
\begin{align*}
&F(\pi_i A)\\
\simeq & \pi_iA \otimes_{(\pi_0A,\Zbb)_{\bs}}^{\Lbb}((\pi_0A,\Zbb)_{\bs} \otimes_{(\Zbb[T],\Zbb)_{\bs}}^{\Lbb}(\Zbb[T,T^{-1}],\Zbb)_{\bs}) \otimes_{((\pi_0A,\Zbb)_{\bs} \otimes_{(\Zbb[T],\Zbb)_{\bs}}^{\Lbb}(\Zbb[T,T^{-1}],\Zbb)_{\bs})} \\
&(((\pi_0A,\Zbb)_{\bs} \otimes_{(\Zbb[T],\Zbb)_{\bs}}^{\Lbb}(\Zbb[T,T^{-1}],\Zbb)_{\bs})\otimes_{(\Zbb[U_1,\ldots,U_n],\Zbb)_{\bs}}^{\Lbb}\Zbb[U_1,\ldots,U_n]_{\bs})\\
\simeq &\pi_iA \otimes_{(\pi_0A,\Zbb)_{\bs}}^{\Lbb} ((\pi_0A)\langle f_1/g, \ldots, f_n/g \rangle, \Zbb[f_1/g, \ldots, f_n/g])_{\bs}.
\end{align*}
By Lemma \ref{lem:nuccoef} and Proposition \ref{prop:nucsta}, $\pi_i A$ is nuclear as an object of $\Dcal((\pi_0A,\Zbb)_{\bs})$, so $F(\pi_i A)$ is also nuclear as an object of $\Dcal(((\pi_0A)\langle f_1/g, \ldots, f_n/g \rangle, \Zbb[f_1/g, \ldots, f_n/g])_{\bs})$.
Therefore, $F(\pi_i A)$ is nuclear as an object of $\Dcal((K,\Zbb)_{\bs})$ by Corollary \ref{cor:nuc} and Lemma \ref{lem:nuccoef}.
Finally, the last claim follows from \cite[Proposition 3.32]{And21}.
\end{proof}

\begin{definition}
Let $(A,A^+)$ be an animated affinoid pair of weakly finite type over $K$.
A \textit{rational localization} of $(A,A^+)$ is a morphism from $(A,A^+)$ of the form $(A,A^+)\to(A\langle f_1/g, \ldots, f_n/g \rangle, A^+\langle f_1/g, \ldots, f_n/g \rangle)$ as in Proposition \ref{prop:loc}.
We let $\Aff\Op_{(A,A^+)}$ denote the $\infty$-category of rational localizations of $(A,A^+)$.
\end{definition}

\begin{lemma}
Let $(A,A^+)$ be an animated affinoid pair of weakly finite type over $K$, and $f\colon (A,A^+) \to (B.B^+)$ be a rational localization.
Then $f_{\bs} \colon (A,A^+)_{\bs} \to (B.B^+)_{\bs}$ is a steady localization in the sense of \cite[Definition 12.16]{AG}.
\end{lemma}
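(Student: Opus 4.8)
The plan is to exhibit $f_{\bs}$ as a composition of base changes of two ``model'' analytic ring maps and then quote their known properties. First I would unwind the definition of a rational localization together with Proposition~\ref{prop:loc}: regarding $(A,A^+)_{\bs}$ as a $(\Zbb[T],\Zbb)_{\bs}$-algebra via $T\mapsto g$, set $\Acal'\coloneqq(A,A^+)_{\bs}\otimes_{(\Zbb[T],\Zbb)_{\bs}}^{\Lbb}(\Zbb[T,T^{-1}],\Zbb)_{\bs}$, and then regard $\Acal'$ as a $(\Zbb[U_1,\ldots,U_n],\Zbb)_{\bs}$-algebra via $U_i\mapsto f_i/g$, so that Proposition~\ref{prop:loc} gives
$$(B,B^+)_{\bs}\ \simeq\ \Acal'\otimes_{(\Zbb[U_1,\ldots,U_n],\Zbb)_{\bs}}^{\Lbb}\Zbb[U_1,\ldots,U_n]_{\bs},\qquad f_{\bs}\colon (A,A^+)_{\bs}\to\Acal'\to (B,B^+)_{\bs}.$$
Thus $f_{\bs}$ is the composition of the base change of $(\Zbb[T],\Zbb)_{\bs}\to(\Zbb[T,T^{-1}],\Zbb)_{\bs}$ along $(\Zbb[T],\Zbb)_{\bs}\to(A,A^+)_{\bs}$ with the base change of $(\Zbb[U_1,\ldots,U_n],\Zbb)_{\bs}\to\Zbb[U_1,\ldots,U_n]_{\bs}$ along $(\Zbb[U_1,\ldots,U_n],\Zbb)_{\bs}\to\Acal'$.

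Second I would invoke the relevant closure properties. The class of steady localizations — steady morphisms $\Acal\to\Bcal$ for which $\underline{\Bcal}\otimes_{\Acal}^{\Lbb}\underline{\Bcal}\to\underline{\Bcal}$ is an equivalence — is stable under base change and composition (this is formal once one has the good base-change behaviour of steady morphisms; see \cite[Definition~12.16]{AG} and the surrounding discussion). Hence it suffices to show that the two model maps $(\Zbb[T],\Zbb)_{\bs}\to(\Zbb[T,T^{-1}],\Zbb)_{\bs}$ and $(\Zbb[U_1,\ldots,U_n],\Zbb)_{\bs}\to\Zbb[U_1,\ldots,U_n]_{\bs}$ are steady localizations. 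The first is the analytic ring map attached to the Zariski open immersion $\Spec\Zbb[T,T^{-1}]\hookrightarrow\Spec\Zbb[T]$ and is a steady localization; the second is a steady localization by \cite[Lecture~VIII]{CM} (see also \cite{And21}, where exactly this type of statement is proved for affinoid pairs, and which underlies Lemma~\ref{lem:staloc}). Combining these facts with the factorization above gives the lemma. I would also note that $f_{\bs}$ is already known to be steady for every morphism of animated affinoid pairs of weakly finite type over $K$ by the proposition proved above, so the genuinely new content is the idempotence (localization) part.

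The main obstacle, as usual in this setting, is the verification that the two model maps are steady localizations, in particular the ``solidification'' map $(\Zbb[U_1,\ldots,U_n],\Zbb)_{\bs}\to\Zbb[U_1,\ldots,U_n]_{\bs}$. One cannot argue naively here, because the solid scalar extension along such a map is \emph{not} computed by an ordinary derived tensor product, so the identity $\underline{\Bcal}\otimes_{\Acal}^{\Lbb}\underline{\Bcal}\simeq\underline{\Bcal}$ really relies on the structure theory of these analytic rings (the Koszul presentation of the Tate algebra used in Lemma~\ref{lem:staloc} and the computations of \cite[Lecture~VIII]{CM}); I would cite those rather than redo them. The remaining work is purely bookkeeping: checking that the algebra structures appearing in the iterated base change match those fixed in Proposition~\ref{prop:loc}, so that the factorization of $f_{\bs}$ into base changes of the two model maps is correct on the nose.
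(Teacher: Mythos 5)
Your proposal is correct and follows essentially the same route as the paper: by Proposition \ref{prop:loc} the map $(A,A^+)_{\bs}\to(B,B^+)_{\bs}$ is an iterated base change of $(\Zbb[T],\Zbb)_{\bs}\to(\Zbb[T,T^{-1}],\Zbb)_{\bs}$ and $(\Zbb[U_1,\ldots,U_n],\Zbb)_{\bs}\to\Zbb[U_1,\ldots,U_n]_{\bs}$, and the paper likewise concludes from these two model maps being steady localizations (citing \cite[Lemma 3.8]{And21}) together with stability of steady localizations under base change and composition. The only difference is bookkeeping: the paper attributes both model cases to Andreychev's Lemma 3.8 rather than to \cite[Lecture VIII]{CM}.
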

\begin{proof}
It follows from the fact that $$(\Zbb[T],\Zbb)_{\bs} \to \Zbb[T,T^{-1}],\Zbb)_{\bs}$$ and $$(\Zbb[U_1,\ldots,U_n],\Zbb)_{\bs}\to \Zbb[U_1,\ldots,U_n]_{\bs}$$ are steady localizations by \cite[Lemma 3.8]{And21}.
\end{proof}

\begin{lemma}\label{lem:lff}
Let $(A,A^+), (C, C^+)$ be animated affinoid pairs of weakly finite type over $K$, and $f\colon (A,A^+) \to (B.B^+)$ be a rational localization.
Then the natural morphism $$\Map_{(A,A^+)/}((B,B^+), (C,C^+))\to \Map_{(\pi_0A,A^+)/}((\pi_0B,B^+), (\pi_0C,C^+))$$ is an equivalence of anima.
\end{lemma}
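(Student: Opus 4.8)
The plan is to transport the statement to the world of analytic animated rings, where $f_{\bs}$ is a steady localization, and then to observe that the two relevant mapping anima are $(-1)$-truncated and are governed by conditions already visible on $\pi_0$. First, by Proposition \ref{prop:cateq} the functor $(A,A^+)\mapsto (A,A^+)_{\bs}$ is fully faithful, hence so is the induced functor on coslice $\infty$-categories, so $\Map_{(A,A^+)/}((B,B^+),(C,C^+))\simeq\Map_{(A,A^+)_{\bs}/}((B,B^+)_{\bs},(C,C^+)_{\bs})$. The analogous fully faithfulness for static affinoid pairs of weakly finite type (\cite[Proposition 3.34]{And21}), together with the identity $\pi_0((D,D^+)_{\bs})=(\pi_0D,D^+)_{\bs}$ from the remark after Proposition \ref{prop:cateq}, identifies the target of the asserted map with $\Map_{\pi_0(A,A^+)_{\bs}/}(\pi_0(B,B^+)_{\bs},\pi_0(C,C^+)_{\bs})$. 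Writing $\Acal=(A,A^+)_{\bs}$, $\Bcal=(B,B^+)_{\bs}$, $\Ccal=(C,C^+)_{\bs}$, it then suffices to show that the natural map $\Map_{\Acal/}(\Bcal,\Ccal)\to \Map_{\pi_0\Acal/}(\pi_0\Bcal,\pi_0\Ccal)$, given by applying $\pi_0$, is an equivalence.

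Next I would argue that both anima are $(-1)$-truncated. By the preceding lemma $\Acal\to\Bcal$ is a steady localization, so by \cite[Definition 12.16]{AG} the anima $\Map_{\Acal/}(\Bcal,\Ccal)$ is empty or contractible, being contractible exactly when the unit $\Ccal\to\Ccal\otimes^{\Lbb}_{\Acal}\Bcal$ is an equivalence. By Proposition \ref{prop:loc}, $\pi_0\Bcal=((\pi_0A)\langle f_1/g,\ldots,f_n/g\rangle,A^+\langle f_1/g,\ldots,f_n/g\rangle)_{\bs}$, so $\pi_0\Acal\to\pi_0\Bcal$ arises from a rational localization of the static affinoid pair $(\pi_0A,A^+)$ and is therefore again a steady localization by the preceding lemma; thus $\Map_{\pi_0\Acal/}(\pi_0\Bcal,\pi_0\Ccal)$ is also empty or contractible. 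Since the natural map sends a point to a point, nonemptiness of the source implies that of the target, and it remains to prove the converse.

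So suppose $\Map_{\pi_0\Acal/}(\pi_0\Bcal,\pi_0\Ccal)\neq\emptyset$, i.e.\ $(\pi_0A,A^+)\to(\pi_0C,C^+)$ factors through $(\pi_0B,B^+)$. Unwinding the classical description of the rational localization $(\pi_0A,A^+)\to((\pi_0A)\langle f_1/g,\ldots,f_n/g\rangle,A^+\langle f_1/g,\ldots,f_n/g\rangle)$ (\cite{And21,Hub96}), this holds iff the image of $g$ in $\pi_0C$ is invertible and each $f_i/g$ lies in $C^+$. Granting this, $\Ccal\otimes^{\Lbb}_{\Acal}\Bcal$ is, by the construction of Proposition \ref{prop:loc}, the localization of $\Ccal$ obtained by base-changing along $(\Zbb[T],\Zbb)_{\bs}\to(\Zbb[T,T^{-1}],\Zbb)_{\bs}$ (with $T\mapsto g$) and then along $(\Zbb[U_1,\ldots,U_n],\Zbb)_{\bs}\to\Zbb[U_1,\ldots,U_n]_{\bs}$ (with $U_i\mapsto f_i/g$). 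The first base change is trivial: since the image of $g$ in $\pi_0\underline{\Ccal}$ is a unit, it is a unit in $\underline{\Ccal}$, so $\Ccal$ is already a $(\Zbb[T,T^{-1}],\Zbb)_{\bs}$-algebra. The second base change is trivial because $\Ccal$ is $\Zbb[U_1,\ldots,U_n]_{\bs}$-complete for the structure $U_i\mapsto f_i/g$: by Remark \ref{rem:sta} this may be checked on the homotopy groups $\pi_iC$, which are $\pi_0C$-modules and are $(\pi_0C,C^+)_{\bs}$-complete (again Remark \ref{rem:sta}, as $\Ccal=(C,C^+)_{\bs}$), and since $\Zbb[f_1/g,\ldots,f_n/g]\subseteq C^+$ there is a map of analytic rings $\Zbb[U_1,\ldots,U_n]_{\bs}\to(\pi_0C,C^+)_{\bs}$, so any $(\pi_0C,C^+)_{\bs}$-complete module is $\Zbb[U_1,\ldots,U_n]_{\bs}$-complete. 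Hence $\Ccal\otimes^{\Lbb}_{\Acal}\Bcal\simeq\Ccal$, the source is nonempty, and the map is an equivalence.

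The step expected to require the most care is this last one: establishing that the animated-level localization $\Ccal\otimes^{\Lbb}_{\Acal}\Bcal$ collapses to $\Ccal$ as soon as the two static conditions hold, which genuinely uses that completeness is detected on homotopy groups (Remark \ref{rem:sta}), that these groups are modules over the static ring $\pi_0C$, and the classical theory of rational localizations of affinoid pairs. The remaining steps are formal manipulations with fully faithful functors, coslice categories, and steady localizations.
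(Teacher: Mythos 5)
Your proposal is correct, and its skeleton coincides with the paper's: reduce via Proposition \ref{prop:cateq} to mapping anima of analytic animated rings, note that both source and target are empty or contractible because $(A,A^+)_{\bs}\to(B,B^+)_{\bs}$ and $(\pi_0A,A^+)_{\bs}\to(\pi_0B,B^+)_{\bs}$ are localizations, reduce to transferring nonemptiness, and extract from a point of the target the two conditions that $g$ becomes invertible in $\pi_0C$ and that the $f_i/g$ land in $C^+$. The only real divergence is the final step. The paper finishes in one line: the two conditions give morphisms of analytic rings $(\Zbb[T,T^{-1}],\Zbb)_{\bs}\to(C,C^+)_{\bs}$ and $\Zbb[U_1,\ldots,U_n]_{\bs}\to(C,C^+)_{\bs}$, and the pushout presentation of $(B,B^+)_{\bs}$ from Proposition \ref{prop:loc} then produces the desired point of $\Map_{(A,A^+)_{\bs}/}((B,B^+)_{\bs},(C,C^+)_{\bs})$ directly by the universal property. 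You instead verify the idempotency criterion, showing that the unit $\Ccal\to\Ccal\otimes^{\Lbb}_{\Acal}\Bcal$ is an equivalence, which forces you to check that $C$ is already $\Zbb[U_1,\ldots,U_n]_{\bs}$-complete via Remark \ref{rem:sta} and the $\pi_0C$-module structure on homotopy groups. This is valid (and proves the slightly stronger statement that the base change is trivial, provided you also note that the induced analytic ring structure on the pushout is again $(C,C^+)_{\bs}$), but the paper's direct mapping-out argument avoids the completeness bookkeeping entirely; your criterion ``nonempty iff the unit is an equivalence'' is exactly the standard characterization for localizations, so the two endgames are logically interchangeable.
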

\begin{proof}
By Proposition \ref{prop:cateq}, it is enough to show that the natural morphism 
$$\Map_{(A,A^+)_{\bs}/}((B,B^+)_{\bs}, (C,C^+)_{\bs})\to \Map_{(\pi_0A,A^+)_{\bs}/}((\pi_0B,B^+)_{\bs}, (\pi_0C,C^+)_{\bs})$$ 
is an equivalence of anima.
Since $(A,A^+)_{\bs} \to (B, B^+)_{\bs}$ and $(\pi_0A,A^+)_{\bs} \to (\pi_0B, B^+)_{\bs}$ are localizations, the above mapping anima are either empty or contractible.
Therefore, it is enough to show that if $\Map_{(\pi_0A,A^+)_{\bs}/}((\pi_0B,B^+)_{\bs}, (\pi_0C,C^+)_{\bs})\neq \emptyset$, then $\Map_{(A,A^+)_{\bs}/}((B,B^+)_{\bs}, (C,C^+)_{\bs})\neq \emptyset$.
We put $$(B,B^+)=(A\langle f_1/g, \ldots, f_n/g \rangle, A^+\langle f_1/g, \ldots, f_n/g \rangle).$$
Since the image of $g$ in $\pi_0C$ is invertible, we get $(\Zbb[T,T^{-1}],\Zbb)_{\bs} \to (C, C^+)_{\bs};\; T \mapsto g$.
Moreover the image of $f_1/g, \ldots, f_n/g$ in $\pi_0C$ lies in $C^+$, we get $\Zbb[U_1, \ldots U_n]_{\bs} \to (C, C^+)_{\bs} ;\; U_i \mapsto f_i/g$.
Therefore, we get 
\begin{align*}
(B,B^+)_{\bs} &\simeq ((A,A^+)_{\bs} \otimes_{(\Zbb[T],\Zbb)_{\bs}}^{\Lbb}(\Zbb[T,T^{-1}],\Zbb)_{\bs}) \otimes_{(\Zbb[U_1,\ldots,U_n],\Zbb)_{\bs}}^{\Lbb} \Zbb[U_1,\ldots,U_n]_{\bs}\\
& \to (C,C^+)_{\bs},
\end{align*}
which proves the lemma.
\end{proof}

\begin{proposition}\label{prop:ratopcateq}
Let $(A,A^+)$ be an animated affinoid pair of weakly finite type over $K$.
Then the functor $\Aff\Op_{(A,A^+)} \to \Aff\Op_{(\pi_0A,A^+)} ;\; (B,B^+) \to (\pi_0B, B^+)$, which is well-defined by Proposition \ref{prop:loc}, is a categorical equivalence.
\end{proposition}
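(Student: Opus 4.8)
The plan is to show that the functor
$$\Phi\colon \Aff\Op_{(A,A^+)} \lra \Aff\Op_{(\pi_0A,A^+)},\qquad (B,B^+)\mapsto (\pi_0B,B^+),$$
is fully faithful and essentially surjective, hence a categorical equivalence. The substance has already been isolated: full faithfulness will come from Lemma \ref{lem:lff} and essential surjectivity from Proposition \ref{prop:loc}, so what remains is formal bookkeeping.

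First I would record that, by construction, $\Aff\Op_{(A,A^+)}$ is the full $\infty$-subcategory of the coslice $(\Ani\Aff_K)_{(A,A^+)/}$ spanned by the rational localizations of $(A,A^+)$, and likewise $\Aff\Op_{(\pi_0A,A^+)}$ is the full subcategory of the coslice of static affinoid pairs of weakly finite type over $(K,K^+)$ under $(\pi_0A,A^+)$ spanned by rational localizations (this is meaningful because $\pi_0A$ is an affinoid $K$-algebra by Definition \ref{def:aff}). Thus for $(B,B^+),(C,C^+)\in \Aff\Op_{(A,A^+)}$ the mapping anima in $\Aff\Op_{(A,A^+)}$ is $\Map_{(A,A^+)/}((B,B^+),(C,C^+))$, and the mapping anima between their images under $\Phi$ is $\Map_{(\pi_0A,A^+)/}((\pi_0B,B^+),(\pi_0C,C^+))$.

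To obtain full faithfulness, I would apply Lemma \ref{lem:lff} with the rational localization $(A,A^+)\to (B,B^+)$ in the role of $f$ and with the animated affinoid pair $(C,C^+)$; this is legitimate precisely because $(B,B^+)\in \Aff\Op_{(A,A^+)}$ is a rational localization of $(A,A^+)$. The lemma shows that the natural ``pass to $\pi_0$'' map
$$\Map_{(A,A^+)/}((B,B^+),(C,C^+)) \lra \Map_{(\pi_0A,A^+)/}((\pi_0B,B^+),(\pi_0C,C^+))$$
is an equivalence of anima, and since this is the map induced by $\Phi$, we conclude that $\Phi$ is fully faithful. For essential surjectivity, I would take any $(\bar B,\bar B^+)\in \Aff\Op_{(\pi_0A,A^+)}$, write it as $\bar B=(\pi_0A)\langle f_1/g,\ldots,f_n/g\rangle$, $\bar B^+=A^+\langle f_1/g,\ldots,f_n/g\rangle$ with $f_1,\ldots,f_n,g\in\pi_0A$ generating the unit ideal of $\pi_0A$, and feed these data into Proposition \ref{prop:loc}: it yields a rational localization $(A\langle f_1/g,\ldots,f_n/g\rangle, A^+\langle f_1/g,\ldots,f_n/g\rangle)$ of $(A,A^+)$, hence an object of $\Aff\Op_{(A,A^+)}$, together with an equivalence $\pi_0(A\langle f_1/g,\ldots,f_n/g\rangle)\simeq (\pi_0A)\langle f_1/g,\ldots,f_n/g\rangle=\bar B$ compatible with the rings of integral elements. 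Thus $\Phi$ carries this object to $(\bar B,\bar B^+)$, so $\Phi$ is essentially surjective and therefore an equivalence.

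I do not expect a genuine obstacle: essentially all the work has been absorbed into Lemma \ref{lem:lff} and Proposition \ref{prop:loc}. The only mild subtleties are bookkeeping — verifying that the two $\Aff\Op$-categories are indeed full subcategories of the relevant coslices (so that Lemma \ref{lem:lff} literally computes their Hom-anima), and checking that the comparison map supplied by Lemma \ref{lem:lff} agrees with the map induced by $\Phi$ — both of which are immediate once one unwinds the definition of a rational localization.
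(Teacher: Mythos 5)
Your proposal is correct and follows the paper's own argument: the paper likewise deduces full faithfulness from Lemma \ref{lem:lff} and treats essential surjectivity as immediate, which is exactly your lifting of a rational covering datum $(f_1,\ldots,f_n,g)$ through Proposition \ref{prop:loc}. Your write-up merely spells out the bookkeeping that the paper leaves implicit.
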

\begin{proof}
This functor is obviously essentially surjective.
Moreover, it is fully faithful by Lemma \ref{lem:lff}.
\end{proof}

\begin{proposition}\label{prop:ratcov}
Let $(A,A^+)$ be an animated affinoid pair of weakly finite type over $K$.
Let $\{ (A,A^+) \to (A_i,A_i^+)\}_{i=1}^n$ be a finite family of rational localizations.
If $\{ (\pi_0A, A^+) \to (\pi_0A_i,A_i^+)\}_{i=1}^n$ is a rational covering (i.e., $\{ \Spa(\pi_0A_i, A_i^+) \to \Spa(\pi_0A,A^+)\}_{i=1}^n$ is a rational open covering),
then $\Dcal((A,A^+)_{\bs}) \to \prod_{i=1}^n \Dcal((A_i,A_i^+)_{\bs})$ is conservative.
We will call such a family $\{ (A,A^+) \to (A_i,A_i^+)\}_{i=1}^n$ a \textit{rational covering}.
\end{proposition}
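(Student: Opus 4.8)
The plan is to prove the stronger statement that $\Acal\coloneqq(A,A^{+})_{\bs}\ra\Bcal\coloneqq\prod_{i=1}^{n}(A_i,A_i^{+})_{\bs}$ is descendable in the sense of \cite{Mann22}, i.e.\ that $\Acal$ lies in the thick $\otimes$-ideal of $\Dcal(\Acal)$ generated by $\Bcal$. Since the product is finite, an object of $\Dcal(\Acal)$ is killed by every $-\otimes_{\Acal}^{\Lbb}(A_i,A_i^{+})_{\bs}$ if and only if it is killed by $-\otimes_{\Acal}^{\Lbb}\Bcal$; and descendability forces $-\otimes_{\Acal}^{\Lbb}\Bcal$ to be conservative, because if $M\otimes_{\Acal}^{\Lbb}\Bcal=0$ then $M\otimes_{\Acal}^{\Lbb}X=0$ for every $X$ in the $\otimes$-ideal generated by $\Bcal$, in particular for $X=\Acal$. (This form moreover yields the associated descent equivalence along the \v{C}ech nerve, which is presumably why one wants it.)

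To produce such a presentation of $\Acal$, I would use the ``non-repeating'' \v{C}ech cochain complex $C^{\bullet}$ of the covering, with $C^{k}=\prod_{|I|=k+1}\Bcal_{I}$, where $I$ runs over subsets of $\{1,\dots,n\}$ and $\Bcal_{I}$ is the iterated derived tensor product over $\Acal$ of the $(A_i,A_i^{+})_{\bs}$ with $i\in I$. This is a finite complex (of length $n$). Each map $\Acal\ra(A_i,A_i^{+})_{\bs}$ is a steady localization, hence idempotent: $(A_i,A_i^{+})_{\bs}\otimes_{\Acal}^{\Lbb}(A_i,A_i^{+})_{\bs}\simeq(A_i,A_i^{+})_{\bs}$. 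By Proposition~\ref{prop:afftensor} every $\Bcal_{I}$ is the analytic ring of an animated affinoid pair of weakly finite type over $K$ (in particular connective), and by Propositions~\ref{prop:loc} and~\ref{prop:ratopcateq} its $\pi_0$ is the corresponding term of the \v{C}ech complex of the \emph{static} rational covering $\{(\pi_0A,A^{+})\ra(\pi_0A_i,A_i^{+})\}$. The idempotency also shows, by a standard retract argument ($\Bcal_{I}$ is an $(A_{i_0},A_{i_0}^{+})_{\bs}$-module for $i_0\in I$, hence a retract of $\Bcal\otimes_{\Acal}^{\Lbb}\Bcal_{I}$), that each $\Bcal_{I}$ lies in the thick $\otimes$-ideal generated by $\Bcal$. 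Hence it suffices to show that the augmented complex $\Acal\ra C^{0}\ra\cdots\ra C^{n-1}$ is a resolution, i.e.\ that the augmentation $\Acal\ra\Tot(C^{\bullet})$ is an equivalence: then $\Acal$ is a finite iterated fibre of the $C^{k}$ and so lies in the thick $\otimes$-ideal generated by $\Bcal$.

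The remaining point is the vanishing of $Q\coloneqq\fib\bigl(\Acal\ra\Tot(C^{\bullet})\bigr)$, and this is where the static case enters. The object $Q$ is the totalization of a finite cochain complex (of length $n$) all of whose terms are connective, so $Q$ is bounded above ($H^{i}(Q)=0$ for $i>n$). On the other hand $-\otimes_{\Acal}^{\Lbb}(\pi_0A,A^{+})_{\bs}$ is exact, hence commutes with this finite totalization, and carries the augmented complex above to the augmented \v{C}ech complex of the static rational covering $\{(\pi_0A,A^{+})\ra(\pi_0A_i,A_i^{+})\}$; the latter is a resolution by the solid Tate acyclicity for rational coverings of affinoid adic spaces (\cite{And21,And23}; cf.\ \cite{CM}), so $Q\otimes_{\Acal}^{\Lbb}(\pi_0A,A^{+})_{\bs}=0$. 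Finally, $-\otimes_{\Acal}^{\Lbb}(\pi_0A,A^{+})_{\bs}$ is conservative \emph{on bounded-above objects}: it is right t-exact, and if $X$ is bounded above and nonzero then, letting $j_{0}$ be the largest integer with $H^{j_{0}}(X)\neq0$ (which exists since the t-structure is left complete), one has $H^{j_{0}}\bigl(X\otimes_{\Acal}^{\Lbb}(\pi_0A,A^{+})_{\bs}\bigr)\cong H^{j_{0}}(X)\neq0$, using that the static module $H^{j_{0}}(X)$ is already $(\pi_0A,A^{+})_{\bs}$-complete (Remark~\ref{rem:sta}). Applying this with $X=Q$ gives $Q=0$.

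I expect the static-to-animated passage in the last step to be the main obstacle. One cannot simply base change along $\Acal\ra(\pi_0A,A^{+})_{\bs}$, as that functor is not conservative on all of $\Dcal(\Acal)$ (it has infinite cohomological dimension); and reducing the conservativity statement directly to the hearts also fails, because the localization functors $-\otimes_{\Acal}^{\Lbb}(A_i,A_i^{+})_{\bs}$ have \emph{positive} cohomological dimension, so truncations leak under them without terminating. Phrasing everything through the finite \v{C}ech complex is exactly what makes the relevant object $Q$ bounded above, and hence detectable by $-\otimes_{\Acal}^{\Lbb}(\pi_0A,A^{+})_{\bs}$; the other ingredients — the retract argument, exactness of base change, and the classical solid Tate acyclicity — are routine or already available.
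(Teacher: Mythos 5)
Your argument is correct, but it takes a genuinely different route from the paper. The paper's proof is a one-line reduction: it asserts that Andreychev's static argument ([And21, Chapter 4]) goes through verbatim for animated affinoid pairs, i.e.\ one re-runs the explicit computations with $(\Zbb[T],\Zbb)_{\bs}\to(\Zbb[T,T^{-1}],\Zbb)_{\bs}$ and $(\Zbb[U_1,\dots,U_n],\Zbb)_{\bs}\to\Zbb[U_1,\dots,U_n]_{\bs}$ in the animated setting. You instead treat the static case as a black box and reduce the animated statement to it: connectivity of the animated affinoid rings makes the fibre $Q$ of the augmentation into the finite alternating \v{C}ech complex bounded above, Proposition~\ref{prop:loc} and Proposition~\ref{prop:afftensor} identify the base change of that complex along $\Acal\to(\pi_0A,A^+)_{\bs}$ with the static \v{C}ech complex, static Tate acyclicity kills $Q\otimes_{\Acal}^{\Lbb}(\pi_0A,A^+)_{\bs}$, and the top-cohomology argument (the mechanism behind Lemma~\ref{lem:bddeq}, together with Remark~\ref{rem:sta}) then forces $Q=0$; conservativity follows since each $\Bcal_I$ is idempotent over some $(A_i,A_i^+)_{\bs}$. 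This buys two things: you never have to re-examine the internals of Andreychev's proof, and you get the stronger conclusion that the covering map is descendable, so the sheaf statement of Corollary~\ref{cor:ratcov} drops out directly rather than via \cite[Proposition 12.18]{AG}. Two minor points to tidy up: constructing the finite alternating \v{C}ech complex coherently in the $\infty$-categorical setting needs the same care as in Proposition~\ref{prop:desbdd} (partial totalizations over $\Delta_{s,\leq t}$ and the fibre sequences of \cite[Lemma 4.12]{Mikami22}), and the static input you invoke is either Andreychev's Tate acyclicity directly or, equivalently, his conservativity statement combined with the standard splitting of the augmented \v{C}ech complex after base change to any single idempotent piece — worth saying explicitly which form you use.
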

\begin{proof}
It can be proved by the same way as in \cite[Chapter 4]{And21}.
\end{proof}

\begin{corollary}\label{cor:ratcov}
We endow $\Aff\Op_{(A,A^+)}$ with the Grothendieck topology generated by rational coverings.
Then the presheaf
$$\Aff\Op_{(A,A^+)}^{\op} \to \Cat_{\infty} ;\; (B,B^+) \to \Dcal((B,B^+)_{\bs})$$
is a sheaf of $\infty$-categories.
\end{corollary}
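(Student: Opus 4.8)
The plan is to deduce the sheaf axiom from \v{C}ech descent for the generating rational coverings, and to obtain that descent from the conservativity statement of Proposition~\ref{prop:ratcov} together with the fact, recorded in the lemma preceding Lemma~\ref{lem:lff}, that a rational localization induces a steady localization of analytic animated rings.

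First I would observe that, by Proposition~\ref{prop:ratopcateq}, $\Aff\Op_{(A,A^+)}$ is equivalent to the poset of rational subsets of the affinoid adic space $\Spa(\pi_0A,A^+)$; in particular it has the finite products (intersections) needed to form \v{C}ech nerves, and rational coverings are stable under pullback, so they genuinely generate the stated Grothendieck topology. Moreover, for a rational covering $\{(A,A^+)\to(A_i,A_i^+)\}_{i=1}^{n}$, each term
$$(A_{i_0},A_{i_0}^+)_{\bs}\otimes_{(A,A^+)_{\bs}}\cdots\otimes_{(A,A^+)_{\bs}}(A_{i_m},A_{i_m}^+)_{\bs}$$
of the associated \v{C}ech nerve is, by Proposition~\ref{prop:afftensor} together with Propositions~\ref{prop:loc} and \ref{prop:ratopcateq}, the analytic animated ring attached to a rational localization of $(A,A^+)$, and so again lies in the image of the presheaf. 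By the usual criterion for a presheaf of $\infty$-categories on a site with a basis of coverings to be a sheaf (cf.\ \cite[\S 6.2]{HTT}), it then suffices to show that for each rational covering the canonical functor
$$\Dcal((A,A^+)_{\bs})\ \lra\ \varprojlim_{[m]\in\Delta}\ \prod_{(i_0,\ldots,i_m)\in\{1,\ldots,n\}^{m+1}}\Dcal\big((A_{i_0},A_{i_0}^+)_{\bs}\otimes_{(A,A^+)_{\bs}}\cdots\otimes_{(A,A^+)_{\bs}}(A_{i_m},A_{i_m}^+)_{\bs}\big)$$
is an equivalence of $\infty$-categories.

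To prove this I would run the Zariski-descent argument for steady localizations exactly as in \cite[Chapter 4]{And21}. Each face map of the \v{C}ech nerve is a base change of one of the $(A,A^+)_{\bs}\to(A_i,A_i^+)_{\bs}$, hence again a steady localization, so on derived categories we obtain reflective localizations whose reflectors are the relevant tensor-product functors; and Proposition~\ref{prop:ratcov} supplies the one non-formal ingredient, namely that $\Dcal((A,A^+)_{\bs})\to\prod_{i}\Dcal((A_i,A_i^+)_{\bs})$ is conservative. With these two inputs the \v{C}ech diagram is a limit diagram, which is the desired equivalence. If one prefers, one may first reduce to two-term Laurent coverings --- the rational topology is generated by those --- so that only a single glueing square has to be analyzed.

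The glueing step above is formal once Proposition~\ref{prop:ratcov} and the steadiness of rational localizations are available, so I expect the only genuine work to be the site-theoretic bookkeeping in the second paragraph: checking that the terms of the \v{C}ech nerve really are rational localizations of $(A,A^+)$ carrying the correct analytic animated ring structure, and that joint conservativity of the pullbacks survives the base changes occurring in the \v{C}ech nerve. All of this reduces, via Propositions~\ref{prop:afftensor}, \ref{prop:loc} and \ref{prop:ratopcateq}, to classical statements about rational subsets of $\Spa(\pi_0A,A^+)$.
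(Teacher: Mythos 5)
Your argument is correct and follows essentially the same route as the paper, whose proof simply combines the conservativity statement of Proposition~\ref{prop:ratcov} with the formal descent result for jointly conservative families of steady localizations (\cite[Proposition 12.18]{AG}), the steadiness being supplied by the lemma preceding Lemma~\ref{lem:lff}. Your version spells out the site-theoretic bookkeeping and invokes the analogous gluing argument from \cite[Chapter 4]{And21} instead of citing \cite[Proposition 12.18]{AG}, but the key inputs are identical.
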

\begin{proof}
It follows from Proposition \ref{prop:ratcov} and \cite[Proposition 12.18]{AG}.
\end{proof}

Next, we will introduce the notion of faithfully flat morphisms for morphisms of animated affinoid $K$-algebras.

\begin{definition}
A morphism $A\to B$ of animated affinoid $K$-algebras is \textit{faithfully flat} if a morphism $\pi_0(A) \to \pi_0(B)$ of affinoid $K$-algebras is faithfully flat (in the sense of \cite[Definition 4.3]{Mikami22}) and $B_{\bs} \otimes_{A_{\bs}}^{\Lbb} \pi_0A_{\bs}$ is equivalent to $\pi_0B_{\bs}$.
\end{definition}

\begin{remark}
Let $f \colon A \to B$ be a morphism of discrete animated ring. Then $f$ is faithfully flat if and only if $\pi_0f \colon \pi_0A \to \pi_0B$ is faithfully flat and $B \otimes_{A}^{\Lbb} \pi_0A$ is equivalent to $\pi_0B$.
Therefore, the above definition is natural.
\end{remark}

The following is an analog of \cite[Lemma 4.4]{Mikami22}. 
\begin{lemma}\label{lem:flattening}
Let $A\to B$ a faithfully flat morphism of animated affinoid $K$-algebras.
Then there exists a rational covering $\{ (A,(\pi_0A)^{\circ}) \to (A_i,(\pi_0A_i)^{\circ})\}_{i=1}^n$ which satisfies that for each $i$ there exists a faithfully flat map $A_i^{\prime} \to B_i^{\prime}$ of admissible $\Ocal_K$-algebras (\cite[Definition 4.1, Definition 4.3]{Mikami22}) such that $A_i^{\prime}[1/\pi] \to B_i^{\prime}[1/\pi]$ is isomorphic to $$\pi_0A_i \to \pi_0B \otimes_{\pi_0A_{\bs}}^{\Lbb} (\pi_0A_i)_{\bs}.$$
\end{lemma}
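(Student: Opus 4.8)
The plan is to reduce the assertion to the static situation and then to invoke \cite[Lemma 4.4]{Mikami22}. First I would observe that the entire conclusion lives at the level of $\pi_0$: the admissible models $A_i'\to B_i'$ are $\Ocal_K$-algebras, and the prescribed isomorphism type of $A_i'[1/\pi]\to B_i'[1/\pi]$ is the map $\pi_0A_i \to \pi_0B\otimes_{\pi_0A_{\bs}}^{\Lbb}(\pi_0A_i)_{\bs}$, which is formed entirely from static data. By the definition of a faithfully flat morphism of animated affinoid $K$-algebras, the morphism $\pi_0f\colon\pi_0A\to\pi_0B$ is a faithfully flat morphism of affinoid $K$-algebras in the sense of \cite[Definition 4.3]{Mikami22}; this (and not the condition on $B_{\bs}\otimes_{A_{\bs}}^{\Lbb}\pi_0A_{\bs}$) is all that will be used.

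Next I would apply \cite[Lemma 4.4]{Mikami22} to $\pi_0f$ to obtain a rational covering $\{(\pi_0A,(\pi_0A)^{\circ})\to(\pi_0A_i,(\pi_0A_i)^{\circ})\}_{i=1}^n$ together with, for each $i$, a faithfully flat map $A_i'\to B_i'$ of admissible $\Ocal_K$-algebras such that $A_i'[1/\pi]\to B_i'[1/\pi]$ is isomorphic to $\pi_0A_i\to\pi_0B\hotimes_{\pi_0A}\pi_0A_i$, where $\pi_0A_i=(\pi_0A)\langle f_1^{(i)}/g^{(i)},\ldots,f_{m_i}^{(i)}/g^{(i)}\rangle$ is a rational localization. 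Since the functor $\Aff\Op_{(A,(\pi_0A)^{\circ})}\to\Aff\Op_{(\pi_0A,(\pi_0A)^{\circ})}$ is a categorical equivalence by Proposition \ref{prop:ratopcateq}, each member of this covering lifts to a rational localization $(A,(\pi_0A)^{\circ})\to(A_i,(\pi_0A_i)^{\circ})$ of the animated affinoid pair, with $A_i=A\langle f_1^{(i)}/g^{(i)},\ldots,f_{m_i}^{(i)}/g^{(i)}\rangle$; the identification $\pi_0A_i\simeq(\pi_0A)\langle f_1^{(i)}/g^{(i)},\ldots\rangle$ is Proposition \ref{prop:loc}. By the definition of rational coverings (Proposition \ref{prop:ratcov}) the resulting family $\{(A,(\pi_0A)^{\circ})\to(A_i,(\pi_0A_i)^{\circ})\}_{i=1}^n$ is again a rational covering.

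It then remains to match the two descriptions of the target. By associativity of $\otimes^{\Lbb}$, writing $(\pi_0A_i)_{\bs}$ as the two-step base change of $(\pi_0A)_{\bs}$ along $(\Zbb[T],\Zbb)_{\bs}\to(\Zbb[T,T^{-1}],\Zbb)_{\bs}$ and $(\Zbb[U_1,\ldots,U_{m_i}],\Zbb)_{\bs}\to\Zbb[U_1,\ldots,U_{m_i}]_{\bs}$ as in Proposition \ref{prop:loc}, the object $\pi_0B\otimes_{\pi_0A_{\bs}}^{\Lbb}(\pi_0A_i)_{\bs}$ is computed by the very same procedure applied now to the affinoid $K$-algebra $\pi_0B$ and the images of the $f_j^{(i)},g^{(i)}$ in $\pi_0B$ (which still generate the unit ideal). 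Hence by Lemma \ref{lem:staloc} it is the static rational localization $(\pi_0B)\langle f_1^{(i)}/g^{(i)},\ldots\rangle$, which is canonically $\pi_0B\hotimes_{\pi_0A}\pi_0A_i$; so $A_i'[1/\pi]\to B_i'[1/\pi]$ is isomorphic to $\pi_0A_i\to\pi_0B\otimes_{\pi_0A_{\bs}}^{\Lbb}(\pi_0A_i)_{\bs}$, as required. The genuine content (the Raynaud--Gruson type flattening yielding an admissible faithfully flat model over a rational covering) is imported wholesale from \cite{Mikami22}; the only point requiring some care on the present side is the identification of the solid derived tensor product $\pi_0B\otimes_{\pi_0A_{\bs}}^{\Lbb}(\pi_0A_i)_{\bs}$ with the naive completed tensor product, which is exactly what Lemma \ref{lem:staloc} provides.
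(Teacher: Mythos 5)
Your proposal is correct and follows essentially the same route as the paper: apply \cite[Lemma 4.4]{Mikami22} to the faithfully flat static morphism $\pi_0A\to\pi_0B$ and lift the resulting rational covering along the categorical equivalence of Proposition \ref{prop:ratopcateq}, the lifted family being a rational covering by the very definition in Proposition \ref{prop:ratcov}. The extra verification you carry out at the end — identifying $\pi_0B\otimes_{\pi_0A_{\bs}}^{\Lbb}(\pi_0A_i)_{\bs}$ with the classical rational localization $\pi_0B\hotimes_{\pi_0A}\pi_0A_i$ via Lemma \ref{lem:staloc} — is a detail the paper leaves implicit, and it is handled correctly.
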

\begin{proof}
It follows \cite[Lemma 4.4]{Mikami22} and Proposition \ref{prop:ratopcateq}.
\end{proof}

Finally, we will prove a proposition about cardinalities. The following proposition is an analog of Corollary \ref{discrete}.
\begin{proposition}\label{prop:aff}
Let $A$ be an animated affinoid $K$-algebra, and $\kappa$ be an uncountable solid cutoff cardinal.
Then we have the inclusion $\Dcal(A)_{\kappa} \supset \Dcal(A_{\bs})_{\kappa}$.
\end{proposition}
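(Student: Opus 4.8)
The plan is to verify the three conditions (a), (b), (c) of Proposition~\ref{prop:card}(2) for the analytic animated ring $\Acal=A_\bs$ (it is one because, by condition~(2) of Definition~\ref{def:aff} and Remark~\ref{rem:sta}, every $\pi_n(A)$ is complete for the relevant solid structure). Here $\underline{\Acal}=A$ and the associated static analytic ring is $\pi_0\Acal=(\pi_0A,(\pi_0A)^{\circ})_\bs$, so that $(\pi_0\Acal)[S]=(\pi_0A)_\bs[S]$ and $\Dcal(\underline{\pi_0\Acal})_\kappa=\Dcal(\pi_0A)^{\cond}_\kappa$; granting (a), (b), (c), Proposition~\ref{prop:card}(2) then gives $\Dcal(A)^{\cond}_\kappa\supset\Dcal(A_\bs)_\kappa$, which is the assertion. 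As in the proof of Corollary~\ref{discrete} we tacitly enlarge $\kappa$ if necessary so that $A$, $\pi_0A$ and $\Ocal_K$ are $\kappa$-condensed.

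Condition (a) --- that the cofinality of $\kappa$ exceeds $\omega$ --- is part of the definition of an uncountable solid cutoff cardinal. For condition (c): by the Remark after Definition~\ref{def:aff} (an instance of Lemma~\ref{lem:nuccoef}, using that $K$ is nuclear over $(\Ocal_K,\Zbb)_\bs$), the object $A$ is nuclear in $\Dcal((\Ocal_K,\Zbb)_\bs)$. Since $\Ocal_K$ is $\pi$-adically complete with $\Ocal_K/^{\Lbb}\pi$ discrete, Proposition~\ref{prop:nucsta} applies with ambient ring $\Ocal_K$ and shows that each $\pi_n(A)=H^{-n}(A)$ is nuclear in $\Dcal((\Ocal_K,\Zbb)_\bs)$; Corollary~\ref{cor:nuccard} (whose cofinality hypothesis is (a)) then places $\pi_n(A)$ in $\Dcal(\Ocal_K)^{\cond}_\kappa$. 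As $\pi_n(A)$ is a $\pi_0A$-module and membership in $\Dcal(-)^{\cond}_\kappa$ is detected on the underlying object of $\Dcal(\Zbb)^{\cond}$, this yields $\pi_n(A)\in\Dcal(\pi_0A)^{\cond}_\kappa$ for all $n\geq0$, i.e. condition (c).

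The main step is condition (b): for every $\kappa$-small extremally disconnected set $S$ one must show $(\pi_0A)_\bs[S]\in\Dcal(\pi_0A)^{\cond}_\kappa$, equivalently $(\pi_0A)_\bs[S]\in\Dcal(\Zbb)^{\cond}_\kappa$. This is the affinoid analogue of the computation in Corollary~\ref{discrete}. Choose a ring of definition $B_0\subset\pi_0A$ which is an $\Ocal_K$-algebra, so that $B_0$ is $\pi$-adically complete and $\pi_0A=B_0[1/\pi]$. Since $(B_0,B_0)_\bs\to(\pi_0A,(\pi_0A)^{\circ})_\bs$ is the steady localization inverting $\pi$, one has $(\pi_0A)_\bs[S]\simeq(B_0,B_0)_\bs[S][1/\pi]$, the filtered colimit of $(B_0,B_0)_\bs[S]$ along multiplication by $\pi$; as $\Dcal(\Zbb)^{\cond}_\kappa$ is closed under small colimits, it suffices to show $(B_0,B_0)_\bs[S]\in\Dcal(\Zbb)^{\cond}_\kappa$. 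By $\pi$-adic completeness (\cite[Proposition 2.12.10]{Mann22}), $(B_0,B_0)_\bs[S]\simeq\varprojlim_{n\in\Nbb}(B_0/\pi^n)_\bs[S]$, and $\Dcal(\Zbb)^{\cond}_\kappa$ is closed under $\Nbb$-indexed limits by Lemma~\ref{lem:limcard} (using (a)), so it is enough to place each $(B_0/\pi^n)_\bs[S]$ in $\Dcal(\Zbb)^{\cond}_\kappa$ for the discrete ring $B_0/\pi^n$. This is exactly the argument in the proof of Corollary~\ref{discrete}: writing $\Zbb_\bs[S]\cong\prod_I\Zbb$ for a $\kappa$-small set $I$ by \cite[Corollary 5.5]{CM}, one has $(B_0/\pi^n)_\bs[S]\cong\varinjlim_C\prod_I C$ over the finitely generated $\Zbb$-subalgebras $C\subset B_0/\pi^n$, each $\prod_I C$ lies in $\Dcal(\Zbb)^{\cond}_\kappa$ (take a strong limit cardinal $\lambda<\kappa$ of cofinality larger than $\max\{\#I,\omega\}$, which exists as $\kappa$ is an uncountable solid cutoff cardinal, and apply Lemma~\ref{lem:limcard}), and hence so does the colimit.

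The step I expect to be the main obstacle is the pair of identifications $(\pi_0A)_\bs[S]\simeq(B_0,B_0)_\bs[S][1/\pi]$ and $(B_0,B_0)_\bs[S]\simeq\varprojlim_n(B_0/\pi^n)_\bs[S]$ --- that is, controlling the free module of the solid analytic ring of an affinoid $K$-algebra through a ring of definition and its $\pi$-adic filtration. This rests on the study of solid modules over adic rings from the first part of Section~2 and on the description of the solid structure on affinoid algebras (compare \cite{And21,And23}), in particular on \cite[Proposition 2.12.10]{Mann22}. With these presentations in hand, everything else is a direct transcription of the proof of Corollary~\ref{discrete}.
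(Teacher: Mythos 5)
Your overall skeleton coincides with the paper's: verify conditions (a), (b), (c) of Proposition~\ref{prop:card}(2), with (a) immediate from the definition of an uncountable solid cutoff cardinal and (c) obtained from nuclearity of the homotopy groups of $A$ together with Corollary~\ref{cor:nuccard} (your use of Proposition~\ref{prop:nucsta} to extract nuclearity of each $\pi_n(A)$ is exactly what the paper's one-line appeal to Corollary~\ref{cor:nuccard} presupposes). Where you diverge is the main condition (b): the paper applies Andreychev's description \cite[Theorem 3.27]{And21} directly to the affinoid algebra $\pi_0A$, writing $(\pi_0A)_{\bs}[S]\cong\varinjlim_{B\subset(\pi_0A)^{\circ},M}\prod_I M$ with each quasi-finitely generated $M$ an $\Nbb$-indexed limit of discrete objects, and then concludes with Lemma~\ref{lem:limcard}; you instead d\'evissage through a ring of definition $B_0$ and its $\pi$-adic filtration.

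The gap sits precisely in the two identifications you flag but do not prove, and the first is not merely technical. By definition $(\pi_0A)_{\bs}=(\pi_0A,(\pi_0A)^{\circ})_{\bs}$, whose structure is induced from the full ring of power-bounded elements, whereas your localization argument only computes the free module of $(\pi_0A,B_0)_{\bs}$, namely $(B_0,B_0)_{\bs}[S][1/\pi]$. Calling $(B_0,B_0)_{\bs}\to(\pi_0A,(\pi_0A)^{\circ})_{\bs}$ ``the steady localization inverting $\pi$'' silently identifies the $B_0$-induced and the $(\pi_0A)^{\circ}$-induced solid structures; in view of the criterion used in the proof of Proposition~\ref{prop:cateq} (\cite[Proposition 3.34]{And21}) this identification is not automatic and needs an argument, e.g.\ that $(\pi_0A)^{\circ}$ is integral over a suitably chosen $B_0$ (the classical fact that power-bounded elements of an affinoid algebra are integral over the image of $\Ocal_K\langle T_1,\ldots,T_n\rangle$, Bosch--G\"untzer--Remmert) combined with the fact that the solid structure depends only on the integral closure of the ring of integral elements. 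Your second identification, $(B_0,B_0)_{\bs}[S]\simeq\varprojlim_n(B_0/\pi^n)_{\bs}[S]$, is likewise only asserted (completeness of the free module plus base change mod $\pi^n$); it is true for admissible $B_0$, but \cite[Proposition 2.12.10]{Mann22} alone does not obviously deliver it and one should point to the relevant completeness statements (e.g.\ in \cite{And23}). Once these inputs are supplied your route does work and is a legitimate alternative; note, however, that \cite[Theorem 3.27]{And21} applied directly to $\pi_0A$ already yields the needed presentation of $(\pi_0A)_{\bs}[S]$ in one step, which is exactly why the paper's proof bypasses the ring-of-definition reduction altogether.
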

\begin{proof}
We will check the conditions (a), (b), and (c) in (2) of Proposition \ref{prop:card}.
The condition (a) is clear.
The condition (c) follows from Corollary \ref{cor:nuccard}.
We will check the condition (b).
We may assume that $A$ is an affinoid $K$-algebra.
Let $S$ be a $\kappa$-small extremally disconnected set.
There exists a $\kappa$-small set $I$ and an isomorphism $\Zbb_{\bs}[S] \cong \prod_{I} \Zbb$ by \cite[Corollary 5.5]{CM}.
Then we get an isomorphism $ A_{\bs}[S] \cong\varinjlim_{B\subset A^{\circ}, M} \prod_{I} M,$
where the colimit is taken over all the finitely generated $\Zbb$-subalgebra $B\subset A^{\circ}$ and all quasi-finitely generated $B$-submodules $M$ of $A$ (\cite[Theorem 3.27]{And21}).
Since $\kappa$ is an uncountable solid cutoff cardinal, we can take a strong limit cardinal $\lambda <\kappa$ whose cofinality is larger than $\lambda^{\prime}=\mathrm{max}\{\#I,2^{\omega}\}$.
By definition, a quasi-finitely generated $B$-submodules $M$ of $A$ can be written as a $\Nbb$-indexed limit of discrete objects of $\Dcal(\Zbb)^{\cond}$,
so we have $\prod_{I} M\in\Dcal(\Zbb)_{\kappa}$ by Lemma \ref{lem:limcard}.
Since $\Dcal(\Zbb)_{\kappa} \subset \Dcal(\Zbb)$ is closed under small colimits, we find that $ A_{\bs}[S] \cong\varinjlim_{B\subset A^{\circ}, M} \prod_{I} M \in \Dcal(\Zbb)_{\kappa} $.
\end{proof}


\section{Boundedness of $N_{B/A}$ for a faithfully flat morphism $A\to B$ of affinoid $K$-algebras}
In this section, we will prove an analog of \cite[Proposition 2.20]{Mikami22}, which will play an important role in the proof of the main theorem.

\begin{notation}
For a $t$-adically complete condensed animated ring $A$ such that $A/^{\Lbb}t$ is discrete ($t\in \pi_0(A)$), we denote $(A,\pi_0A)_{\bs}$ by $A_{\bs}$.
\end{notation}

Let $K$ be a complete non-archimedean field, and let $\Ocal_{K}$ denote the ring of integers of $K$, and $\pi \in K$ denote a pseudo-uniformizer of $K$.
Let $A \to B$ be a faithfully flat morphism of affinoid $K$-algebras.
We assume that there exists a faithfully flat map $A_0 \to B_0$ of admissible $\Ocal_K$-algebras such that $A_0[1/\pi] \to B_0[1/\pi]$ is isomorphic to $A\to B$.
First, we recall the following theorem proved in \cite{Mikami22}.
\begin{theorem}[{\cite[Theorem 4.10]{Mikami22}}]\label{thm:N}
There exists an object $N_{B_0/A_0} \in \Dcal(B_0)$ satisfying the following conditions:
\begin{itemize}
\item
The object $N_{B_0/A_0}$ is ${A_0}_{\bs}$-complete and compact as an object of $\Dcal({A_0}_{\bs})$.
\item
There exists an equivalence of functors from $\Dcal({A_0}_{\bs})$ to $\Dcal({B_0}_{\bs})$
$$-\otimes_{{A_0}_{\bs}}^{\Lbb} {B_0}_{\bs} \simeq R\intHom_{A_0}(N_{B_0/A_0},-).$$
\end{itemize}
We denote $N_{B_0/A_0} \otimes_{B_0}^{\Lbb} B$ by $N_{B/A}$.
Then $N_{B/A} \in \Dcal(B)$ satisfies the following conditions:
\begin{itemize}
\item
The object $N_{B/A}$ is $A_{\bs}$-complete and compact as an object of $\Dcal(A_{\bs})$.
\item
There exists an equivalence of functors from $\Dcal(A_{\bs})$ to $\Dcal(B_{\bs})$
$$-\otimes_{A_{\bs}}^{\Lbb} B_{\bs} \simeq R\intHom_{A}(N_{B/A},-).$$
\end{itemize}
\end{theorem}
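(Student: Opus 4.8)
The first part of the statement, concerning $N_{B_0/A_0}$, is \cite[Theorem 4.10]{Mikami22}, and the plan is to take it as given and transfer it to $A\to B$ by inverting $\pi$. Concretely, I would first record the localization formalism: writing $A_{\bs}\simeq {A_0}_{\bs}\otimes_{(\Zbb[T],\Zbb)_{\bs}}^{\Lbb}(\Zbb[T,T^{-1}],\Zbb)_{\bs}$ with $T\mapsto\pi$, this is a steady localization by \cite[Lemma 3.8]{And21}, so $\Dcal(A_{\bs})$ is identified with the full subcategory of $\Dcal({A_0}_{\bs})$ consisting of those objects on which $\pi$ acts invertibly; this subcategory is stable under all small limits and colimits in $\Dcal({A_0}_{\bs})$, and the forgetful functor $\Dcal(A_{\bs})\to\Dcal({A_0}_{\bs})$ preserves colimits. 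In this language $B_{\bs}\simeq {B_0}_{\bs}\otimes_{{A_0}_{\bs}}^{\Lbb}A_{\bs}$, and since $B=B_0[1/\pi]$ the object $N_{B/A}=N_{B_0/A_0}\otimes_{B_0}^{\Lbb}B$ is just $N_{B_0/A_0}\otimes_{{A_0}_{\bs}}^{\Lbb}A_{\bs}$, i.e. the image of $N_{B_0/A_0}$ under base change to $A_{\bs}$.

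Granting this, the properties of $N_{B/A}$ follow quickly. It is $A_{\bs}$-complete because it lies in the essential image of $-\otimes_{{A_0}_{\bs}}^{\Lbb}A_{\bs}$, and it is compact in $\Dcal(A_{\bs})$ because base change along a steady localization preserves compact objects (its right adjoint, the forgetful functor, preserves colimits) and $N_{B_0/A_0}$ is compact in $\Dcal({A_0}_{\bs})$. For the functor equivalence I would take $M\in\Dcal(A_{\bs})$, regard it inside $\Dcal({A_0}_{\bs})$, and feed it into the first part to get $M\otimes_{{A_0}_{\bs}}^{\Lbb}{B_0}_{\bs}\simeq R\intHom_{A_0}(N_{B_0/A_0},M)$. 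On the source, since $M$ is already an $A_{\bs}$-module, $M\otimes_{{A_0}_{\bs}}^{\Lbb}{B_0}_{\bs}\simeq M\otimes_{A_{\bs}}^{\Lbb}B_{\bs}$ (both sides are colimit-preserving in $M$ and agree on the compact generators $A_{\bs}[S]$). On the target, $R\intHom_{A_0}(N_{B_0/A_0},M)$ is $\pi$-local because $M$ is, hence already lies in $\Dcal(A_{\bs})$, and the tensor-forgetful adjunction combined with $N_{B/A}=N_{B_0/A_0}\otimes_{{A_0}_{\bs}}^{\Lbb}A_{\bs}$ identifies it with $R\intHom_{A}(N_{B/A},M)$. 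All identifications being natural in $M$, this yields the equivalence of functors $-\otimes_{A_{\bs}}^{\Lbb}B_{\bs}\simeq R\intHom_{A}(N_{B/A},-)$.

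The step I expect to be the real obstacle is not conceptual but the verification that inverting $\pi$ interacts correctly with the solid/analytic structures: that ${B_0}_{\bs}[1/\pi]$ is genuinely $B_{\bs}$ as an analytic animated ring, that relative tensor products and internal homs over ${A_0}_{\bs}$ localize as written, and that the identification of $\Dcal(A_{\bs})$ with the $\pi$-local part of $\Dcal({A_0}_{\bs})$ is compatible with the monoidal structures and with internal $\Hom$. This is exactly where steadiness of the localization (\cite[Lemma 3.8]{And21}, \cite[Proposition 13.14]{AG}) is used; beyond that, one must keep track of cutoff cardinals so that all objects in sight lie in a common presentable piece $\Dcal({A_0}_{\bs})_{\kappa}$.
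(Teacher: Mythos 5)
There is nothing to compare against inside this paper: Theorem \ref{thm:N} is recalled verbatim, in both its integral and its rational half, from \cite[Theorem 4.10]{Mikami22}, and no proof is given here. Your proposal therefore supplies an argument the paper omits, namely deducing the statement for $A\to B$ from the integral statement by inverting $\pi$; this is essentially correct, and it is the same localization mechanism the paper itself uses elsewhere (compare Lemma \ref{lem:staloc}, which identifies rational localizations via base change along $(\Zbb[T],\Zbb)_{\bs}\to(\Zbb[T,T^{-1}],\Zbb)_{\bs}$), so it is presumably also how the result is obtained in \cite{Mikami22}. The individual steps check out: $\Dcal$ of the pushout is the full subcategory of $\pi$-invertible objects of $\Dcal({A_0}_{\bs})$, closed under limits and colimits; compactness of $N_{B/A}$ follows since the forgetful functor preserves colimits; $N_{B_0/A_0}\otimes_{B_0}^{\Lbb}B\simeq N_{B_0/A_0}\otimes_{{A_0}_{\bs}}^{\Lbb}A_{\bs}$ because inverting $\pi$ already lands in $\Dcal(A_{\bs})$; and the internal-Hom identification $R\intHom_{A_0}(N_{B_0/A_0},M)\simeq R\intHom_{A}(N_{B/A},M)$ for $\pi$-local $M$ is the projection/adjunction formula you cite. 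The one point you should make explicit, because the notation hides it, is the identification of analytic ring structures: the paper's $A_{\bs}$ for an affinoid $K$-algebra means $(A,A^{\circ})_{\bs}$ with $A^{\circ}$ the power-bounded elements, whereas ${A_0}_{\bs}\otimes_{(\Zbb[T],\Zbb)_{\bs}}^{\Lbb}(\Zbb[T,T^{-1}],\Zbb)_{\bs}$ is naturally $(A,A_0)_{\bs}$, the structure generated by the ring of definition. These agree because $A^{\circ}$ is integral over the image of $A_0$ (e.g.\ via a presentation $\Ocal_K\langle X_1,\dots,X_n\rangle\twoheadrightarrow A_0$ and the classical integrality of $A^{\circ}$ over the image of the unit ball of the Tate algebra) and the solid structure attached to an affinoid pair is unchanged under integral extensions of the ring of integral elements (\cite{And21}); the same remark is needed for $(B,B_0)_{\bs}$ versus $B_{\bs}$. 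With that supplied, and the cardinal bookkeeping you already flag, your argument is complete.
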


We denote the $n$-fold derived tensor product of $N_{B/A}$ (resp. $N_{B_0/A_0}$) over $A_{\bs}$ (resp. ${A_0}_{\bs}$) by $N_{B/A}^{\otimes n}$ (resp. $N_{B_0/A_0}^{\otimes n}$).

\begin{theorem}\label{thm:affbdd}
For every $n\geq 1$, $N_{B/A}^{\otimes n}$ is quasi-isomorphic to a complex of the form $0 \to M^0 \to M^1 \to M^2 \to 0$ where $M^0$ is placed in cohomological degree $0$ and $M^0, M^1,M^2$ are projective objects of $\Dcal(A_{\bs})^{\heart}$.
In particular, $-\otimes_{A_{\bs}}^{\Lbb} B_{\bs}$ has finite Tor-dimension $\leq 2$.
\end{theorem}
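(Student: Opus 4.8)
The plan is to descend the statement to the special fibre $A_0/\pi\to B_0/\pi$, where \cite[Proposition 2.20]{Mikami22} applies, and then lift the resulting bound back up through the derived $\pi$-adic completion and along the steady localization ${A_0}_{\bs}\to A_{\bs}$; the only genuinely non-formal input is \cite[Proposition 2.20]{Mikami22}, and everything else is bookkeeping. First I would record that, for every $n\ge 1$, one has $N_{B/A}^{\otimes n}\simeq N_{B_0/A_0}^{\otimes n}\otimes_{{A_0}_{\bs}}^{\Lbb}A_{\bs}$. Indeed, by Theorem \ref{thm:N} the object $N_{B_0/A_0}$ is compact and $R\intHom_{A_0}(N_{B_0/A_0},-)\simeq-\otimes_{{A_0}_{\bs}}^{\Lbb}{B_0}_{\bs}$; since ${A_0}_{\bs}\to A_{\bs}$ is a steady localization with $A_{\bs}\otimes_{{A_0}_{\bs}}^{\Lbb}{B_0}_{\bs}\simeq B_{\bs}$, its base change $N_{B_0/A_0}\otimes_{{A_0}_{\bs}}^{\Lbb}A_{\bs}$ is a compact object of $\Dcal(A_{\bs})$ representing $-\otimes_{A_{\bs}}^{\Lbb}B_{\bs}$, hence is identified with $N_{B/A}$ by Yoneda, and the case of general $n$ follows from monoidality of base change. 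Because base change along a steady localization sends ${A_0}_{\bs}[S]$ to $A_{\bs}[S]$ (hence projective objects to projective objects) and projective objects are flat, it carries any complex of projective objects of $\Dcal({A_0}_{\bs})^{\heart}$ concentrated in cohomological degrees $[0,2]$ to such a complex over $A_{\bs}$. It therefore suffices to prove that each $N_{B_0/A_0}^{\otimes n}$ is quasi-isomorphic to some $0\to P^0\to P^1\to P^2\to 0$ with $P^0$ in degree $0$ and the $P^i$ projective in $\Dcal({A_0}_{\bs})^{\heart}$.

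Next I would reduce modulo $\pi$. The same Yoneda argument, now using flatness of $A_0\to B_0$ (so that $B_0\otimes_{A_0}^{\Lbb}A_0/\pi\simeq B_0/\pi$), identifies $N_{B_0/A_0}\otimes_{{A_0}_{\bs}}^{\Lbb}(A_0/\pi)_{\bs}$ with $N_{(B_0/\pi)/(A_0/\pi)}$, so $N_{B_0/A_0}^{\otimes n}/^{\Lbb}\pi\simeq N_{(B_0/\pi)/(A_0/\pi)}^{\otimes n}$, where $A_0/\pi\to B_0/\pi$ is a finitely presented faithfully flat map of discrete rings. By \cite[Proposition 2.20]{Mikami22} this object is concentrated in cohomological degrees $[0,1]$ for all $n\ge 1$. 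Since $A_0$ is $\pi$-torsion-free we have $\pi^{m-1}A_0/\pi^mA_0\cong A_0/\pi A_0$, which produces exact triangles $N_{B_0/A_0}^{\otimes n}/^{\Lbb}\pi\to N_{B_0/A_0}^{\otimes n}/^{\Lbb}\pi^m\to N_{B_0/A_0}^{\otimes n}/^{\Lbb}\pi^{m-1}$; inducting on $m$ shows that $N_{B_0/A_0}^{\otimes n}/^{\Lbb}\pi^m$ is concentrated in degrees $[0,1]$ for every $m$. Since $N_{B_0/A_0}^{\otimes n}$ is ${A_0}_{\bs}$-complete it equals $R\varprojlim_m N_{B_0/A_0}^{\otimes n}/^{\Lbb}\pi^m$, and as $R\varprojlim$ over $\Nbb$ has cohomological dimension $\le 1$ it follows that $N_{B_0/A_0}^{\otimes n}$ is concentrated in cohomological degrees $[0,2]$.

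It remains to upgrade this to a complex of projectives. The object $N_{B_0/A_0}^{\otimes n}$ is compact in $\Dcal({A_0}_{\bs})$: it is a tensor power of the compact object $N_{B_0/A_0}$, and $-\otimes_{{A_0}_{\bs}}^{\Lbb}N_{B_0/A_0}$ preserves compact objects because its right adjoint $R\intHom_{A_0}(N_{B_0/A_0},-)\simeq-\otimes_{{A_0}_{\bs}}^{\Lbb}{B_0}_{\bs}$ preserves colimits. A compact object concentrated in cohomological degrees $[0,2]$ is quasi-isomorphic to a bounded complex of projective objects, and cancelling split projective summands off the top and the bottom puts it in the form $0\to P^0\to P^1\to P^2\to 0$ with $P^0$ in degree $0$; transporting along ${A_0}_{\bs}\to A_{\bs}$ as in the first paragraph yields the asserted complex $0\to M^0\to M^1\to M^2\to 0$ for $N_{B/A}^{\otimes n}$. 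The Tor-dimension claim then follows by taking $n=1$: writing $N_{B/A}\simeq(M^0\to M^1\to M^2)$ with $M^0$ in degree $0$ and noting that $R\intHom_A(M^i,-)$ is t-exact for each projective $M^i$, the functor $-\otimes_{A_{\bs}}^{\Lbb}B_{\bs}\simeq R\intHom_A(N_{B/A},-)$ has amplitude contained in $[-2,0]$.

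I expect the main obstacle to be the two reductions — verifying that $-\otimes_{{A_0}_{\bs}}^{\Lbb}(A_0/\pi)_{\bs}$ and the localization ${A_0}_{\bs}\to A_{\bs}$ act on the representing objects $N_{\bullet}$ exactly as claimed and preserve the relevant shapes — together with the control of cohomological degrees under derived $\pi$-adic completion. Once these compatibilities are set up, the result is a formal consequence of the uniform boundedness over the special fibre supplied by \cite[Proposition 2.20]{Mikami22}.
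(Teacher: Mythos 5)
Your overall strategy---reduce modulo $\pi$, invoke the special-fibre boundedness from \cite[Proposition 2.20]{Mikami22}, and climb back up using that $R\varprojlim$ over $\Nbb$ has cohomological dimension $\leq 1$---is the same as the paper's, but two of the steps you dismiss as bookkeeping fail as written. First, the claim ``since $N_{B_0/A_0}^{\otimes n}$ is ${A_0}_{\bs}$-complete it equals $R\varprojlim_m N_{B_0/A_0}^{\otimes n}/^{\Lbb}\pi^m$'' conflates solidity with derived $\pi$-adic completeness: ${A_0}_{\bs}$-complete objects need not be $\pi$-adically complete (already $A=A_0[1/\pi]$ is solid but not complete), and for a general admissible $\Ocal_K$-algebra $A_0$ the compact generators $(A_0)_{\bs}[S]$ are filtered colimits of products, hence not $\pi$-adically complete, so compactness of $N_{B_0/A_0}^{\otimes n}$ does not rescue the claim either. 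This is precisely why the paper's proof descends further to a \emph{small} $\pi$-adic ring $A'$, where \cite[Proposition 3.8]{Mikami22} guarantees that compact objects are $\pi$-adically complete, and even there it applies completeness not to $N'$ itself but to $R\intHom_{A'}(N',M)$ with $M$ a $\pi$-adically complete static module.

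Second, even granting cohomological amplitude $[0,2]$ for $N_{B_0/A_0}^{\otimes n}$, your concluding step ``a compact object concentrated in cohomological degrees $[0,2]$ is quasi-isomorphic to $0\to P^0\to P^1\to P^2\to 0$ with projective terms'' is false: over $\Zbb_{\bs}$ the compact object $\Zbb/2$ is concentrated in degree $0$ but admits no projective resolution of length $0$. One can prune a bounded complex of projectives above the top cohomological degree (a surjection onto a projective splits), but pruning at the bottom requires the relevant kernel to be projective, which cohomological concentration alone does not give. What the theorem really asserts is projective (equivalently $\Ext$-) amplitude, i.e.\ $R\intHom_{A}(N_{B/A}^{\otimes n},M)\in\Dcal(A_{\bs})^{[-2,0]}$ for every static $M$, and that is what the paper proves, using that $N'/^{\Lbb}\pi$ has \emph{projective} amplitude in $[0,1]$ (from the proof of \cite[Theorem 4.13]{Mikami22}), the reduction of an arbitrary static $M$ to a $\pi$-adically complete one via compact objects over the small ring $A'$, and the left $t$-exactness of $R\varprojlim$. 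Since your Tor-dimension conclusion rests on the three-term complex, it inherits both gaps; the argument would need to be reorganized along these lines to go through.
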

\begin{proof}
We will show that $N_{B_0/A_0}^{\otimes n}$ is quasi-isomorphic to a complex of the form $0 \to M^0_0 \to M^1_0 \to M^2_0 \to 0$ where $M^0_0$ is placed in cohomological degree $0$ and $M^0_0, M^1_0,M^2_0$ are projective objects of $\Dcal((A_0)_{\bs})^{\heart}$.
Since $N_{B_0/A_0}^{\otimes n}$ is equivalent to $N_{B_0^{\otimes n}/A_0}$, where $B_0^{\otimes n}$ is the classical $n$-fold completed tensor product of $B_0$ over $A_0$, we may assume $n=1$. 
It is enough to show that for every static object $M$ of $\Dcal({A_0}_{\bs})^{\heart}$, $R\intHom_{A_0}(N_{B_0/A_0}, M) \simeq M\otimes_{{A_0}_{\bs}}^{\Lbb} {B_0}_{\bs}$ lies in $\Dcal({A_0}_{\bs})^{[-2,0]}$.
Since we have $M\otimes_{{A_0}_{\bs}}^{\Lbb}{B_0}_{\bs}\in\Dcal({A_0}_{\bs})^{\leq0}$, it suffices to show $R\intHom_{A_0}(N_{B_0/A_0}, M) \in \Dcal({A_0}_{\bs})^{\geq -2}$.
By the proof of \cite[Theorem 4.13]{Mikami22}, we can take a small $\pi$-adic ring $A^{\prime}$ with a morphism $A^{\prime} \to A_0$ and a compact object $N^{\prime}\in\Dcal(A^{\prime}_{\bs})$ satisfying the following conditions:
\begin{enumerate}
\item
In $A^{\prime}$, $\pi$ is a non-zero-divisor.
\item 
There is an equivalence $N^{\prime}\otimes_{A^{\prime}_{\bs}}^{\Lbb} {A_0}_{\bs} \simeq N_{B_0/A_0}$.
\item\label{2}
The $(A^{\prime}/\pi)_{\bs}$-module $N^{\prime}/^{\Lbb}\pi$ has projective-amplitude in $[0,1]$.
\end{enumerate}
We will prove $R\intHom_{A^{\prime}}(N^{\prime}, M) \in \Dcal(A^{\prime}_{\bs})^{\geq -2}$ for every static object $M$ of $\Dcal(A^{\prime}_{\bs})^{\heart}$.
Since $A^{\prime}$ is small, every compact object in $\Dcal(A^{\prime}_{\bs})$ is $\pi$-adically complete (\cite[Proposition 3.8]{Mikami22}).
We write $M$ as a filtered colimit $\varinjlim_{\lambda} M_{\lambda}$ of $\pi$-adically complete objects of $\Dcal(A^{\prime}_{\bs})$.
Then $H^0(M_{\lambda})$ is also $\pi$-adically complete (\cite[Proposition 3.2]{Mikami22}), so we can write $M$ as the filtered colimit  $\varinjlim_{\lambda} H^0(M_{\lambda})$ of static $\pi$-adically complete objects. 
Therefore, we may assume that $M$ is $\pi$-adically complete.
Then $R\intHom_{A^{\prime}}(N^{\prime}, M)$ is $\pi$-adically complete, and  we have an equivalence $$R\intHom_{A^{\prime}}(N^{\prime}, M)/^{\Lbb} \pi \simeq R\intHom_{A^{\prime}/\pi}(N^{\prime}/^{\Lbb}\pi, M/^{\Lbb}\pi).$$
Since the functor $R\varprojlim_{n\in \Nbb}$ is left $t$-exact, it is enough to show $$R\intHom_{A^{\prime}/\pi}(N^{\prime}/^{\Lbb}\pi, M/^{\Lbb}\pi) \in \Dcal((A^{\prime}/\pi)_{\bs})^{\geq -2}.$$
It follows from (\ref{2}).
\end{proof}


\section{Fppf-descent for condensed animated rings}

First, we will compare the notion of descendability in \cite{Mikami22} and \cite{Mann22}.
We roughly recall the notion of descendability in \cite{Mann22}. For details, see \cite[2.5, 2.6]{Mann22}.
\begin{definition}[{\cite[Definition 2.5.1]{Mann22}}]
Let $\Acal$ be an analytic animated ring.
Let $\End(\Dcal(\Acal))$ denote the $\infty$-category of $\Dcal(\Acal)$-enriched exact endofunctors of $\Dcal(\Acal)$.
It is a stable $\infty$-category and it has the composition monoidal structure.
\end{definition}


\begin{remark}\label{rem:ff}
The functor $\Dcal(\Acal)^{\op} \to \End(\Dcal(\Acal)) ;\; N \mapsto R\intHom_{\Acal}(N,-)$ is monoidal, exact, and fully faithful (\cite[Remark 2.5.15]{Mann22}).
\end{remark}

\begin{definition}[{\cite[Definition 2.6.7]{Mann22}}]
For $n$ be a non-negative integer, a steady morphism $f\colon \Acal\to\Bcal$ of analytic animated rings is \textit{descendable} of index $\leq n$ if the natural morphism $\Kcal_f^n \to \id$ is zero, where $\Kcal_f \coloneqq \fib(\id \to (-\otimes_{\Acal}\Bcal))$.
A steady morphism $f\colon \Acal\to\Bcal$ of analytic animated rings is \textit{descendable} if it is descendable of index $\leq n$ for some non-negative integer $n$.
\end{definition}

\begin{remark}
When we define the descendability for morphisms of analytic animated rings, it is not necessary to introduce $\infty$-categories $\End(\Dcal(\Acal))^{\std}$ and $\Ecal(\Acal)$ (\cite[Definition 2.5.8, Definition 2.5.17]{Mann22}).
These $\infty$-categories are necessary to define the descendability for morphisms of analytic spaces, because we cannot glue the $\infty$-categories $\End(\Dcal(\Acal))$ along analytic coverings.
\end{remark}

Let $f \colon \Acal \to \Bcal$ be a steady morphism of analytic animated rings.
We assume that there exists a compact object $N \in \Dcal(\Acal)$ such that there is an equivalence of functors from $\Dcal(\Acal)$ to $\Dcal(\Acal)$
$$-\otimes_{\Acal}^{\Lbb} \Bcal \simeq R\intHom_{\Acal}(N,-).$$
We inductively define full $\infty$-subcategories $\Ccal(N,m)$ of $\Dcal(\Acal)$ for positive integers $m$ as follows:
\begin{itemize}
\item
The full $\infty$-subcategory $\Ccal(N,1)$ is the smallest full $\infty$-subcategory which contains $N$ and is closed under tensor products, retracts, and shifts.
\item
The full $\infty$-subcategory $\Ccal(N,m)$ is the smallest full $\infty$-subcategory which contains $\fib(M\to M^{\prime})$ for any $M,M^{\prime} \in \Ccal(N,m-1)$ and is closed under tensor products, retracts, and shifts.
\end{itemize}
\begin{lemma}\label{lem:desiff}
If $\Ccal(N,n)$ contains $\underline{\Acal}$, then $f$ is descendable of index $\leq n$.
Conversely, if $f$ is descendable, then there exists a positive integer $m$ such that $\underline{\Acal} \in \Ccal(N,m)$.
\end{lemma}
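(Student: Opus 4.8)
The plan is to transport everything along the monoidal, exact and fully faithful functor
$$G\colon \Dcal(\Acal)^{\op}\longrightarrow\End(\Dcal(\Acal)),\qquad P\longmapsto R\intHom_{\Acal}(P,-)$$
of Remark \ref{rem:ff}. Under $G$ one has $G(\underline{\Acal})=\id$ and $G(N)=-\otimes_{\Acal}^{\Lbb}\Bcal$, and the unit natural transformation $\id\to-\otimes_{\Acal}^{\Lbb}\Bcal$ is $G$ of a map $u\colon N\to\underline{\Acal}$. Put $C\coloneqq\cofib(u)$ and let $g\colon\underline{\Acal}\to C$ be the canonical map. Since $G$ is exact and monoidal, $G(C)=\Kcal_f$, so $G$ sends $C^{\otimes n}$ to $\Kcal_f^{n}$ and the $n$-fold tensor power $g^{\otimes n}\colon\underline{\Acal}\to C^{\otimes n}$ to the natural morphism $\Kcal_f^{n}\to\id$. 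As $G$ is fully faithful, $f$ is descendable of index $\le n$ if and only if $g^{\otimes n}$ is nullhomotopic, and everything comes down to comparing this with membership of $\underline{\Acal}$ in $\Ccal(N,n)$.

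For the first implication I would prove, by induction on $m$, that for every $X\in\Ccal(N,m)$ the natural map $\id_X\otimes g^{\otimes m}\colon X\to X\otimes_{\Acal}^{\Lbb}C^{\otimes m}$ is nullhomotopic; taking $X=\underline{\Acal}\in\Ccal(N,n)$ then gives $g^{\otimes n}\simeq 0$. The class of such $X$ is closed under tensor products, retracts and shifts (tensoring or restricting a null map along any object or morphism stays null), so for $m=1$ it suffices to check $X=N$; there $\id_N\otimes u\colon N\otimes_{\Acal}^{\Lbb}N\to N$ is a split epimorphism --- split by the comultiplication on $N$ which, by dualizability of $N$ with $N^{\vee}\simeq\Bcal$, is dual to the multiplication of $\Bcal$ --- so the next term $\id_N\otimes g$ of its cofibre sequence is nullhomotopic. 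For the inductive step the same closure properties reduce to $X=\fib(M\xrightarrow{\varphi}M')$ with $M,M'\in\Ccal(N,m-1)$. One feeds the fibre sequence $X\to M\to M'$ into the natural transformation $\id_{(-)}\otimes g^{\otimes m}$: by the inductive hypothesis $\id_M\otimes g^{\otimes(m-1)}$ and $\id_{M'}\otimes g^{\otimes(m-1)}$ are null, hence so are $\id_M\otimes g^{\otimes m}$ and $\id_{M'}\otimes g^{\otimes m}$; combining this with the connecting maps of $X\to M\to M'$ and of $X\otimes C^{\otimes m}\to M\otimes C^{\otimes m}\to M'\otimes C^{\otimes m}$ one shows that $\id_X\otimes g^{\otimes m}$ factors both through $\iota\colon X\to M$ and through the fibre of $\iota\otimes\id_{C^{\otimes m}}$, and plays these two factorizations off against each other to conclude it is null.

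For the converse: if $f$ is descendable it is descendable of index $\le n$ for some $n$, so $\id$ is a retract of $\cofib(\Kcal_f^{n}\to\id)$, and the latter admits a finite filtration with graded pieces $\Kcal_f^{\,k-1}\circ(-\otimes_{\Acal}^{\Lbb}\Bcal)=G\big(C^{\otimes(k-1)}\otimes N\big)$ for $1\le k\le n$. Using the identity $C\otimes(-)\simeq\cofib\big(N\otimes(-)\to(-)\big)$ one sees by induction that $C^{\otimes(k-1)}\otimes N\in\Ccal(N,k)\subseteq\Ccal(N,n)$; since $\Ccal(N,n+1)$ contains the fibre of any map between objects of $\Ccal(N,n)$, it is closed under extensions of such objects, so the filtered object above lies in $\Ccal(N,n+1)$, and therefore $\underline{\Acal}$, being a retract of it, lies in $\Ccal(N,n+1)$. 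This gives the second statement with $m=n+1$.

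The hard part will be the inductive step of the first implication: knowing that a natural transformation vanishes on the two outer terms of a fibre sequence does not in general force it to vanish on the fibre, and pushing the diagram chase through genuinely uses both that $\id_{(-)}\otimes g^{\otimes m}$ is ``tensoring with a fixed morphism'' and the extra copy of $g$ that $g^{\otimes m}$ carries over $g^{\otimes(m-1)}$; this is the one point where the exact value of the descendability index must be tracked with care.
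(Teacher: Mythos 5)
Your translation of the problem into $\Dcal(\Acal)$ along the fully faithful, exact, monoidal functor $G=R\intHom_{\Acal}(-,-)$ is the right move, and several pieces of your argument are sound: descendability of index $\le n$ is indeed equivalent, via $G$, to nullhomotopy of $g^{\otimes n}\colon\underline{\Acal}\to C^{\otimes n}$; the base case works, since the retraction of $-\otimes_{\Acal}\Bcal\to-\otimes_{\Acal}\Bcal\otimes_{\Acal}\Bcal$ given by the multiplication of $\Bcal$ transports, by full faithfulness, to a splitting of $\id_N\otimes u$, so $\id_N\otimes g\simeq 0$; and the converse direction is essentially correct (your one-line ``closed under extensions'' is a bit loose for an $n$-fold iterated extension, but the graded pieces sit in the staggered levels $\Ccal(N,k)$ and the partial totalizations land in $\Ccal(N,k+1)$, so $\fib(g^{\otimes n})\in\Ccal(N,n+1)$ and only ``some $m$'' is needed). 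Note that the paper itself offers no independent argument here --- its proof is a citation to the proof of \cite[Proposition 2.6.5]{Mann22} --- so this is the mechanism one would expect.

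The genuine gap is exactly the point you flag: the inductive step of the first implication, and I do not believe it can be closed in the form you set up. The two-factorization chase proves an \emph{additive} statement: if $\id_M\otimes g^{\otimes a}\simeq 0$ and $\id_{M'}\otimes g^{\otimes b}\simeq 0$ and $X=\fib(\iota\colon X\to M,\ \varphi\colon M\to M')$, then $\id_X\otimes g^{\otimes a}$ factors through $\fib(\iota\otimes\id_{C^{\otimes a}})\simeq M'[-1]\otimes C^{\otimes a}$, and one then needs $b$ \emph{further} copies of $g$ to annihilate the $M'[-1]$ factor, giving $\id_X\otimes g^{\otimes(a+b)}\simeq 0$. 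With only one extra copy of $g$ (your $a=m-1$, total $m$) the last leg would require $\id_{M'}\otimes g\simeq 0$, which is not part of the inductive hypothesis. Since $\Ccal(N,m)$ is generated by fibres of maps between two \emph{arbitrary} objects of $\Ccal(N,m-1)$, the chase only yields $\id_X\otimes g^{\otimes 2^{m-1}}\simeq 0$ for $X\in\Ccal(N,m)$, i.e.\ descendability of index $\le 2^{n-1}$ rather than $\le n$; and the exponent loss is not an artifact of the method --- in the analogous situation $\Zbb/p^{2}\simeq\fib(\Zbb/p\to\Zbb/p[1])$, $\Zbb/p^{4}\simeq\fib(\Zbb/p^{2}\to\Zbb/p^{2}[1])$ the exponent genuinely doubles at each level, so no formal diagram chase of this shape can produce the exponent-$m$ statement. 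This weaker bound still proves the qualitative assertion ``$f$ is descendable'', and in the one place the paper uses the quantitative form (Proposition \ref{prop:desbdd}) each fibre sequence has one term in $\Ccal(N,1)$, so the additive count does recover the linear index $n+1$ claimed there; but as a proof of Lemma \ref{lem:desiff} as stated, with index $\le n$, your inductive step is missing and the sketched way of ``playing the two factorizations off against each other'' will not supply it.
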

\begin{proof}
It follows from the proof of \cite[Proposition 2.6.5]{Mann22}.
\end{proof}

We have a natural morphism $N\to \underline{\Acal}$ which corresponds to $$\id =R\intHom_{\Acal}(\underline{\Acal},- )\Rightarrow (-\otimes_{\Acal}^{\Lbb} \Bcal) \simeq R\intHom_{\Acal}(N,- ) \colon \Dcal(\Acal)\to \Dcal(\Acal).$$
From this, we get an augmented semisimplicial object $N^{\bullet}=\{N^{\otimes m+1}\}_{n\geq -1}$ and a natural morphism $\varinjlim_{[m] \in \Delta_{s}^{\op}}N^{\otimes(m+1)} \to N^{\otimes 0}=\underline{\Acal}$, where $N^{\otimes (m+1)}$ is the $(m+1)$-fold derived tensor product of $N$ in $\Dcal(\Acal)$.

\begin{proposition}\label{prop:desbdd}
Let $n$ be a positive integer.
We assume that for every positive integer $r$, $N^{\otimes r}$ lies in $\Dcal(\Acal)^{\leq n}$.
If the natural map $\varinjlim_{[m] \in \Delta_{s}^{\op}}N^{\otimes(m+1)} \to \Acal$ is an equivalence, then $f$ is descendable of index $\leq n+1$.
\end{proposition}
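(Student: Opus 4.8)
The plan is to use Lemma \ref{lem:desiff}: it suffices to show that $\underline{\Acal}$ lies in $\Ccal(N,n+1)$. The hypothesis that $\varinjlim_{[m]\in\Delta_s^{\op}}N^{\otimes(m+1)}\to\underline{\Acal}$ is an equivalence gives us a presentation of $\underline{\Acal}$ as a geometric realization of a semisimplicial diagram built out of tensor powers of $N$. I would first analyze the partial geometric realizations. For each $r\geq 0$, let $|N^\bullet|_{\leq r}\coloneqq\varinjlim_{[m]\in\Delta_{s,\leq r}^{\op}}N^{\otimes(m+1)}$ denote the $r$-th skeleton of this realization, so that $|N^\bullet|_{\leq 0}=N$ and $\underline{\Acal}\simeq\colim_r |N^\bullet|_{\leq r}$. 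The skeletal filtration gives, for each $r\geq 1$, a cofiber sequence
$$|N^\bullet|_{\leq r-1}\to |N^\bullet|_{\leq r}\to N^{\otimes(r+1)}[r]$$
(the associated graded is the $(r+1)$-st tensor power shifted, since in the semisimplicial setting the ``latching object'' vanishes). Each $N^{\otimes(r+1)}$ lies in $\Ccal(N,1)$, and by the assumption $N^{\otimes r}\in\Dcal(\Acal)^{\leq n}$ for all $r$, each such term is $n$-truncated.

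Now I would run the standard ``descendability from a bounded resolution'' argument: since $N$ is connective is not assumed, but $N^{\otimes r}\in\Dcal(\Acal)^{\leq n}$ is, one shows that the tail of the skeletal filtration contributes only highly connective pieces. Precisely, the fiber $\fib(\underline{\Acal}\to|N^\bullet|_{\leq r})$ is built from the terms $N^{\otimes(m+1)}[m]$ for $m>r$, hence lies in $\Dcal(\Acal)^{\leq n-r-1}$; so taking $r=n$ we find that $\fib(\underline{\Acal}\to|N^\bullet|_{\leq n})$ is coconnective of degree $\leq -1$, i.e. lies in $\Dcal(\Acal)^{\leq -1}$. On the other hand, $|N^\bullet|_{\leq n}$ is an iterated extension, by the cofiber sequences above, of the objects $N^{\otimes(m+1)}[m]$ for $0\leq m\leq n$, each of which lies in $\Ccal(N,1)$; since $\Ccal(N,1)$ is closed under shifts and $\Ccal(N,2)\supset\Ccal(N,1)$ is closed under fibers, retracts, tensor products, and shifts, the iterated extension $|N^\bullet|_{\leq n}$ lies in $\Ccal(N,2)$. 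Thus $\underline{\Acal}$ sits in a fiber sequence $F\to\underline{\Acal}\to|N^\bullet|_{\leq n}$ with $|N^\bullet|_{\leq n}\in\Ccal(N,2)$ and $F\in\Dcal(\Acal)^{\leq -1}$.

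To conclude I would iterate: rotating, $\underline{\Acal}$ maps to $|N^\bullet|_{\leq n}\in\Ccal(N,2)$ with fiber $F\in\Dcal(\Acal)^{\leq-1}$; but also $F\simeq\fib(\underline\Acal\to|N^\bullet|_{\leq n})\simeq\colim_{r>n}(\text{shifts of }N^{\otimes(m+1)})$ is a colimit of objects of the form $N^{\otimes(m+1)}[m]$ with $m>n$, hence itself built from $\Ccal(N,1)$ by shifts and extensions, so $F\in\Ccal(N,2)$ as well — wait, the colimit is infinite, so this needs care; instead I would argue that $\cofib(F\to\underline\Acal)\in\Ccal(N,2)$ and $F$ is $(\leq -1)$-truncated, and then apply the same reasoning to $F[1]$ (which is $(\leq -2)$-truncated) against the shifted filtration, showing $\underline\Acal$ is an extension of $n+1$ pieces each in $\Ccal(N,1)$ up to shift. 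The cleanest packaging is: the cofiber sequence $F\to\underline\Acal\to|N^\bullet|_{\leq n}$ with $|N^\bullet|_{\leq n}\in\Ccal(N,2)$, combined with $F\in\Dcal(\Acal)^{\leq-1}$, feeds into the inductive definition of $\Ccal(N,m)$ so that, tracking degrees, $n+1$ layers suffice to reach $\underline\Acal$; then Lemma \ref{lem:desiff} gives descendability of index $\leq n+1$.

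\textbf{Main obstacle.} The delicate point is the bookkeeping of truncation degrees versus the index $m$ in $\Ccal(N,m)$: I must show that an $r$-fold extension whose successive cofibers lie in $\Ccal(N,1)$ and which differs from $\underline\Acal$ only by a $(\leq -r)$-truncated fiber actually forces $\underline\Acal\in\Ccal(N,r+1)$. This is exactly the content of \cite[Proposition 2.6.5]{Mann22} (the ``bounded Čech resolution implies descendable'' mechanism), so I expect to invoke its proof rather than reprove it; the work is in matching the semisimplicial skeletal filtration above to the hypotheses of that proposition, in particular checking that the connectivity estimate $\fib(\underline\Acal\to|N^\bullet|_{\leq r})\in\Dcal(\Acal)^{\leq n-r-1}$ really follows from $N^{\otimes r}\in\Dcal(\Acal)^{\leq n}$ together with the cofiber sequences, using that $\Dcal(\Acal)^{\leq\bullet}$ is closed under the relevant colimits by the compatibility of the $t$-structure with filtered colimits.
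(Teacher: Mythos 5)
Your skeletal filtration and connectivity estimate reproduce the first half of the paper's own argument (the paper writes $N_t$ for your $|N^\bullet|_{\leq t}$, uses the same cofiber sequences $N_t \to N_{t+1} \to N^{\otimes(t+2)}[t+1]$ from \cite[Lemma 4.12]{Mikami22}, and deduces that $F_n\coloneqq\cofib(N_n\to\underline{\Acal})$ lies in $\Dcal(\Acal)^{\leq -1}$). But the second half has two genuine problems. First, $\Ccal(N,2)$ is \emph{not} closed under fibers: by the definition in this section it only contains fibers of maps between objects of $\Ccal(N,1)$, together with tensor products, retracts and shifts. An iterated extension of $n+1$ objects of $\Ccal(N,1)$ therefore lands in $\Ccal(N,n+1)$, not in $\Ccal(N,2)$; this is exactly the index bookkeeping that produces the bound $n+1$, and if your closure claim were true every such map would be descendable of index bounded independently of $n$, which is not what the definition allows. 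The correct statement, proved by induction on the skeletal cofiber sequences, is $N_n\in\Ccal(N,n+1)$.

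Second, and more seriously, the concluding step is missing. It is not true that $\underline{\Acal}$ is literally an extension of $n+1$ pieces of $\Ccal(N,1)$ up to shift: the error term $F_n$ is in general nonzero, merely coconnective. The paper's key observation is that since $F_n\in\Dcal(\Acal)^{\leq -1}$ and $\underline{\Acal}=\Acal[\ast]$ is a generator (so $\pi_0\Map_{\Dcal(\Acal)}(\underline{\Acal},F_n)\cong H^0(F_n)(\ast)=0$), the natural map $\underline{\Acal}\to F_n$ is null; hence $\underline{\Acal}$ is a \emph{retract} of $N_n$, and since $\Ccal(N,n+1)$ is closed under retracts one gets $\underline{\Acal}\in\Ccal(N,n+1)$, after which Lemma \ref{lem:desiff} concludes. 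Your phrase ``tracking degrees, $n+1$ layers suffice'' together with the appeal to the proof of \cite[Proposition 2.6.5]{Mann22} does not supply this: in this paper that proof is only used, via Lemma \ref{lem:desiff}, for the implication $\underline{\Acal}\in\Ccal(N,n+1)\Rightarrow f$ descendable of index $\leq n+1$; converting the connectivity estimate on $F_n$ into that membership is precisely what has to be proved here, and the retract trick is the missing idea. (A minor slip: your ``$\fib(\underline{\Acal}\to|N^\bullet|_{\leq r})$'' has the map in the wrong direction; the natural map goes from the skeleton to $\underline{\Acal}$, and one should take its cofiber or fiber accordingly.)
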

\begin{proof}
We will prove $\underline{\Acal} \in \Ccal(N,n+1)$.
We put $N_t\coloneqq \varinjlim_{[m] \in \Delta_{s,\leq t}^{\op}}N^{\otimes(m+1)}$ and $F_t\coloneqq \cofib(N_t\to\underline{\Acal})$ for every non-negative integer $t$.
By \cite[Lemma 4.12]{Mikami22}, we have a fiber sequence 
$$N_{t} \to N_{t+1} \to N^{\otimes(t+2)}[t+1].$$
Since $N^{\otimes r}$ lies in $\Dcal(\Acal)^{\leq n}$, we get that $\tau^{\geq n-t+1}(N_t) \to \tau^{\geq n-t+1}(N_{t+1})$ is an equivalence and $H^{n-t}(N_t) \to H^{n-t}(N_{t+1})$ is surjective.
For $k\in \Zbb$, we have an equivalence $\varinjlim_{m\geq 0}\tau^{\geq k}N_m \simeq \tau^{\geq k}\underline{\Acal}$, so we find that $\tau^{\geq 1} N_n \to \tau^{\geq 1}\underline{\Acal}$ is an equivalence and $H^0(N_n)\to H^0(\underline{\Acal})$ is surjective.
Therefore, we get that $F_n$ lies in $\Dcal(\Acal)^{\leq -1}$.
We have the following fiber sequences:
$$\xymatrix{
N_n \ar[r] &\underline{\Acal} \ar[r] &F_n  \\
N_{0} \ar[r]\ar[u] &\underline{\Acal} \ar[r]_{\alpha}\ar[u] &F_{0}\ar[u]_{\beta}.
}
$$
Since $F_n$ lies in $\Dcal(\Acal)^{\leq -1}$, $\beta\circ\alpha$ is zero.
Therefore, we find that $\underline{\Acal}$ is a retract of $N_n$.
We can easily prove that $N_n$ lies in $\Ccal(N,n+1)$ by using \cite[Lemma 4.12]{Mikami22}, which proves $\underline{\Acal} \in \Ccal(N,n+1)$.
\end{proof}

We will also need the following lemma.
\begin{lemma}\label{lem:desconv}
We assume that $f$ is descendable.
Then the natural map $$\varinjlim_{[m] \in \Delta_{s}^{\op}}N^{\otimes(m+1)} \to \underline{\Acal}$$ is an equivalence.
\end{lemma}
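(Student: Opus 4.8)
The plan is to show that when $f$ is descendable, the augmented semisimplicial diagram $N^\bullet \to \underline{\Acal}$ has colimit $\underline{\Acal}$, by relating this colimit to the standard Amitsur/cobar construction for the steady localization-type morphism $f$ and using the fact that descendability makes this cobar complex converge. First I would recall, via Remark \ref{rem:ff}, that the fully faithful monoidal embedding $\Dcal(\Acal)^{\op} \hookrightarrow \End(\Dcal(\Acal))$, $N \mapsto R\intHom_\Acal(N,-)$, carries the natural map $N \to \underline{\Acal}$ to the unit map $\id \to (-\otimes_\Acal \Bcal)$, and hence carries the augmented semisimplicial object $N^\bullet$ to the augmented \emph{cosimplicial} endofunctor $\{(-\otimes_\Acal \Bcal^{\otimes(m+1)})\}$ together with its augmentation from $\id$; more precisely it sends the colimit $\varinjlim_{[m]\in\Delta_s^{\op}} N^{\otimes(m+1)}$ to the limit $\varprojlim_{[m]\in\Delta_s} (-\otimes_\Acal \Bcal^{\otimes(m+1)})$ in $\End(\Dcal(\Acal))$. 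So the assertion to prove becomes: the augmentation $\id \to \varprojlim_{[m]\in\Delta_s}(-\otimes_\Acal\Bcal^{\otimes(m+1)})$ is an equivalence of endofunctors.

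Next I would reduce this to the totalization of the Amitsur complex. The semisimplicial (Čech-type) limit $\varprojlim_{\Delta_s}$ of the cobar object agrees with the cosimplicial limit $\varprojlim_\Delta$ because the cosimplicial object $[m]\mapsto(-\otimes_\Acal\Bcal^{\otimes(m+1)})$ is split-augmented after base change along $f$ (the usual extra-degeneracy argument), so it is a limit diagram iff its underlying semisimplicial diagram is; alternatively one invokes \cite[Lemma 4.12]{Mikami22}-style filtration identities to pass between $\Delta_s$ and $\Delta$. Thus it suffices to show $\id \xrightarrow{\sim} \Tot\big((-\otimes_\Acal\Bcal^{\otimes(\bullet+1)})\big)$. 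Now apply the descendability hypothesis in the form of Lemma \ref{lem:desiff}: there is $m$ with $\underline{\Acal}\in\Ccal(N,m)$, i.e. $\Kcal_f^m \to \id$ is zero, where $\Kcal_f = \fib(\id\to(-\otimes_\Acal\Bcal))$. A standard argument (as in Mann \cite[2.6]{Mann22} or Mathew's work on descendable algebras) shows that when the powers of $\Kcal_f$ are eventually null, the tower $\{\Kcal_f^{\otimes k}\}_k$ has vanishing limit \emph{and} vanishing $\lim^1$, and the Tot-tower of the cobar complex is, up to reindexing, precisely the tower whose successive fibers are shifts of $\Kcal_f^{\otimes k}$; hence $\Tot$ of the cobar complex is computed by a tower that is pro-constant with value $\id$, giving the desired equivalence.

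Concretely, the cleanest route is: let $T = (-\otimes_\Acal\Bcal)\in\End(\Dcal(\Acal))$ with augmentation $\varepsilon\colon\id\to T$ and $\Kcal_f=\fib(\varepsilon)$. The partial totalizations $\Tot_t$ of the cobar complex fit into fiber sequences whose associated graded pieces are $\Kcal_f^{\otimes(t+1)}[{-t}]$ (the endofunctor analogue of the fiber sequence $N_t\to N_{t+1}\to N^{\otimes(t+2)}[t+1]$ used in Proposition \ref{prop:desbdd}, transported through the embedding of Remark \ref{rem:ff}). Since $\Kcal_f^{\otimes m}\to\id$ is the zero map, $\Kcal_f^{\otimes(t+1)}$ becomes a retract-summand-free "nilpotent" object and the fiber $\fib(\Tot_t \to \id)$ lies in the thick subcategory generated by $\{\Kcal_f^{\otimes k}\}_{k\geq \lfloor t/?\rfloor}$ which, for $t$ large, maps to $\id$ by zero; passing to the limit over $t$ and using that the maps $\Kcal_f^{\otimes k}\to\id$ are eventually zero gives $\varprojlim_t \fib(\Tot_t\to\id)=0$, i.e. $\Tot(\text{cobar})\simeq\id$. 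Unwinding the embedding of Remark \ref{rem:ff} back to $\Dcal(\Acal)$ yields $\varinjlim_{[m]\in\Delta_s^{\op}}N^{\otimes(m+1)}\xrightarrow{\sim}\underline{\Acal}$.

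The main obstacle I expect is the bookkeeping in passing between the three pictures — the semisimplicial colimit of $N^{\otimes(m+1)}$ in $\Dcal(\Acal)$, the cosimplicial limit of $(-\otimes_\Acal\Bcal^{\otimes(m+1)})$ in $\End(\Dcal(\Acal))$, and the $\Kcal_f$-tower — and in particular checking that the monoidal fully faithful functor of Remark \ref{rem:ff} genuinely intertwines the relevant (co)limits, since $\Dcal(\Acal)$ and $\End(\Dcal(\Acal))$ are large and not obviously presentable; here one should either work $\kappa$-locally using the cardinality results of Section 1 (Lemma \ref{lem:limcard}, Proposition \ref{prop:card}) so that all categories in sight are presentable and the functor preserves the needed limits, or cite the corresponding statement in \cite[2.6]{Mann22} directly. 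The rest — the extra-degeneracy splitting after base change and the eventual-nullity of the $\Kcal_f$-tower — is formal once descendability is invoked through Lemma \ref{lem:desiff}.
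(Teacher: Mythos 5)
Your proposal is correct in substance, but it proves the lemma by a genuinely different mechanism than the paper. The paper never touches the Tot-tower: it defines $\Ccal\subset\Dcal(\Acal)$ as the subcategory of $M$ with $\varinjlim_{[m]\in\Delta_s^{\op}}(N^{\otimes(m+1)}\otimes_{\Acal}^{\Lbb}M)\xrightarrow{\sim}M$, notes $\Ccal$ is closed under finite colimits, shifts, tensor products and retracts, so by Lemma \ref{lem:desiff} (descendability puts $\underline{\Acal}$ in the thick tensor closure of $N$) it suffices to show $N\in\Ccal$; this last point follows because, after upgrading $N^{\bullet}$ to a simplicial object via Remark \ref{rem:ff}, the augmented simplicial object $\{N^{\otimes(n+1)}\otimes_{\Acal}^{\Lbb}N\}$ is split, and one concludes with \cite[Lemma 6.5.3.7]{HTT}. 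You instead run the standard Mathew--Mann convergence argument on the cobar complex of endofunctors: descendability makes the tower of fibers $\fib(\id\to\Tot_t)\simeq\Kcal_f^{t+1}$ pro-zero, so $\id\xrightarrow{\sim}\Tot$, and you transport back through the embedding of Remark \ref{rem:ff}. Both routes use Remark \ref{rem:ff} and the $\Delta_s$ versus $\Delta$ comparison; what the paper's route buys is that it avoids the partial-totalization fiber computation entirely (only the splitting trick and closure properties of $\Ccal$ are needed), while yours uses descendability in its raw form $\Kcal_f^{n}\to\id\simeq 0$ and is closer to the literature you cite, at the cost of importing the identification of $\fib(\id\to\Tot_t)$ with $\Kcal_f^{t+1}$ and some care with limits in $\End(\Dcal(\Acal))$ (which, as you note, can be handled $\kappa$-locally, or more simply by evaluating pointwise on objects of $\Dcal(\Acal)$, since $R\intHom_{\Acal}(-,M)$ turns the semisimplicial colimit into a limit and Yoneda finishes). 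Two small inaccuracies to fix: the agreement of $\varprojlim_{\Delta_s}$ with $\varprojlim_{\Delta}$ is not a consequence of the extra-degeneracy splitting after base change along $f$ --- it holds for any cosimplicial object because $\Delta_s\subset\Delta$ is initial (the dual of \cite[Lemma 6.5.3.7]{HTT}); and the vanishing of $\varprojlim_t\fib(\id\to\Tot_t)$ comes from the transition maps $\Kcal_f^{t+n}\to\Kcal_f^{t}$ being zero (they factor through $\Kcal_f^{t}\circ(\Kcal_f^{n}\to\id)$), i.e.\ pro-nilpotence of the tower, rather than from the augmentations $\Kcal_f^{k}\to\id$ being zero per se.
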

\begin{proof}
Let $\Ccal$ be the full $\infty$-subcategory of $\Dcal(\Acal)$ consisting of those objects $M$ such that $\varinjlim_{[m] \in \Delta_{s}^{\op}}(N^{\otimes(m+1)} \otimes_{\Acal}^{\Lbb} M) \to M$ is an equivalence.
We want to show $\underline{\Acal} \in \Ccal$.
Since $\Ccal$ is stable under finite colimits, shifts, tensor products, and retracts, it is enough to show $N\in \Ccal$ by Lemma \ref{lem:desiff}. 
We have an augmented cosimplicial analytic animated ring $\{\Bcal^{n+1} \}_{n\geq -1}$, where $\Bcal^{n+1}$ is the $(n+1)$-fold derived tensor product of $\Bcal$ over $\Acal$. 
Since $\Acal \to \Bcal$ is steady, we have an equivalence of functors from $\Dcal(\Acal)$ to $\Dcal(\Acal)$
$$-\otimes_{\Acal}^{\Lbb} \Bcal^{n+1} \simeq R\intHom_{\Acal}(N^{\otimes(n+1)},-).$$
By Remark \ref{rem:ff}, we get an augmented simplicial object $\{N^{\otimes(n+1)}\}_{n\geq -1}$ in $\Dcal(\Acal)$, and its underlying augmented semisimplicial object is equal to $N^{\bullet}$.
Since an augmented simplicial object $\{N^{\otimes(n+1)}\otimes_{\Acal}^{\Lbb} N\}_{n\geq -1}$ is split (\cite[Definition 4.7.2.2]{HA}), we find that $$\varinjlim_{[m] \in \Delta^{\op}}(N^{\otimes(m+1)} \otimes_{\Acal}^{\Lbb} N) \to N$$ is an equivalence.
By \cite[Lemma 6.5.3.7]{HTT}, $\varinjlim_{[m] \in \Delta_s^{\op}}(N^{\otimes(m+1)} \otimes_{\Acal}^{\Lbb} N) \to N$ is also an equivalence, so $N$ lies in $\Ccal$. 

\end{proof}

The following lemmata are used to associate the properties of the scalar extension functor along a map of analytic animated rings with those along a map of analytic rings.

\begin{lemma}\label{lem:limtor}
Let $\kappa$ be a strong limit cardinal.
Let $f \colon \Acal \to \Bcal$ be a morphism of analytic animated rings such that $\pi_0(\Acal)\otimes_{\Acal}^{\Lbb} \Bcal \simeq \pi_0(\Bcal)$. 
We assume that $\Dcal(\Acal)_{\kappa} \subset \Dcal(\underline{\Acal})_{\kappa}$ and $\Dcal(\Bcal)_{\kappa} \subset \Dcal(\underline{\Bcal})_{\kappa}$.
If the functor $-\otimes_{\pi_0(\Acal)}^{\Lbb} \pi_0(\Bcal) \colon \Dcal(\pi_0(\Acal))_{\kappa} \to \Dcal(\pi_0(\Bcal))_{\kappa}$ preserves all small limits and has finite Tor-dimension $\leq n$ for some $n\in \Zbb_{\geq0}$, 
then the functor $-\otimes_{\Acal}^{\Lbb} \Bcal \colon \Dcal(\Acal)_{\kappa} \to \Dcal(\Bcal)_{\kappa}$ also preserves all small limits and has finite Tor-dimension $\leq n$
\end{lemma}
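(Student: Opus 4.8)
The plan is to reduce everything to the static case via Postnikov towers, using the hypothesis $\pi_0(\Acal)\otimes_{\Acal}^{\Lbb}\Bcal\simeq\pi_0(\Bcal)$ to identify the two functors on hearts. Write $F\coloneqq-\otimes_{\Acal}^{\Lbb}\Bcal\colon\Dcal(\Acal)_{\kappa}\to\Dcal(\Bcal)_{\kappa}$ and $G\coloneqq-\otimes_{\pi_0(\Acal)}^{\Lbb}\pi_0(\Bcal)\colon\Dcal(\pi_0(\Acal))_{\kappa}\to\Dcal(\pi_0(\Bcal))_{\kappa}$. Since $F$ preserves all small colimits it is exact, hence preserves finite limits; as every small limit is built from finite limits and small products, preservation of all small limits reduces to preservation of small products. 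Similarly, since $F$ is right $t$-exact, finite Tor-dimension $\leq n$ amounts to $F(\Dcal(\Acal)_{\kappa}^{\geq 0})\subseteq\Dcal(\Bcal)_{\kappa}^{\geq -n}$. The key point is that a static object $M\in\Dcal(\Acal)_{\kappa}^{\heart}$ is a $\pi_0(\Acal)$-module and that $\Dcal(\Acal)_{\kappa}^{\heart}=\Dcal(\pi_0(\Acal))_{\kappa}^{\heart}$ (the condition of being $\kappa$-condensed only constrains the underlying object of $\Dcal(\Zbb)^{\cond}$); hence, by base change and the hypothesis, $F(M)\simeq M\otimes_{\pi_0(\Acal)}^{\Lbb}(\pi_0(\Acal)\otimes_{\Acal}^{\Lbb}\Bcal)\simeq G(M)$, which lies in $\Dcal^{[-n,0]}$ and on which $G$ preserves products, by the two hypotheses on $G$.

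Next I treat bounded objects. If $M\in\Dcal(\Acal)_{\kappa}$ lies in $\Dcal^{[a,b]}$, its Postnikov filtration has finitely many nonzero graded pieces $H^{j}(M)[-j]$, each a shift of an object of $\Dcal(\Acal)_{\kappa}^{\heart}$ (using that $\Dcal(\Acal)_{\kappa}$ is closed under truncations); as $F$ is exact, $F(M)$ inherits a finite filtration with graded pieces $G(H^{j}(M))[-j]\in\Dcal^{[j-n,j]}$, so $F(M)\in\Dcal^{[a-n,b]}$. Thus $F$ has Tor-dimension $\leq n$ on bounded objects. Running the same dévissage for a family $\{M_{i}\}$ lying uniformly in a fixed $\Dcal^{[a,b]}$, and using that small products in $\Dcal(\Acal)_{\kappa}$ are $t$-exact — which holds because $\Dcal(\Acal)_{\kappa}\subseteq\Dcal(\underline{\Acal})_{\kappa}$ is a full subcategory of the derived $\infty$-category of a Grothendieck abelian category of condensed modules, where products are exact — together with the product-preservation of $G$, shows that $F$ commutes with products of uniformly bounded families.

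It remains to remove the boundedness, using that the $t$-structure on $\Dcal(\Acal)_{\kappa}$ is left complete and compatible with products. Right $t$-exactness of $F$ gives $\fib\bigl(F(M)\to F(\tau^{\geq -k}M)\bigr)=F(\tau^{\leq -k-1}M)\in\Dcal^{\leq -k-1}$, so the Milnor ($\varprojlim^{1}$-)sequence yields $F(M)\simeq\varprojlim_{k}F(\tau^{\geq -k}M)$, and likewise $\prod_{i}M_{i}\simeq\varprojlim_{k}\prod_{i}\tau^{\geq -k}M_{i}$, $\prod_{i}F(M_{i})\simeq\varprojlim_{k}\prod_{i}F(\tau^{\geq -k}M_{i})$. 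This reduces both assertions to families lying uniformly in some $\Dcal^{\geq -k}$, i.e.\ (after shifting) to coconnective objects $P\in\Dcal(\Acal)_{\kappa}^{\geq 0}$, for which $P\simeq\varprojlim_{m}\tau^{\leq m}P$ and $\fib\bigl(F(P)\to F(\tau^{\leq m}P)\bigr)=F(\tau^{\geq m+1}P)$. To conclude one needs $F(\tau^{\geq m+1}P)\in\Dcal^{\geq m+1-n}$, so that $\varprojlim_{m}F(\tau^{\geq m+1}P)=0$, whence $F(P)\simeq\varprojlim_{m}F(\tau^{\leq m}P)\in\Dcal^{\geq -n}$ (as $\Dcal^{\geq -n}$ is closed under such sequential limits); feeding the bounded-family product-preservation through the same two Milnor limits then gives product-preservation in general.

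The main obstacle is precisely that last bound $F(\tau^{\geq m+1}P)\in\Dcal^{\geq m+1-n}$: it is, up to shift, the very coconnective Tor-estimate one is trying to establish, so the naive bootstrap from the heart is circular, and the same circularity infects the unbounded-above step of product-preservation. Breaking it requires treating the Tor-estimate and product-preservation together rather than in sequence, exploiting that $G$ preserves \emph{all} small limits (so that on the static side the coconnective estimate is free) and that every object of $\Dcal(\Acal)_{\kappa}$ is a Milnor limit of bounded objects with fibers becoming arbitrarily coconnective; I expect getting this simultaneous argument — and keeping exact track of which small limits $\Dcal(\Acal)_{\kappa}$ is stable under (Lemma \ref{lem:limcard}) — to be the delicate heart of the proof.
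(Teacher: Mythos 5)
Your strategy is the same as the paper's: identify $-\otimes_{\Acal}^{\Lbb}\Bcal$ with $-\otimes_{\pi_0(\Acal)}^{\Lbb}\pi_0(\Bcal)$ on static objects via $\pi_0(\Acal)\otimes_{\Acal}^{\Lbb}\Bcal\simeq\pi_0(\Bcal)$, use t-exactness of direct products in $\Dcal(\Acal)_{\kappa}$ (coming from the hypothesis $\Dcal(\Acal)_{\kappa}\subset\Dcal(\underline{\Acal})_{\kappa}$), reduce the comparison of products to uniformly bounded and then to static families, and invoke the hypothesis on $G$. But your write-up stops short of a proof: in the final paragraph you concede that you cannot establish the coconnective Tor-estimate $F(\Dcal(\Acal)_{\kappa}^{\geq a})\subset\Dcal(\Bcal)_{\kappa}^{\geq a-n}$, which is exactly what is needed to cut the family off from above, and you conjecture that a delicate simultaneous treatment of the Tor-bound and product-preservation is required. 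That unproved step is a genuine gap, and the claimed circularity is a misdiagnosis caused by reaching for Postnikov towers and Milnor sequences, i.e.\ for limits, along which $F$ is not yet known to be continuous.

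The estimate in fact follows from the heart case by a filtered-colimit argument, with no simultaneous induction. The category $\Dcal(\Acal)_{\kappa}$ is compactly generated by the objects $\Acal[S]$, which are connective; hence every compact object is bounded above, being a retract of a finite colimit of shifts of such generators. The t-structure is compatible with filtered colimits (this uses $\Dcal(\Acal)_{\kappa}\subset\Dcal(\underline{\Acal})_{\kappa}$ and the corresponding property of $\Dcal(\Zbb)^{\cond}$), so truncations commute with filtered colimits and $\Dcal^{\geq a-n}$ is closed under them. Writing $M\in\Dcal(\Acal)_{\kappa}^{\geq a}$ as $M\simeq\varinjlim_i C_i$ with $C_i$ compact, one gets $M\simeq\tau^{\geq a}M\simeq\varinjlim_i\tau^{\geq a}C_i$, a filtered colimit of \emph{bounded} objects of $\Dcal^{\geq a}$; your finite d\'evissage gives $F(\tau^{\geq a}C_i)\in\Dcal^{\geq a-n}$, and since $F$ preserves filtered colimits, $F(M)\in\Dcal^{\geq a-n}$. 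With this bound in hand, your own reduction (truncate below using right t-exactness, truncate above using the bound just proved, use t-exactness of products, peel off finitely many static layers, then apply the hypotheses on $G$) closes the argument, and it then coincides with the paper's proof, which passes over this point without comment.
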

\begin{proof}
A static object $M$ of $\Dcal(\Acal)$ can be regarded as a $\pi_0\Acal$-module and we have an equivalence $M\otimes_{\Acal}^{\Lbb} \Bcal \simeq M\otimes_{\pi_0\Acal}^{\Lbb} \pi_0\Bcal$, so $-\otimes_{\Acal}^{\Lbb} \Bcal \colon \Dcal(\Acal)_{\kappa} \to \Dcal(\Bcal)_{\kappa}$ has finite Tor-dimension.
Let us prove that $-\otimes_{\Acal}^{\Lbb} \Bcal \colon \Dcal(\Acal)_{\kappa} \to \Dcal(\Bcal)_{\kappa}$ preserves all small limits.
It is enough to show that it preserves all direct products.
Let $\{M_{\lambda}\}_{\lambda \in \Lambda}$ be a family of objects in $\Dcal(\Acal)_{\kappa}$.
We will show that $H^r((\prod_{\lambda}M_{\lambda})\otimes_{\Acal}^{\Lbb} \Bcal) \to H^r(\prod_{\lambda}(M_{\lambda}\otimes_{\Acal}^{\Lbb} \Bcal))$ is an isomorphism for every integer $r$, where we note that $\Dcal(\Acal)_{\kappa}$ has the natural t-structure by the assumption $\Dcal(\Acal)_{\kappa} \subset \Dcal(\underline{\Acal})_{\kappa}$.
By taking a shift, we may assume $r=0$.
Since the direct product functor in $\Dcal(\underline{\Acal})_{\kappa}$ is t-exact, the direct product functor in $\Dcal(\Acal)_{\kappa}$ is also t-exact.
Moreover $-\otimes_{\Acal}^{\Lbb} \Bcal \colon \Dcal(\Acal)_{\kappa} \to \Dcal(\Bcal)_{\kappa}$ has finite Tor-dimension, so we may assume that $\{M_{\lambda}\}_{\lambda}$ is uniformly bounded, and then we may assume that $M_{\lambda}$ is static for any $\lambda$.
Then the claim follows from the fact that $-\otimes_{\pi_0(\Acal)}^{\Lbb} \pi_0(\Bcal) \colon \Dcal(\pi_0(\Acal))_{\kappa} \to \Dcal(\pi_0(\Bcal))_{\kappa}$ preserves direct products.
\end{proof}

\begin{lemma}\label{lem:bddeq}
Let $\Acal$ be an analytic animated ring.
Let $f \colon M\to N$ be a morphism of bounded above objects of $\Dcal(\Acal)$.
If $f \otimes_{\Acal} \pi_0\Acal \colon M\otimes_{\Acal} \pi_0\Acal \to N\otimes_{\Acal} \pi_0\Acal$ is an equivalence, then $f$ is also an equivalence.
\end{lemma}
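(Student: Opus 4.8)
The plan is to reduce to showing that the cofiber $C \coloneqq \cofib(f)$ vanishes. Since $M$ and $N$ are bounded above, so is $C$; and since $-\otimes_{\Acal}^{\Lbb}\pi_0\Acal$ is an exact functor of stable $\infty$-categories, $C\otimes_{\Acal}^{\Lbb}\pi_0\Acal \simeq \cofib\!\left(f\otimes_{\Acal}\pi_0\Acal\right)\simeq 0$. Thus it suffices to prove the following: a bounded-above object $C\in\Dcal(\Acal)$ with $C\otimes_{\Acal}^{\Lbb}\pi_0\Acal\simeq 0$ is itself zero; in other words, $-\otimes_{\Acal}^{\Lbb}\pi_0\Acal$ is conservative on bounded-above objects.

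The two ingredients I would establish first are: (i) the functor $-\otimes_{\Acal}^{\Lbb}\pi_0\Acal\colon \Dcal(\Acal)\to\Dcal(\pi_0\Acal)$ is right t-exact; and (ii) for a static object $P\in\Dcal(\Acal)^{\heart}$ there is a natural isomorphism $H^0(P\otimes_{\Acal}^{\Lbb}\pi_0\Acal)\cong P$. For (i), write the functor as the composite of the inclusion $\Dcal(\Acal)\hookrightarrow\Dcal(\underline{\Acal})$, the underlying base change $-\otimes_{\underline{\Acal}}^{\Lbb}\pi_0\underline{\Acal}$, and the completion $-\otimes_{\pi_0\underline{\Acal}}^{\Lbb}\pi_0\Acal$. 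The inclusion $\Dcal(\Acal)\hookrightarrow\Dcal(\underline{\Acal})$ is t-exact because $\Dcal(\Acal)$ is closed under truncations on $\Dcal(\underline{\Acal})$, so its left adjoint, the completion functor, is right t-exact; the underlying base change is right t-exact as always; composing gives (i). For (ii), set $X\coloneqq P\otimes_{\underline{\Acal}}^{\Lbb}\pi_0\underline{\Acal}\in\Dcal(\pi_0\underline{\Acal})^{\leq 0}$, so that $H^0(X)=P\otimes_{\pi_0\underline{\Acal}}\pi_0\underline{\Acal}=P$; applying the exact, right t-exact completion functor to the triangle $\tau^{\leq -1}X\to X\to H^0(X)$ shows that $H^0$ of the completion of $X$ agrees with $H^0$ of the completion of $P$, and $P$ is already $\pi_0\Acal$-complete (being static and $\Acal$-complete, with the analytic structure on $\pi_0\Acal$ induced from $\Acal$), so this is just $P$.

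Granting these, the main argument is the standard top-cohomology one. Suppose $C\neq 0$. By left-completeness of the t-structure together with boundedness above, there is a largest integer $n$ with $H^n(C)\neq 0$, and then $C\in\Dcal(\Acal)^{\leq n}$ and $\tau^{\geq n}C\simeq H^n(C)[-n]$. Applying the right t-exact functor $-\otimes_{\Acal}^{\Lbb}\pi_0\Acal$ to the triangle $\tau^{\leq n-1}C\to C\to\tau^{\geq n}C$, the first term lands in $\Dcal(\pi_0\Acal)^{\leq n-1}$, hence $H^n(C\otimes_{\Acal}^{\Lbb}\pi_0\Acal)\cong H^n(\tau^{\geq n}C\otimes_{\Acal}^{\Lbb}\pi_0\Acal)=H^0(H^n(C)\otimes_{\Acal}^{\Lbb}\pi_0\Acal)\cong H^n(C)\neq 0$ by (ii), contradicting $C\otimes_{\Acal}^{\Lbb}\pi_0\Acal\simeq 0$. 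Therefore $C=0$ and $f$ is an equivalence. The only genuinely delicate point is (ii) — ensuring the completion functor does not disturb $H^0$ of a static complete object — which I expect to be the main obstacle; everything else is formal manipulation of the t-structure.
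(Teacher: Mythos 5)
Your proof is correct and is essentially the paper's argument: both reduce the claim, via right t-exactness of $-\otimes_{\Acal}^{\Lbb}\pi_0\Acal$ and the identification $H^0(P\otimes_{\Acal}^{\Lbb}\pi_0\Acal)\cong P$ for static $P$, to detecting the top cohomology after base change, your cofiber-plus-top-degree formulation being a cosmetic variant of the paper's truncation to $\Dcal(\Acal)^{\leq 0}$ and induction on $H^0$. (One small slip in step (i): the completion functor in your factorization is the left adjoint of $\Dcal(\pi_0\Acal)\hookrightarrow\Dcal(\pi_0\underline{\Acal})$, not of $\Dcal(\Acal)\hookrightarrow\Dcal(\underline{\Acal})$, but the identical t-exactness argument applies to it, so nothing is lost.)
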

\begin{proof}
We may assume that $M,N \in \Dcal(\Acal)^{\leq0}$.
By induction, it is enough to show that $H^0(M) \to H^0(N)$ is an equivalence.
It follows from $H^0(M) \simeq H^0(M\otimes_{\Acal} \pi_0\Acal)$ and $H^0(N) \simeq H^0(N\otimes_{\Acal} \pi_0\Acal)$
\end{proof}

Let $K$ be a complete non-archimedean field.
Let $f \colon A\to B$ be a faithfully flat morphism of discrete animated rings such that $\pi_0f \colon \pi_0A \to \pi_0B$ is finitely presented (resp. a faithfully flat morphism of animated affinoid $K$-algebras such that $\pi_0f \colon \pi_0A \to \pi_0B$ is isomorphic to $A_0[1/\pi] \to B_0[1/\pi]$ for a faithfully flat map $A_0 \to B_0$ of admissible $\Ocal_K$-algebras).
We will prove that $f_{\bs} \colon A_{\bs} \to B_{\bs}$ is descendable of index $\leq2$ (resp. $\leq 3$). 
First, we will prove that there exists a compact object $N_{B/A} \in \Dcal(\Acal)$ such that there is an equivalence $$-\otimes_{A_{\bs}}^{\Lbb}B_{\bs} \simeq R\intHom_A(N_{B/A},-)$$ of functors from $\Dcal(\Acal)$ to $\Dcal(\Acal)$ (Theorem \ref{existN}).
We take $N_{\pi_0B/\pi_0A}\in \Dcal(\pi_0A_{\bs})$ and an equivalence of functors from $\Dcal(\pi_0A_{\bs})$ to $\Dcal(\pi_0B_{\bs})$
$$-\otimes_{\pi_0A_{\bs}}^{\Lbb} \pi_0B_{\bs} \simeq R\intHom_{\pi_0A}(N_{\pi_0B/\pi_0A},-)$$ (\cite[Corollary 2.11]{Mikami22}, Theorem \ref{thm:N}).
By construction, $N_{\pi_0B/\pi_0A}$ lies in $\Dcal(\pi_0A_{\bs})_{\kappa}$ for any uncountable solid cutoff cardinal $\kappa$.

\begin{lemma}\label{lem:lim}
Let $\kappa$ be an uncountable solid cutoff cardinal.
Then The functor $$-\otimes_{A_{\bs}}^{\Lbb} B_{\bs} \colon \Dcal(A_{\bs})_{\kappa} \to \Dcal(B_{\bs})_{\kappa}$$ preserves all small limits and has finite Tor-dimension $\leq 1$ (resp. $\leq 2$).
\end{lemma}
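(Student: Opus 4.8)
The plan is to derive the statement from Lemma~\ref{lem:limtor}, applied to the morphism $f_{\bs}\colon A_{\bs}\to B_{\bs}$ of analytic animated rings. Since $\pi_0(A_{\bs})=(\pi_0 A)_{\bs}$ and $\pi_0(B_{\bs})=(\pi_0 B)_{\bs}$, this reduces the problem to checking three hypotheses: (i) $\pi_0(A_{\bs})\otimes_{A_{\bs}}^{\Lbb}B_{\bs}\simeq\pi_0(B_{\bs})$; (ii) $\Dcal(A_{\bs})_{\kappa}\subset\Dcal(\underline{A_{\bs}})_{\kappa}$ and $\Dcal(B_{\bs})_{\kappa}\subset\Dcal(\underline{B_{\bs}})_{\kappa}$; (iii) the base functor $-\otimes_{\pi_0 A_{\bs}}^{\Lbb}\pi_0 B_{\bs}\colon\Dcal(\pi_0 A_{\bs})_{\kappa}\to\Dcal(\pi_0 B_{\bs})_{\kappa}$ preserves all small limits and has finite Tor-dimension $\leq 1$ (resp.\ $\leq 2$).

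For (i): in the affinoid case the equivalence $(\pi_0 A)_{\bs}\otimes_{A_{\bs}}^{\Lbb}B_{\bs}\simeq(\pi_0 B)_{\bs}$ is part of the definition of a faithfully flat morphism of animated affinoid $K$-algebras. In the discrete animated ring case one has $B\otimes_A^{\Lbb}\pi_0 A\simeq\pi_0 B$ by definition of faithful flatness; since $-\otimes_{A_{\bs}}^{\Lbb}B_{\bs}$ restricted to discrete $A$-modules agrees, on underlying objects, with $-\otimes_A^{\Lbb}B$ (both are colimit-preserving functors sending $A_{\bs}[S]$ to $B_{\bs}[S]$ for $S$ a discrete set, and discrete modules are generated under small colimits by such $A_{\bs}[S]$), applying this to $\pi_0 A$ gives $\pi_0(A_{\bs})\otimes_{A_{\bs}}^{\Lbb}B_{\bs}\simeq(\pi_0 A\otimes_A^{\Lbb}B)_{\bs}\simeq(\pi_0 B)_{\bs}\simeq\pi_0(B_{\bs})$; if one prefers to argue with the comparison map directly, right $t$-exactness of $-\otimes_{A_{\bs}}^{\Lbb}B_{\bs}$ together with Lemma~\ref{lem:bddeq} does the same job. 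For (ii): since $\kappa$ is an uncountable solid cutoff cardinal, these inclusions are Corollary~\ref{discrete} (discrete case) and Proposition~\ref{prop:aff} (affinoid case), applied to both $A$ and $B$.

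The substantive input is (iii), which takes place entirely at the level of the underlying affinoid (resp.\ finitely presented discrete) situation. By \cite[Corollary~2.11]{Mikami22} (discrete case), resp.\ Theorem~\ref{thm:N} applied to the admissible lift $A_0\to B_0$ of $\pi_0 A\to\pi_0 B$ (affinoid case), there is a compact object $N_{\pi_0 B/\pi_0 A}\in\Dcal(\pi_0 A_{\bs})$ with an equivalence $-\otimes_{\pi_0 A_{\bs}}^{\Lbb}\pi_0 B_{\bs}\simeq R\intHom_{\pi_0 A}(N_{\pi_0 B/\pi_0 A},-)$ of functors; as $N_{\pi_0 B/\pi_0 A}$ lies in $\Dcal(\pi_0 A_{\bs})_{\kappa}$ for every uncountable solid cutoff cardinal $\kappa$ and $R\intHom$ against it preserves limits, the base functor preserves all small limits. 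The Tor-dimension bound $\leq 1$ (resp.\ $\leq 2$) amounts to bounding the cohomological amplitude of $N_{\pi_0 B/\pi_0 A}$: it follows from the structure of $N_{\pi_0 B/\pi_0 A}$ recorded in \cite{Mikami22} in the discrete case, and from Theorem~\ref{thm:affbdd} applied to $\pi_0 A\to\pi_0 B$ in the affinoid case. Feeding (i), (ii), (iii) into Lemma~\ref{lem:limtor} yields the assertion.

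I expect the only delicate point to be the passage from the $\pi_0$-level to $A_{\bs}\to B_{\bs}$ on the $\kappa$-truncated categories — that limit-preservation and Tor-amplitude are genuinely inherited once one knows $\Dcal(A_{\bs})_{\kappa}\subset\Dcal(\underline{A_{\bs}})_{\kappa}$, given that $\Dcal(A_{\bs})_{\kappa}$ is not closed under limits in $\Dcal(A_{\bs})$. But this is exactly what has been isolated and proved in Lemma~\ref{lem:limtor}, whose argument reduces preservation of products to the case of uniformly bounded, hence static, families; so beyond assembling the three hypotheses above there is nothing further to verify.
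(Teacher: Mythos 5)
Your overall route is the paper's own: reduce to the static case via Lemma~\ref{lem:limtor} combined with Corollary~\ref{discrete} and Proposition~\ref{prop:aff}, then settle the static case using the compact object $N_{\pi_0B/\pi_0A}$ from \cite[Corollary 2.11]{Mikami22} resp.\ Theorem~\ref{thm:N}, with the Tor-dimension bound supplied by \cite[Proposition 2.20]{Mikami22} resp.\ Theorem~\ref{thm:affbdd}. Your verifications of hypotheses (i) and (ii) are fine and correspond to what the paper invokes implicitly resp.\ explicitly.

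The one real gap is in your justification of (iii), and it is exactly the subtlety the paper flags in the remark right after the lemma. The equivalence $-\otimes_{\pi_0A_{\bs}}^{\Lbb}\pi_0B_{\bs}\simeq R\intHom_{\pi_0A}(N_{\pi_0B/\pi_0A},-)$ gives limit preservation as an endofunctor of all of $\Dcal(\pi_0A_{\bs})$; but since $\Dcal(\pi_0A_{\bs})_{\kappa}$ is \emph{not} closed under small limits in $\Dcal(\pi_0A_{\bs})$, this does not by itself imply that the restricted functor $\Dcal(\pi_0A_{\bs})_{\kappa}\to\Dcal(\pi_0B_{\bs})_{\kappa}$ preserves limits — which is precisely the inference your sentence ``$R\intHom$ against it preserves limits, [hence] the base functor preserves all small limits'' makes. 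The missing (short) step, which is how the paper argues: since $N_{\pi_0B/\pi_0A}$ lies in $\Dcal(\pi_0A_{\bs})_{\kappa}$ and $\Dcal(\pi_0A_{\bs})_{\kappa}$ is closed under tensor products, the left adjoint $-\otimes_{\pi_0A_{\bs}}^{\Lbb}N_{\pi_0B/\pi_0A}$ of $R\intHom_{\pi_0A}(N_{\pi_0B/\pi_0A},-)$ restricts to the $\kappa$-subcategory, so the restricted functor is a right adjoint between presentable $\infty$-categories and hence preserves all small limits; one then passes from this statement about the composite with the forgetful functor to the functor valued in $\Dcal(\pi_0B_{\bs})_{\kappa}$ by observing that the forgetful functor $\Dcal(\pi_0B_{\bs})_{\kappa}\to\Dcal(\pi_0A_{\bs})_{\kappa}$ is conservative and preserves limits (it has the left adjoint $-\otimes_{\pi_0A_{\bs}}^{\Lbb}\pi_0B_{\bs}$ at the $\kappa$-level). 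You have all the ingredients on the table ($N$ compact and in the $\kappa$-subcategory), so this is an omission of the decisive argument rather than a wrong approach; with it inserted, your proof coincides with the paper's.
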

\begin{proof}
By Lemma \ref{lem:limtor}, Proposition \ref{discrete}, and Proposition \ref{prop:aff}, we may assume that $A$ and $B$ are static.
The claim about finite Tor-dimension follows from \cite[Proposition 2.20]{Mikami22} and Theorem \ref{thm:affbdd}.
Since the forgetful functor $\Dcal(B_{\bs})_{\kappa} \to \Dcal(A_{\bs})_{\kappa}$ is conservative and preserves all small limits, so it is enough to show that $$-\otimes_{A_{\bs}}^{\Lbb} B_{\bs} \colon \Dcal(A_{\bs})_{\kappa} \to \Dcal(A_{\bs})_{\kappa}$$ preserves all small limits.
The functor $$-\otimes_{A_{\bs}}^{\Lbb} B_{\bs}\simeq R\intHom_A(N_{B/A},-) \colon \Dcal(A_{\bs}) \to \Dcal(A_{\bs})$$ has the left adjoint functor $$-\otimes_{A_{\bs}}^{\Lbb} N_{B/A} \colon \Dcal(A_{\bs}) \to \Dcal(A_{\bs}).$$
Since the functor $-\otimes_{A_{\bs}}^{\Lbb} N_{B/A} \colon \Dcal(A_{\bs}) \to \Dcal(A_{\bs})$ restricts to $-\otimes_{A_{\bs}}^{\Lbb} N_{B/A} \colon \Dcal(A_{\bs})_{\kappa} \to \Dcal(A_{\bs})_{\kappa}$, the functor $-\otimes_{A_{\bs}}^{\Lbb} B_{\bs} \colon \Dcal(A_{\bs})_{\kappa} \to \Dcal(A_{\bs})_{\kappa}$ has the left adjoint.
Therefore, it preserves all small limits.
\end{proof}
\begin{remark}
Since $\Dcal(A_{\bs})_{\kappa} \subset \Dcal(A_{\bs})$ is not stable under small limits, 
the fact that $-\otimes_{A_{\bs}}^{\Lbb} B_{\bs} \colon \Dcal(A_{\bs}) \to \Dcal(A_{\bs})$ preserves all small limits does not directly imply that $-\otimes_{A_{\bs}}^{\Lbb} B_{\bs} \colon \Dcal(A_{\bs})_{\kappa} \to \Dcal(A_{\bs})_{\kappa}$ preserves all small limits.
\end{remark}

\begin{lemma}\label{lem:repani}
The functor $F \colon \Dcal(A_{\bs}) \overset{-\otimes_{A_{\bs}}^{\Lbb}B_{\bs}}{\longrightarrow} \Dcal(B_{\bs}) \overset{\Map(B,-)}{\longrightarrow} \Ani$ is representable by some compact object $N_{B/A}\in \Dcal(A_{\bs})$.
Moreover we have an equivalence $$N_{B/A}\otimes_{A_{\bs}}^{\Lbb}\pi_0A_{\bs} \simeq N_{\pi_0B/\pi_0A}.$$
\end{lemma}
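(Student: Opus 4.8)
The plan is to apply the adjoint functor theorem on each stratum $\Dcal(A_{\bs})_{\kappa}$ and then argue that the resulting object represents $F$ on all of $\Dcal(A_{\bs})$. First I would fix an uncountable solid cutoff cardinal $\kappa$ large enough that $A_{\bs},B_{\bs}\in\Dcal(\Zbb)^{\cond}_{\kappa}$; by Corollary \ref{discrete} (in the discrete animated case) or Proposition \ref{prop:aff} (in the affinoid case) we then have $\Dcal(A_{\bs})_{\kappa}\subseteq\Dcal(A)^{\cond}_{\kappa}$ and $\Dcal(B_{\bs})_{\kappa}\subseteq\Dcal(B)^{\cond}_{\kappa}$, so these $\infty$-categories inherit the usual t-structures. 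On the presentable $\infty$-category $\Dcal(A_{\bs})_{\kappa}$ consider the endofunctor $G_{\kappa}$ obtained as base change $-\otimes_{A_{\bs}}^{\Lbb}B_{\bs}\colon\Dcal(A_{\bs})_{\kappa}\to\Dcal(B_{\bs})_{\kappa}$ followed by the forgetful functor $\Dcal(B_{\bs})_{\kappa}\to\Dcal(A_{\bs})_{\kappa}$. By (the proof of) Lemma \ref{lem:lim} the functor $G_{\kappa}$ preserves all small limits, and it preserves all small colimits since base change is a left adjoint and the forgetful functor is colimit-preserving; hence by the adjoint functor theorem \cite[Corollary 5.5.2.9]{HTT} it admits a left adjoint $L_{\kappa}\colon\Dcal(A_{\bs})_{\kappa}\to\Dcal(A_{\bs})_{\kappa}$, which preserves compact objects because $G_{\kappa}$ preserves filtered colimits. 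I would set $N\coloneqq L_{\kappa}(A_{\bs})$. Then $N$ is compact in $\Dcal(A_{\bs})_{\kappa}$, hence compact in $\Dcal(A_{\bs})$, since the inclusion $\Dcal(A_{\bs})_{\kappa}\hookrightarrow\Dcal(A_{\bs})$ preserves colimits and carries each generator $A_{\bs}[S]$ with $S$ $\kappa$-small to a compact object of $\Dcal(A_{\bs})$. By the adjunction, $\Map_{A_{\bs}}(N,M)\simeq\Map_{A_{\bs}}(A_{\bs},G_{\kappa}(M))=F(M)$ for every $M\in\Dcal(A_{\bs})_{\kappa}$.

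Next I would prove the comparison $N\otimes_{A_{\bs}}^{\Lbb}\pi_0A_{\bs}\simeq N_{\pi_0B/\pi_0A}$; besides being part of the statement, it is what will make the globalization work. Recall that $N_{\pi_0B/\pi_0A}$ is compact in $\Dcal(\pi_0A_{\bs})_{\kappa}$ and satisfies $-\otimes_{\pi_0A_{\bs}}^{\Lbb}\pi_0B_{\bs}\simeq R\intHom_{\pi_0A}(N_{\pi_0B/\pi_0A},-)$ (see \cite[Corollary 2.11]{Mikami22} and Theorem \ref{thm:N}). For $M'\in\Dcal(\pi_0A_{\bs})_{\kappa}$, restriction of scalars along $A_{\bs}\to\pi_0A_{\bs}$ keeps $M'$ inside $\Dcal(A_{\bs})_{\kappa}$, so the adjunction $(-\otimes_{A_{\bs}}^{\Lbb}\pi_0A_{\bs})\dashv\mathrm{res}$ and the computation of the first step give $\Map_{\pi_0A_{\bs}}(N\otimes_{A_{\bs}}^{\Lbb}\pi_0A_{\bs},M')\simeq\Map_{A_{\bs}}(N,\mathrm{res}\,M')=F(\mathrm{res}\,M')$. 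Since $M'$ is a $\pi_0A$-module and $A\to B$ is faithfully flat, so that $\pi_0A_{\bs}\otimes_{A_{\bs}}^{\Lbb}B_{\bs}\simeq\pi_0B_{\bs}$, we have $\mathrm{res}(M')\otimes_{A_{\bs}}^{\Lbb}B_{\bs}\simeq M'\otimes_{\pi_0A_{\bs}}^{\Lbb}\pi_0B_{\bs}$; hence $F(\mathrm{res}\,M')$ is the underlying anima of $M'\otimes_{\pi_0A_{\bs}}^{\Lbb}\pi_0B_{\bs}\simeq R\intHom_{\pi_0A}(N_{\pi_0B/\pi_0A},M')$. Thus $N\otimes_{A_{\bs}}^{\Lbb}\pi_0A_{\bs}$ and $N_{\pi_0B/\pi_0A}$ corepresent the same functor on the presentable $\infty$-category $\Dcal(\pi_0A_{\bs})_{\kappa}$ and both lie in it, so they are equivalent there, hence in $\Dcal(\pi_0A_{\bs})$.

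The third step is to upgrade the representability from $\Dcal(A_{\bs})_{\kappa}$ to all of $\Dcal(A_{\bs})$. The unit $A_{\bs}\to N\otimes_{A_{\bs}}^{\Lbb}B_{\bs}$ of the adjunction yields, for each $M\in\Dcal(A_{\bs})$, a natural map $\Map_{A_{\bs}}(N,M)\to\Map_{A_{\bs}}(N\otimes_{A_{\bs}}^{\Lbb}B_{\bs},M\otimes_{A_{\bs}}^{\Lbb}B_{\bs})\to\Map_{A_{\bs}}(A_{\bs},M\otimes_{A_{\bs}}^{\Lbb}B_{\bs})=F(M)$, i.e.\ a natural transformation $\eta\colon\Map_{A_{\bs}}(N,-)\Rightarrow F$ which restricts to the adjunction equivalence on $\Dcal(A_{\bs})_{\kappa}$. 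Both $\Map_{A_{\bs}}(N,-)$ and $F$ send fiber sequences to fiber sequences, and both preserve small limits — for $F$ this uses Lemma \ref{lem:lim} and Lemma \ref{lem:limcard}, since any small limit in $\Dcal(A_{\bs})$ is already computed inside $\Dcal(A_{\bs})_{\kappa'}$ for a sufficiently large uncountable solid cutoff cardinal $\kappa'$. Using left-completeness of the t-structure together with the finite Tor-dimension of $-\otimes_{A_{\bs}}^{\Lbb}B_{\bs}$ (Lemma \ref{lem:lim}), a d\'evissage reduces the claim that $\eta_M$ is an equivalence to the case where $M$ is a static object of $\Dcal(A_{\bs})$, which need not be $\kappa$-condensed. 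For such $M$, which is a $\pi_0A$-module, the comparison above and the adjunction give $\Map_{A_{\bs}}(N,M)\simeq\Map_{\pi_0A_{\bs}}(N_{\pi_0B/\pi_0A},M)$, the underlying anima of $M\otimes_{\pi_0A_{\bs}}^{\Lbb}\pi_0B_{\bs}\simeq M\otimes_{A_{\bs}}^{\Lbb}B_{\bs}$ (defining equivalence of $N_{\pi_0B/\pi_0A}$ and faithful flatness), i.e.\ $F(M)$; a short diagram chase identifies this with $\eta_M$, so $\eta$ is an equivalence and $N$ represents $F$.

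The step I expect to be the genuine obstacle is this last one. Because $\Dcal(A_{\bs})$ is not presentable and $\Dcal(A_{\bs})_{\kappa}$ is not stable under small limits in it, one cannot invoke the adjoint functor theorem or a representability theorem on $\Dcal(A_{\bs})$ directly, so one must verify by hand that the object produced at the $\kappa$-level still corepresents $F$ on arbitrarily large modules. This is precisely where the reduction to static modules and the comparison with the static ring $\pi_0A$ via $N_{\pi_0B/\pi_0A}$ — and, with it, the faithful-flatness hypothesis $B_{\bs}\otimes_{A_{\bs}}^{\Lbb}\pi_0A_{\bs}\simeq\pi_0B_{\bs}$ — are used in an essential way rather than merely for bookkeeping.
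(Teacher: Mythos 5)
Your proposal is correct in substance, and its first two steps coincide with the paper's proof: you produce the representing object on a presentable stratum $\Dcal(A_{\bs})_{\kappa}$ (the paper applies \cite[Proposition 5.5.2.7]{HTT} to $F_{\kappa}$, you apply the adjoint functor theorem to the endofunctor and evaluate the left adjoint at the unit — the same object), and you identify its base change to $\pi_0A_{\bs}$ with $N_{\pi_0B/\pi_0A}$ by exactly the paper's argument, namely that both corepresent $M'\mapsto \Map_{\pi_0A}(\pi_0A, M'\otimes_{\pi_0A_{\bs}}^{\Lbb}\pi_0B_{\bs})$ on $\Dcal(\pi_0A_{\bs})_{\kappa}$. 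Where you genuinely diverge is the globalization: the paper observes that the compact object $N_{\kappa}$ is bounded above and uses Lemma \ref{lem:bddeq} together with the $\pi_0$-comparison to show $N_{\kappa}$ is independent of $\kappa$, after which representability on all of $\Dcal(A_{\bs})=\bigcup_{\kappa}\Dcal(A_{\bs})_{\kappa}$ is immediate; you instead build a global transformation $\eta\colon\Map_{A}(N,-)\Rightarrow F$ from the unit and prove it is an equivalence by a d\'evissage to static modules. This works, but it amounts to folding the proof of Theorem \ref{existN} into the lemma: the paper deliberately defers the natural-transformation-plus-d\'evissage argument to Theorem \ref{existN} (where it is run at the level of $R\intHom$ via \cite[Lemma A.4.8]{Mann22}), keeping the lemma itself short. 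Two points in your route deserve explicit care. First, in the static case you must check that the specific map $\eta_M$, not merely its source and target, is an equivalence; this needs the bookkeeping that under $N\otimes_{A_{\bs}}^{\Lbb}\pi_0A_{\bs}\simeq N_{\pi_0B/\pi_0A}$ the base change of the unit corresponds to the universal element furnished by Theorem \ref{thm:N} (or an argument that any element inducing an equivalence on the stratum differs from the universal one by an automorphism of $N_{\pi_0B/\pi_0A}$), together with the fact that the equivalence $-\otimes_{\pi_0A_{\bs}}^{\Lbb}\pi_0B_{\bs}\simeq R\intHom_{\pi_0A}(N_{\pi_0B/\pi_0A},-)$ is available on all of $\Dcal(\pi_0A_{\bs})$, not just on a stratum. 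Second, your assertion that $F$ preserves small limits on the non-presentable category $\Dcal(A_{\bs})$ requires the cardinal argument you gesture at: one must check that a given small limit lands in (and is computed in) a stratum $\Dcal(A_{\bs})_{\kappa'}$ to which Lemma \ref{lem:lim} applies, using Lemma \ref{lem:limcard}, the inclusion $\Dcal(A_{\bs})\cap\Dcal(A)^{\cond}_{\kappa'}\subset\Dcal(A_{\bs})_{\kappa'}$, and Corollary \ref{discrete} or Proposition \ref{prop:aff}; for your d\'evissage only Postnikov (countable) limits and filtered colimits are needed, which is precisely what the cofinality $>\omega$ of uncountable solid cutoff cardinals provides, so it is cleaner to state only that. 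With these points spelled out, your proof is complete; the trade-off is that the paper's route gives the lemma more quickly, while yours proves the representability and, essentially, the functorial identification of Theorem \ref{existN} in one pass.
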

\begin{proof}
Let $\kappa$ be an uncountable solid cutoff cardinal.
By Lemma \ref{lem:lim}, the functor 
$$F_{\kappa}=F\mid_{\Dcal(A_{\bs})_{\kappa}} \colon \Dcal(A_{\bs})_{\kappa} \to \Ani$$ preserves all small limits and all filtered colimits.
Since $\Dcal(A_{\bs})_{\kappa}$ is a presentable $\infty$-category, $F_{\kappa}$ is represented by some object $N_{\kappa} \in \Dcal(A_{\bs})_{\kappa}$ by \cite[Proposition 5.5.2.7]{HTT}.
We prove that $N_{\kappa}$ is independent of the choice of $\kappa$.
Since $F_{\kappa}$ preserves filtered colimits, $N_{\kappa}$ is compact and in particular it is bounded.
By Lemma \ref{lem:bddeq}, it is enough to show $N_{\kappa}\otimes_{A_{\bs}}^{\Lbb}\pi_0A_{\bs} \simeq N_{\pi_0B/\pi_0A}$.
The object $N_{\kappa}\otimes_{A_{\bs}}^{\Lbb}\pi_0A_{\bs}$ represents a functor $G_{\kappa} \colon \Dcal(\pi_0A_{\bs})_{\kappa} \to \Dcal(A_{\bs})_{\kappa} \overset{F_{\kappa}}{\longrightarrow} \Ani$.
This functor can be written as $\Dcal(\pi_0A_{\bs})_{\kappa} \overset{-\otimes_{\pi_0A_{\bs}}^{\Lbb} \pi_0B_{\bs}}{\longrightarrow} \Dcal(\pi_0B_{\bs})_{\kappa} \overset{\Map(\pi_0B,-)}{\longrightarrow} \Ani$, and it represented by $N_{\pi_0B/\pi_0A}\in \Dcal(\pi_0A_{\bs})_{\kappa}$, so we get $N_{\kappa}\otimes_{A_{\bs}}^{\Lbb}\pi_0A_{\bs} \simeq N_{\pi_0B/\pi_0A}$.
\end{proof}

\begin{theorem}\label{existN}
Let $N_{B/A}\in \Dcal(A_{\bs})$ be as in Lemma \ref{lem:repani}.
Then there exists an equivalence of functors from $\Dcal(A_{\bs})$ to $\Dcal(A_{\bs})$
$$-\otimes_{A_{\bs}}^{\Lbb}B_{\bs} \simeq R\intHom_A(N_{B/A},-).$$
\end{theorem}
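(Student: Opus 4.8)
The plan is to identify the \emph{left} adjoint of the functor under study. Write $G\colon\Dcal(A_{\bs})\to\Dcal(A_{\bs})$ for the composite of $-\otimes_{A_{\bs}}^{\Lbb}B_{\bs}\colon\Dcal(A_{\bs})\to\Dcal(B_{\bs})$ with the forgetful functor $\Dcal(B_{\bs})\to\Dcal(A_{\bs})$; this is the functor we must identify with $R\intHom_{A}(N_{B/A},-)$. By Lemma \ref{lem:lim}, $-\otimes_{A_{\bs}}^{\Lbb}B_{\bs}$ preserves all small limits, hence so does $G$; restricting to $\Dcal(A_{\bs})_{\kappa}$ for an uncountable solid cutoff cardinal $\kappa$ and applying the adjoint functor theorem \cite[Corollary 5.5.2.9]{HTT} gives a left adjoint of $G|_{\Dcal(A_{\bs})_{\kappa}}$, and these glue as $\kappa$ grows to a colimit-preserving left adjoint $L\colon\Dcal(A_{\bs})\to\Dcal(A_{\bs})$ of $G$. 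Since $L(A_{\bs})$ corepresents $M\mapsto\Map_{A}(A_{\bs},G(M))=\Map_{A}(A,M\otimes_{A_{\bs}}^{\Lbb}B_{\bs})$, Lemma \ref{lem:repani} identifies $L(A_{\bs})\simeq N_{B/A}$. As $-\otimes_{A_{\bs}}^{\Lbb}N_{B/A}$ is left adjoint to $R\intHom_{A}(N_{B/A},-)$, it then suffices to prove $L\simeq -\otimes_{A_{\bs}}^{\Lbb}N_{B/A}$ and pass to right adjoints.

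To compare these, I would first observe that since $f_{\bs}\colon A_{\bs}\to B_{\bs}$ is steady, the forgetful functor $\Dcal(B_{\bs})\to\Dcal(A_{\bs})$ satisfies the projection formula, so $G$ is strongly $\Dcal(A_{\bs})$-linear; hence its left adjoint $L$ acquires a canonical \emph{oplax} $\Dcal(A_{\bs})$-linear structure, i.e.\ natural maps $L(M\otimes_{A_{\bs}}^{\Lbb}X)\to L(M)\otimes_{A_{\bs}}^{\Lbb}X$. Specializing $M=A_{\bs}$ and using $L(A_{\bs})\simeq N_{B/A}$ gives a natural transformation $\chi\colon L\Rightarrow(-\otimes_{A_{\bs}}^{\Lbb}N_{B/A})$, which restricts to the identity at $A_{\bs}$, and the task becomes showing $\chi$ is an equivalence. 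Both functors preserve small colimits and $\Dcal(A_{\bs})_{\kappa}$ is generated under colimits by the compact objects $A_{\bs}[S]$ with $S$ a $\kappa$-small extremally disconnected set, so it is enough to check $\chi_{A_{\bs}[S]}$ is an equivalence for every such $S$ (and then let $\kappa\to\infty$).

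Fixing $S$: $N_{B/A}$ is compact, hence bounded above, so $A_{\bs}[S]\otimes_{A_{\bs}}^{\Lbb}N_{B/A}$ is bounded above; and $G$ has finite Tor-dimension (Lemma \ref{lem:lim} and Theorem \ref{thm:affbdd}), so its left adjoint $L$ carries bounded-above objects to bounded-above objects, so $L(A_{\bs}[S])$ is bounded above as well. By Lemma \ref{lem:bddeq} it then suffices to check that $\chi_{A_{\bs}[S]}\otimes_{A_{\bs}}^{\Lbb}\pi_0A_{\bs}$ is an equivalence. Faithful flatness of $f$ gives $\pi_0A_{\bs}\otimes_{A_{\bs}}^{\Lbb}B_{\bs}\simeq\pi_0B_{\bs}$, whence for $M\in\Dcal(A_{\bs})$ a natural equivalence
$$G(M)\otimes_{A_{\bs}}^{\Lbb}\pi_0A_{\bs}\simeq M\otimes_{A_{\bs}}^{\Lbb}\pi_0B_{\bs}\simeq(M\otimes_{A_{\bs}}^{\Lbb}\pi_0A_{\bs})\otimes_{\pi_0A_{\bs}}^{\Lbb}\pi_0B_{\bs};$$
a standard manipulation of the adjunctions (using that scalar extension along $A_{\bs}\to\pi_0A_{\bs}$ and along $B_{\bs}\to\pi_0B_{\bs}$ is left adjoint to the respective forgetful functor, and that all maps in sight are steady) then yields a natural equivalence $L(M)\otimes_{A_{\bs}}^{\Lbb}\pi_0A_{\bs}\simeq L_0(M\otimes_{A_{\bs}}^{\Lbb}\pi_0A_{\bs})$, compatible with the oplax structures, where $L_0$ is the left adjoint of $G_0:=-\otimes_{\pi_0A_{\bs}}^{\Lbb}\pi_0B_{\bs}$. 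By the static descent result \cite[Corollary 2.11]{Mikami22} (resp.\ Theorem \ref{thm:N}) together with $N_{B/A}\otimes_{A_{\bs}}^{\Lbb}\pi_0A_{\bs}\simeq N_{\pi_0B/\pi_0A}$ from Lemma \ref{lem:repani}, one has $G_0=R\intHom_{\pi_0A}(N_{\pi_0B/\pi_0A},-)$, so $L_0=-\otimes_{\pi_0A_{\bs}}^{\Lbb}N_{\pi_0B/\pi_0A}$ is strongly $\Dcal(\pi_0A_{\bs})$-linear and its oplax structure maps are equivalences. Therefore $\chi_{A_{\bs}[S]}\otimes_{A_{\bs}}^{\Lbb}\pi_0A_{\bs}$ is an equivalence, hence so is $\chi_{A_{\bs}[S]}$, hence $\chi$; passing to right adjoints of $L\simeq -\otimes_{A_{\bs}}^{\Lbb}N_{B/A}$ gives the desired $-\otimes_{A_{\bs}}^{\Lbb}B_{\bs}\simeq R\intHom_{A}(N_{B/A},-)$.

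The crux — and the step I expect to be most delicate — is showing that the oplax $\Dcal(A_{\bs})$-linear structure on $L$ becomes strong on the generators $A_{\bs}[S]$: this is where steadiness (to make $G$ strongly linear), the finite Tor-dimension bound, and the already-established static statement must all be combined, and where the fact that $\Dcal(A_{\bs})$ is merely an increasing union of presentable $\infty$-categories forces the careful handling of the cardinal $\kappa$.
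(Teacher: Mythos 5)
Your route is genuinely different from the paper's, and in outline it works, but it shifts the weight onto exactly the coherence points you flag. The paper never constructs the global left adjoint $L$ of $-\otimes_{A_{\bs}}^{\Lbb}B_{\bs}$: it only uses the representability of the single functor in Lemma \ref{lem:repani} to produce the universal element $f\colon A\to N_{B/A}\otimes_{A_{\bs}}^{\Lbb}B_{\bs}$ corresponding to $\id_{N_{B/A}}$, turns this into a natural transformation $\eta\colon R\intHom_A(N_{B/A},-)\Rightarrow -\otimes_{A_{\bs}}^{\Lbb}B_{\bs}$ via \cite[Lemma A.4.8]{Mann22}, and then checks $\eta$ objectwise: since both functors preserve all small limits and colimits (Lemma \ref{lem:lim} plus compactness of $N_{B/A}$), it suffices to treat static $M$, where everything literally factors through $\pi_0A_{\bs}$-modules and the claim reduces to the static theorem together with $N_{B/A}\otimes_{A_{\bs}}^{\Lbb}\pi_0A_{\bs}\simeq N_{\pi_0B/\pi_0A}$. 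Your argument instead compares left adjoints, checks on the generators $A_{\bs}[S]$, and base-changes to $\pi_0A_{\bs}$; this is viable, but two steps need more than the ``standard manipulation'' you invoke. First, the equivalence $L(M)\otimes_{A_{\bs}}^{\Lbb}\pi_0A_{\bs}\simeq L_0(M\otimes_{A_{\bs}}^{\Lbb}\pi_0A_{\bs})$ is a Beck--Chevalley (mate) statement: it does not follow formally from $(-\otimes\pi_0A_{\bs})\circ G\simeq G_0\circ(-\otimes\pi_0A_{\bs})$ alone, but needs the companion identification $G\circ\iota\simeq\iota\circ G_0$ for the forgetful functor $\iota\colon\Dcal(\pi_0A_{\bs})\to\Dcal(A_{\bs})$ (itself using $\pi_0A_{\bs}\otimes_{A_{\bs}}^{\Lbb}B_{\bs}\simeq\pi_0B_{\bs}$ and steadiness) and then a Yoneda/coherence check on $A_{\bs}[S]$ confirming that the resulting equivalence is implemented by $\chi_{A_{\bs}[S]}\otimes\pi_0A_{\bs}$ — without that last compatibility, Lemma \ref{lem:bddeq} only tells you source and target are abstractly equivalent, not that your map is. Second, the assertion that the $\kappa$-wise adjoints ``glue'' is not automatic, since $L_{\kappa'}$ need not carry $\Dcal(A_{\bs})_{\kappa}$ into itself a priori; this is harmless if, as you suggest, you work per $\kappa$ and prove $\chi_{A_{\bs}[S]}$ is an equivalence for each $\kappa$-small $S$ before letting $\kappa$ grow, but it should be said. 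What the paper's route buys is precisely the avoidance of these mate and gluing verifications (at the cost of using Mann's lemma to rigidify a point of a mapping anima into a natural transformation); what your route buys is a more structural statement — identification of the left adjoint with $-\otimes_{A_{\bs}}^{\Lbb}N_{B/A}$ as an oplax-linear functor — from which the theorem follows by passing to right adjoints.
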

\begin{proof}
We have an equivalence of anima 
\begin{align*}
\Map_{\Dcal(A_{\bs})}(N_{B/A},N_{B/A}) &\simeq \Map_{\Dcal(B_{\bs})}(B,N_{B/A}\otimes_{A_{\bs}}^{\Lbb} B_{\bs})\\
&\simeq \Map_{\Dcal(A_{\bs})}(A,N_{B/A}\otimes_{A_{\bs}}^{\Lbb} B_{\bs}).
\end{align*}
Let $f \colon A \to N_{B/A}\otimes_{A_{\bs}}^{\Lbb} B_{\bs}$ be a morphism which corresponds to $\id \colon N_{B/A} \to N_{B/A}$.
By \cite[Lemma A.4.8]{Mann22}, $f$ induces a natural transformation of functors
$$\eta\colon R\intHom_A(N_{B/A},-) \Rightarrow -\otimes_{A_{\bs}}^{\Lbb}B_{\bs} \colon \Dcal(A_{\bs}) \to \Dcal(A_{\bs}).$$
We will prove $\eta$ is an equivalence.
Both functors preserve all small limits and small colimits by Lemma \ref{lem:lim} and the fact that $N_{B/A}$ is compact, so it is enough to show that for a static object $M\in \Dcal(A_{\bs})^{\heart}$, $\eta_M \colon R\intHom_A(N_{B/A},M) \to M\otimes_{A_{\bs}}^{\Lbb}B_{\bs}$ is an equivalence.
Since $M$ is a $\pi_0A_{\bs}$-module, $\eta_M$ is equivalent to $$R\intHom_{\pi_0A}(N_{B/A}\otimes_{A_{\bs}}^{\Lbb} \pi_0A_{\bs},M) \to M\otimes_{\pi_0A_{\bs}}^{\Lbb}\pi_0B_{\bs},$$ which is an equivalence by the fact that $N_{B/A}\otimes_{A_{\bs}}^{\Lbb}\pi_0A_{\bs} \simeq N_{\pi_0B/\pi_0A}$.
\end{proof}

Next, we will check the condition in Proposition \ref{prop:desbdd}.

\begin{theorem}\label{thm:unifbdd}
For every positive integer $r$, $N_{B/A}^{\otimes r}$ lies in $\Dcal(A_{\bs})^{\leq 1}$ (resp. $\Dcal(A_{\bs})^{\leq 2}$).
Moreover, the natural map $\varinjlim_{[m] \in \Delta_{s}^{\op}}N_{B/A}^{\otimes(m+1)} \to A$ is an equivalence.
\end{theorem}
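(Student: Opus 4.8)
The plan is to deduce both assertions from the corresponding statements for $\pi_0A_{\bs}\to\pi_0B_{\bs}$ by base change along the symmetric monoidal, colimit-preserving functor $-\otimes_{A_{\bs}}^{\Lbb}\pi_0A_{\bs}\colon\Dcal(A_{\bs})\to\Dcal(\pi_0A_{\bs})$, using Lemma~\ref{lem:bddeq} to transport equivalences back up. Recall from Lemma~\ref{lem:repani} that $N_{B/A}\otimes_{A_{\bs}}^{\Lbb}\pi_0A_{\bs}\simeq N_{\pi_0B/\pi_0A}$; by Remark~\ref{rem:ff} together with the flatness hypothesis $\pi_0A_{\bs}\otimes_{A_{\bs}}^{\Lbb}B_{\bs}\simeq\pi_0B_{\bs}$, both augmentations $N_{B/A}\to A$ and $N_{\pi_0B/\pi_0A}\to\pi_0A$ correspond (under $N\mapsto R\intHom(N,-)$) to the unit of scalar extension along $\pi_0A_{\bs}\to\pi_0B_{\bs}$, so this equivalence is compatible with the augmentations. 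Since $-\otimes_{A_{\bs}}^{\Lbb}\pi_0A_{\bs}$ is symmetric monoidal, it therefore carries the whole augmented semisimplicial object $\{N_{B/A}^{\otimes(m+1)}\}_{m\geq-1}$ to $\{N_{\pi_0B/\pi_0A}^{\otimes(m+1)}\}_{m\geq-1}$, augmentation included.

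For the first assertion, fix $r\geq1$. Since $N_{B/A}$ is compact it is bounded, hence $N_{B/A}^{\otimes r}$ is bounded above. By the previous paragraph and symmetric monoidality, $N_{B/A}^{\otimes r}\otimes_{A_{\bs}}^{\Lbb}\pi_0A_{\bs}\simeq N_{\pi_0B/\pi_0A}^{\otimes r}$, which lies in $\Dcal(\pi_0A_{\bs})^{\leq1}$ (resp.\ $\Dcal(\pi_0A_{\bs})^{\leq2}$) by \cite[Proposition~2.20]{Mikami22} (resp.\ Theorem~\ref{thm:affbdd}). Suppose for contradiction that $H^m(N_{B/A}^{\otimes r})\neq0$ for some $m\geq2$ (resp.\ $m\geq3$), and take such $m$ maximal. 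From the cofiber sequence $\tau^{\leq m-1}N_{B/A}^{\otimes r}\to N_{B/A}^{\otimes r}\to H^m(N_{B/A}^{\otimes r})[-m]$ and the right $t$-exactness of $-\otimes_{A_{\bs}}^{\Lbb}\pi_0A_{\bs}$ (as $\pi_0A_{\bs}$ is connective over $A_{\bs}$), one gets $H^m(N_{B/A}^{\otimes r}\otimes_{A_{\bs}}^{\Lbb}\pi_0A_{\bs})\cong H^m(N_{B/A}^{\otimes r})$, using that $H^m(N_{B/A}^{\otimes r})$ is a $\pi_0A$-module, so that $\pi_0\!\left(H^m(N_{B/A}^{\otimes r})\otimes_{A_{\bs}}^{\Lbb}\pi_0A_{\bs}\right)\cong H^m(N_{B/A}^{\otimes r})$. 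This contradicts $N_{\pi_0B/\pi_0A}^{\otimes r}\in\Dcal(\pi_0A_{\bs})^{\leq1}$ (resp.\ $\leq2$). Hence $N_{B/A}^{\otimes r}\in\Dcal(A_{\bs})^{\leq1}$ (resp.\ $\Dcal(A_{\bs})^{\leq2}$).

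For the second assertion, put $n=1$ (resp.\ $n=2$) and $N_k\coloneqq\varinjlim_{[m]\in\Delta_{s,\leq k}^{\op}}N_{B/A}^{\otimes(m+1)}$, so that $C\coloneqq\varinjlim_{[m]\in\Delta_s^{\op}}N_{B/A}^{\otimes(m+1)}=\varinjlim_{k}N_k$. By \cite[Lemma~4.12]{Mikami22} there are fiber sequences $N_k\to N_{k+1}\to N_{B/A}^{\otimes(k+2)}[k+1]$; by the first assertion $N_{B/A}^{\otimes(k+2)}[k+1]\in\Dcal(A_{\bs})^{\leq n-k-1}$ and $N_0=N_{B/A}\in\Dcal(A_{\bs})^{\leq n}$, so inductively $N_k\in\Dcal(A_{\bs})^{\leq n}$, and therefore $C\in\Dcal(A_{\bs})^{\leq n}$ since the $t$-structure on $\Dcal(A_{\bs})$ is compatible with filtered colimits. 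As $A\in\Dcal(A_{\bs})^{\leq0}$, Lemma~\ref{lem:bddeq} reduces the claim to showing that the augmentation $C\to A$ becomes an equivalence after applying $-\otimes_{A_{\bs}}^{\Lbb}\pi_0A_{\bs}$. By the first paragraph and colimit-preservation, that base-changed map is the augmentation $\varinjlim_{[m]\in\Delta_s^{\op}}N_{\pi_0B/\pi_0A}^{\otimes(m+1)}\to\pi_0A$, and this is an equivalence: the map $\pi_0A_{\bs}\to\pi_0B_{\bs}$ is descendable by \cite{Mikami22} (the finitely presented case for discrete rings, resp.\ the case of affinoid $K$-algebras via Lemma~\ref{lem:flattening}), so Lemma~\ref{lem:desconv} applies.

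I expect the main obstacle to be the bookkeeping in the first paragraph: verifying that base change along $A_{\bs}\to\pi_0A_{\bs}$ respects the entire augmented semisimplicial structure, not merely the underlying objects. The remaining subtlety is that $-\otimes_{A_{\bs}}^{\Lbb}\pi_0A_{\bs}$ is very far from $t$-exact (the module $\pi_0A$ can have infinite Tor-amplitude over $A$), so one genuinely needs the ``top cohomology'' argument of the second paragraph and the relative Nakayama statement Lemma~\ref{lem:bddeq} rather than a naive degreewise comparison; the non-presentability of $\Dcal(A_{\bs})$ plays no further role here beyond what was already arranged in Lemmata~\ref{lem:lim} and~\ref{lem:repani}.
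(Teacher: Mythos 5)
Your proof is correct, and its second half (boundedness of the colimit via the skeletal fiber sequences, reduction modulo $\pi_0$ via Lemma \ref{lem:bddeq}, the identification $N_{B/A}\otimes_{A_{\bs}}^{\Lbb}\pi_0A_{\bs}\simeq N_{\pi_0B/\pi_0A}$ from Lemma \ref{lem:repani}, and then Lemma \ref{lem:desconv} together with the known results of \cite{Mikami22} for $\pi_0A_{\bs}\to\pi_0B_{\bs}$) is essentially the argument in the paper. Where you genuinely diverge is the first assertion: the paper first reduces to $r=1$ via $N_{B/A}^{\otimes r}\simeq N_{B^r/A}$ and then argues intrinsically over $A$, using the finite Tor-dimension of $-\otimes_{A_{\bs}}^{\Lbb}B_{\bs}$ from Lemma \ref{lem:lim} and the observation that a nonzero map $N_{B/A}\to\tau^{\geq t}N_{B/A}$ would force $H^0\bigl(R\intHom_A(N_{B/A},\tau^{\geq t}N_{B/A})\bigr)\simeq H^0\bigl(\tau^{\geq t}N_{B/A}\otimes_{A_{\bs}}^{\Lbb}B_{\bs}\bigr)\neq0$, contradicting that Tor bound; you instead base-change $N_{B/A}^{\otimes r}$ itself to $\pi_0A_{\bs}$ and run a top-cohomology (derived Nakayama) argument against the uniform bound on $N_{\pi_0B/\pi_0A}^{\otimes r}$ from \cite[Proposition 2.20]{Mikami22} and Theorem \ref{thm:affbdd}. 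Both are sound; your route avoids having to know that $A\to B^r$ again satisfies the running hypotheses, while the paper's route avoids the maximal-nonvanishing-degree bookkeeping (though both rely on compactness of $N_{B/A}$ implying boundedness, as in Lemma \ref{lem:repani}). Two minor imprecisions, neither a gap: for the discrete case the paper quotes \cite[Theorem 2.15]{Mikami22} directly for the colimit statement rather than descendability plus Lemma \ref{lem:desconv}; and Lemma \ref{lem:flattening} is not needed at this point, since the affinoid case of the theorem is already stated under the running hypothesis that $\pi_0A\to\pi_0B$ admits a model $A_0\to B_0$ (the flattening lemma only enters later, when deducing the final corollary for arbitrary faithfully flat maps).
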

\begin{proof}
First, we prove the former claim.
Since we have an equivalence $N_{B/A}^{\otimes r}\simeq N_{B^r/A}$, where $B^r$ is the $r$-fold derived tensor product of $B$ over $A$, so we may assume $r=1$.
We assume that there exists an integer $t>1$ (resp. $t>2$) such that $H^t(N_{B/A})\neq 0$.
Then the natural map $N_{B/A} \to \tau^{\geq t}N_{B/A}$ is not zero.
Therefore, $H^0(\tau^{\geq t}N_{B/A} \otimes_{A_{\bs}}^{\Lbb} B_{\bs}) \cong H^0(R\intHom_A(N_{B/A}, \tau^{\geq t}N_{B/A}))$ is not zero.
However the functor $-\otimes_{A_{\bs}}^{\Lbb} B_{\bs}$ has finite Tor-dimension $\leq 1$ (resp. $\leq 2$) by Lemma \ref{lem:lim} , so it is a contradiction.

Next, we prove the latter claim.
By the first claim, we find that $\varinjlim_{[m] \in \Delta_{s}^{\op}}N_{B/A}^{\otimes(m+1)}$ is bounded above. 
Therefore, it is enough to show that $\varinjlim_{[m] \in \Delta_{s}^{\op}}N_{B/A}^{\otimes(m+1)}\otimes_{A_{\bs}}^{\Lbb} \pi_0A \to \pi_0A$ is an equivalence by Lemma \ref{lem:bddeq}.
Since we have an equivalence $N_{B/A}\otimes_{A_{\bs}}^{\Lbb} \pi_0A \simeq N_{\pi_0B/\pi_0A}$ by Lemma \ref{lem:repani},
it is enough to show that
\begin{align}
\varinjlim_{[m] \in \Delta_{s}^{\op}}N_{\pi_0B/\pi_0A}^{\otimes(m+1)} \to \pi_0A
\label{4.14}
\end{align}
 is an equivalence, where $N_{\pi_0B/\pi_0A}^{\otimes(m+1)}$ is the $(m+1)$-fold tensor product of $N_{\pi_0B/\pi_0A}$ in $\Dcal(\pi_0A_{\bs})$.
When $A$ is discrete, it follows from \cite[Theorem 2.15]{Mikami22}.
We assume that $\pi_0A$ is an affinoid $K$-algebra. By \cite[Theorem 4.13]{Mikami22} and Lemma \ref{lem:desiff}, $\pi_0A_{\bs} \to \pi_0B_{\bs}$ is descendable, so Lemma \ref{lem:desconv} shows that (\ref{4.14}) is an equivalence. 
\end{proof}

From the above, we can get the main theorem.
\begin{theorem}\label{thm:main}
The morphism $A_{\bs}\to B_{\bs}$ is descendable of index $\leq 2$ (resp. $\leq 3$).
\end{theorem}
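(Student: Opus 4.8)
The plan is to deduce the statement formally from Theorem~\ref{existN}, Theorem~\ref{thm:unifbdd}, and Proposition~\ref{prop:desbdd}, so that no further substantial argument is required. First I would record that $f_{\bs}\colon A_{\bs}\to B_{\bs}$ is steady: in the affinoid case this is the proposition asserting that a morphism of animated affinoid pairs of weakly finite type over $K$ induces a steady morphism of analytic animated rings (applied to $(A,(\pi_0A)^{\circ})\to(B,(\pi_0B)^{\circ})$), while in the discrete animated case it follows from the corresponding statement for finitely presented maps of static rings in \cite{Mann22} together with the equivalence $B_{\bs}\otimes_{A_{\bs}}^{\Lbb}\pi_0A_{\bs}\simeq\pi_0B_{\bs}$ built into the definition of faithful flatness. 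Steadiness is precisely what is needed for the notion of descendability to be available, and, combined with the equivalence $-\otimes_{A_{\bs}}^{\Lbb}B_{\bs}\simeq R\intHom_A(N_{B/A},-)$ of Theorem~\ref{existN} with $N_{B/A}$ compact, it places us in the exact situation analysed just before Proposition~\ref{prop:desbdd}, with $\Acal=A_{\bs}$, $\Bcal=B_{\bs}$, and $N=N_{B/A}$.

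Next I would verify the two hypotheses of Proposition~\ref{prop:desbdd} for this $N$. The first, that $N_{B/A}^{\otimes r}\in\Dcal(A_{\bs})^{\leq 1}$ (resp.\ $\Dcal(A_{\bs})^{\leq 2}$) for every positive integer $r$, is the first assertion of Theorem~\ref{thm:unifbdd}; the second, that the natural map $\varinjlim_{[m]\in\Delta_{s}^{\op}}N_{B/A}^{\otimes(m+1)}\to A$ is an equivalence, is its second assertion. Applying Proposition~\ref{prop:desbdd} with $n=1$ (resp.\ $n=2$) then gives that $f_{\bs}$ is descendable of index $\leq n+1=2$ (resp.\ $\leq 3$), which is the claim.

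Since the mathematical content has been front-loaded into Theorem~\ref{existN} and Theorem~\ref{thm:unifbdd} (and, through them, into the boundedness results of Section~3 and of \cite{Mikami22}, as well as the careful use of the adjoint functor theorem on the non-presentable category $\Dcal(A_{\bs})$), the only points in the present proof that need attention are bookkeeping: confirming that steadiness has indeed been secured so that the descendability formalism applies, and checking that the cohomological amplitude bound on the tensor powers $N_{B/A}^{\otimes r}$ — degree $\leq 1$ in the discrete case and degree $\leq 2$ in the affinoid case, mirroring the Tor-dimension $\leq 1$ resp.\ $\leq 2$ of $-\otimes_{A_{\bs}}^{\Lbb}B_{\bs}$ from Lemma~\ref{lem:lim} — is correctly transported through Proposition~\ref{prop:desbdd} into the descendability index. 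I do not expect a genuine obstacle here; the theorem is essentially a corollary packaging the preceding results.
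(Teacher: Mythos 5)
Your proposal is correct and is essentially the paper's own argument: the theorem is deduced by applying Proposition~\ref{prop:desbdd} with $N=N_{B/A}$ (and $n=1$, resp.\ $n=2$), where the standing hypotheses are supplied by Theorem~\ref{existN} and Theorem~\ref{thm:unifbdd}, and steadiness of $A_{\bs}\to B_{\bs}$ is secured exactly as you describe. No gaps.
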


By using \cite[Proposition 2.6.3, Proposition 2.9.6]{Mann22}, and Proposition \ref{prop:afftensor}, we can get the following corollaries.
\begin{corollary}
Let $f \colon A\to B$ be a faithfully flat morphism of discrete animated rings such that $\pi_0f \colon \pi_0A \to \pi_0B$ is finitely presented.
Let $B^{n/A}$ denote the $n$-fold derived tensor product of $B$ over $A$.
Then we have an equivalence of $\infty$-categories 
$$\Dcal(A_{\bs}) \overset{\sim}{\lra} \varprojlim_{[n] \in \Delta} \Dcal((B^{(n+1)/A})_{\bs}). $$
\end{corollary}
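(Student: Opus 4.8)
The plan is to deduce the corollary formally from Theorem~\ref{thm:main} together with the descent formalism for descendable morphisms of analytic animated rings from \cite{Mann22}.

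First I would invoke Theorem~\ref{thm:main}: the morphism $f_{\bs}\colon A_{\bs}\to B_{\bs}$ is descendable, hence in particular steady. By \cite[Proposition 2.6.3]{Mann22}, for any descendable morphism $\Acal\to\Bcal$ of analytic animated rings the canonical base-change functor induces an equivalence
$$\Dcal(\Acal)\overset{\sim}{\lra}\varprojlim_{[n]\in\Delta}\Dcal(\Bcal^{\otimes(n+1)}),$$
where $\Bcal^{\otimes(n+1)}$ is the $(n+1)$-fold derived tensor product of $\Bcal$ over $\Acal$ in analytic animated rings and the cosimplicial structure is the \v{C}ech nerve of $\Acal\to\Bcal$. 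Applied to $f_{\bs}$ this gives
$$\Dcal(A_{\bs})\overset{\sim}{\lra}\varprojlim_{[n]\in\Delta}\Dcal\bigl((B_{\bs})^{\otimes_{A_{\bs}}(n+1)}\bigr).$$

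Second, I would identify the terms of this diagram. By \cite[Proposition 2.9.6]{Mann22}, the solidification functor $C\mapsto C_{\bs}$ on discrete animated rings is compatible with derived tensor products, so $(B_{\bs})^{\otimes_{A_{\bs}}(n+1)}\simeq (B^{(n+1)/A})_{\bs}$ for every $n$, compatibly with all face and degeneracy maps; here $B^{(n+1)/A}$ is the $(n+1)$-fold derived tensor product of $B$ over $A$, which is again a discrete animated ring, so each $\Dcal((B^{(n+1)/A})_{\bs})$ is defined. Substituting this identification into the previous equivalence yields exactly
$$\Dcal(A_{\bs})\overset{\sim}{\lra}\varprojlim_{[n]\in\Delta}\Dcal\bigl((B^{(n+1)/A})_{\bs}\bigr),$$
as claimed.

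There is no serious obstacle here: essentially all the content has already been placed in Theorem~\ref{thm:main} (existence, compactness, and boundedness of $N_{B/A}$, together with the resolution $\varinjlim_{[m]\in\Delta_s^{\op}}N_{B/A}^{\otimes(m+1)}\overset{\sim}{\lra}A$, which via Proposition~\ref{prop:desbdd} give descendability) and in the general formalism of \cite{Mann22}. The only point that needs care is that the levelwise equivalences $(B_{\bs})^{\otimes_{A_{\bs}}(n+1)}\simeq (B^{(n+1)/A})_{\bs}$ assemble into an equivalence of cosimplicial analytic animated rings, so that they induce an equivalence of the associated cosimplicial $\infty$-categories of modules; this is immediate from the monoidality in \cite[Proposition 2.9.6]{Mann22}. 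Finally I would note that faithful flatness of $f$ ensures $\pi_0 f$ is faithfully flat and finitely presented, so Theorem~\ref{thm:main} does apply, and that the affinoid analog is obtained the same way using Proposition~\ref{prop:afftensor} in place of \cite[Proposition 2.9.6]{Mann22}.
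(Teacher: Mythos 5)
Your proposal is correct and follows the paper's own route: the paper also deduces this corollary directly from Theorem \ref{thm:main} combined with \cite[Proposition 2.6.3]{Mann22} (descent along descendable morphisms) and \cite[Proposition 2.9.6]{Mann22} (identifying the terms of the \v{C}ech conerve with $(B^{(n+1)/A})_{\bs}$), reserving Proposition \ref{prop:afftensor} for the affinoid analog. No gaps to report.
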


\begin{corollary}
Let $f \colon A\to B$ be a faithfully flat morphism of animated affinoid $K$-algberas.
Let $B^{n/A}$ denote the $n$-fold derived tensor product of $B$ over $(A,\Zbb)_{\bs}$.
Then we have an equivalence of $\infty$-categories 
$$\Dcal(A_{\bs}) \overset{\sim}{\lra} \varprojlim_{[n] \in \Delta} \Dcal((B^{(n+1)/A})_{\bs}). $$
\end{corollary}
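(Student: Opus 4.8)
The plan is to combine Theorem~\ref{thm:main} with Mann's general descent criterion and the compatibility of the $(-)_{\bs}$-construction with derived tensor products recorded in Proposition~\ref{prop:afftensor}.

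First, by Theorem~\ref{thm:main} the morphism $f_{\bs}\colon A_{\bs}\to B_{\bs}$ is steady and descendable (of index $\leq 3$). By \cite[Proposition 2.6.3]{Mann22}, a descendable morphism of analytic animated rings satisfies effective descent for the assignment $\Dcal(-)$; that is, pullback along $f_{\bs}$ induces an equivalence
$$\Dcal(A_{\bs})\overset{\sim}{\lra}\varprojlim_{[n]\in\Delta}\Dcal\big(\Bcal^{\otimes(n+1)}\big),$$
where $\Bcal^{\otimes(n+1)}$ is the $(n+1)$-fold derived tensor product of $B_{\bs}$ over $A_{\bs}$ in analytic animated rings, i.e.\ the $n$-th term of the \v{C}ech nerve of $f_{\bs}$.

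Next I would identify $\Bcal^{\otimes(n+1)}$ with $(B^{(n+1)/A})_{\bs}$. Iterating the proposition asserting that a derived tensor product of animated affinoid $K$-algebras over $(A,\Zbb)_{\bs}$ is again an animated affinoid $K$-algebra shows that each $B^{(n+1)/A}$ is an animated affinoid $K$-algebra. Applying Proposition~\ref{prop:afftensor} inductively — to the pairs built from $(A,(\pi_0A)^{\circ})$ and $(B,(\pi_0B)^{\circ})$ and their iterated tensor products — one obtains canonical equivalences of analytic animated rings $\Bcal^{\otimes(n+1)}\simeq\big((B,B^+)^{\otimes(n+1)/(A,A^+)}\big)_{\bs}$, compatibly with the cosimplicial structure maps, where here $(B,B^+)^{\otimes(n+1)/(A,A^+)}$ is the iterated affinoid-pair tensor product. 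Since an affinoid $K$-algebra is Tate, every ring of integral elements of $\pi_0(B^{(n+1)/A})$ is a ring of definition, so the analytic ring structure $(B^{(n+1)/A},-)_{\bs}$, and in particular the $\infty$-category $\Dcal\big((B^{(n+1)/A})_{\bs}\big)$, does not depend on the choice of ring of integral elements; hence $\Dcal(\Bcal^{\otimes(n+1)})\simeq\Dcal\big((B^{(n+1)/A})_{\bs}\big)$ naturally in $[n]$. Combining this with the previous display yields the corollary. (One also invokes \cite[Proposition 2.9.6]{Mann22} to match the analytic ring structures along the way.)

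The step I expect to be the main obstacle is precisely this identification of the \v{C}ech-nerve terms: Proposition~\ref{prop:afftensor} handles the two-fold case cleanly, but one must check that iterating it is compatible with the cosimplicial structure maps and keep careful track of which ring of integral elements is attached at each level — this is where the Tate-ness of affinoid $K$-algebras, which makes $\Dcal((-,-)_{\bs})$ insensitive to that choice, is essential. The descent statement itself, once the \v{C}ech nerve has been correctly identified, is then a formal consequence of descendability via \cite[Proposition 2.6.3]{Mann22}, and the discrete-ring corollary is proved the same way using the analogue of Proposition~\ref{prop:afftensor} for discrete animated rings in place of the affinoid statement.
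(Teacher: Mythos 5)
There is a genuine gap at the very first step: you invoke Theorem~\ref{thm:main} to conclude that $f_{\bs}\colon A_{\bs}\to B_{\bs}$ is descendable of index $\leq 3$, but in the affinoid case that theorem is proved only under the standing hypothesis (stated just before Theorem~\ref{existN}) that $\pi_0 f\colon \pi_0A\to\pi_0B$ is isomorphic to $A_0[1/\pi]\to B_0[1/\pi]$ for a faithfully flat map $A_0\to B_0$ of admissible $\Ocal_K$-algebras. The corollary makes no such assumption, and a general faithfully flat morphism of (animated) affinoid $K$-algebras need not admit a faithfully flat formal model globally; flatness of formal models is only achievable after modification. So your proof, as written, only establishes the corollary under the extra formal-model hypothesis, i.e.\ it proves a weaker statement. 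This is precisely why the paper's proof does not cite Theorem~\ref{thm:main} alone: it first uses Lemma~\ref{lem:flattening} (the analog of \cite[Lemma 4.4]{Mikami22}, a flattening statement) to produce a rational covering $\{(A,(\pi_0A)^{\circ})\to(A_i,(\pi_0A_i)^{\circ})\}_{i}$ such that each base-changed morphism $A_i\to B\otimes_{A_{\bs}}^{\Lbb}(A_i)_{\bs}$ does admit a faithfully flat formal model, applies Theorem~\ref{thm:main} (hence descent via \cite[Proposition 2.6.3]{Mann22}) on each piece, and then glues these local descent statements using Corollary~\ref{cor:ratcov}, the sheafiness of $(B,B^+)\mapsto\Dcal((B,B^+)_{\bs})$ for the Grothendieck topology generated by rational coverings. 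Your proposal contains no analogue of this localization-and-gluing step, and without it the argument does not go through.

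The part of your proposal devoted to identifying the \v{C}ech nerve of $f_{\bs}$ with $\big((B^{(n+1)/A})_{\bs}\big)_{[n]\in\Delta}$ via Proposition~\ref{prop:afftensor} (together with \cite[Proposition 2.9.6]{Mann22}) is consistent with what the paper does in the preamble to the corollaries, and that step is not the main difficulty. For the discrete-ring corollary your reasoning is fine, since there Theorem~\ref{thm:main} applies directly under the stated finite-presentation hypothesis; it is only the affinoid corollary where the flattening argument and the rational-covering descent are indispensable.
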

\begin{proof}
It follows from Theorem \ref{thm:main}, Lemma \ref{lem:flattening}, and Corollary \ref{cor:ratcov}.
\end{proof}


\bibliographystyle{my_amsalpha}
\bibliography{bibliography}

\providecommand{\bysame}{\leavevmode\hbox to3em{\hrulefill}\thinspace}
\providecommand{\MR}{\relax\ifhmode\unskip\space\fi MR }
\providecommand{\MRhref}[2]{%
  \href{http://www.ams.org/mathscinet-getitem?mr=#1}{#2}
}
\providecommand{\href}[2]{#2}
\begin{thebibliography}{Man22b}

\bibitem[And21]{And21}
Grigory Andreychev, \emph{Pseudocoherent and perfect complexes and vector
  bundles on analytic adic spaces}, arXiv preprint arXiv:
  \href{https://arxiv.org/abs/2105.12591}{2105.12591} (2021).

\bibitem[And23]{And23}
\bysame, \emph{K-theorie adischer r\"aume}, Ph.D. thesis, Rheinische
  Friedrich-Wilhelms-Universit{\"a}t Bonn, September 2023,
  \url{https://hdl.handle.net/20.500.11811/11040}.

\bibitem[BBK17]{BBK17}
Oren Ben-Bassat and Kobi Kremnizer, \emph{Non-{A}rchimedean analytic geometry
  as relative algebraic geometry}, Ann. Fac. Sci. Toulouse Math. (6)
  \textbf{26} (2017), no.~1, 49--126.

\bibitem[CS22]{CC}
Dustin Clausen and Peter Scholze, \emph{Condensed mathematics and complex
  geometry}, \url{https://people.mpim-bonn.mpg.de/scholze/Complex.pdf}, 2022.

\bibitem[Hub96]{Hub96}
Roland Huber, \emph{\'{E}tale cohomology of rigid analytic varieties and adic
  spaces}, Aspects of Mathematics, vol. E30, Friedr. Vieweg \& Sohn,
  Braunschweig, 1996.

\bibitem[Ked19]{AWS17Ked}
Kiran~S. Kedlaya, \emph{Sheaves, stacks, and shtukas}, Mathematical Surveys and
  Monographs, vol. 242, American Mathematical Society, Providence, RI, 2019, In
  Perfectoid spaces: Lectures from the 2017 Arizona Winter School, held in
  Tucson, AZ, March 11--17, Edited and with a preface by Bryden Cais, With an
  introduction by Peter Scholze.

\bibitem[Lur09]{HTT}
Jacob Lurie, \emph{Higher topos theory}, Annals of Mathematics Studies, vol.
  170, Princeton University Press, Princeton, NJ, 2009.

\bibitem[Lur17]{HA}
\bysame, \emph{Higher algebra},
  \url{https://www.math.ias.edu/~lurie/papers/HA.pdf}, 2017.

\bibitem[Lur18]{SAG}
\bysame, \emph{Spectral algebraic geometry},
  \url{https://www.math.ias.edu/~lurie/papers/SAG-rootfile.pdf}, 2018.

\bibitem[Man22a]{Mann22b}
Lucas Mann, \emph{The 6-functor formalism for $\mathbb{Z}_\ell$- and
  $\mathbb{Q}_\ell$-sheaves on diamonds}, arXiv preprint arXiv:
  \href{https://arxiv.org/abs/2209.08135}{2209.08135} (2022).

\bibitem[Man22b]{Mann22}
\bysame, \emph{A p-adic 6-functor formalism in rigid-analytic geometry}, Ph.D.
  thesis, Rheinische Friedrich-Wilhelms-Universit{\"a}t Bonn, August 2022,
  \url{https://hdl.handle.net/20.500.11811/10125}.

\bibitem[Mat16]{Mat16}
Akhil Mathew, \emph{The {G}alois group of a stable homotopy theory}, Adv. Math.
  \textbf{291} (2016), 403--541.

\bibitem[Mik22]{Mikami22}
Yutaro Mikami, \emph{Faithfully flat descent of quasi-coherent complexes on
  rigid analytic varieties via condensed mathematics}, arXiv preprint arXiv:
  \href{}{2206.14795} (2022).

\bibitem[PY18]{PY18}
Mauro Porta and Tony~Yue Yu, \emph{Derived non-archimedean analytic spaces},
  Selecta Math. (N.S.) \textbf{24} (2018), no.~2, 609--665.

\bibitem[PY20]{PY20}
\bysame, \emph{Representability theorem in derived analytic geometry}, J. Eur.
  Math. Soc. (JEMS) \textbf{22} (2020), no.~12, 3867--3951.

\bibitem[PY21]{PY21}
\bysame, \emph{Derived {H}om spaces in rigid analytic geometry}, Publ. Res.
  Inst. Math. Sci. \textbf{57} (2021), no.~3-4, 921--958.

\bibitem[Sch17]{Sch17}
Peter Scholze, \emph{Etale cohomology of diamonds}, arXiv preprint arXiv:
  \href{https://arxiv.org/abs/1709.07343}{1709.07343} (2017).

\bibitem[Sch19]{CM}
\bysame, \emph{Lectures on condensed mathematics},
  \url{https://people.mpim-bonn.mpg.de/scholze/Condensed.pdf}, 2019.

\bibitem[Sch20]{AG}
\bysame, \emph{Lectures on analytic geometry},
  \url{https://people.mpim-bonn.mpg.de/scholze/Analytic.pdf}, 2020.

\end{thebibliography}
\end{document}